\newcommand{\mL}{\mathcal{L}}
\newcommand{\mG}{\mathcal{G}}
\newcommand{\mN}{\mathcal{N}}
\newcommand{\mP}{\mathcal{P}}
\newcommand{\mQ}{\mathcal{Q}}
\newcommand{\TV}{d_{\textnormal{TV}}}
\newcommand{\bP}{\mathbb{P}}
\newcommand{\bE}{\mathbb{E}}
\newcommand{\pr}[1]{\textsc{#1}}
\newcommand\numberthis{\addtocounter{equation}{1}\tag{\theequation}}
\newtheorem*{rep@theorem}{\rep@title}
\newcommand{\newreptheorem}[2]{%
\newenvironment{rep#1}[1]{%
 \def\rep@title{#2 \ref{##1}}%
 \begin{rep@theorem}}%
 {\end{rep@theorem}}}
\newtheorem{definition}{Definition}[section]
\newtheorem{question}{Question}[section]
\newtheorem{proposition}{Proposition}[section]
\newtheorem{theorem}{Theorem}[section]
\newtheorem{corollary}{Corollary}[section]
\newtheorem{conjecture}{Conjecture}[section]
\newtheorem{lemma}{Lemma}[section]
\newenvironment{fminipage}%
  {\begin{Sbox}\begin{minipage}}%
  {\end{minipage}\end{Sbox}\fbox{\TheSbox}}
\newcommand{\unif}{\mathsf{unif}}
\newcommand{\tv}{d_\mathrm{TV}}
\newcommand{\nurgg}{\nu^{\mathsf{rgg}}}
\newcommand{\kl}{\mathsf{KL}}
\newcommand{\grgg}{G^\mathsf{rgg}}
\newcommand{\sphere}{\mathbb{S}}
\theoremstyle{remark}
\numberwithin{equation}{section}
\begin{document}

\title{Phase Transitions for Detecting Latent Geometry \\ in Random Graphs}

\author{Matthew Brennan\thanks{Massachusetts Institute of Technology. Department of EECS. Email: \texttt{brennanm@mit.edu}.}
\and 
Guy Bresler\thanks{Massachusetts Institute of Technology. Department of EECS. Email: \texttt{guy@mit.edu}.}
\and
Dheeraj Nagaraj\thanks{Massachusetts Institute of Technology. Department of EECS. Email: \texttt{dheeraj@mit.edu}.}}
\date{\today}

\maketitle

\begin{abstract}
Random graphs with latent geometric structure are popular models of social and biological networks, with applications ranging from network user profiling to circuit design. These graphs are also of purely theoretical interest within computer science, probability and statistics. A fundamental initial question regarding these models is: when are these random graphs affected by their latent geometry and when are they indistinguishable from simpler models without latent structure, such as the Erd\H{o}s-R\'{e}nyi graph $\mG(n, p)$? We address this question for two of the most well-studied models of random graphs with latent geometry -- the random intersection and random geometric graph. Our results are as follows:
\begin{itemize}
\item The random intersection graph is defined by sampling $n$ random sets $S_1, S_2, \dots, S_n$ by including each element of a set of size $d$ in each $S_i$ independently with probability $\delta$, and including the edge $\{i, j\}$ if $S_i \cap S_j \neq \emptyset$. We prove that the random intersection graph converges in total variation to an Erd\H{o}s-R\'{e}nyi graph if $d = \tilde{\omega}(n^3)$, and does not if $d = o(n^3)$, for both dense and sparse edge densities $p$. This resolves an open problem in \cite{fill2000random, rybarczyk2011equivalence, kim2018total}. The same result was obtained simultaneously and independently by Bubeck, Racz and Richey \cite{bubeck2019geometry}.
\item We strengthen the preceding argument to show that the matrix of random intersection sizes $|S_i \cap S_j|$ converges in total variation to a symmetric matrix with independent Poisson entries. This yields the first total variation convergence result for $\tau$-random intersection graphs, where the edge $\{i, j\}$ is included if $|S_i \cap S_j| \ge \tau$. More precisely, our results imply that, if $p$ is bounded away from $1$, then the $\tau$-random intersection graph with edge density $p$ converges to $\mG(n, p)$ if $d = \omega(\tau^3 n^3)$.
\item The random geometric graph on $\mathbb{S}^{d - 1}$ is defined by sampling $X_1, X_2, \dots, X_n$ uniformly at random from $\mathbb{S}^{d - 1}$ and including the edge $\{i, j\}$ if $\| X_i - X_j \|_2 \le \tau$. A result of \cite{bubeck2016testing} showed that this model converges to $\mG(n, p)$ in total variation, where $p$ is chosen so that the models have matching edge densities, as long as $d = \omega(n^3)$. It was conjectured in \cite{bubeck2016testing} that this threshold decreases drastically for $p$ small. We make the first progress towards this conjecture by showing convergence if $d = \tilde{\omega}\left(\min\{ pn^3, p^2 n^{7/2} \} \right)$.
\end{itemize}
Our proofs are a hybrid of combinatorial arguments, direct couplings and applications of information inequalities. Previous upper bounds on the total variation distance between random graphs with latent geometry and $\mG(n, p)$ have typically not been both combinatorial and information-theoretic, while this interplay is essential to the sharpness of our bounds.
\end{abstract}

\pagebreak

\tableofcontents

\pagebreak

\section{Introduction}

Random graphs have emerged as ubiquitous objects of interest in a variety of fields. The topic of inference on random graphs encompasses a number of important statistical problems with applications ranging from social, genetic and biological networks to network user profiling and circuit design. Random graphs are also primary combinatorial objects of interest within the computer science, probability and statistics communities. Many contemporary random graphs in applications arise as models of pairwise relationships between latent points $X_1, X_2, \dots, X_n$ drawn at random from a high-dimensional feature space. This feature space is referred to as the social space in random models of social networks \cite{hoff2002latent}. In these models with latent structure, the points $X_i$ capture the underlying attributes of nodes in the network that determine the formation of edges. The underlying geometry of the feature space influences the emergent properties of the network, often leading to desirable properties of real-world networks such as the small-world phenomenon and clustering.

An initial fundamental question regarding these models is: when are random graphs with latent geometric structure actually influenced by their geometry? In other words, when are these models capturing more than simpler models without any latent structure? As the dimension $d$ of the latent feature space increases, it is often the case that the numerous degrees of freedom in the points $X_i$ cause the connections in the graph to appear less correlated and more independent. In the high-dimensional limit $d \to \infty$, these models begin to resemble the simplest random graph without latent structure, the Erd\H{o}s-R\'{e}nyi graph $\mG(n, p)$, wherein each edge is included independently with probability $p$. This leads to the following precise reformulation of our general question.

\begin{question}
Given a random graph model with $n$ nodes, latent geometry in dimension $d = d(n)$ and edge density $p = p(n)$, for what triples $(n, d, p)$ is the model indistinguishable from $\mG(n, p)$?
\end{question}

We address this question for two of the most popular models of random graphs with latent geometry -- the random intersection and random geometric graph. The random intersection graph $G$ is defined by sampling $n$ random sets $S_1, S_2, \dots, S_n$ by including each element of a set of size $d$ in each $S_i$ independently with probability $\delta$, and including the edge $\{i, j\}$ in $G$ if $S_i$ and $S_j$ have nonempty intersection. Random intersection graphs were introduced in \cite{karonski1999random}. A long line of research has examined the combinatorial properties of random intersection graphs, including their diameter, connectivity and large components \cite{blackburn2009connectivity, rybarczyk2011diameter}, independent sets \cite{nikoletseas2008large}, degree distribution \cite{stark2004vertex, jaworski2006degree, bloznelis2013degree} and threshold functions \cite{rybarczyk2011sharp}. Random intersection graphs have also found a range of applications, including to epidemics \cite{ball2014epidemics, britton2008epidemics}, circuit design \cite{karonski1999random}, network user profiling \cite{marchette2005random}, the security of wireless sensor networks \cite{bloznelis2009component}, key predistribution \cite{yagan2008random} and cluster analysis \cite{godehardt2003two}. A line of work directly relevant to our question of focus has examined common properties between random intersection and Erd\H{o}s-R\'{e}nyi graphs \cite{fill2000random, rybarczyk2011equivalence, kim2018total}. For a more extensive survey of properties and applications of random intersection graphs, see Chapter 12 of \cite{frieze2016introduction} or \cite{bloznelis2015recenta} and \cite{bloznelis2015recentb}. A more general model is the $\tau$-random intersection graph, where each edge $\{i, j\}$ is included if $|S_i \cap S_j| \ge \tau$. The $\tau$-random intersection graph was introduced in \cite{godehardt2003two} and its clique number has recently been examined in \cite{bloznelis2017large}. For $\tau > 1$, this model is analytically more difficult than the ordinary random intersection graph and the question of its convergence to $\mG(n, p)$ in total variation has not yet been studied. 

The random geometric graph on $\mathbb{S}^{d - 1}$ is defined by sampling $X_1, X_2, \dots, X_n$ uniformly at random from $\mathbb{S}^{d - 1}$ and including the edge $\{i, j\}$ if $\| X_i - X_j \|_2 \le \tau$. A large body of literature has been devoted to studying the properties of low-dimensional random geometric graphs, and a survey of this literature can be found in the monograph \cite{penrose2003random}. Random geometric graphs have many well-studied applications, including to wireless networks \cite{santi2005topology, haenggi2009stochastic}, gossip algorithms \cite{boyd2006randomized} and optimal planning \cite{karaman2011sampling}. In this work, we focus on the high-dimensional setting where the dimension $d$ of the latent space $\mathbb{S}^{d - 1}$ grows as a function of $n$. A recent line of work has studied properties of high-dimensional random geometric graphs, including their clique number \cite{devroye2011high, arias2015detecting}, edge and triangle statistics \cite{bubeck2016testing, grygierek2019poisson}, convergence to Erd\H{o}s-R\'{e}nyi graphs in total variation \cite{devroye2011high, bubeck2016testing, eldan2016information, racz2017basic} and birthday inequalities \cite{perkins2016birthday}. This prior work as it relates to our question is discussed in Section \ref{subsec:relwork}. In \cite{bubeck2016testing}, it was shown that random geometric graphs on $\mathbb{S}^{d - 1}$ with marginal edge density $p$ converge to $\mG(n, p)$ in total variation as long as $d = \omega(n^3)$. It was conjectured that if $p$ decays as a function of $n$, this threshold should also decrease from $O(n^3)$. However, as will be discussed in Section \ref{subsec:relwork}, the techniques in \cite{bubeck2016testing} rely on a coupling of random matrices that fails if $d = O(n^3)$. We make the first progress towards the conjecture of \cite{bubeck2016testing} by showing that the threshold of $O(n^3)$ decreases substantially for small $p$.

Our main results and the overall structure of the paper are as follows:
\begin{itemize}
\item We prove in Section \ref{sec:rig} that the random intersection graph converges in total variation to an Erd\H{o}s-R\'{e}nyi graph if $d = \tilde{\omega}(n^3)$. Furthermore, we show in the dense and sparse regimes of $p$, that these two random graphs have total variation $1 - o(1)$ if $d = o(n^3)$. This resolves an open problem in \cite{fill2000random}, \cite{rybarczyk2011equivalence} and \cite{kim2018total}.
\item In Section \ref{sec:rim}, we strengthen our argument for random intersection graphs to show that the matrix of random intersection sizes $|S_i \cap S_j|$ converges in total variation to a symmetric matrix with independent Poisson entries. This implies the first total variation convergence result for $\tau$-random intersection graphs. More precisely, our results show that the $\tau$-random intersection graph with edge density $p$ converges to $\mG(n, p)$ if $d = \omega(\tau^3 n^3)$, if $p$ is bounded away from $1$.
\item In Section \ref{sec:rgg}, we prove that if $d = \tilde{\omega}\left(\min\{ pn^3, p^2 n^{7/2} \} \right)$, then the random geometric graph on $\mathbb{S}^{d - 1}$ and edge density $p$ converges in total variation to $\mG(n, p)$. This marks the first progress towards the conjecture of \cite{bubeck2016testing} that the threshold $d = \omega(n^3)$ decreases drastically for $p$ polynomially small with respect to $n$.
\end{itemize}
The first result above was obtained simultaneously by Bubeck, Racz and Richey \cite{bubeck2019geometry}. While working on the convergence of sparse random geometric graphs to Erd\H{o}s-R\'{e}nyi graphs, we were informed by M. Racz that they had solved the problem for the random intersection graphs. We then found an alternative proof for this result during our work on the random geometric graphs, and extended our techniques to random intersection matrices and $\tau$-random intersection graphs. We have not seen any portion of their arguments so our solution is independent of theirs.

In Section \ref{subsec:relwork}, we discuss work related to our question for random intersection and geometric graphs. In Section \ref{sec:results}, we formally introduce the models we consider, our results, the techniques used to prove these results and several problems that remain open. Our proofs are a hybrid of combinatorial arguments, direct couplings and applications of information inequalities, which is novel to this problem. Previous upper bounds on the total variation distance between random graphs with latent geometry and $\mG(n, p)$ have typically not been both combinatorial and information-theoretic, while this interplay is essential to the sharpness of our bounds.

\subsection{Related Work}
\label{subsec:relwork}

The question of convergence of random intersection graphs to Erd\H{o}s-R\'{e}nyi random graphs in total variation was first examined in \cite{fill2000random}. In \cite{fill2000random}, it was shown that if $d = n^\alpha$ where $\alpha > 6$, then the two graphs converge in total variation. In the recent paper \cite{kim2018total}, this result was improved to show that convergence occurs as long as $d \gg n^4$. In \cite{rybarczyk2011equivalence}, a weaker property than convergence in total variation was shown to hold as long as $d = n^\alpha$ where $\alpha > 3$. In particular, \cite{rybarczyk2011equivalence} shows that, for any monotone property $\mathcal{A}$, the probabilities $\bP[G \in \mathcal{A}]$ are essentially the same regardless of whether $G$ is sampled from a random intersection graph or an Erd\H{o}s-R\'{e}nyi random graph at a matching edge density. We remark that this result differs from convergence in total variation because of the monotonicity requirement on $\mathcal{A}$. The first main result of our work is to strengthen these prior results by showing that total variation convergence occurs as long as $d = \tilde{\omega}(n^3)$, and that this is the best bound possible. We also extend our techniques to show that this total variation convergence occurs when $d = \tilde{\omega}(\tau^3 n^3)$ for $\tau$-random intersection graphs with edge density bounded away from $1$, yielding the first convergence result of this type for this model.

The convergence of high-dimensional random geometric graphs to Erd\H{o}s-R\'{e}nyi random graphs in total variation was first examined in \cite{devroye2011high}. In \cite{devroye2011high}, the authors identified the clique number of random geometric graphs and showed an asymptotic convergence result through central limit theorem-based methods -- specifically, they proved that the two graphs converge in total variation if $n$ is fixed and $d \to \infty$. \cite{bubeck2016testing} strengthened this result significantly, by showing that if $d = \omega(n^3)$ then the two graphs converge in total variation. Their main technique was to show that the adjacency matrices of random geometric graphs and Erd\H{o}s-R\'{e}nyi graphs can be approximately generated by thresholding the entries of an $n \times n$ Wishart matrix with $d$ degrees of freedom and an $n \times n$ $\pr{goe}$ matrix, respectively. By directly comparing their density functions on the set of symmetric matrices in $\mathbb{R}^{n \times n}$, \cite{bubeck2016testing} showed that these two random matrix ensembles converge in total variation if $d = \omega(n^3)$. \cite{bubeck2016testing} conjectured that, if the marginal edge density $p$ of the graphs tends to zero as a function of $n$, then the threshold of $d = \omega(n^3)$ should decrease drastically. Specifically, they conjectured that if $p = \Theta(1/n)$, then random geometric graphs converge to Erd\H{o}s-R\'{e}nyi random graphs in total variation as long as $d = \omega(\log^3 n)$.

We remark that the argument in \cite{bubeck2016testing} thresholding a pair of coupled Wishart and $\pr{goe}$ matrices breaks down as soon as $d = O(n^3)$, since these two matrix ensembles no longer converge in total variation. Our third main result overcomes this technical difficulty, making the first progress towards the conjecture of \cite{bubeck2016testing} by showing that convergence occurs as long as $d = \tilde{\omega}\left(\min\{ pn^3, p^2 n^{7/2} \} \right)$. The argument in \cite{bubeck2016testing} sparked a line of research examining the total variation convergence of Wishart and \pr{goe} matrices \cite{bubeck2016entropic, nourdin2018asymptotic, chetelat2019middle, racz2019smooth}. \cite{eldan2016information} extended the Erd\H{o}s-R\'{e}nyi total variation convergence result in \cite{bubeck2016testing} to anisotropic random geometric graphs. We also note that the same result on Wishart and \pr{goe} matrices as in \cite{bubeck2016testing} was obtained independently in \cite{jiang2015approximation}. An exposition on some of these results on the convergence of random geometric graphs and matrix ensembles can be found in \cite{racz2017basic}.

The general topic of showing total variation convergence between high-dimensional objects has emerged as a common technical problem in a number of areas. \cite{janson2010asymptotic} provides some initial general results on showing total variation convergence for pairs of random graph distributions. Total variation convergence often underlies information-theoretic lower bounds for detection and estimation problems in statistics \cite{mossel2015reconstruction, berthet2013optimal, abbe2015community, bresler2018optimal}. The total variation convergence of high-dimensional objects also is the principal technical content of average-case reductions between statistical problems \cite{berthet2013complexity, ma2015computational, hajek2015computational, wu2018statistical, brennan2018reducibility, brennan2019optimal, brennan2019universality, brennan2019average, brennan2020reducibility}. The recent reduction in \cite{brennan2019optimal} between the planted clique problem and sparse principal component analysis directly uses the random matrix ensemble convergence of \cite{bubeck2016testing} and \cite{jiang2015approximation} to construct efficient reductions.

\subsection{Notation}

Throughout, we let $G = ([n], E)$ be a simple graph on the vertex set $[n] = \{1, 2, \dots, n\}$. All other quantities, unless stated otherwise, will be viewed as functions of $n$. For example, $p = p(n)$ and $d = d(n)$ will typically denote edge density and latent dimension parameters associated with $G$. The asymptotic notation $\omega_n(\cdot), o_n(\cdot), \Omega_n(\cdot)$ and $O_n(\cdot)$ refers to its standard meaning with all quantities that are not functions of $n$ viewed as constant. The notation $\tilde{\omega}_n(\cdot)$ and $\tilde{O}_n(\cdot)$ are shorthands for $\omega_n(\cdot)$ and $O_n(\cdot)$, respectively, up to $\text{polylog}(n)$ factors. The inequalities $\ll$ and $\lesssim$ will serve as shorthands for $o_n(\cdot)$ and $O_n(\cdot)$, respectively. Throughout, equalities involving $O_n(\cdot)$ will be used to denote two-sided estimates of error terms. More precisely, the statement $A = B + O_n(C)$ will be a shorthand for $|A - B| = O_n(C)$. Given a random variable $X$, we let $\mL(X)$ denote its law. Given a measure $\nu$ over graphs $G$ and edge $e$, we let $\nu_{\sim e}$ denote the marginal measure of the graph restricted to edges other than $e$ and we let $\nu_{\sim e}^+$ denote the measure of the rest of the graph conditioned on the event $\{e \in E(G)\}$. We let $\mathbbm{1}(A)$ denote the indicator for an event $A$. Total variation, KL divergence and $\chi^2$ divergence are denoted as $\TV(\cdot, \cdot), \kl(\cdot \| \cdot)$ and $\chi^2(\cdot, \cdot)$, respectively. Given measures $\mu_1, \mu_2, \dots, \mu_n$ over a measurable space $(\mathcal{X}, \mathcal{B})$, then $\mu = \mu_1 \otimes \mu_2 \otimes \cdots \otimes \mu_n$ denotes the product measure with marginals $\mu_i$.

\section{Random Graphs with Latent Geometry}
\label{sec:results}

Now, we formally introduce the models we consider and state our main results. We remark that all of the graphs we consider -- random intersection graphs, $\tau$-random intersection graphs and random geometric graphs on $\mathbb{S}^{d - 1}$ -- can be viewed as specific instantiations of random inner product graphs. These are also referred to as dot product graphs in the literature and are defined as follows.

\begin{definition}[Random Inner Product Graphs]
Let $\mu$ be a measure on a set $\mathcal{H}$ equipped with a real-valued inner product $\langle \cdot, \cdot \rangle$ and let $X_1, X_2, \dots, X_n \sim_{\textnormal{i.i.d.}} \mu$. The random inner product graph $G$ over the vertex set $[n]$ is then constructed by connecting $i$ and $j$ if and only if $\langle X_i, X_j \rangle \ge t$ for some threshold $t \in \mathbb{R}$.
\end{definition}

\subsection{Random Intersection Graphs and Matrices}

An intersection graph of a sequence of sets is defined as follows. We remark that an intersection graph can be viewed as an inner product graph over the set $\{0, 1\}^d$ equipped with the inner product on $\mathbb{R}^d$ by identifying a set with its corresponding indicator vector.

\begin{definition}[Intersection Graph]
Given finite sets $S_1, S_2, \dots, S_n$, let $\pr{ig}_{\tau}(S_1, S_2, \dots, S_n)$ denote the graph $G$ on the vertex set $[n]$ where $\{i, j \} \in E(G)$ if and only if $|S_i \cap S_j| \ge \tau$.
\end{definition}

This leads to a natural distribution of random intersection graphs formed by constructing $n$ subsets of $[d]$ where each element of $[d]$ is included in each subset independently with a fixed probability $\delta$.

\begin{definition}[Random Intersection Graph]
Let $\pr{rig}(n, d, p)$ denote the distribution of the graph $\pr{ig}_1(S_1, S_2, \dots, S_n)$ where $S_1, S_2, \dots, S_n$ are random subsets of $[d]$ generated by including each element of $[d]$ in each $S_i$ independently with probability $\delta$ where $p = 1 - (1 - \delta^2)^d$.
\end{definition}

Here $p = 1 - (1 - \delta^2)^d$ corresponds to the marginal probability of an edge between two vertices in $\pr{rig}(n, d, p)$. We remark that our notation differs from the standard notation for random intersection graphs, which are typically parameterized directly by $\delta$ rather than their marginal edge density $p$. We choose the latter for consistency with our notation for random geometric graphs on $\mathbb{S}^{d - 1}$.

Below is our main theorem identifying conditions under which $\pr{rig}$ and $\mG(n, p)$ converge in total variation. This resolves an open problem in \cite{kim2018total}, \cite{fill2000random} and \cite{rybarczyk2011equivalence}. Bubeck, Racz and Richey independently proved the same result through alternate techniques simultaneous to this work \cite{bubeck2019geometry}.

\begin{theorem} \label{thm:introdenserig}
Suppose $p = p(n) \in (0, 1)$ and $d$ satisfy that
$$d \gg n^3 \left( 1 + \min \left\{ \log n, \log(1 - p)^{-1} \right\} \right)^3$$
Then it follows that
$$\TV\left( \pr{rig}(n, d, p), \mG(n, p) \right) \to 0 \quad \text{as } n \to \infty$$
\end{theorem}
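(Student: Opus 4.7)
The plan is to bound an information-theoretic divergence between $\mu = \mathcal{L}(\pr{rig}(n,d,p))$ and $\nu = \mG(n,p)$ by combining combinatorial control of the bipartite representation of the RIG with chain-rule arguments at the level of edges. Writing $\delta$ for the parameter such that $p = 1-(1-\delta^2)^d$, I realize $\pr{rig}$ via an $n\times d$ matrix $A$ with iid $\bern(\delta)$ entries, so that $S_i$ is the support of row $i$ and $\{i,j\}\in E$ iff $S_i\cap S_j\ne\emptyset$. Equivalently, each column $k\in[d]$ contributes a clique on the random set $T_k = \{i : A_{ik}=1\}$, which is the combinatorial object I would track.

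The first step is to define a typical event $\mathcal{T}$ under which every set size $|S_i|$ is within $O(\sqrt{d\delta\log n})$ of its mean $d\delta$ and every pairwise overlap $|S_i\cap S_j|$ is close to $d\delta^2$. Chernoff combined with a union bound over the $O(n^2)$ events gives $\bP(\mathcal{T}^c)=o(1)$, with the polylogarithmic slack in the hypothesis absorbing the tails; when $p$ is small so that $\log(1-p)^{-1}$ is small, the condition degenerates to just $d\gg n^3$. Outside $\mathcal{T}$ the contribution to total variation is $o(1)$.

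On $\mathcal{T}$, I would bound $\KL(\mu\,\|\,\nu)$ via a chain-rule decomposition over vertices $i=2,\dots,n$, reducing to bounding, for each $i$, the conditional divergence between the joint law of the edges from $i$ to $[i-1]$ given $S_1,\dots,S_{i-1}$ and the product $\bern(p)^{\otimes(i-1)}$. If the previous sets were perfectly disjoint with exact size $d\delta$, the conditional law would be exactly $\bern(p)^{\otimes(i-1)}$; the corrections come from (i) set-size fluctuations, which shift marginal edge probabilities, and (ii) pairwise overlaps among earlier sets, which introduce joint correlations through shared elements of $S_i$.

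The principal obstacle is extracting the sharp $n^3$ scaling: naive triangle-inequality or Pinsker steps at each vertex incur an extra factor of $n$ and recover only the prior $d\gg n^4$ threshold of \cite{kim2018total}. To reach $d\gg n^3\log^3 n$, I would instead analyze the chi-squared divergence at the level of the intersection-size matrix $(|S_i\cap S_j|)_{i<j}$ rather than directly at the graph, exploiting that the linear-in-$(|S_j|-d\delta)$ biases in edge probabilities are centered at zero and cancel at the second-moment level, leaving only quadratic corrections that enumerate small shared-element patterns and reduce to counts of subsets in $\binom{[n]}{3}$ weighted by powers of $\delta$. These aggregate to $O(n^3\log^3(n)/d)=o(1)$ under the hypothesis, whereupon Pinsker's inequality yields $\TV(\mu,\nu)\to 0$. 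This intersection-matrix-level estimate is also the natural entry point to the $\tau$-random intersection graph extension discussed in Section~\ref{sec:rim}.
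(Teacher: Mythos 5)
Your setup (bipartite representation, columns forcing cliques, KL chain rule) is fine, but the load-bearing step is the final one, and it is asserted rather than proved: that a $\chi^2$ computation at the level of the intersection-size matrix, after cancellation of the linear terms, ``aggregates to $O(n^3\log^3 n/d)$.'' This is exactly the step that fails, and it fails in the sparse regime where the theorem's condition degenerates to $d \gg n^3$. A global second-moment comparison of $\pr{rim}(n,d,\delta)$ with a product (Poisson or binomial) reference -- or of $\pr{rig}(n,d,p)$ with $\mG(n,p)$ -- is dominated not by the triangle-type correlations you enumerate but by rare events in which a single element of $[d]$ lies in $k\ge 5$ of the sets, forcing a $k$-clique. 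For $p=c/n$ and $d=n^3\log n$ (allowed by the theorem), the probability that some element lies in $5$ sets is of order $d n^5\delta^5 \asymp n^{-2}(\log n)^{-3/2}$, while under $\mG(n,c/n)$ (or under independent $\mathrm{Poisson}(d\delta^2)$ entries) the corresponding $5$-clique pattern has probability of order $n^{-5}$; since $\chi^2(\nu,\mu)\ge (\nu(E)-\mu(E))^2/\mu(E)$ for any event $E$, this single event already contributes $\Omega\bigl(n(\log n)^{-3}\bigr)\to\infty$. Taking larger $k$ shows the same divergence for every polynomial $d$, so no choice of polylogarithmic slack rescues a one-shot $\chi^2$ (and hence Pinsker via $\chi^2$) at either the graph or the matrix level, even though the total variation does tend to $0$. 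Your ``typical event'' does not cure this, because conditioning on set sizes and pairwise overlaps does not remove these forced cliques, and in any case a $\chi^2$ bound for the conditioned measure would still have to be related back to the unconditioned one.

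This is precisely why the paper avoids a global second moment. Its proof Poissonizes the clique decomposition (the number of elements lying in exactly $k$ sets becomes independent Poissons), conditions on a typical event under which no element lies in $\ge 6$ sets and the counts for $k=3,4,5$ are near their means, and then uses $\chi^2$ only \emph{locally}: Lemma \ref{lem:pcdist} bounds the total variation cost of planting a single clique of constant size $t$ in an Erd\H{o}s--R\'enyi background with suitably updated density, and these per-clique TV costs are accumulated \emph{linearly} by induction and the data-processing inequality, with a final binomial comparison matching the edge density. In that scheme the rare dense structures hurt only through their (vanishing) probability, absorbed into the conditioning event, rather than through their enormous likelihood ratios, which is what destroys the global $\chi^2$ you propose. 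To repair your argument you would need either this kind of Poissonization-plus-peeling mechanism, or a conditional/truncated divergence argument that explicitly excises the forced-clique events before any second-moment step; as written, the claimed $O(n^3\log^3 n/d)$ bound does not hold. (Separately, even if the matrix-level bound did hold, you would still need the small binomial/Poisson density-matching step to pass from the thresholded matrix to $\mG(n,p)$, as in the paper's Corollary \ref{cor:higherthres}.)
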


For the sake of exposition, we first show that this theorem holds under the assumption that $1 - p = \Omega_n(n^{-1/2})$. The proof of the theorem under these conditions can be found in Section \ref{sec:rig}. The theorem when $1 - p = o_n(n^{-1/2})$ will be implied in Section \ref{sec:rim} by a later result on the convergence of random intersection matrices and Poisson matrices. The main ideas in the proof are as follows. A Poissonization argument yields that $G \sim \pr{rig}(n, d, p)$ can approximately be generated as the union of independently chosen cliques. Through the second-moment method, we obtain a tight upper bound on the total variation distance induced by planting a small clique in $\mG(n, p)$ to $\mG(n, p')$ where $p'$ is chosen so that the two models have matching edge densities. Another Poissonization step and then applying this bound inductively yields the desired convergence in total variation. Making this argument rigorous requires a number of additional technical steps.

We also show that, in sparse and dense regimes of edge densities $p$, the condition on the dimension $d$ in Theorem \ref{thm:introdenserig} is the best that can be hoped for. This follows by comparing the number of triangles and a signed variant of the number of triangles in each of $\pr{rig}(n, d, p)$ and $\mG(n, p)$. The signed triangle statistic we consider was introduced in \cite{bubeck2016testing} to show a similar theorem for random geometric graphs.

\begin{theorem} \label{thm:rig-triangles}
Suppose $p = p(n)$ satisfies that $1 - p = \Omega(1)$ and either $p = \Theta(1)$ or $p = \Theta(1/n)$. It follows that if $n^2 \ll d \ll n^3$, then
$$\TV\left( \pr{rig}(n, d, p), \mG(n, p) \right) \to 1 \quad \text{as } n \to \infty$$
\end{theorem}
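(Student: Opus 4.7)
The plan is to distinguish $\pr{rig}(n,d,p)$ from $\mG(n,p)$ with the signed-triangle statistic
$$T^{\star}(G) \;=\; \sum_{\{i,j,k\}\in\binom{[n]}{3}} (A_{ij}-p)(A_{jk}-p)(A_{ik}-p)$$
of \cite{bubeck2016testing}, and conclude with Chebyshev. Under $\mG(n,p)$, independence of edges gives $\bE[T^{\star}]=0$ and $\text{Cov}(\tilde A_K,\tilde A_{K'})=0$ for every pair $K\ne K'$ (any non-shared edge contributes an independent factor of mean zero), so $\Var_{\mG}(T^{\star}) = \binom{n}{3}(p(1-p))^{3} \asymp n^{3}p^{3}(1-p)^{3}$. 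Under $\pr{rig}$, write $B_{ij}=\mathbbm{1}[S_i\cap S_j\ne\emptyset]$, $\bar B_{ij}=1-B_{ij}$, $q=1-p$. Analyzing one element of $[d]$ at a time,
$$\bP(\bar B_{ij}=1)=(1-\delta^2)^d,\quad \bP(\bar B_{ij}=\bar B_{jk}=1)=(1-2\delta^2+\delta^3)^d,\quad \bP(\bar B_{ij}=\bar B_{jk}=\bar B_{ik}=1)=(1-3\delta^2+2\delta^3)^d.$$
Expanding $\bE_{\pr{rig}}[(q-\bar B_{ij})(q-\bar B_{jk})(q-\bar B_{ik})]$ in these three probabilities and Taylor-expanding in $\delta$ (the hypothesis $d\gg n^2$ forces $d\delta^3 = o(1)$ in both regimes of $p$, and the $O(q^{3})$ and $O(q^{3}d\delta^4)$ contributions cancel exactly against the three summands $-2q^{3}+3q\cdot a - b$) yields
$$\bE_{\pr{rig}}[(B_{ij}-p)(B_{jk}-p)(B_{ik}-p)] \;=\; (1-p)^{3}\,d\delta^{3}\,(1+o(1)) \;\asymp\; (1-p)^{3}p^{3/2}d^{-1/2},$$
so $\bE_{\pr{rig}}[T^{\star}] = \Theta(n^{3}(1-p)^{3}p^{3/2}d^{-1/2})$.

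For $\Var_{\pr{rig}}(T^{\star})$, I split pairs of triangles $(K,K')$ by $|K\cap K'|\in\{0,1,2,3\}$. Disjoint triangles ($|K\cap K'|=0$) involve disjoint sets of the vertex indicators $X_{i}^{(e)}$ and are therefore \emph{exactly} independent under $\pr{rig}$, contributing $0$ to the covariance sum. The diagonal $K=K'$ contributes $O(n^{3}p^{3})$ from $|\tilde A_{K}|\le 1$. For $|K\cap K'|\in\{1,2\}$ each cross-expectation again factors as a product over $e\in[d]$ of an explicit single-element polynomial probability in the at-most-five relevant indicators, and the same Taylor cancellation as in Step~1 (leading $q^{|S|}$-type terms cancel, the surviving correction is of order $d\delta^{3}$) shows that each such covariance inherits a $d^{-1}$ factor. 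Summed against the $O(n^{4})$ and $O(n^{5})$ respective pair counts, the cross-covariance contributions are $O(\bE_{\pr{rig}}[T^{\star}]^{2}/n)$, and altogether $\Var_{\pr{rig}}(T^{\star}) = O(n^{3}p^{3} + \bE_{\pr{rig}}[T^{\star}]^{2})$. Setting $t = \tfrac12\bE_{\pr{rig}}[T^{\star}]$, Chebyshev then gives both $\bP_{\mG}[T^{\star}>t]\to0$ and $\bP_{\pr{rig}}[T^{\star}\le t]\to0$ as long as $\bE_{\pr{rig}}[T^{\star}]^{2}\gg n^{3}p^{3}$, which with the estimates above reduces to $d \ll n^{3}(1-p)^{6}$. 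Under the hypothesis $1-p = \Omega(1)$ this is exactly the assumed $d\ll n^{3}$, and the two-sided test forces $\TV(\pr{rig}(n,d,p),\mG(n,p))\to 1$.

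The main technical obstacle is the cross-covariance bound for $|K\cap K'|\in\{1,2\}$: a naive bound of order $p^{3}$ on each such covariance, multiplied by the $\Theta(n^{5})$ (respectively $\Theta(n^{4})$) pair count, would overwhelm the signal $\bE_{\pr{rig}}[T^{\star}]^{2}\asymp n^{6}p^{3}(1-p)^{6}/d$ and prevent the argument from reaching the correct threshold $d \ll n^{3}$. The cure is to observe that each such cross-moment admits the same cancellation of leading $q^{|S|}$-terms that produced $c_{3} = (1+o(1))q^{3}d\delta^{3}$ in Step~1, so the cross-covariance also decays like $d^{-1}$ rather than being $\Theta(1)$ in $d$. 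This is also the reason unsigned triangles suffice for $p=\Theta(1/n)$ but not for $p=\Theta(1)$: in the dense regime the variance of the plain triangle count is dominated by the $\Theta(n^{4}p^{5})$ ``book'' contribution, which signed triangles avoid thanks to the mean-zero centering.
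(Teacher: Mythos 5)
Your dense case ($p = \Theta(1)$) is essentially the paper's argument: the same signed-triangle statistic, the same first-moment computation $\bE_{\pr{rig}}[T^\star] \asymp n^3(1-p)^3 d\delta^3$, a variance bound, and two-sided Chebyshev. The gap is in the sparse case $p = \Theta(1/n)$, and it sits exactly in the step you flag as the "main technical obstacle": the claim that each covariance of two signed triangles sharing an edge "inherits a $d^{-1}$ factor" so that the cross-covariance total is $O(\bE_{\pr{rig}}[T^\star]^2/n)$. This is false. A careful expansion (the paper's Lemma \ref{lem:rigsignedtrianglesvar}) gives
$$\textnormal{Cov}\left[ \tau_{123}, \tau_{124} \right] = 2(1-p)^5(1-2p)\, d\delta^3 + O_n\!\left(d\delta^4\right),$$
i.e.\ the edge-sharing covariance is of the \emph{same} order $d\delta^3$ as the mean of a single signed triangle, not of order $(\bE[\tau])^2$; the leading coefficient does not cancel for $p$ bounded away from $1/2$. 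Summing over the $\Theta(n^4)$ edge-sharing pairs contributes $\Theta(n^4 d\delta^3) \asymp n \cdot \bE_{\pr{rig}}[T^\star]$ to $\Var_{\pr{rig}}(T^\star)$, which is $o(\bE_{\pr{rig}}[T^\star]^2)$ only when $\bE_{\pr{rig}}[T^\star] \gg n$, i.e.\ $d \ll p^3 n^4$. For $p = \Theta(1)$ this is implied by $d \ll n^3$, which is why your dense argument goes through; but for $p = \Theta(1/n)$ it would require $d \ll n$, contradicting the hypothesis $d \gg n^2$. Concretely, with $p = \Theta(1/n)$ and $n^2 \ll d \ll n^3$ one has $\bE_{\pr{rig}}[T^\star]^2 \asymp n^3/d$ while the edge-sharing contribution is $\asymp n^{5/2}d^{-1/2} = \sqrt{d/n}\cdot(n^3/d) \gg n^3/d$, so $\Var_{\pr{rig}}(T^\star) \gg \bE_{\pr{rig}}[T^\star]^2$ and Chebyshev cannot yield $\bP_{\pr{rig}}[T^\star \le \tfrac12 \bE_{\pr{rig}}[T^\star]] \to 0$. (Separately, even your stated conclusion $\Var_{\pr{rig}}(T^\star) = O(n^3p^3 + \bE_{\pr{rig}}[T^\star]^2)$ would not suffice: Chebyshev at threshold $\bE/2$ needs the variance to be $o(\bE^2)$, not $O(\bE^2)$.)

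This is precisely why the paper abandons the signed statistic in the sparse regime and argues differently there: it uses the plain triangle count together with the planted-cliques representation of $\pr{rig}$, observing that $T(G) \ge M_3$, the number of elements of $[d]$ lying in exactly three of the sets, which is $\mathrm{Bin}(d,p_3)$ with mean $\approx \tfrac{n^3}{6}\sqrt{p^3/d} \to \infty$ when $d \ll n^3$, hence $T(G) \ge \tfrac{n^3}{12}\sqrt{p^3/d}$ with high probability under $\pr{rig}$; under $\mG(n,p)$ with $p = \Theta(1/n)$ one has $\bE[T(G)] = \binom{n}{3}p^3 = O(1)$, so Markov bounds the probability of the same event by $O(\sqrt{dp^3}) = o_n(1)$. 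Your closing remark about why unsigned triangles fail in the dense regime is correct, but the mirror-image phenomenon — that the signed statistic's variance under $\pr{rig}$ blows up relative to its mean in the sparse regime — is what your variance claim overlooks; to salvage your route you would need either a genuinely sharper control of the $\pr{rig}$-side fluctuations of $T^\star$ or a switch to a one-sided argument of the paper's type for $p = \Theta(1/n)$.
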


Our second main result extends the proof of Theorem \ref{thm:introdenserig} to directly couple the full matrix of intersection sizes between the sets $S_i$ to a matrix with i.i.d. Poisson entries. More precisely, consider the following pair of random matrices.

\begin{definition}[Random Intersection Matrix]
Let $\pr{rim}(n, d, \delta)$ denote the distribution of $n \times n$ matrices $M$ with entries
$$M_{ij} = \left\{ \begin{array}{cl} |S_i \cap S_j| &\textnormal{if } i \neq j \\ 0 &\textnormal{otherwise} \end{array} \right.$$
where $S_1, S_2, \dots, S_n$ are random subsets of $[d]$ generated by including each element of $[d]$ in each $S_i$ independently with probability $\delta$.
\end{definition}

\begin{definition}[Poisson Matrix]
Given $\lambda \in \mathbb{R}_{\ge 0}$, let $\pr{poim}(n, \lambda)$ denote the distribution of symmetric $n \times n$ matrices $M$ such that $M_{ii} = 0$ for all $1 \le i \le n$ and $M_{ij}$ are i.i.d. $\textnormal{Poisson}(\lambda)$ for all $1 \le i < j \le n$.
\end{definition}

Our second main result coupling $\pr{rim}$ and $\pr{poim}$ can now be formally stated as follows. Its proof can be found in Section \ref{sec:rim}.

\begin{theorem} \label{thm:rim}
Suppose that $\delta = \delta(n) \in (0, 1)$ and $d$ satisfies that $d \gg n^3$ and $\delta \ll d^{-1/3} n^{-1/2}$. Then it holds that
$$\TV\left( \pr{rim}(n, d, \delta), \pr{poim}\left(n, d\delta^2\right) \right) \to 0 \quad \text{as } n \to \infty$$
\end{theorem}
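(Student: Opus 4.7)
The plan is to extend the Poissonization-plus-clique-decomposition strategy behind Theorem~\ref{thm:introdenserig} to track the full matrix of intersection sizes rather than only the induced graph. First, Poissonize the element count by replacing $d$ with $D \sim \text{Poi}(d)$; since $d \gg n^3$, this substitution costs only $O(1/\sqrt{d}) = o(1)$ in total variation. Setting $T_e := \{i \in [n] : e \in S_i\}$, the counts $N_T := |\{e : T_e = T\}|$ for $T \subseteq [n]$ become mutually independent Poisson random variables with $\mathbb{E}[N_T] = d\delta^{|T|}(1-\delta)^{n-|T|}$, and $M_{ij} = \sum_{T \supseteq \{i,j\}} N_T$.

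Next, decompose $M = M^{(2)} + M''$, where $M^{(2)}_{ij} = N_{\{i,j\}}$ and $M''_{ij} = \sum_{T : |T| \geq 3,\ T \supseteq \{i,j\}} N_T$; the two parts are independent. Crucially, $M^{(2)}$ is already exactly $\pr{poim}(n, \lambda_2)$ with $\lambda_2 = d\delta^2(1-\delta)^{n-2}$. By Poisson additivity applied to the target, write $\pr{poim}(n, d\delta^2) \stackrel{d}{=} Z + W$ with $Z \sim \pr{poim}(n, \lambda_2)$ and $W \sim \pr{poim}(n, d\delta^2 - \lambda_2)$ independent, and couple $Z \equiv M^{(2)}$. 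The theorem then reduces to the bound $\mathrm{TV}(M^{(2)} + M'',\, M^{(2)} + W) = o(1)$.

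The main obstacle is this last bound. A naive data-processing inequality would yield only $\mathrm{TV}(M'', W)$, which is too weak: $M''$ contains on the order of $d n^3 \delta^3 \lesssim n^{3/2}$ planted triangles from $|T_e|=3$ events, whose three-entry correlations are detectable from the independent Poisson baseline $W$ at the same marginal rate (for instance, via a triangle-count test on nonzero entries). The Poisson background $M^{(2)}$, of entry mean $\lambda_2 \approx d\delta^2$, is essential for smoothing these planted cliques and must not be discarded. I would instead bound $\chi^2(\pr{rim}, \pr{poim}(n, d\delta^2))$ directly via a Charlier-polynomial expansion (equivalently, via Poisson factorial-moment identities). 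After cancellation of the first-order term---which holds because $\sum_{T \supseteq \{i,j\}} \mathbb{E}[N_T] = d\delta^2$ exactly---the chi-squared becomes a sum over non-empty subgraphs $A \subseteq \binom{[n]}{2}$ with at least two edges, weighted by a power of $\delta^{2|V(A)|}$ times Poisson-smoothing factors involving $1/\lambda_2$ per edge of $A$. The hardest step is the combinatorial bookkeeping: the dominant contribution comes from $A$ being a path of two edges ($|V(A)|=3$, $|A|=2$), and larger subgraphs contribute additional factors of $n\delta \ll 1$ per extra vertex and $1/\lambda_2 \lesssim 1$ per extra edge, so they are lower order; the conditions $d \gg n^3$ and $\delta \ll d^{-1/3} n^{-1/2}$ are precisely what make the dominant contribution $o(1)$.
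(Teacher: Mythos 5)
Your first half (Poissonizing, splitting off $M^{(2)}\sim\pr{poim}(n,\lambda_2)$ exactly via Poisson thinning, and recognizing that the Poisson background must not be thrown away by data processing) matches the paper's route in spirit. The genuine gap is the step where you diverge: bounding $\chi^2\bigl(\pr{rim}(n,d,\delta),\pr{poim}(n,d\delta^2)\bigr)$ globally cannot work under the stated hypotheses. The theorem imposes no lower bound on $\delta$, so $\lambda=d\delta^2$ may be bounded below $1$ (e.g.\ $\delta=\tfrac12 d^{-1/2}$ with $d\gg n^3$ is allowed, and $\delta$ may even be polynomially small), and then the untruncated $\chi^2$ diverges even though the total variation tends to $0$. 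Concretely, in the two-copy (second moment) expansion, the event that both independent latent copies plant a set on the same $k$ vertices contributes on the order of $\binom{n}{k}\,(d\delta^k)^2\,(1+\lambda^{-1})^{\binom{k}{2}}\approx \tfrac{d^2}{k!}\,(n\delta^2)^k\,\lambda^{-\binom{k}{2}}$; for $\lambda$ bounded below $1$ the factor $\lambda^{-\binom{k}{2}}$ grows like $e^{ck^2}$ while the prefactor decays only like $e^{-Ck\log n}$, so these terms blow up as $k$ grows toward $n$. This is the standard sensitivity of $\chi^2$ to rare events: conditionally on a large planted clique, the likelihood ratio against $\pr{poim}$ is astronomically large. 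Your bookkeeping assertion that one gains a factor ``$1/\lambda_2\lesssim 1$ per extra edge'' is precisely where this is hidden, and it is not implied by $d\gg n^3$ and $\delta\ll d^{-1/3}n^{-1/2}$.

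What is missing is a truncation/conditioning device and a mechanism for returning to total variation. The paper conditions on a high-probability event under which the Poissonized counts $M_k$ of planted $k$-sets lie in confidence windows and vanish for $k\ge 6$, then plants the surviving cliques one at a time, applying a $\chi^2$ computation only locally (a single planted $t$-clique against a $\pr{poim}(n,\lambda)$ background, Lemma~\ref{lem:plantedpois}), accumulating the error through the triangle inequality and data processing, with casework on $d\delta^2\ge 1$ versus $d\delta^2<1$; the residual mismatch in total rate is then absorbed by a univariate Poisson-versus-Poisson TV bound after conditioning on the total count (Lemma~\ref{lem:poissontv}). Some version of this conditioning (or at minimum a truncation of clique sizes together with a separate treatment of small $\lambda$) is unavoidable before your Charlier/factorial-moment expansion can be summed. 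Separately, your claim that Poissonizing $d$ costs $O(1/\sqrt d)$ in total variation is not justified as stated, since $\TV(\delta_d,\mathrm{Poisson}(d))$ is close to $1$; the correct and standard fix is to Poissonize only the Binomial count of elements lying in at least two of the sets, which costs $O(n^2\delta^2)=o(1)$, exactly as in Proposition~\ref{prop:poissonization}.
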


Applying the data-processing inequality to these matrices at $\tau > 1$ now yields a natural extension of Theorem \ref{thm:introdenserig} to random intersection graphs defined at higher thresholds than $1$. These graphs are formally defined as follows.

\begin{definition}[Random Intersection Graphs at Higher $\tau$] \label{defn:righigh}
Let $\pr{rig}_\tau(n, d, p)$ denote the distribution of the graph $\pr{ig}_\tau(S_1, S_2, \dots, S_n)$ where $S_1, S_2, \dots, S_n$ are random subsets of $[d]$ generated by including each element of $[d]$ in each $S_i$ independently with probability $\delta$ where
$$1 - p = \sum_{k = 0}^{\tau - 1} \binom{d}{k} \delta^{2k}(1 - \delta^2)^{d - k}$$
\end{definition}

We obtain the following corollary of Theorem \ref{thm:rim} yielding conditions for the convergence of $\mG(n, p)$ and $\pr{rig}_\tau(n, d, p)$ in total variation.

\begin{corollary} \label{cor:higherthres}
Suppose $p = p(n) \in (0, 1)$, $\delta = \delta(n) \in (0, 1)$, $\tau \in \mathbb{Z}_+$ and $d$ satisfy that
$$1 - p = \sum_{k = 0}^{\tau - 1} \binom{d}{k} \delta^{2k}(1 - \delta^2)^{d - k}$$
Furthermore suppose that
$$d \gg n^3, \quad \delta \ll d^{-1/3} n^{-1/2} \quad \textnormal{and} \quad n^2 \delta^4 \ll p(1 - p)$$
Then it follows that
$$\TV\left( \pr{rig}_\tau(n, d, p), \mG(n, p) \right) \to 0 \quad \text{as } n \to \infty$$
\end{corollary}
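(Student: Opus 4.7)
The plan is to deduce this corollary from Theorem \ref{thm:rim} by postcomposing with a thresholding map, followed by a short Erd\H{o}s-R\'enyi comparison handling the resulting Poisson-versus-Binomial tail gap in the edge probability. Let $\phi_\tau$ be the deterministic map that sends a symmetric nonnegative-integer matrix $M$ (with zero diagonal) to the graph on $[n]$ with edge set $\{\{i,j\} : M_{ij} \ge \tau\}$. By Definition \ref{defn:righigh}, $\phi_\tau(\pr{rim}(n,d,\delta))$ has law $\pr{rig}_\tau(n,d,p)$ with marginal edge probability $p = \bP[\mathrm{Bin}(d,\delta^2) \ge \tau]$, while $\phi_\tau(\pr{poim}(n,d\delta^2))$ has law $\mG(n,q)$ with $q := \bP[\mathrm{Poisson}(d\delta^2) \ge \tau]$, since the off-diagonal entries of $\pr{poim}$ are i.i.d.\ Poisson. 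The hypotheses $d \gg n^3$ and $\delta \ll d^{-1/3}n^{-1/2}$ are exactly those of Theorem \ref{thm:rim}, so the data-processing inequality gives
\[
\TV\bigl(\pr{rig}_\tau(n,d,p),\, \mG(n,q)\bigr) \;\le\; \TV\bigl(\pr{rim}(n,d,\delta),\, \pr{poim}(n,d\delta^2)\bigr) \;=\; o_n(1).
\]

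By the triangle inequality, it remains to bound $\TV(\mG(n,p), \mG(n,q))$. KL-tensorization over the independent Bernoulli edges of $\mG(n,\cdot)$ combined with Pinsker's inequality and the Taylor estimate $\KL(\mathrm{Bern}(p) \,\|\, \mathrm{Bern}(q)) \lesssim (p-q)^2/(p(1-p))$ for close $p$, $q$ yield
\[
\TV(\mG(n,p), \mG(n,q))^2 \;\lesssim\; \frac{n^2(p-q)^2}{p(1-p)}.
\]
So it suffices to show $|p - q| \lesssim \delta^2 \sqrt{p(1-p)}$: plugging this in gives $\TV^2 \lesssim n^2\delta^4 = o_n(1)$ by the hypothesis $n^2\delta^4 \ll p(1-p)$.

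For this sharper estimate on $|p-q|$, I would expand each Binomial PMF entry $\binom{d}{k}\delta^{2k}(1-\delta^2)^{d-k}$ around its Poisson analogue $e^{-d\delta^2}(d\delta^2)^k/k!$ termwise, using $(1-\delta^2)^{d-k} = e^{-d\delta^2}\bigl(1 + k\delta^2 + O((d+k^2)\delta^4)\bigr)$ and $\binom{d}{k} = (d^k/k!)\bigl(1 + O(k^2/d)\bigr)$. Each Binomial entry then differs from its Poisson analogue by a multiplicative factor $1 + O(\delta^2 + k^2/d)$ on the relevant range of $k$, and summing over $k \ge \tau$ gives the desired bound, with cancellation arising because the leading $\delta^2$-correction corresponds to an effective small shift in the Poisson mean rather than a generic perturbation. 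The main obstacle is precisely this sharper termwise comparison: the crude Le Cam estimate $|p - q| \le \TV(\mathrm{Bin}(d,\delta^2), \mathrm{Poisson}(d\delta^2)) \le d\delta^4$ is insufficient when $d\delta^2 = \omega(1)$ to recover the stated condition $n^2\delta^4 \ll p(1-p)$, so one must exploit that the Poisson-Binomial discrepancy at each PMF entry is proportional to the entry itself, ensuring that the tail gap scales with the tail mass $\sqrt{p(1-p)}$ rather than with the worst-case transport cost $d\delta^4$.
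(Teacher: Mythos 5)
Your overall route is the same as the paper's: threshold $\pr{rim}(n,d,\delta)$ and $\pr{poim}(n,d\delta^2)$ at $\tau$, apply the data-processing inequality together with Theorem \ref{thm:rim} to get $\TV\left(\pr{rig}_\tau(n,d,p), \mG(n,q)\right) = o_n(1)$ with $q = \bP[\text{Poisson}(d\delta^2)\ge\tau]$, then compare $\mG(n,p)$ and $\mG(n,q)$. The paper does this last comparison by conditioning on the edge count and invoking Lemma \ref{lem:binomtv}, while you use KL tensorization plus Pinsker; these give essentially the same bound $\TV(\mG(n,p),\mG(n,q))^2 \lesssim n^2(p-q)^2/(p(1-p))$, except that your Bernoulli KL estimate implicitly needs $q(1-q) \asymp p(1-p)$, which the paper secures by first disposing of the degenerate cases $p = o_n(n^{-2})$ and $1-p = o_n(n^{-2})$ (both graphs empty, resp.\ complete, w.h.p.); you should add that reduction.

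The genuine gap is your treatment of $|p-q|$. As written, the crucial estimate $|p-q| \lesssim \delta^2\sqrt{p(1-p)}$ is only sketched via a termwise PMF expansion whose claimed cancellation is not established (and is delicate, since $\tau$ can be of order $d\delta^2$ and the per-term correction grows like $k\delta^2 + k^2/d$), so the proof is incomplete at its self-identified ``main obstacle.'' But that obstacle is illusory: the refined Poisson approximation bound (Theorem 2.1 of \cite{barbour1989some}, already used in Proposition \ref{prop:poissonization}) gives $\TV\left(\text{Binom}(d,\delta^2), \text{Poisson}(d\delta^2)\right) \le \delta^2$ regardless of whether $d\delta^2$ is large — you only quoted the cruder Le Cam form $d\delta^4$, which is indeed too weak when $d\delta^2 = \omega_n(1)$. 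With $|p-q| \le \delta^2$, your own display gives $\TV(\mG(n,p),\mG(n,q))^2 \lesssim n^2\delta^4/(p(1-p)) = o_n(1)$ exactly by the hypothesis $n^2\delta^4 \ll p(1-p)$, which is in the statement precisely to absorb the $1/(p(1-p))$ factor; no bound scaling with $\sqrt{p(1-p)}$ is needed (indeed such a bound would make that hypothesis nearly redundant, since $n^2\delta^4 \ll d^{-4/3} = o_n(1)$ already follows from $\delta \ll d^{-1/3}n^{-1/2}$). Replacing your sketched expansion by this standard bound repairs the argument and makes it coincide with the paper's proof.
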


When $1 - p$ is bounded below by a constant, this corollary can be restated with the simple condition of $d \gg \tau^3 n^3$ as shown below. This is our main result on the convergence of $\mG(n, p)$ and $\pr{rig}_\tau(n, d, p)$ in total variation.

\begin{corollary} \label{cor:morerig}
Suppose $p = p(n) \in (0, 1)$ satisfies that $1 - p = \Omega_n(1)$ and $d$ and $\tau = \tau(n) \in \mathbb{Z}_+$ satisfy $d \gg \tau^3 n^3$. Then it follows that
$$\TV\left( \pr{rig}_\tau(n, d, p), \mG(n, p) \right) \to 0 \quad \text{as } n \to \infty$$
\end{corollary}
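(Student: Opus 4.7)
The plan is to deduce this corollary from Corollary~\ref{cor:higherthres} by verifying its three hypotheses under the assumptions $d \gg \tau^3 n^3$ and $1-p = \Omega_n(1)$. The only nontrivial step is to bound the parameter $\delta$ implicit in the definition of $\pr{rig}_\tau$.

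First, I handle the very sparse case separately. If $p \leq 1/n^3$, then $\binom{n}{2}p = o_n(1)$, so by Markov's inequality both $\pr{rig}_\tau(n,d,p)$ and $\mG(n,p)$ place probability $1-o_n(1)$ on the empty graph. The triangle inequality through the empty graph then gives $\TV(\pr{rig}_\tau(n,d,p), \mG(n,p)) = o_n(1)$.

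Assume henceforth that $p \geq 1/n^3$. The key estimate I need is $d\delta^2 = O(\tau)$. Setting $\mu := d\delta^2$, the hypothesis $1-p \geq c > 0$ is exactly $\bP[\textnormal{Bin}(d,\delta^2) < \tau] \geq c$. A standard Chernoff lower-tail bound gives $\bP[\textnormal{Bin}(d,\delta^2) \leq \tau - 1] \leq \exp(-\mu/8)$ whenever $\mu \geq 2\tau$; combining this with the lower bound $c$ forces $\mu \leq \max(2\tau, 8\log(1/c)) = O(\tau)$, i.e., $\delta^2 \leq C\tau/d$ for a constant $C = C(c)$. With this bound in hand, the three conditions of Corollary~\ref{cor:higherthres} are routine: $d \gg n^3$ is immediate; the condition $\delta \ll d^{-1/3} n^{-1/2}$ amounts to $C\tau/d \ll d^{-2/3} n^{-1}$, equivalently $d \gg (C\tau n)^3$; and for $n^2 \delta^4 \ll p(1-p) = \Theta(p)$, substituting $\delta^4 \leq C^2\tau^2/d^2$ together with $p \geq 1/n^3$ and $d^2 \gg \tau^6 n^6$ yields $n^2\delta^4/p \leq C^2 n^5\tau^2/d^2 \ll 1/(\tau^4 n) = o_n(1)$.

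The main technical ingredient is the Chernoff estimate used to derive $d\delta^2 = O(\tau)$; the remaining steps are elementary. Once the sparse case is carved off, there is no need for a tighter $p$-dependent bound on $\delta$, which keeps the entire argument short.
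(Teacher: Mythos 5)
Your proposal is correct and follows essentially the same route as the paper: reduce to Corollary \ref{cor:higherthres}, show $d\delta^2 = O_n(\tau)$ from $1-p = \Omega_n(1)$, verify the three hypotheses using $d \gg \tau^3 n^3$, and dispatch the very sparse regime by noting both graphs are empty with high probability. The only difference is that you justify $d\delta^2 = O_n(\tau)$ via a Chernoff lower-tail bound where the paper appeals to the central limit theorem for $\textnormal{Bin}(d,\delta^2)$; your version is, if anything, the cleaner and more rigorous justification of the same estimate.
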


Note that the condition $d \gg \tau^3 n^3$ is more restrictive at larger $\tau$. Qualitatively, this arises because of the relation between $p, \tau$ and $\delta$ in Definition \ref{defn:righigh}. If $p$ remains fixed and $\tau$ increases while satisfying that $\tau \ll d$, then $\delta$ must also increase. This can be seen by expressing the given relation as $1 - p = \bP[\text{Binom}(d, \delta^2) < \tau]$. At larger $\delta$, the conditions for the underlying $\pr{rim}$ and $\pr{poim}$ matrices to converge in Theorem \ref{thm:rim} are then stricter, leading to a more restrictive condition on $d$ in Corollary \ref{cor:morerig}. As we will discuss further in Section \ref{subsec:open}, it is unclear if the conditions in Corollaries \ref{cor:higherthres} and \ref{cor:morerig} are tight. Whether they can be improved or there is a statistic distinguishing between $\pr{rig}_\tau$ and $\mG(n, p)$ when these conditions are violated is a question left open by this work.

\subsection{Random Geometric Graphs on $\mathbb{S}^{d - 1}$}

A geometric graph of a sequence of points in $\mathbb{R}^d$ is defined as follows.

\begin{definition}[Geometric Graph]
Given $n$ points $X_1, X_2, \dots, X_n \in \mathbb{R}^d$ and a threshold $t \in \mathbb{R}$, let $\pr{gg}_{t}(X_1, X_2, \dots, X_n)$ denote the graph $G$ on the vertex set $[n]$ where $\{i, j \} \in E(G)$ if and only if $\langle X_i, X_j \rangle \ge t$.
\end{definition}

Note that when the points $X_1, X_2, \dots, X_n$ are on the sphere $\mathbb{S}^{d - 1}$, the inner product condition $\langle X_i, X_j \rangle \ge t$ is equivalent to $\| X_i - X_j \|_2^2 \le \tau = 2 - 2t$, yielding the standard definition of geometric graphs in which points close in $\ell_2$ distance are joined by an edge. This leads to a natural distribution of random geometric graphs by taking $X_1, X_2, \dots, X_n$ to be sampled independently and uniformly at random from the sphere $\mathbb{S}^{d - 1}$. 

\begin{definition}[Random Geometric Graph]
Let $\pr{rgg}(n, d, p)$ denote the distribution of the graph $\pr{gg}_{t_{p, d}}(X_1, X_2, \dots, X_n)$ where $X_1, X_2, \dots, X_n$ are sampled independently from the Haar measure on $\mathbb{S}^{d - 1}$ and $t_{p, d} \in \mathbb{R}$ is such that $\bP[\langle X_1, X_2 \rangle \ge t_{p, d}] = p$.
\end{definition}

Our main result on random geometric graphs is the following theorem, yielding the first progress towards a conjecture of \cite{bubeck2016testing} that the regime of parameters $(n, d, p)$ in which $\pr{rgg}(n, d, p)$ to $\mG(n, p)$ converge in total variation increases quickly as $p$ decays with $n$. This theorem also tightly recovers the result of \cite{bubeck2016testing} on the total variation convergence of $\pr{rgg}(n, d, p)$ to $\mG(n, p)$ in the dense regime when $p$ is constant.

\begin{theorem} \label{thm:rgg}
Suppose $p = p(n) \in (0, 1/2]$ satisfies that $p = \Omega_n(n^{-2} \log n)$ and
$$d \gg \min\left\{ pn^3 \log p^{-1}, p^2 n^{7/2} (\log n)^3 \sqrt{\log p^{-1}} \right\}$$
where $d$ also satisfies that $d \gg n \log^4 n$. Then it follows that
$$\TV\left( \pr{rgg}(n, d, p), \mG(n, p) \right) \to 0 \quad \text{as } n \to \infty$$
\end{theorem}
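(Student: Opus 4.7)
My plan is to establish two separate upper bounds on $\TV(\pr{rgg}(n,d,p), \mG(n,p))$ and take their minimum, matching the $\min$ in the conclusion. The first bound, aimed at $d \gg pn^3 \log p^{-1}$, refines the Wishart-vs-$\pr{goe}$ coupling of \cite{bubeck2016testing} to exploit the sparsity induced by thresholding at a level with tail probability $p$. The second, aimed at $d \gg p^2 n^{7/2}(\log n)^3 \sqrt{\log p^{-1}}$, proceeds by revealing the latent vectors one at a time and controlling a chi-squared divergence at each step. The auxiliary assumption $d \gg n \log^4 n$ is used for a preliminary reduction from uniform latent vectors on $\mathbb{S}^{d-1}$ to i.i.d.\ Gaussians: write $X_i = Y_i / \|Y_i\|_2$ with $Y_i \sim \N(0, \Id_d)$ and condition on $\|Y_i\|_2/\sqrt{d} \in 1 \pm O(\sqrt{\log n / d})$, which costs negligibly in TV.

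For the first bound, I would replay the density-ratio comparison of \cite{bubeck2016testing} between a Wishart matrix $W = YY^\top/d$ and a moment-matched $\pr{goe}$ matrix $M$, but track the gain available from the fact that we only need the two to agree after thresholding at $t_{p,d}$. After coupling $(W,M)$, the resulting adjacency matrices disagree at an edge $(i,j)$ only when $W_{ij}$ and $M_{ij}$ straddle $t_{p,d}$, an event whose marginal probability is $O(p)$. Propagating this gain through the KL-divergence computation of \cite{bubeck2016testing} -- which yields KL on the order of $n^3/d$ for the unthresholded matrices -- should produce an effective divergence of order $pn^3/d$, with the $\log p^{-1}$ factor arising when translating between Wishart and Gaussian densities in the tail.

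For the second bound, I would define hybrids $\mu_k$ for $k = 0, 1, \dots, n$, where $\mu_k$ has edges inside $\{1,\dots,k\}$ drawn from the random geometric graph on $X_1, \dots, X_k$ and all remaining edges drawn i.i.d.\ $\mathrm{Bern}(p)$, so that $\mu_0 = \mG(n,p)$ and $\mu_n = \pr{rgg}(n,d,p)$. By the triangle inequality it suffices to bound $\sum_k \TV(\mu_k, \mu_{k-1})$. Conditionally on $X_1, \dots, X_{k-1}$, the two measures differ only in the joint law of the $k-1$ edges from vertex $k$ to $\{1,\dots,k-1\}$: under $\mu_{k-1}$ these are i.i.d.\ $\mathrm{Bern}(p)$, while under $\mu_k$ they are $(\mathbbm{1}\{\langle X_j, X_k\rangle \ge t_{p,d}\})_{j<k}$ for an independent $X_k$. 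Each such indicator is marginally $\mathrm{Bern}(p)$ by rotational invariance, so in the Walsh basis of $\{0,1\}^{k-1}$ orthonormal under $\mathrm{Bern}(p)^{k-1}$, all singleton Fourier coefficients of $d\mu_k/d\mu_{k-1}$ vanish. The pair coefficient at $\{j,j'\}$ is the normalized covariance $\mathrm{Cov}(\mathbbm{1}_{jk}, \mathbbm{1}_{j'k})/(p(1-p))$, which reduces to a joint Gaussian tail probability $\bP[\langle X_j, X_k\rangle \ge t_{p,d},\, \langle X_{j'}, X_k\rangle \ge t_{p,d}]$ as a function of $\langle X_j, X_{j'}\rangle$. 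Summing the squared Fourier coefficients, integrating over $(X_1,\dots,X_{k-1})$, applying Pinsker, and summing over $k$ yields the claimed $d \gg p^2 n^{7/2}$ threshold.

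The main obstacle is obtaining a sufficiently sharp $p$-dependence in the previous step. The naive estimate for the joint tail $\bP[\langle X_j, X_k\rangle \ge t_{p,d},\, \langle X_{j'}, X_k\rangle \ge t_{p,d}]$ gives only $p^2$ plus an $O(1/d)$ correlation correction, whereas the target bound $d \gg p^2 n^{7/2}$ requires a correction that itself improves with $p$; this forces a careful quantitative analysis of correlated Gaussian tails together with sharp control of the averages over $(X_j, X_{j'})$. Simultaneously maintaining this sharpness for the higher-order Fourier coefficients -- which involve joint tails of more than two correlated Gaussians -- is where the combinatorial and information-theoretic ingredients highlighted in the introduction must interact most carefully.
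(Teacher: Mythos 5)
Your proposal has genuine gaps in both halves. For the first bound, the mechanism you describe is one the paper explicitly rules out: once $d = O(n^3)$ the Wishart and $\pr{goe}$ ensembles no longer converge in total variation, so there is no coupling under which the matrices $W$ and $M$ agree, let alone one under which $|W_{ij}-M_{ij}|$ is small entrywise with a quantitative bound --- and your argument needs exactly such entrywise closeness so that a disagreement after thresholding forces $W_{ij}$ to ``straddle'' $t_{p,d}$. No such coupling is constructed, and ``propagating this gain through the KL-divergence computation'' is not a step: the computation of \cite{bubeck2016testing} is a global density-ratio calculation on the full matrix laws, and the only general tool relating it to the thresholded graphs is data processing, which yields the unthresholded $n^3/d$ bound and hence requires $d \gg n^3$. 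The paper reaches the $p n^3 \log p^{-1}$ threshold by a different route: tensorizing KL edge-by-edge conditionally on the \emph{entire} rest of the graph (Lemma \ref{lem:kl_tensorization}), reducing to $\bE[(Q-p)^2]$ for $Q = \bP[e_0 \in E(\grgg) \mid \grgg_{\sim e_0}]$, and controlling $Q$ through an explicit Gram--Schmidt coupling that writes $Q_0$ as $\Psi_{d-n+2}$ evaluated at a randomly shifted threshold whose deviation from $t_{p,d}$ is bounded by concentration (Lemmas \ref{lem:rgg_remainder_concentration}--\ref{lem:expected_deviation_bound}).

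For the second bound, your vertex-by-vertex hybrid with a Walsh expansion is a genuinely different decomposition from the paper's (which stays with single-edge conditioning and bounds $\bE|Q-p| = 2p \cdot \tv\left(\left(\nurgg_{\sim e_0}\right)^{+},\nurgg_{\sim e_0}\right)$ via a coupling that re-randomizes only the single vector $X_2$), but the decisive step is exactly the one you leave open. Conditionally on $X_1,\dots,X_{k-1}$, the law of the new row is a mixture over $X_k$ of \emph{deterministic} strings, so summing squared Fourier coefficients is a full second-moment computation: the likelihood-ratio second moment puts weight of order $p^{-N_{11}}(1-p)^{-N_{00}}$ on the event that two independent copies $X_k, X_k'$ have identical neighborhoods, and in the sparse regime these high-order subset terms are precisely where such arguments fail; bounding the pair coefficients sharply does not control them. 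Indeed, if only the pair coefficients mattered, the calculation you sketch (covariance $\approx \rho\, p^2 \log p^{-1}$ with $\bE \rho^2 \approx 1/d$, chain rule over $k$, Pinsker) would give a threshold near $d \gg p^2 n^3 (\log p^{-1})^2$, strictly stronger than both the theorem and the paper's $p^2 n^{7/2}$ bound --- a sign that the neglected higher-order coefficients and the fluctuations of the fixed inner products $\langle X_j, X_{j'}\rangle$ cannot be dropped. As written, neither half closes, so the proposal does not yet constitute a proof.
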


When $p = \Theta(n^{-\alpha})$ where $\alpha \in [0, 1]$, this yields convergence in total variation as long as $d = \tilde{\omega}(\min\{n^{3 - \alpha}, n^{7/2 - 2\alpha} \})$, yielding the first improvement over the $d = \omega(n^3)$ result of \cite{bubeck2016testing}. In particular, when $p = c/n$ where $c > 0$ is a constant, this theorem shows convergence in total variation if $d = \tilde{\omega}(n^{3/2})$. We remark that our argument still yields convergence results if $p = o_n(n^{-2} \log n)$. However, for the sake of maintaining a simple main theorem statement, we relegate these results to the propositions in the subsections of Section \ref{sec:rgg}.

The main ideas in the proof of Theorem \ref{thm:rgg} are as follows. We first reduce bounding the total variation between $\pr{rgg}(n, d, p)$ and $\mG(n, p)$ to bounding the expected value of the $\chi^2$ divergence between the conditional distribution $Q$ of an edge of $\pr{rgg}$ given the rest of the graph and $\text{Bern}(p)$. We then bound this $\chi^2$ divergence using two different coupling arguments. The first directly couples $X_1, X_2, \dots, X_n$ with a set of orthogonal vectors and independent random variables, approximately expressing the conditional distribution $Q$ in terms of one of these variables. This argument yields tight bounds in the regime of dense marginal edge probabilities $p$. The second argument reduces bounding this $\chi^2$ divergence to bounding the total variation between $\pr{rgg}$ with the edge $\{1, 2\}$ marginalized out and $\pr{rgg}$ conditioned on the presence of $\{1, 2\}$. This is then done by directly coupling to $X_1, X_2, \dots, X_n$ to $X_1, X_2', \dots, X_n$ where $X_2'$ is conditioned to be such that the edge $\{1, 2\}$ is present. This argument yields tighter bounds in the regime of sparse marginal edge probabilities $p$.

\subsection{Techniques and Information Inequalities}

In this section, we briefly review the key properties of the $f$-divergences $\TV(\cdot, \cdot), \kl(\cdot \| \cdot)$ and $\chi^2(\cdot, \cdot)$ used in our arguments. Given two measures $\nu$ and $\mu$ on a measurable space $(\mathcal{X}, \mathcal{B})$ where $\nu$ is absolutely continuous with respect to $\mu$, these divergences are given by
\begin{align*}
&\TV\left( \nu, \mu \right) = \frac{1}{2} \cdot \bE_{x \sim \mu} \left| \frac{d\nu}{d\mu}(x) - 1 \right|, \quad \quad \kl\left( \nu \| \mu \right) = \bE_{x \sim \mu} \left[ \frac{d\nu}{d\mu}(x) \cdot \log \frac{d\nu}{d\mu}(x) \right] \quad \text{and} \quad \\
&\chi^2\left( \nu, \mu \right) = \bE_{x \sim \mu} \left( \frac{d\nu}{d\mu}(x) - 1 \right)^2
\end{align*}
where $\tfrac{d\nu}{d\mu} : \mathcal{X} \to \mathbb{R}_{\ge 0}$ denotes the Radon-Nikodym derivative of $\nu$ with respect to $\mu$. A key property of these divergences is that they satisfy data-processing inequalities. Specifically, if $K$ is a Markov transition from the measurable space $(\mathcal{X}, \mathcal{B})$ to another measurable space $(\mathcal{X}', \mathcal{B}')$, then $\TV(K\nu, K\mu) \le \TV(\nu, \mu)$. Analogous inequalities hold for $\kl$ and $\chi^2$. The following inequalities also hold
$$2 \cdot \TV\left( \nu, \mu \right)^2 \le \kl(\nu \| \mu) \le \chi^2(\nu, \mu)$$
Note that the first inequality is Pinsker's inequality and the second is Cauchy-Schwarz. A survey of these inequalities and the relationships between different probability metrics can be found in \cite{gibbs2002choosing}. These divergences have different characterizations and properties that make them amenable to different contexts. Total variation satisfies the triangle inequality and is symmetric, while $\kl(\cdot \| \cdot)$ and $\chi^2(\cdot, \cdot)$ are not. Furthermore, total variation can alternatively be characterized in terms of couplings and differences in event probabilities as
$$\TV\left( \nu, \mu \right) = \sup_{S \in \mathcal{B}} \left| \bP_{\nu}[S] - \bP_{\mu}[S] \right| = \inf_{\rho \in \mathcal{C}} \bP_{(X, Y) \sim \rho}[X \neq Y]$$
where $\mathcal{C}$ is the set of couplings $(X, Y)$ over the product space $\mathcal{X} \times \mathcal{X}$ where $X \sim \nu$ and $Y \sim \mu$. When $\mathcal{X}$ is a product set $\mathcal{X} = \mathcal{S}^k$ and $\mu$ and $\nu$ are product measures with $\nu = \nu_1 \otimes \nu_2 \otimes \cdots \otimes \nu_k$ and $\mu = \mu_1 \otimes \mu_2 \otimes \cdots \otimes \mu_k$, then $\kl$ tensorizes with
$$\kl\left( \nu \| \mu \right) = \sum_{i = 1}^k \kl( \nu_i \| \mu_i)$$
A similar property holds when $\nu$ is not necessarily a product distribution. Given a measure $\nu$ on $(X_1, X_2, \dots, X_k) \in \mathcal{S}^k$, let $\nu_i$ denote the marginal measure of $X_i$ and $\nu_i( \cdot | x_{\sim i})$ denote the conditional measure of $X_i$ given $(X_j : j \neq i) = x_{\sim i}$ where $x_{\sim i} \in \mathcal{S}^{k - 1}$. When $\mu$, but not $\nu$, is a product measure then $\kl$ satisfies the following tensorization inequality, which will be a key part of our argument for random geometric graphs.

\begin{lemma}[See e.g. Lemma 3.4 in \cite{kontorovich2017concentration}]
\label{lem:kl_tensorization}
Suppose $\mu$ is a product measure on $\mathcal{S}^k$ with $\mu = \mu_1\otimes \mu_2 \otimes \dots \otimes \mu_k$, then it holds that
$$\kl(\nu||\mu) \leq \sum_{i=1}^{k}\mathbb{E}_{x  \sim \nu} \left[ \kl\bigr(\nu_i(\cdot|x_{\sim i})\bigr|\bigr|\mu_i\bigr) \right]$$
\end{lemma}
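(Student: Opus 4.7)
The plan is to derive the inequality from two classical ingredients: the chain rule for KL divergence and the convexity of KL in its first argument. I would start by writing the decomposition
$$\nu(x_1,\dots,x_k) = \prod_{i=1}^k \nu_i^{<}(x_i \mid x_{<i}),$$
where $\nu_i^{<}(\cdot \mid x_{<i})$ denotes the conditional law of $X_i$ given $X_1,\dots,X_{i-1}$. Since $\mu$ is a product measure, $\mu_i(\cdot \mid x_{<i}) = \mu_i$, so the standard chain rule yields the identity
$$\kl(\nu \| \mu) = \sum_{i=1}^k \mathbb{E}_{x_{<i} \sim \nu_{<i}} \kl\bigl(\nu_i^{<}(\cdot \mid x_{<i}) \, \big\| \, \mu_i\bigr).$$
This reduces everything to upgrading the conditioning inside the KL term from $x_{<i}$ to $x_{\sim i}$.

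The key observation is that $\nu_i^{<}(\cdot \mid x_{<i})$ can be recovered from the full conditionals $\nu_i(\cdot \mid x_{\sim i})$ by marginalizing over the future coordinates:
$$\nu_i^{<}(\cdot \mid x_{<i}) = \mathbb{E}_{x_{>i} \sim \nu(\cdot \mid x_{<i})}\bigl[ \nu_i(\cdot \mid x_{<i}, x_{>i}) \bigr].$$
Now I would invoke the joint convexity (in particular, convexity in the first slot) of $\kl(\cdot \| \mu_i)$ together with Jensen's inequality to obtain
$$\kl\bigl(\nu_i^{<}(\cdot \mid x_{<i}) \, \big\| \, \mu_i\bigr) \le \mathbb{E}_{x_{>i} \sim \nu(\cdot \mid x_{<i})} \kl\bigl(\nu_i(\cdot \mid x_{\sim i}) \, \big\| \, \mu_i\bigr).$$
Taking expectation over $x_{<i} \sim \nu_{<i}$ and recognizing that the nested expectation collapses to expectation over the full joint law $\nu$ gives
$$\mathbb{E}_{x_{<i} \sim \nu_{<i}} \kl\bigl(\nu_i^{<}(\cdot \mid x_{<i}) \, \big\| \, \mu_i\bigr) \le \mathbb{E}_{x \sim \nu} \kl\bigl(\nu_i(\cdot \mid x_{\sim i}) \, \big\| \, \mu_i\bigr).$$

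Summing over $i = 1, \dots, k$ then produces the claimed bound. There is no real obstacle here: each step is a one-line application of either the chain rule, the product structure of $\mu$, or convexity of KL. The only thing to verify carefully is that the identity $\nu_i^{<}(\cdot \mid x_{<i}) = \mathbb{E}_{x_{>i}}[\nu_i(\cdot \mid x_{\sim i})]$ holds as a mixture representation in the right sense, since this is exactly what licenses the application of Jensen's inequality to pass from mixture-of-conditionals to conditional-of-mixture inside the KL.
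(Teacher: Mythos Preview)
Your proof is correct. The paper does not actually supply a proof of this lemma; it is stated with a citation to \cite{kontorovich2017concentration} and used as a black box, so there is no ``paper's own proof'' to compare against.

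Your argument is the standard one: chain rule for $\kl$ (valid here because $\mu$ is a product, so the reference conditionals collapse to the marginals $\mu_i$), followed by the tower-property identity expressing $\nu_i^{<}(\cdot\mid x_{<i})$ as a mixture of the full conditionals $\nu_i(\cdot\mid x_{\sim i})$, and finally convexity of $\kl$ in the first argument. The one point you flagged as needing care---that $\nu_i^{<}(\cdot\mid x_{<i}) = \mathbb{E}_{x_{>i}\sim\nu(\cdot\mid x_{<i})}[\nu_i(\cdot\mid x_{\sim i})]$---is indeed just the tower property $\mathbb{E}[\,\cdot\mid X_{<i}] = \mathbb{E}[\mathbb{E}[\,\cdot\mid X_{<i},X_{>i}]\mid X_{<i}]$ applied to indicators. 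The final collapse of the nested expectation to $\mathbb{E}_{x\sim\nu}$ is fine since the integrand depends only on $x_{\sim i}$.
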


The divergences also have important properties related to mixtures. Suppose that $\nu = \bE_{s \sim \rho} \nu_s$ where $\rho$ is a distribution on elements of a set $T$ and $\{ \nu_s : s \in T \}$ is a collection of measures on $(\mathcal{X}, \mathcal{B})$ absolutely continuous with respect to $\mu$. Convexity of the divergences yield that $\TV(\nu, \mu) \le \bE_{s \sim \rho} [\TV(\nu_s, \mu)]$ and analogous inequalities hold for $\kl$ and $\chi^2$. A particularly useful property of $\chi^2$ divergence follows by applying Fubini's theorem to mixtures $\nu = \bE_{s \sim \rho} \nu_s$ as follows:
$$1 + \chi^2(\nu, \mu) = \bE_{x \sim \mu} \left[ \bE_{s \sim \rho} \left[ \frac{d\nu_s}{d\mu}(x) \right]^2 \right] = \bE_{(s, s') \sim \rho \otimes \rho} \left[ \bE_{x \sim \mu} \left[ \frac{d\nu_s}{d\mu}(x) \cdot \frac{d\nu_{s'}}{d\mu}(x) \right] \right]$$
This is the main idea behind the second moment method and will be crucial in our arguments for random intersection graphs and matrices. Furthermore, if $\mu = \bE_{s \sim \rho'} \mu_s$, then we have the following conditioning property of total variation
$$\TV(\nu, \mu) \le \TV(\rho, \rho') + \bE_{s \sim \rho} [\TV(\nu_s, \mu_s)]$$
If $E$ is an event on $\mathcal{X}$ and $\nu_E$ is the distribution of $\nu$ given $E$, then we also have the conditioning property that $\TV(\nu_E, \nu) = \bP_\nu[E^c]$. A final useful property is that if the Radon-Nikodym derivative is controlled, then all of these divergences can be bounded from above in terms of one another. For example, if $|\tfrac{d\nu}{d\mu} - 1|$ is upper bounded by $C_1$ on an event $E$ and is upper bounded by $C_2$ in general, then it holds that
$$\chi^2(\nu, \mu) \le 2C_1 \cdot \TV(\nu, \mu) + C_2^2 \cdot \bP_\mu[E^c]$$
This allows $\chi^2$ to be upper bounded in terms of both concentration of $\tfrac{d\nu}{d\mu}$ and a coupling of $\nu$ and $\mu$, which will be essential to our arguments for random geometric graphs in the sparse case.

\subsection{Open Problems and Conjectures}
\label{subsec:open}

As previously mentioned, it is unclear if the conditions in Corollaries \ref{cor:higherthres} and \ref{cor:morerig} are tight. Given the similarities between the proofs of Theorems \ref{thm:introdenserig} and \ref{thm:rim}, and the fact that triangles and signed triangles certify that the conditions in Theorem \ref{thm:introdenserig} are tight in certain regimes of $p$, it is possible that triangles and signed triangle identify the optimal conditions on $d$ needed for the convergence in Corollary \ref{cor:morerig}. However, analyzing these statistics for ordinary $\pr{rig}$ to prove the relatively simple Theorem \ref{thm:rig-triangles} is even computationally involved. Carrying out similar computations for $\pr{rig}_\tau$ seems as though it would be substantially more difficult.

Another outstanding problem related to Corollary \ref{cor:morerig} concerns the convergence of $\tau$-random intersection graphs and $\mG(n, p)$ in the large $\tau$ regime. A limiting case of our argument is when $\tau$ is set to be $\tau = \Theta_n(d^{1/3 - \kappa})$, in which case our condition reduces to $d \gg n^{1/\kappa}$. In particular, our argument fails to show any convergence if $\tau = \Omega_n(d^{1/3})$. This leads to the following question left open by this work.

\begin{question}
For what parameters $(\tau, n, d, p)$ do $\pr{rig}_\tau(n, d, p)$ and $\mG(n, p)$ converge in total variation if $\tau = \Omega_n(d^{1/3})$?
\end{question}

We suspect that improving on the condition $d \gg \tau^3 n^3$ in Corollary \ref{cor:morerig} would require a substantially different argument for bounding the total variation between $\pr{rig}_\tau$ and $\mG(n, p)$. We conjecture that for $\tau = d/4 + O_n(\sqrt{d \log n})$ and $\delta = 1/2$, $\tau$-random intersection graphs should behave approximately like random geometric graphs on the sphere $\mathbb{S}^{d - 1}$. A direction for future work is to directly compare these two models in total variation distance, showing that they can converge to one another even when they are both far from Erd\H{o}s-R\'{e}nyi. Another open problem is the optimal dependence on $p$ in the phase transition for detecting geometry in random geometric graphs on the sphere $\mathbb{S}^{d - 1}$. In particular, the following conjecture of \cite{bubeck2016testing} about this dependence when $p = \Theta(1/n)$ remains open.

\begin{conjecture}[\cite{bubeck2016testing}]\label{conj:BDER}
If $c > 0$ is a constant and $d \gg \log^3 n$, then it follows that
$$\TV\left( \pr{rgg}(n, d, c/n), \mG(n, c/n) \right) \to 0 \quad \text{as } n \to \infty$$
\end{conjecture}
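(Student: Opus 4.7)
The plan is to push the conditional-KL approach used for Theorem \ref{thm:rgg} all the way down to the extreme sparsity $p = c/n$. By Pinsker's inequality combined with the chain-rule bound of Lemma \ref{lem:kl_tensorization} applied with product target $\mu = \mG(n, c/n)$,
\begin{equation*}
2 \cdot \TV\bigl(\pr{rgg}(n,d,c/n),\, \mG(n,c/n)\bigr)^2 \;\le\; \sum_{e \in \binom{[n]}{2}} \bE_{G \sim \pr{rgg}} \bigl[\kl\bigl(\pr{rgg}_e(\cdot \mid G_{\sim e}) \,\big\|\, \ber(c/n)\bigr)\bigr].
\end{equation*}
It thus suffices to show each of the $\binom{n}{2}$ summands is $o(n^{-2})$, since this yields total variation $o(n^{-1/2}) \to 0$. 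Fixing $e = \{1,2\}$, the conditional distribution $\pr{rgg}_e(\cdot \mid G_{\sim e})$ is the law of $\ind\{\langle X_1, X_2 \rangle \ge t_{p,d}\}$ under the posterior of $(X_1, X_2)$ given $G_{\sim e}$. By rotational invariance and Bayes' rule, this posterior has density proportional to a product over $k \in \{3, \dots, n\}$ of factors depending only on whether $k$ appears as a neighbor of $1$, of $2$, or both in $G_{\sim e}$.

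The second step is to decompose the posterior according to the neighborhood profile. Vertices $k$ neighboring only $1$ contribute a factor depending only on $X_1$ and integrate out against the Haar marginal of $X_2$; similarly for those neighboring only $2$. The only cross-terms coupling $X_1$ and $X_2$ come from common neighbors in $G_{\sim e}$. At $p = c/n$ the number of common neighbors is a sum of $n-2$ indicators each of mean $O((c/n)^2)$, hence Poisson-like with parameter $O(1/n)$, so it vanishes with probability $1 - O(1/n)$. On this typical event the posterior factorizes and the conditional edge probability equals its unconditional value to the required accuracy. To handle the remaining events one must show that (i) the probability of $\ell$ common neighbors decays at least like $(C/n)^\ell$ uniformly in the surrounding profile, and (ii) conditional on $\ell$ common neighbors, the resulting spherical cap-intersection integral on $\mathbb{S}^{d-1}$ inflates the edge probability by at most a bounded factor on average, with sharper-than-Gaussian control in the relevant moderate-deviation regime.

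The principal obstacle is the cap geometry in dimension $d \asymp \log^3 n$. The threshold $t_{p,d}$ scales like $\sqrt{(2 \log n)/d} = \Theta(1/\log n)$, which sits roughly $\sqrt{\log n}$ standard deviations into the tail of $\langle X_1, X_2\rangle$. This places the relevant events inside the moderate-deviation regime where neither the high-dimensional Gaussian approximation underlying the Wishart and $\pr{goe}$ comparison in \cite{bubeck2016testing} nor fixed-dimensional uniform-measure estimates apply directly. The argument requires sharp asymptotics for the three-point correlation $\bP[\langle X_1, X_k\rangle \ge t_{p,d},\, \langle X_2, X_k\rangle \ge t_{p,d} \mid \langle X_1, X_2\rangle = u]$ as a function of $u$, tight to the level of multiplicative $1+o(1)$ corrections. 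A secondary obstacle is that any bound obtained by summing worst-case contributions from every neighborhood profile will saturate well before $d = \mathrm{polylog}(n)$, since moderate-deviation corrections to Gaussian tails accumulate polynomially in $n$. I suspect that closing the remaining $n^{3/2}$ gap relative to Theorem \ref{thm:rgg} will require a genuinely new coupling rather than a sharper calculation, perhaps constructing $\pr{rgg}(n,d,c/n)$ from a Poissonized mixture of local spherical-cap clusters directly coupled to the $\mG(n,c/n)$ edge process, analogous in spirit to the Poissonized clique decomposition used for $\pr{rig}$.
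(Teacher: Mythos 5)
The statement you are addressing is Conjecture~\ref{conj:BDER}, which is an \emph{open conjecture} in this paper: the authors prove nothing stronger than Theorem~\ref{thm:rgg}, which at $p = c/n$ only gives convergence for $d = \tilde{\omega}(n^{3/2})$, and they explicitly note that even establishing convergence at some $d < n$ appears out of reach of all known techniques. Your proposal does not close this gap; it is a plan whose first step (Pinsker plus the tensorization of Lemma~\ref{lem:kl_tensorization}, reducing to a per-edge conditional bound) is exactly the route the paper already takes, and whose subsequent steps are asserted rather than proved. Concretely, requiring each summand to be $o(n^{-2})$ means $\bE[(Q-p)^2] = o(p\, n^{-2})$, i.e.\ the conditional edge probability $Q$ must match $p = c/n$ to \emph{relative} accuracy $o(n^{-1/2})$. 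The claim that ``on the typical event the posterior factorizes and the conditional edge probability equals its unconditional value to the required accuracy'' is precisely the missing content: the posterior of $(X_1, X_2)$ is tilted not only by common neighbors but by all $\sim 2n$ adjacency/non-adjacency constraints with the remaining vertices, and controlling that tilt to relative accuracy $n^{-1/2}$ is where the paper's own Lemmas~\ref{lem:rgg_remainder_concentration}--\ref{lem:expected_deviation_bound} produce fluctuations of order $p\sqrt{n\log p^{-1}/d}\,\mathrm{polylog}(n)$, forcing $d$ polynomially large in $n$. Nothing in your sketch shows how to beat this; your items (i) and (ii) about common-neighbor profiles and cap-intersection integrals are stated as requirements, not established, and your own closing sentence concedes that a genuinely new coupling is likely needed.

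There is also a structural obstacle your plan does not engage with: at $d = \Theta(\log^3 n) \ll n$ the latent vectors $X_1,\dots,X_n$ are linearly dependent, so the Gram--Schmidt coupling and every ``near-orthogonality'' heuristic underlying the factorization picture (and underlying all prior total-variation upper bounds, including the Wishart/\pr{goe} comparison) are unavailable. Moreover, the paper itself suspects that the Pinsker/tensorization/Jensen steps you rely on are intrinsically lossy in this regime, so even a perfect execution of your per-edge program may be incapable of reaching $d = \mathrm{polylog}(n)$. In short: the conjecture remains open, the paper contains no proof for you to have matched, and your proposal has genuine, acknowledged gaps at the steps that carry all of the difficulty.
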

 
 Even showing total variation convergence at any $p = \Omega_n(1/n)$ for some $d < n$ seems like a technically challenging open problem, given that all known techniques require that the Wishart matrix $W_{ij} = \langle X_i, X_j\rangle$ of the latent points $X_1, X_2, \dots, X_n$ is non-singular. A first question to answer is as follows.

\begin{question}
Is there a parameter $d = d(n) \ll n$ such that $\pr{rgg}(n, d, c/n)$ and $\mG(n, c/n)$ converge in total variation for any constant $c > 0$?
\end{question}

\section{Random Intersection Graphs}
\label{sec:rig}

The purpose of this section is to identify the regime of parameters $(n, d, p)$ in which $\pr{rig}(n, d, p)$ converges in total variation to $\mG(n, p)$. The following theorem identifies the regime in which the two random graphs do not converge for the main edge densities $p$ of interest. This theorem is a restatement of Theorem \ref{thm:rig-triangles} from the previous section. Its proof involves analyzing the triangle and signed triangle counts in $\pr{rig}(n, d, p)$. Further discussion of this result and its proof are in Section \ref{subsec:signedtriangles} and Appendix \ref{subsec:rig-triangles}. 

\begin{reptheorem}{thm:rig-triangles}
Suppose $p = p(n)$ satisfies that $1 - p = \Omega(1)$ and either $p = \Theta(1)$ or $p = \Theta(1/n)$. It follows that if $n^2 \ll d \ll n^3$, then
$$\TV\left( \pr{rig}(n, d, p), \mG(n, p) \right) \to 1 \quad \text{as } n \to \infty$$
\end{reptheorem}

The main purpose of this and the next two subsections is to prove the following theorem, identifying conditions under which the two graph distributions converge in total variation. This theorem is Theorem \ref{thm:introdenserig} in the case where $1 - p = \Omega_n(n^{-1/2})$, which captures the main regime of interest. This theorem resolves an open problem in \cite{kim2018total}, \cite{fill2000random} and \cite{rybarczyk2011equivalence}. Bubeck, Racz and Richey independently proved the same result through alternate techniques simultaneous to this work \cite{bubeck2019geometry}. Subsequent sections will also introduce the techniques we use to show the stronger equivalence of random intersection matrices and random Poisson matrices with i.i.d. entries in the following section. 

\begin{theorem} \label{thm:denserig}
Suppose $p = p(n) \in (0, 1)$ satisfies $1 - p = \Omega_n(n^{-1/2})$ and $d$ satisfies that
$$d \gg n^3 \left( 1 + \log(1 - p)^{-1} \right)^3$$
Then it follows that
$$\TV\left( \pr{rig}(n, d, p), \mG(n, p) \right) \to 0 \quad \text{as } n \to \infty$$
\end{theorem}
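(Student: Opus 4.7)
My plan follows the three-step recipe signaled in the introduction: a Poissonization, a single-clique second-moment bound, and an assembly via the Poisson mixture. \textbf{Poissonization.} For $k \in [d]$, let $T_k := \{i \in [n] : k \in S_i\}$, so the $T_k$'s are i.i.d.\ (each vertex included with probability $\delta$) and $G = \pr{rig}(n,d,p)$ is the union of the cliques on the $T_k$'s. Replacing $d$ by $D \sim \mathrm{Poi}(d)$ costs $O(1/\sqrt{d})$ in TV. Under the Poissonized law, for every $A \subseteq [n]$ the indicator $Y_A := \mathbbm{1}[\exists k,\, T_k = A]$ is an independent Bernoulli with parameter $r_A := 1 - \exp(-d\delta^{|A|}(1-\delta)^{n-|A|})$: subsets of size $\leq 1$ contribute no edges, size-$2$ subsets together form an Erd\H{o}s--R\'{e}nyi graph $\mG(n, p_2)$ with $p_2$ close to $p$, and size-$s$ subsets with $s \geq 3$ each plant a random $s$-clique.

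\textbf{Single-clique $\chi^2$ bound.} Let $\nu_K$ be the law of $\mG(n, q) \cup K_K$, where $K_K$ is the clique on the fixed $s$-subset $K$, and let $\nu := \mathbb{E}_K \nu_K$ with $K$ uniform over $s$-subsets. Choose $q' > q$ so that $\nu$ and $\mG(n, q')$ have matching marginal edge probability. A direct expansion of $1 + \chi^2(\nu, \mG(n, q'))$ via two independent copies $K, K'$, together with $q' - q \asymp \binom{s}{2}/\binom{n}{2} \cdot (1-q)$ and a Taylor expansion in this small parameter, reduces to
\[
1 + \chi^2(\nu, \mG(n, q')) \leq \mathbb{E}_{K, K'}\!\left[\, q^{-\binom{|K \cap K'|}{2}}\,\right] (1 + o(1)),
\]
with $|K \cap K'|$ hypergeometric. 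The edge-density matching is essential: comparing to $\mG(n, q)$ directly would leave the raw triangle variance of order $n^4 q^5$ and yield only the weaker $d \gg n^4$ bound of \cite{kim2018total}, whereas the matched $q'$ cancels the first-moment contribution and leaves only the signed-triangle variance, smaller by a factor $n$.

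\textbf{Assembly.} The Poisson mixture structure gives a clean identity $d\nu^{(s)}/d\mu_0 = \exp(\lambda_s(\phi_s(G) - 1))$, where $\nu^{(s)}$ is the size-$s$ planted-clique contribution over the baseline $\mu_0 := \mG(n, q)$, $\lambda_s := \binom{n}{s} r_s$ is the Poisson intensity, and $\phi_s(G)$ is the normalized count of $s$-cliques in $G$. Taylor-expanding the exponential and exploiting the matched edge density (so that the first-order term of $\phi_s - 1$ vanishes) bounds $\chi^2(\nu^{(s)}, \mG(n, q'))$ by $\lambda_s^2$ times the variance of the signed-$s$-clique statistic. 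The $s = 3$ term is dominant; using $d\delta^2 \asymp \log(1-p)^{-1}$ from the definition of $p$, this evaluates to $\chi^2 \asymp n^3 \log^3(1-p)^{-1} / d$ in the main regime, which is $o(1)$ under the hypothesis $d \gg n^3(1 + \log(1-p)^{-1})^3$. The assumption $1-p = \Omega(n^{-1/2})$ rules out boundary effects in the Taylor expansion where $q$ is close to $1$. TV convergence then follows by Pinsker.

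The main obstacle is the sharp version of the single-clique $\chi^2$ bound: without matching the edge density one loses a factor of $n$ and obtains only the $d \gg n^4$ threshold, and it is only after subtracting off the first moment via the choice of $q'$ that the signed-triangle variance (matching the converse in Theorem \ref{thm:rig-triangles}) emerges. The Poisson-mixture identity in the Assembly step is what lets this single-clique bound propagate cleanly to the full Poissonized mixture over all sizes $s \geq 3$; the remaining manipulations are bookkeeping.
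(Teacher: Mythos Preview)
Your Poissonization and single-clique second-moment bound are the right ingredients and match the paper's Lemma~\ref{lem:pcdist}: you correctly identify that matching the edge density to $q'$ is what cancels the first-moment term and drops the threshold from $n^4$ to $n^3$. One minor correction: the cost of Poissonization is not $O(1/\sqrt d)$ but $O(n^2\delta^2)$, obtained by Poissonizing only the number of elements landing in at least two sets (this is $\mathrm{Bin}(d,1-p_0-p_1)$ versus $\mathrm{Poi}(d(1-p_0-p_1))$, with TV at most $1-p_0-p_1$); your stated bound is false since $\TV(\mathrm{Poi}(d),\delta_d)\to 1$.

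The real gap is in your Assembly step. The identity $d\nu^{(s)}/d\mu_0=\exp\bigl(\lambda_s(\phi_s(G)-1)\bigr)$ is simply false for the union-of-cliques model. For a single planted $s$-clique $K$ the likelihood ratio against $\mG(n,q)$ is $\prod_{e\in\binom{K}{2}}q^{-1}\mathbbm{1}[e\in G]$, but for several cliques $K_1,\dots,K_N$ the ratio is $\prod_{e\in\bigcup_i\binom{K_i}{2}}q^{-1}\mathbbm{1}[e\in G]$, which is \emph{not} the product of the individual ratios whenever two cliques share an edge. Consequently the Poisson mixture does not exponentiate; you can check directly that your proposed density does not even integrate to $1$ under $\mu_0$. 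At best one has the pointwise inequality $d\nu^{(s)}/d\mu_0\le \exp\bigl(\lambda_s(\phi_s-1)\bigr)$, but this is not enough to bound $\chi^2$. And even granting the formula, ``Taylor-expanding the exponential'' to extract $\lambda_s^2\,\mathrm{Var}[\phi_s]$ is not bookkeeping: $\phi_s$ takes values up to $q^{-\binom{s}{2}}$, so $e^{\lambda_s\phi_s}$ has no a priori moment control, and the higher-order terms in the expansion are exactly what you are trying to show are small.

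The paper's assembly avoids this entirely. It stays in total variation and plants the cliques \emph{one at a time}: by data-processing, $\TV\bigl(\text{plant }m\text{ cliques on }\mG(n,q),\ \mG(n,q^{(m)})\bigr)$ is at most the single-clique bound $E_s$ plus $\TV\bigl(\text{plant }m-1\text{ cliques},\ \mG(n,q^{(m-1)})\bigr)$, and induction gives $\sum_s m_s E_s$. After conditioning the Poisson counts $M_s$ into a high-probability window, this sum is $O\bigl(\sum_s dp_s\cdot E_s\bigr)$, whose dominant $s=3$ term is $O(n^{3/2}L^{3/2}/\sqrt d)$. One is then left with a mixture of Erd\H{o}s--R\'enyi graphs at slightly varying densities, which is compared to $\mG(n,p)$ by a direct binomial edge-count estimate rather than Pinsker. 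The inductive TV argument is what replaces your exponential identity, and it is both simpler and actually correct.
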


The proof of this theorem will be extended in Section \ref{sec:rim} to show that the entire random matrix of intersection sizes $|S_i \cap S_j|$ converges to a Poisson matrix with independent entries. This more general result will then imply an analogue of Theorem \ref{thm:denserig} for random intersection graphs at higher thresholds $\tau$. It will also complete the proof of Theorem \ref{thm:introdenserig}. Throughout the proof of this Theorem \ref{thm:denserig}, let $L = 1 + \log(1 - p)^{-1}$ and note that $1 < L = O_n(\log n)$. The proof approximately proceeds as follows:
\begin{enumerate}
\item The distribution $\pr{rig}(n, d, p)$ can be viewed as a union of cliques, each corresponding to an element of $[d]$. A Poissonization argument yields that $G \sim \pr{rig}(n, d, p)$ can approximately be generated as the union of a Poisson number of independently chosen cliques of small sizes.
\item We show that planting a triangle in an Erd\H{o}s-R\'{e}nyi graph of edge density $p$ yields a graph within total variation $O_n(p^{-3/2} n^{-3/2})$ of an Erd\H{o}s-R\'{e}nyi graph of an appropriately updated edge density. The expected number of triangles planted in the process described in the previous step is $O_n(n^3 p^{3/2} L^{3/2} d^{-1/2})$.
\item An induction now shows that adding in these triangles induces a $O_n(n^{3/2} L^{3/2} d^{-1/2}) = o_n(1)$ total variation distance from a mixture of Erd\H{o}s-R\'{e}nyi graphs. A similar argument applies to larger cliques, whose contribution to this total variation distance ends up being smaller.
\item Directly comparing their edge counts shows that the resulting mixture of Erd\H{o}s-R\'{e}nyi graphs is close to $\mG(n, p)$, and the theorem then follows from the triangle inequality.
\end{enumerate}
However, making this argument rigorous requires a number of additional technical steps. In the next section, we establish the bound in Step 2 above -- we obtain tight bounds on the optimal error probability of testing for a small planted clique in a density-corrected Erd\H{o}s-R\'{e}nyi graph. This will serve as a key technical component in our proof of Theorem \ref{thm:denserig}. We remark that the fact that the triangles in Step 3 have the largest contribution to the resulting total variation distance intuitively is consistent with the fact that analyzing triangle and signed triangle counts suffices to prove Theorem \ref{thm:rig-triangles}.

\subsection{Testing for Planted Cliques in Density-Corrected Random Graphs}
\label{subsec:plantingcliques}

We first observe that each element of $[d]$ forces a clique on the vertices whose sets it is a member of. Thus an alternative view of a random intersection graph is as a union of $d$ randomly chosen cliques. A precise description of this union and the numbers of cliques of each size is given later in this section. It is natural to consider whether forcing a randomly chosen clique on an Erd\H{o}s-R\'{e}nyi random graph yields a graph distribution close to some other Erd\H{o}s-R\'{e}nyi random graph. Obtaining a tight bound on the total variation distance between the resulting distributions is the content of the next lemma. Let $\mG(n, t, q)$ denote the planted clique distribution, generated by:
\begin{enumerate}
\item Sampling a graph $G \sim \mG(n, q)$, and then
\item Choosing $t$ vertices uniformly at random from $[n]$ and inserting a clique on these vertices.
\end{enumerate}
A key component of our method is the following precise bound on the total variation between $\mG(n, t, q)$ and the Erd\H{o}s-R\'{e}nyi random graph with matching edge density, which we obtain by a careful estimate of the corresponding $\chi^2$ divergence between these two distributions through the second moment method.

\begin{lemma} \label{lem:pcdist}
For any constant $t \ge 2$ and $q = q(n) \in \left(t^4 n^{-2}, 1 \right)$, it holds that
\begin{align*}
&\TV\left( \mG(n, t, q), \mG\left(n, q + (1 - q) \binom{t}{2} \binom{n}{2}^{-1} \right) \right) = O_n\left(q^{-1/2} n^{-3/2} + q^{-1} n^{-2} + \max_{2 < k \le t} q^{-\frac{1}{2}\binom{k}{2}} n^{-k/2} \right)
\end{align*}
\end{lemma}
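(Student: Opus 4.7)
The plan is to control $\chi^2(\mG(n,t,q), \mG(n,q'))$ by the second moment method, where we abbreviate $q' := q + (1-q)\binom{t}{2}\binom{n}{2}^{-1}$, and then conclude using $\TV \le \sqrt{\chi^2/2}$. Viewing $\mG(n,t,q)$ as the mixture $\mathbb{E}_S[\nu_S]$ over the uniformly random clique location $S \in \binom{[n]}{t}$ and applying Fubini yields
\[
1 + \chi^2 = \mathbb{E}_{S, S'}\,\mathbb{E}_{G \sim \mG(n,q')}\!\left[\frac{d\nu_S}{d\mG(n,q')}(G) \cdot \frac{d\nu_{S'}}{d\mG(n,q')}(G)\right].
\]
Because $\nu_S$ forces the edges of $\binom{S}{2}$ and leaves the remaining edges i.i.d.\ $\bern(q)$, while the reference is i.i.d.\ $\bern(q')$, the inner expectation factors over edges. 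Each edge $e$ contributes one of three values, depending on whether $e \in \binom{S}{2} \cap \binom{S'}{2}$, $e \in \binom{S}{2} \triangle \binom{S'}{2}$, or $e$ lies in neither.

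The counts of edges in these three cases depend only on $I := |S \cap S'|$, reducing the computation to
\[
1 + \chi^2 = \mathbb{E}_I[\Lambda(I)], \qquad \Lambda(I) := (q')^{-\binom{I}{2}} (q/q')^{2\binom{t}{2} - 2\binom{I}{2}} \alpha^{\binom{n}{2} - 2\binom{t}{2} + \binom{I}{2}},
\]
where $\alpha := q^2/q' + (1-q)^2/(1-q')$ and $I$ is hypergeometric with $\mathbb{P}[I = k] = \Theta_n(n^{-k})$ for constant $t$. Factor as $\Lambda(I) = \Lambda(0) (\alpha q'/q^2)^{\binom{I}{2}}$. Under the hypothesis $q > t^4/n^2$, the parameter $\eta := \binom{t}{2}\binom{n}{2}^{-1}$ is much smaller than $q$, so $\alpha q'/q^2 = 1/q + O_n(\eta/q^2)$ and $\Lambda(0)$ is close to $1$. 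Each term with $k \ge 3$ then contributes at worst $\mathbb{P}[I = k] \cdot q^{-\binom{k}{2}} \lesssim q^{-\binom{k}{2}} n^{-k}$ to $\chi^2$, producing the $\max_{k > 2}$ term of the stated bound after Cauchy--Schwarz.

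The crux of the argument is a cancellation at $k = 2$ between $\Lambda(0) - 1$ and the $k = 2$ contribution of $\mathbb{E}_I[\Lambda(I)] - 1$. A second-order expansion gives
\[
\Lambda(0) - 1 = -\frac{(1-q)\binom{t}{2}^2}{\binom{n}{2}\,q} + O_n\!\left(\frac{1}{(qn^2)^2}\right), \qquad \mathbb{P}[I = 2]\bigl(\alpha q'/q^2 - 1\bigr) = +\frac{(1-q)\binom{t}{2}^2}{\binom{n}{2}\,q} + O_n\!\left(\frac{\eta}{q^2 n^2}\right),
\]
so the two leading $\Theta(1/(qn^2))$ contributions cancel exactly. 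The remaining $\chi^2$ residual is driven by (i) the next order in these expansions, of size $(qn^2)^{-2}$, which after square-rooting yields the $q^{-1} n^{-2}$ term of the bound, and (ii) the linear-in-$(\alpha q'/q^2 - 1)$ piece of $\mathbb{P}[I = 3]\bigl((\alpha q'/q^2)^3 - 1\bigr) \lesssim 1/(qn^3)$ (when $t \ge 3$), which yields the $q^{-1/2} n^{-3/2}$ term.

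The hard part is carrying out these expansions uniformly over the full range $q \in (t^4/n^2, 1)$. Near the lower endpoint, the natural small parameter $x := (1-q)/(\binom{n}{2}q)$ is only $\Theta(t^{-4})$, so Taylor remainders must be controlled via explicit multiplicative inequalities rather than informal asymptotics, and it is essential to track the $(q/q')^{2\binom{t}{2}}$ and $\alpha^{\binom{n}{2}}$ factors of $\Lambda(0)$ to the same precision as $\mathbb{P}[I = 2](\alpha q'/q^2 - 1)$. The hypothesis $q > t^4/n^2$ is precisely what keeps the post-cancellation $k = 2$ residual below the claimed bound and prevents $(\alpha q'/q^2)^{\binom{k}{2}}$ from overwhelming the hypergeometric probabilities $\mathbb{P}[I = k]$ for larger $k$.
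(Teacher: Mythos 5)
Your proposal follows essentially the same route as the paper's proof: the second-moment $\chi^2$ computation over the mixture of clique locations, factorization over edges, reduction to the hypergeometric overlap $|S \cap S'|$, cancellation of the leading $\Theta\left(\tfrac{1}{qn^2}\right)$ contributions between the overlap-$\le 1$ and overlap-$2$ terms, and Cauchy--Schwarz to convert to total variation. The only (harmless) bookkeeping discrepancy is that in the paper the $q^{-1/2}n^{-3/2}$ term comes from the $O_n(n^{-3})$ error in approximating $\bP[|S\cap S'|=2]$ by $\binom{t}{2}^2\binom{n}{2}^{-1}$ multiplied by $(1-q)/q$ --- a contribution your displayed error term $O_n(\eta/(q^2n^2))$ for $\bP[I=2](\alpha q'/q^2-1)$ omits --- rather than from a piece of the $k=3$ term, but this residual is absorbed by the same term of the stated bound, so the argument goes through as you outline it.
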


\begin{proof}
Let $\tau = (1 - q) \binom{t}{2} \binom{n}{2}^{-1}$ and $p = q + \tau$. Observe that $p - q = \tau = O_n(n^{-2})$ and hence that $p = \Theta_n(q)$ since $q > t^4 n^{-2}$. Furthermore, it follows that
\begin{equation} \label{eq1}
1 < \frac{1 - q}{1 - p} = \frac{1}{1 - \binom{t}{2} \binom{n}{2}^{-1}} = 1 + O_n\left(n^{-2}\right)
\end{equation}
and thus $1 - p = \Theta_n(1 - q)$. Given a set $S \subseteq [n]$, let $\mG(n, S, q)$ denote the graph distribution formed by planting a clique on the vertices of $S$ and including each other edge independently with probability $q$. Let $\mathcal{U}_t$ denote the uniform distribution on the size $t$ subsets of $[n]$ and note that $\mG(n, t, q) =_d \bE_{S \sim \mathcal{U}_t} \mG(n, S, q)$. Let $\mathcal{S}_n$ denote the set of all simple undirected graphs on the labelled vertex set $[n]$ and observe that
\allowdisplaybreaks
\begin{align*} 
1 + \chi^2(\mG(n, t, q), \mG(n, p)) &= \sum_{G \in \mathcal{S}_n} \frac{\bP[\mG(n, t, q) = G]^2}{\bP[\mG(n, p) = G]} = \sum_{G \in \mathcal{S}_n} \frac{\bE_{S \sim \mathcal{U}_t}\left[ \bP[\mG(n, S, q) = G] \right]^2}{\bP[\mG(n, p) = G]} \\
&\quad \quad = \bE_{S, T \sim \mathcal{U}_t} \left[\sum_{G \in \mathcal{S}_n} \frac{\bP[\mG(n, S, q) = G] \cdot \bP[\mG(n, T, q) = G]}{\bP[\mG(n, p) = G]} \right]
\end{align*}
where the last equality holds by linearity of expectation and because $S$ and $T$ are independent. Since $\mG(n, S, q)$, $\mG(n, T, q)$ and $\mG(n, p)$ are product distributions, the above quantity is equal to
\allowdisplaybreaks
\begin{align*} 
&\bE_{S, T \sim \mathcal{U}_t} \left[ \prod_{e \in \binom{[n]}{2}} \left( \frac{\bP[e \not \in E(\mG(n, S, q))] \cdot \bP[e \not \in E(\mG(n, T, q))]}{\bP[e \not \in E(\mG(n, p))]} \right. \right. \\
&\quad \quad \quad \quad \quad \quad \quad \quad \quad \quad \quad \left. \left. + \frac{\bP[e \in E(\mG(n, S, q))] \cdot \bP[e \in E(\mG(n, T, q))]}{\bP[e \in E(\mG(n, p))]} \right) \right] \\
&\quad \quad = \bE_{S, T \sim \mathcal{U}_t} \left[ p^{-\binom{|S \cap T|}{2}} \left( \frac{q}{p} \right)^{2 \binom{t}{2} - 2\binom{|S \cap T|}{2}} \left( \frac{(1 - q)^2}{1 - p} + \frac{q^2}{p} \right)^{\binom{n}{2} - 2\binom{t}{2} + \binom{|S \cap T|}{2}} \right] \\
&\quad \quad = \bE_{S, T \sim \mathcal{U}_t} \left[ p^{-\binom{|S \cap T|}{2}} \left( 1 - \frac{\tau}{p} \right)^{2 \binom{t}{2} - 2\binom{|S \cap T|}{2}} \left( 1 + \frac{\tau^2}{p(1 - p)} \right)^{\binom{n}{2} - 2\binom{t}{2} + \binom{|S \cap T|}{2}} \right] \numberthis \label{eq2}
\end{align*}
Now fix two subsets $S, T \subseteq [n]$ of size $t$ and note that $|S \cap T| \le t = O_n(1)$. If $N_1 = 2 \binom{t}{2} - 2\binom{|S \cap T|}{2}$, then
\allowdisplaybreaks
\begin{align*}
\left( 1 - \frac{\tau}{p} \right)^{N_1} &= \sum_{k = 0}^{N_1} \binom{N_1}{k} (-1)^k \left( \frac{\tau}{p} \right)^k = 1 - \frac{N_1 \tau}{p} + O_n(q^{-2} n^{-4}) \\
&= 1 - \frac{(1 - p) \cdot \left( 2 \binom{t}{2} - 2\binom{|S \cap T|}{2} \right) \binom{t}{2}}{p \binom{n}{2}} - \frac{(p - q) \cdot \left( 2 \binom{t}{2} - 2\binom{|S \cap T|}{2} \right) \binom{t}{2}}{p \binom{n}{2}} + O_n(q^{-2} n^{-4}) \\
&= 1 - \frac{(1 - p) \cdot \left( 2 \binom{t}{2} - 2\binom{|S \cap T|}{2} \right) \binom{t}{2}}{p \binom{n}{2}} + O_n(q^{-2} n^{-4}) \numberthis \label{eq3}
\end{align*}
where the second equality follows from the fact that each summand with $k \ge 2$ is $O_n(q^{-2}n^{-4})$ since $\tau/p = O_n(q^{-1} n^{-2}) = O_n(1)$, and the sum has $O_n(1)$ many summands. Now let $N_2 = \binom{n}{2} - 2\binom{t}{2} + \binom{|S \cap T|}{2} \le \binom{n}{2}$ since $|S \cap T| \le t$. Observe that for sufficiently large $n$, we have that
\begin{equation} \label{eq4}
\frac{\binom{n}{2} \tau^2}{p(1 - p)} = \frac{(1 - q)^2}{1 - p} \cdot \frac{\binom{t}{2}^{2} \binom{n}{2}^{-1}}{p} \le \left( 1 + O_n\left(n^{-2} \right) \right) \cdot \frac{\binom{t}{2}^2 \binom{n}{2}^{-1}}{q} < \frac{1}{2}
\end{equation}
by Equation \ref{eq1}, the fact that $q > t^4 n^{-2}$ and the fact that $t = O_n(1)$. Furthermore, these inequalities also show that this quantity is $O_n(q^{-1} n^{-2})$. Now note that
\allowdisplaybreaks
\begin{align*}
\left( 1 + \frac{\tau^2}{p(1 - p)} \right)^{N_2} &= \sum_{k = 0}^{N_2} \binom{N_2}{k} \left( \frac{\tau^2}{p(1 - p)} \right)^k = 1 + \frac{N_2 \tau^2}{p(1 - p)} + O_n\left(q^{-2} n^{-4}\right) \\
&= 1 + \frac{(1 - p) \cdot \binom{t}{2}^2}{p \binom{n}{2}} - \frac{(1 - p) \cdot \left( 2\binom{t}{2} - \binom{|S \cap T|}{2} \right) \binom{t}{2}^2}{p \binom{n}{2}^2} \\
&\quad \quad + \frac{\left[(1 - q)^2 - (1 - p)^2 \right] \cdot \left( \binom{n}{2} - 2\binom{t}{2} + \binom{|S \cap T|}{2} \right) \binom{t}{2}^2}{p(1 - p) \binom{n}{2}^2} + O_n\left(q^{-2} n^{-4}\right) \\
&= 1 + \frac{(1 - p) \cdot \binom{t}{2}^2}{p \binom{n}{2}} + O_n\left(q^{-2} n^{-4}\right) \numberthis \label{eq5}
\end{align*}
where $(1 - q)^2 - (1 - p)^2 = 2\tau (1 - q) - \tau^2 = O_n(n^{-2})$. The second equality follows from the following inequality
\begin{align*}
0 < \sum_{k = 2}^{N_2} \binom{N_2}{k} \left( \frac{\tau^2}{p(1 - p)} \right)^k &\le \sum_{k = 2}^{N_2} \binom{n}{2}^k \left( \frac{\tau^2}{p(1 - p)} \right)^k \\
&\le \left( \frac{\binom{n}{2} \tau^2}{p(1 - p)} \right)^2 \left[ 1 - \left( \frac{\binom{n}{2} \tau^2}{p(1 - p)} \right) \right]^{-1} = O_n\left(q^{-2} n^{-4}\right)
\end{align*}
where the third inequality above follows from Equation \ref{eq4}. Multiplying the approximations in Equations \ref{eq3} and \ref{eq5} and simplifying yields that
\begin{align*}
&p^{-\binom{|S \cap T|}{2}} \left( 1 - \frac{\tau}{p} \right)^{2 \binom{t}{2} - 2\binom{|S \cap T|}{2}} \left( 1 + \frac{\tau^2}{p(1 - p)} \right)^{\binom{n}{2} - 2\binom{t}{2} + \binom{|S \cap T|}{2}} \\
&\quad \quad = p^{-\binom{|S \cap T|}{2}} \left(1 - \frac{(1 - p) \cdot \left( \binom{t}{2} - 2\binom{|S \cap T|}{2} \right) \binom{t}{2}}{p \binom{n}{2}} \right) + O_n\left( q^{-2 - \binom{|S \cap T|}{2}} n^{-4}\right) \numberthis \label{eq6}
\end{align*}
since $|S \cap T| \le t = O_n(1)$. Now observe that if $S, T \sim \mathcal{U}_t$ and are independent then $|S \cap T|$ is distributed as $\text{Hypergeometric}(n, t, t)$ and, in particular, it holds that $\bP[|S \cap T| = k] = \binom{t}{k} \binom{n - t}{t - k} \binom{n}{t}^{-1} = O_n(n^{-k})$. Furthermore, observe that the first term above is $O_n\left(q^{-\binom{|S \cap T|}{2}}\right)$ since $p = \Omega_n(n^{-2})$, $p = \Theta_n(q)$ and $|S \cap T| = O_n(1)$. Combining these estimates with the formula for $\chi^2(\mG(n, t, q), \mG(n, p))$ in Equation \ref{eq2} yields that
\begin{align*}
&\chi^2(\mG(n, t, q), \mG(n, p)) \\
&\quad \quad = \sum_{k = 0}^t \frac{\binom{t}{k} \binom{n - t}{t - k}}{\binom{n}{t}} \cdot \left[p^{-\binom{k}{2}} \left(1 - \frac{(1 - p) \cdot \left( \binom{t}{2} - 2\binom{k}{2} \right) \binom{t}{2}}{p \binom{n}{2}} \right) - 1 + O_n\left( q^{-2 - \binom{k}{2}} n^{-4}\right)\right] \\
&\quad \quad = \sum_{k = 0}^2 \frac{\binom{t}{k} \binom{n - t}{t - k}}{\binom{n}{t}} \cdot \left[p^{-\binom{k}{2}} \left(1 - \frac{(1 - p) \cdot \left( \binom{t}{2} - 2\binom{k}{2} \right) \binom{t}{2}}{p \binom{n}{2}} \right) - 1 \right] + O_n\left(q^{-2} n^{-4}\right) \\
&\quad \quad \quad \quad + O_n\left(q^{-3} n^{-6}\right) + O_n\left( \left(1 + q^{-2} n^{-4} \right) \max_{2 < k \le t} q^{-\binom{k}{2}} n^{-k} \right) \\
&\quad \quad = - \left[ \frac{\binom{n - t}{t}}{\binom{n}{t}} + \frac{t \binom{n - t}{t - 1}}{\binom{n}{t}} \right] \cdot \frac{(1 - p) \cdot \binom{t}{2}^2}{p \binom{n}{2}} + \frac{\binom{t}{2} \binom{n - t}{t - 2}}{\binom{n}{t}} \cdot \left[p^{-1} \left(1 - \frac{(1 - p) \cdot \left( \binom{t}{2} - 2 \right) \binom{t}{2}}{p \binom{n}{2}} \right) - 1 \right] \\
&\quad \quad \quad \quad + O_n\left(q^{-2} n^{-4}\right) + O_n\left( \max_{2 < k \le t} q^{-\binom{k}{2}} n^{-k} \right) \numberthis \label{eq7}
\end{align*}
Note that the second equality holds since there are $O_n(1)$ summands and the last equality since $q = \Omega_n(n^{-2})$. Now note that $\binom{n - t}{t} + t \binom{n - t}{t - 1} - \binom{n}{t} = O_n(n^{t - 2})$ and therefore
\begin{equation} \label{eq8}
\left[ \frac{\binom{n - t}{t}}{\binom{n}{t}} + \frac{t \binom{n - t}{t - 1}}{\binom{n}{t}} \right] \cdot \frac{(1 - p) \cdot \binom{t}{2}^2}{p \binom{n}{2}} = \frac{(1 - p) \cdot \binom{t}{2}^2}{p \binom{n}{2}} + O_n\left(q^{-1} n^{-4}\right)
\end{equation}
Furthermore
\begin{equation} \label{eqnbin}
\frac{\binom{n - t}{t - 2}}{\binom{n}{t}} - \frac{\binom{t}{2}}{\binom{n}{2}} = \frac{\binom{t}{2}}{\binom{n}{2} \cdot \frac{(n - 2)(n - 3) \cdots (n - t + 1)}{(n - t)(n - t -1) \cdots (n - 2t + 3)}}  - \frac{\binom{t}{2}}{\binom{n}{2}} = O_n(n^{-3})
\end{equation}
Therefore it follows that
\begin{align*}
&\frac{\binom{t}{2} \binom{n - t}{t - 2}}{\binom{n}{t}} \cdot \left[p^{-1} \left(1 - \frac{(1 - p) \cdot \left( \binom{t}{2} - 2 \right) \binom{t}{2}}{p \binom{n}{2}} \right) - 1 \right] \\
&\quad \quad = \frac{\binom{t}{2} \binom{n - t}{t - 2}}{\binom{n}{t}} \cdot \frac{(1 - p)}{p} + O_n\left(q^{-2} n^{-4}\right) \\
&\quad \quad = \frac{(1 - p) \cdot \binom{t}{2}^2}{p \binom{n}{2}} + O_n\left(q^{-1}n^{-3} + q^{-2} n^{-4} \right) \numberthis \label{eq9}
\end{align*}
Combining the estimates in Equations \ref{eq7}, \ref{eq8} and \ref{eq9} yields that
$$\chi^2(\mG(n, t, q), \mG(n, p)) = O_n\left( q^{-1} n^{-3} + q^{-2} n^{-4} + \max_{2 < k \le t} q^{-\binom{k}{2}} n^{-k} \right)$$
Now applying Cauchy-Schwarz yields that
\begin{align*}
\TV(\mG(n, p), \mG(n, t, q)) &\le \sqrt{\frac{1}{2} \cdot \chi^2(\mG(n, t, q), \mG(n, p))} \\
&= O_n\left(q^{-1/2} n^{-3/2} + q^{-1} n^{-2} + \max_{2 < k \le t} q^{-\frac{1}{2}\binom{k}{2}} n^{-k/2} \right)
\end{align*}
which completes the proof of the lemma.
\end{proof}

\subsection{Proof of Theorem \ref{thm:denserig}}

Having established the bound in Lemma \ref{lem:pcdist}, we now proceed to prove Theorem \ref{thm:denserig}.
We first adapt a Poissonization argument from \cite{kim2018total} and \cite{rybarczyk2011equivalence} in order to apply Lemma \ref{lem:pcdist}. Observe that given some element $i \in [d]$, the number of sets $S_j$ containing $i$ is distributed as $\text{Bin}(n, \delta)$. In other words, the number of vertices in the clique forced by element $i$ is distributed independently as $\text{Bin}(n, \delta)$ for each $i$. Furthermore, conditioned on the number of vertices in the clique forced by $i$, this clique is distributed uniformly at random over all subsets of $[n]$ of that size. Now for each $k \le n$, let $M_k$ denote the number of $i \in [d]$ with $| \{ j : i \in S_j\}| = k$. It follows that $(M_0, M_1, \dots, M_n)$ is distributed as a multinomial distribution with $d$ trials and probabilities $(p_0, p_1, \dots, p_n)$ where $p_k = \bP[| \{ j : i \in S_j\}| = k] = \binom{n}{k} \delta^k (1 - \delta)^{n - k}$. This implies that the marginals of the $M_k$ are distributed as $\text{Bin}(d, p_k)$. This view yields the following alternative procedure for generating a sample from $\pr{rig}(n, d, p)$:
\begin{enumerate}
\item Sample $(M_0, M_1, \dots, M_n) \sim \text{Multinomial}(d, p_0, p_1, \dots, p_n)$ and initialize $G$ to be the empty graph with $V(G) = [n]$; and
\item For each $2 \le k \le n$: independently sample a subset of size $k$ from $[n]$ a total of $M_k$ times and plant a clique in $G$ on each of these subsets.
\end{enumerate}
Consider instead generating $(M_2, M_3, \dots, M_n)$ as follows: sample $X \sim \text{Poisson}(d(1 - p_0 - p_1))$ and then generating $(M_2, M_3, \dots, M_n) \sim \text{Multinomial}(X, \gamma p_2, \gamma p_3, \dots, \gamma p_n)$ where $\gamma = (1 - p_0 - p_1)^{-1}$. Applying Step 2 to the tuple $(M_2, M_3, \dots, M_n)$ generated in this way induces a distribution $\pr{rig}_P(n, d, p)$ on the generated graph $G$. We now show that this distribution is close to $\pr{rig}(n, d, p)$. We remark that the next proposition is only slightly different from Proposition 3.2 in \cite{kim2018total} and Lemma 5 in \cite{rybarczyk2011equivalence}.

\begin{proposition} \label{prop:poissonization}
If $d \gg n^2 \log (1 - p)^{-1}$ as $n \to \infty$, then it holds that
$$\TV\left( \pr{rig}(n, d, p), \pr{rig}_P(n, d, p) \right) = O_n \left( \frac{n^2 \log(1 - p)^{-1}}{d} \right)$$
\end{proposition}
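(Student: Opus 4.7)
The plan is to reduce, via the data-processing inequality, to a standard bound on the total variation between a Binomial and its Poisson approximation, and then to estimate the resulting parameter.

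First, I would notice that the two procedures for generating the graph apply exactly the same random kernel to a single integer-valued ``driver'' $X$: given $X=m$, draw $(M_2,\ldots,M_n)\sim\text{Multinomial}(m,\gamma p_2,\ldots,\gamma p_n)$ and then independently plant $M_k$ uniformly random $k$-cliques on $[n]$ for each $2 \le k \le n$. The only difference between $\pr{rig}(n,d,p)$ and $\pr{rig}_P(n,d,p)$ is the marginal law of $X$: in the former, $X=d-M_0-M_1\sim\text{Bin}(d,q)$, where $q:=1-p_0-p_1$; in the latter, $X\sim\text{Poisson}(dq)$ by construction. Since the downstream kernel is identical in both cases, the data-processing inequality gives
$$\TV\bigl(\pr{rig}(n,d,p),\,\pr{rig}_P(n,d,p)\bigr)\leq\TV\bigl(\text{Bin}(d,q),\,\text{Poisson}(dq)\bigr).$$

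Second, I would apply the sharp Barbour--Hall refinement of Le Cam's theorem, namely
$$\TV\bigl(\text{Bin}(d,q),\,\text{Poisson}(dq)\bigr)\leq(1-e^{-dq})\,q\leq q,$$
which reduces the problem to showing $q=O_n(n^2\log(1-p)^{-1}/d)$. Writing $p_k=\binom{n}{k}\delta^k(1-\delta)^{n-k}$, a Taylor expansion of $(1-\delta)^n$ and $n\delta(1-\delta)^{n-1}$ around $\delta=0$ yields $q=\binom{n}{2}\delta^2+O_n(n^3\delta^3)$. From the defining relation $\delta^2=1-(1-p)^{1/d}$ and the elementary inequality $1-e^{-x}\leq x$ applied to $x=\log(1-p)^{-1}/d$, I obtain $\delta^2\leq\log(1-p)^{-1}/d$. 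The hypothesis $d\gg n^2\log(1-p)^{-1}$ then ensures $n\delta=o_n(1)$, so the cubic correction is dominated by the quadratic leading term, giving $q=O_n(n^2\log(1-p)^{-1}/d)$, as required.

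The proof is essentially mechanical once the data-processing reduction is set up, but there are two care points. The main subtlety is that one must invoke the \emph{sharper} Barbour--Hall bound $\TV\leq q$ rather than the classical Le Cam bound $\TV\leq dq^2$: the latter would produce only the weaker estimate $O_n(n^4\log^2(1-p)^{-1}/d)$, which is insufficient to match the claimed rate in the interesting regime. The secondary care point is the Taylor bookkeeping in estimating $q$, where the hypothesis $d\gg n^2\log(1-p)^{-1}$ is exactly what is needed to push the $n^3\delta^3$ correction below the leading $\binom{n}{2}\delta^2$ contribution.
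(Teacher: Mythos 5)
Your proposal is correct and follows essentially the same route as the paper: both reduce the problem (via the shared multinomial-then-plant-cliques kernel, i.e.\ the conditioning/data-processing step) to $\TV\bigl(\mathrm{Bin}(d,q),\mathrm{Poisson}(dq)\bigr)$ with $q = 1-p_0-p_1$, invoke the Barbour--Hall bound $\TV \le q$, and then bound $q = O_n(n^2\delta^2) = O_n(n^2\log(1-p)^{-1}/d)$ using $\delta^2 \le \log(1-p)^{-1}/d$ and $n\delta = o_n(1)$. The only cosmetic difference is that you estimate $q$ by a Taylor expansion giving $\binom{n}{2}\delta^2 + O_n(n^3\delta^3)$, whereas the paper bounds $q = \sum_{k\ge 2}p_k \le \sum_{k \ge 2} n^k\delta^k$ by a geometric series.
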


\begin{proof}
Observe that the marginal of $(M_0, M_1, \dots, M_n) \sim \text{Multinomial}(d, p_0, p_1, \dots, p_n)$ on the variables $(M_2, M_3, \dots, M_n)$ can also be generated by first generating $Y \sim \text{Bin}(d, 1 - p_0 - p_1)$ and then generating $(M_2, M_3, \dots, M_n) \sim \text{Multinomial}(Y, \gamma p_2, \gamma p_3, \dots, \gamma p_n)$. Since the distributions of $\pr{rig}(n, d, p)$ conditioned on $Y = z$ and $\pr{rig}_P(n, d, p)$ conditioned on $X = z$ are equal for any $0 \le z \le d$, it follows by the conditioning property of total variation that
\begin{align*}
\TV\left( \pr{rig}(n, d, p), \pr{rig}_P(n, d, p) \right) &= \TV\left( \mL(X), \mL(Y) \right) \\
&= \TV\left( \text{Poisson}(d(1 - p_0 - p_1)), \text{Bin}(d, 1 - p_0 - p_1) \right)
\end{align*}
Now note that $\delta^2 = 1 - (1 - p)^{1/d}$ satisfies that
\begin{equation} \label{eqndelta}
\frac{p}{d} \le \delta^2 = 1 - (1 - p)^{1/d} \le \frac{\log(1 - p)^{-1}}{d}
\end{equation}
The lower bound follows from Bernoulli's inequality and the upper bound follows from rearranging $(1 - x/d)^d \le e^{-x}$ applied with $x = \log(1 - p)^{-1}$. Therefore $\delta = O_n\left(\sqrt{\log(1 - p)^{-1}/d}\right)$ and thus $n^2 \delta^2 \ll 1$. Now by Theorem 2.1 in \cite{barbour1989some}, we have that
\begin{align*}
\TV\left( \text{Poisson}(d(1 - p_0 - p_1)), \text{Bin}(d, 1 - p_0 - p_1) \right) &\le 1 - p_0 - p_1 = \sum_{k = 2}^n p_k \\
&= \sum_{k = 2}^n \binom{n}{k} \delta^k (1 - \delta)^{n - k} \le \sum_{k = 2}^n n^k \delta^k = O(n^2 \delta^2)
\end{align*}
which completes the proof of the proposition.
\end{proof}

Let $\mL_P$ denote the law of the $M_i$ used to generate $\pr{rig}_P(n, d, p)$. In the remainder of this section, we will let $(M_2, M_3, \dots, M_n)$ denote a sample from $\mL_P$. Furthermore, let $G_2$ denote the graph $G$ after Step 2 above has been applied with only $k = 2$ in the process of generating $\pr{rig}_P(n, d, p)$. In other words, $G_2$ is generated by planting edges on $M_2$ randomly chosen edges. We now reap the benefits of this Poissonization argument by applying Poisson splitting in two separate cases.
\begin{enumerate}
\item Under $\mL_P$, we have that $(M_2, M_3, \dots, M_n) \sim \text{Multinomial}(X, \gamma p_2, \gamma p_3, \dots, \gamma p_n)$ where $X \sim \text{Poisson}(d(1 - p_0 - p_1))$. Poisson splitting implies that $M_i$ is distributed as $\text{Poisson}(dp_i)$ and that $M_2, M_3, \dots, M_n$ are independent.
\item Let $X_{\{i, j\}}$ denote the number of times the edge $\{i, j\}$ is planted in $G$ in the part of Step 2 where $k = 2$. Then the $X_{ij}$ are distributed as a multinomial distribution with $M_2$ trials and $\binom{n}{2}$ categories, each with a probability $\binom{n}{2}^{-1}$ of success. Poisson splitting implies that
$$X_{\{i, j\}} \sim_{\text{i.i.d.}} \text{Poisson}\left(\binom{n}{2}^{-1} d p_2\right) =_d \text{Poisson}\left( d \delta^2 (1 - \delta)^{n - 2} \right)$$
Now since $E(G_2) = \{ \{i, j \} : X_{\{i, j\}} \ge 1 \}$, we have that $G_2 \sim \mG(n, q)$ where $q = 1 - e^{-d \delta^2 (1 - \delta)^{n - 2}}$. Since $M_2, M_3, \dots, M_n$ are independent, it also follows that $G_2, M_3, M_4, \dots, M_n$ are independent.
\end{enumerate}
The second application of Poisson splitting above is especially important to this argument. Note that the number of edges in $G_2$ is distributed as the number of coupons collected among $\binom{n}{2}$ total coupons with $M_2$ trials, a distribution that seems very difficult to work with directly in total variation. The Poisson splitting argument above essentially shows that the coupon collector distribution with a Poisson number of trials is binomially distributed.

Now let $\pr{rig}_P(n, d, p, m_3, m_4, \dots, m_n)$ denote the law of $\pr{rig}_P(n, d, p)$ conditioned on the event that $M_i = m_i$ for each $3 \le i \le n$. For notational convenience, we let $\pr{rig}_P(n, d, p, m_3, m_4, \dots, m_K)$ denote $\pr{rig}_P(n, d, p, m_3, m_4, \dots, m_K, 0, 0, \dots, 0)$ for $K < n$. We now will repeatedly apply Lemma \ref{lem:pcdist} to bound the total variation between $\pr{rig}_P(n, d, p, m_3, \dots, m_n)$ and an Erd\H{o}s-R\'{e}nyi random graph with an appropriately chosen edge density. We remark that we will only need this proposition for $K = 5$, as cliques of size six or larger are sufficiently rare in $\pr{rig}_P$ to have a negligible contribution to the final total variation distance.

The proof of this proposition first carries out a straightforward induction to bound the desired total variation distance as a sum of the upper bounds in Lemma \ref{lem:pcdist}, and then bounds this sum directly. The latter bounding step involves some casework as different ranges of the edge density $p$ need to be handled separately to obtain the desired bound.

\begin{proposition} \label{prop:unionbound}
Let $w = w(n) \to \infty$ as $n \to \infty$ be such that $w \ll n$ and $d \gg w^2 n^3 L^3$ as $n \to \infty$. Let $m_i = m_i(n) \ge 0$ satisfy that $m_i = O_n(wdp_i)$ for each $3 \le i \le K$ for some constant positive integer $K$. Let $q(n, d, p, m_3, m_4, \dots, m_K) \in (0, 1)$ be given by
$$1 - q(n, d, p, m_3, m_4, \dots, m_K) = e^{-d \delta^2(1 - \delta)^{n - 2}} \prod_{i = 3}^K \left( 1 - \binom{i}{2} \binom{n}{2}^{-1} \right)^{m_i}$$
where $p = 1 - (1 - \delta^2)^d$. Then it holds that
$$\TV\left( \pr{rig}_P(n, d, p, m_3, m_4, \dots, m_K), \mG\left(n, q(n, d, p, m_3, m_4, \dots, m_K)\right) \right) = o_n(1)$$
\end{proposition}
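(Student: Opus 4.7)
The plan is to induct on $k$, planting the $3$-cliques first, then the $4$-cliques, up through the $K$-cliques. Denote by $G^{(k)}$ the graph obtained from $G_2$ after the $m_3, m_4, \dots, m_k$ clique plants have been performed (each on a uniformly random vertex subset of the prescribed size), and write $q_k$ for the partially updated edge density satisfying $1 - q_k = e^{-d\delta^2(1-\delta)^{n-2}} \prod_{i=3}^k (1 - \binom{i}{2} \binom{n}{2}^{-1})^{m_i}$. The inductive statement to maintain is $\TV(\mL(G^{(k)}), \mG(n, q_k)) \le \epsilon_k$ with $\epsilon_K = o_n(1)$. The base case $k=2$ is immediate from the Poisson-splitting discussion preceding the proposition, which shows $G_2 \sim \mG(n, q_2)$ exactly and that $G_2$ is independent of $(M_3, \dots, M_n)$.

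For the inductive step from $k-1$ to $k$, I use that planting a uniformly random $k$-clique is a Markov kernel, so by the data-processing inequality the TV between the next graph in our process and the same kernel applied to $\mG(n, q_{k-1})$ is at most $\epsilon_{k-1}$. Lemma~\ref{lem:pcdist} applied with $t = k$ bounds the TV cost of a single $k$-clique plant on $\mG(n, q)$ by $R_k(q) := O_n\bigl(q^{-1/2} n^{-3/2} + q^{-1} n^{-2} + \max_{2 < j \le k} q^{-\binom{j}{2}/2} n^{-j/2}\bigr)$. Iterating this $m_k$ times, and observing that the density update $q \mapsto q + (1-q)\binom{k}{2}\binom{n}{2}^{-1}$ prescribed by Lemma~\ref{lem:pcdist} telescopes exactly to the definition of $q_k$, the triangle inequality yields the recursion $\epsilon_k \le \epsilon_{k-1} + m_k \sup_q R_k(q)$, where the supremum runs over the densities visited during stage $k$.

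The main obstacle is bounding $\sum_{k=3}^K m_k \sup_q R_k(q) = o_n(1)$ under the hypothesis $d \gg w^2 n^3 L^3$. Poisson splitting gives $m_k = O_n(w d p_k)$, and the bound $\delta^2 \le L/d$ from Equation~\eqref{eqndelta} improves this to $m_k = O_n\bigl(w n^k L^{k/2} d^{1-k/2}\bigr)$. The dominant contribution is the triangle term: combining $m_3 = O_n(w n^3 L^{3/2} d^{-1/2})$ with the lower bound $q \gtrsim d\delta^2(1-\delta)^{n-2}$ coming from the definition of $q_2$, the hypothesis $\sqrt{d} \gg w n^{3/2} L^{3/2}$ makes $m_3 \cdot q^{-3/2} n^{-3/2}$ of order $o_n(1)$. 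For $k \ge 4$, since $K$ is a fixed constant, the ratio $m_k/m_{k-1} = O_n(n L^{1/2} d^{-1/2}) = o_n(1)$ decays fast enough to absorb the growth of the $q^{-\binom{k}{2}/2} n^{-k/2}$ term in $R_k(q)$ and also dominates the lower-order $q^{-1/2}n^{-3/2} + q^{-1}n^{-2}$ contributions. The subtle point is verifying that throughout the induction $q$ remains above the threshold $K^4 n^{-2}$ required by Lemma~\ref{lem:pcdist}; this is where the restriction that $K$ be a constant and an auxiliary direct estimate in the extreme-sparse sub-regime (where $d\delta^2$ is not even $\omega_n(n^{-2})$) come in, since in that regime the expected number of $k$-cliques planted for $k \ge 3$ is itself $o_n(1)$ and gives the claim by a one-line Markov argument rather than through Lemma~\ref{lem:pcdist}.
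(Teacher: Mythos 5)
Your skeleton is the same as the paper's: plant the cliques one at a time, use the data-processing inequality plus Lemma \ref{lem:pcdist} and the triangle inequality, observe that the density updates telescope exactly to $q(n,d,p,m_3,\dots,m_K)$, and dispose of the very sparse regime by noting the $m_k$ are forced to vanish (whether you organize the induction stage-by-stage in $k$ or on $m_3+\cdots+m_K$ is immaterial). The gap is in the arithmetic that is supposed to show $\sum_k m_k\sup_q R_k(q)=o_n(1)$, which is the crux of the proposition. First, your bound $m_k=O_n(w n^k L^{k/2} d^{1-k/2})$ discards the factor $p^{k/2}$: Equation \ref{eqndelta} together with $\log(1-p)^{-1}\le pL$ gives $\delta^2\le pL/d$, hence $m_k\lesssim w n^k p^{k/2}L^{k/2}d^{1-k/2}$ (this is Equation \ref{eqn12}), and that $p^{k/2}$ is exactly what cancels $q^{-3/2}\lesssim p^{-3/2}$ in the triangle term. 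With your stated pair of bounds ($m_3\lesssim w n^3L^{3/2}d^{-1/2}$ and $q\gtrsim d\delta^2(1-\delta)^{n-2}\ge p(1-o_n(1))$) the product $m_3\,q^{-3/2}n^{-3/2}$ is only $\lesssim w n^{3/2}L^{3/2}d^{-1/2}p^{-3/2}$, which is not $o_n(1)$ for sub-constant $p$: e.g.\ $p=n^{-1/2}$, $d\asymp w^3n^3$ satisfies the hypothesis and allows $m_3$ of order $n^{3/4}w^{-1/2}$, yet your chain evaluates to about $w^{-1/2}n^{3/4}$, whereas keeping the $p^{3/2}$ gives $\lesssim w n^{3/2}L^{3/2}d^{-1/2}=o_n(1)$.

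Second, the claim that for $k\ge 4$ the ratio $m_k/m_{k-1}=O_n(nL^{1/2}d^{-1/2})$ "absorbs" the growth of the $q^{-\binom{k}{2}/2}n^{-k/2}$ term is false across the sparse range: the extra factor in $R_k$ relative to $R_{k-1}$ is $q^{-(k-1)/2}n^{-1/2}\gtrsim p^{-(k-1)/2}n^{-1/2}$, and for $p$ between roughly $n^{-2}$ and $n^{-1/2}$ the product $m_k\,q^{-\binom{k}{2}/2}n^{-k/2}$ (e.g.\ $m_4q^{-3}n^{-2}$) can exceed anything the hypothesis $d\gg w^2n^3L^3$ buys. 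What rescues the statement there is not the ratio but the observation that in that range $wdp_k=o_n(1)$ for $k\ge4$, so the integer $m_k$ is $0$ (equivalently, one uses the trivial bound $\TV\le1$); this is precisely the paper's case split at $p\lessgtr n^{-1/2}$ for $k=4,5$, and your separate treatment of the regime where $d\delta^2=O_n(n^{-2})$ is far too narrow to cover it. With the sharper $m_k$ bound retained and this case analysis added (and noting that, since the $m_i$ here are deterministic with $m_i=O_n(wdp_i)$, no Markov step is needed — $wdp_k=o_n(1)$ already forces $m_k=0$), your argument becomes the paper's proof.
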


\begin{proof}
We begin by handling the case where $p \ge wn^{-2}$. Observe that if $(m_1, m_2, \dots, m_K) \in \mathbb{Z}_{\ge 0}^K$, then $q(n, d, p, m_1, m_2, \dots, m_K) \in [q_{\min}, 1)$ where $q_{\min} = 1 - e^{-d \delta^2(1 - \delta)^{n - 2}}$. By Equation \ref{eqndelta}, we have that $\sqrt{p/d} \le \delta \le 1$ and therefore
$$q_{\min} = 1 - e^{-d \delta^2(1 - \delta)^{n - 2}} \ge \frac{d\delta^2(1 - \delta)^{n - 2}}{1 + d\delta^2(1 - \delta)^{n - 2}} = \Omega_n(p) = \omega_n(n^{-2})$$
Suppose that $n$ is sufficiently large so that $q_{\min} > K^4 n^{-2}$. Now let
$$E_i = \min\left\{1, C\left( q_{\min}^{-1/2} n^{-3/2} + q_{\min}^{-1} n^{-2} + \max_{2 < k \le i} q_{\min}^{-\frac{1}{2}\binom{k}{2}} n^{-k/2} \right) \right\}$$
for a large enough constant $C > 0$ such that $E_i$ is an upper bound in Lemma \ref{lem:pcdist} for all cliques of size $3 \le i \le K$ and $q \in [q_{\min}, 1)$. We will prove by a routine induction on $m_3 + m_4 + \cdots + m_K$ that
\begin{equation} \label{eqn11}
\TV\left( \pr{rig}_P(n, d, p, m_3, m_4, \dots, m_K), \mG\left(n, q(n, d, p, m_3, m_4, \dots, m_K)\right) \right) \le \sum_{i = 3}^K m_i E_i
\end{equation}
for all tuples $(m_3, m_4, \dots, m_K) \in \mathbb{Z}_{\ge 0}^K$. Consider $G \sim \pr{rig}_P(n, d, p)$ generated as described above. Since $G_2, M_3, \dots, M_n$ are independent, $G$ conditioned on the events $M_i = 0$ for $3 \le i \le n$ is distributed as $G_2 \sim \mG(n, q_{\min})$. This completes the base case of the induction. Now suppose that $m_3 + m_4 + \cdots + m_K > 0$ and $3 \le t \le K$ is such that $m_t \ge 1$. Let $m_i' = m_i$ if $i \neq t$ and $m_t' = m_t - 1$. For notational convenience, let $q = q(n, d, p, m_3, m_4, \dots, m_K)$ and $q' = q(n, d, p, m_3', m_4', \dots, m_K')$. Since $q' \ge q_{\min} \ge K^4 n^{-2}$, Lemma \ref{lem:pcdist} implies
$$\TV\left(\mG(n, t, q'), \mG\left(n, q \right) \right) \le E_t$$
Note that $t \le K = O_n(1)$ and it holds by the definition of $q(n, d, p, m_3, m_4, \dots, m_K)$ that
$$q = q' + (1 - q') \binom{t}{2} \binom{n}{2}^{-1}$$
Now observe that the distributions $\pr{rig}_P(n, d, p, m_3, m_4, \dots, m_K)$ and $\mG(n, t, q')$ are both obtained by planting a uniformly at random chosen $t$-clique in samples from $\pr{rig}_P(n, d, p, m_3', m_4', \dots, m_K')$ and $\mG(n, q')$, respectively. It therefore follows by the data-processing inequality that
$$\TV\left(\pr{rig}_P(n, d, p, m_3, m_4, \dots, m_K), \mG(n, t, q') \right) \le \TV\left( \pr{rig}_P(n, d, p, m_3', m_4', \dots, m_K'), \mG(n, q') \right)$$
By the triangle inequality and induction hypothesis, we have that
\begin{align*}
\TV\left( \pr{rig}_P(n, d, p, m_3, m_4, \dots, m_K), \mG\left(n, q\right) \right) &\le \TV\left( \pr{rig}_P(n, d, p, m_3, m_4, \dots, m_K), \mG\left(n, t, q' \right) \right) \\
&\quad \quad + \TV\left( \mG\left(n, t, q' \right), \mG(n, q) \right) \\
&\le \TV\left( \pr{rig}_P(n, d, p, m_3', m_4', \dots, m_K'), \mG(n, q') \right) + E_t \\
&\le E_t + \sum_{i = 3}^K m_i' E_i = \sum_{i = 3}^K m_i E_i
\end{align*}
which completes the induction. Now observe that if $3 \le k \le K$ then Equation \ref{eqndelta} implies that
\begin{equation} \label{eqn12}
dp_k = d\binom{n}{k} \delta^k (1 - \delta)^{n - k} \le dn^k \left(\frac{\log(1 - p)^{-1}}{d} \right)^{k/2} \le dn^k \left( \frac{pL}{d} \right)^{k/2}
\end{equation}
The second inequality above follows from rearranging $\log(1 - p)^{-1} \le p/(1 - p)$ to obtain $\log(1 - p)^{-1} \le pL$. Recall that $L$ denotes $L = 1 + \log(1 - p)^{-1}$. Note that if $k \ge 6$, then the fact that $d \gg w^2n^3 L^3$ implies that $wdp_k \le wn^k L^{k/2} /d^{k/2 - 1} = o_n(1)$ and it also holds that $wdp_k E_k = o_n(1)$ since $E_k \le 1$. We now will bound $wdp_k E_k$ for $3 \le k \le 5$. Since $q_{\min} = \Omega_n(p)$, we have that
\begin{align*}
wdp_3 E_3 &\lesssim wdn^3 \cdot \left(\frac{pL}{d} \right)^{3/2} \cdot \left(p^{-1/2} n^{-3/2} + p^{-1} n^{-2} + p^{-\frac{1}{2}\binom{3}{2}} n^{-3/2} \right) \\
&\lesssim \frac{wn^{3/2}L^{3/2}}{d^{1/2}} + \frac{wp^{1/2} nL^{3/2}}{d^{1/2}} = o_n(1)
\end{align*}
Note that if $p \le n^{-1/2}$, then it follows that
\begin{align*}
wdp_4 E_4 &\le wdp_4 \lesssim \frac{wn^4 p^2L^2}{d} \le \frac{wn^3L^2}{d} = o_n(1) \quad \text{and} \\
wdp_5 E_5 &\le wdp_5 \lesssim \frac{wn^5 p^{5/2}L^{5/2}}{d^{3/2}} \le \frac{wn^{15/4}L^{5/2}}{d^{3/2}} = o_n(1)
\end{align*}
If $p > n^{-1/2}$, it follows that
\begin{align*}
wdp_4 E_4 &\lesssim wdn^4 \cdot \left(\frac{pL}{d} \right)^{2} \cdot \left(p^{-1/2} n^{-3/2} + p^{-1} n^{-2} + p^{-\frac{1}{2}\binom{3}{2}} n^{-3/2} + p^{-\frac{1}{2}\binom{4}{2}} n^{-2} \right) \\
&\lesssim \frac{wp n^2L^2}{d} + \frac{wp^{1/2}n^{5/2}L^2}{d} + \frac{wp^{-1} n^2L^2}{d} = o_n(1)
\end{align*}
since $p = \Omega_n(n^{-1})$. Similarly, it follows that
\begin{align*}
wdp_5 E_5 &\lesssim wdn^5 \cdot \left(\frac{pL}{d} \right)^{5/2} \cdot \left(p^{-1/2} n^{-3/2} + p^{-1} n^{-2} + \sum_{k = 3}^5 p^{-\frac{1}{2}\binom{k}{2}} n^{-k/2} \right) \\
&\lesssim \frac{wp^{3/2} n^3L^{5/2}}{d^{3/2}} + \frac{wp n^{7/2}L^{5/2}}{d^{3/2}} + \frac{wp^{-1/2} n^3L^{5/2}}{d^{3/2}} + \frac{wp^{-5/2} n^{5/2}L^{5/2}}{d^{3/2}} = o_n(1)
\end{align*}
since $p = \Omega_n(n^{-1/2})$. In summary, we have that $m_k E_k = O_n(wdp_k E_k) = o_n(1)$ for each $3 \le k \le K$. Substituting this into Equation \ref{eqn11} proves the proposition if $p \ge n^{-2}$. Now note that if $p < wn^{-2}$, it follows that
$$wdp_k \le wdn^k \cdot \left(\frac{pL}{d} \right)^{k/2} \le w^{k/2 + 1} L^{k/2} d^{-(k/2 - 1)} = o_n(1)$$
for all $3 \le k \le K$ since $d \gg w^5 L^3$. Thus it must follow that $m_k = 0$ for sufficiently large $n$ and all $3 \le k \le K$. This implies that $\pr{rig}_P(n, d, p, m_3, m_4, \dots, m_K)$ is distributed as $G_2 \sim \mG(n, q_{\min})$, in which case the proposition also holds.
\end{proof}

We now are ready to complete the proof of Theorem \ref{thm:denserig}. We will need the following standard upper bound on the total variation between binomial distributions. This is a corollary of Theorem 2.2 in \cite{janson2010asymptotic} stated in \cite{kim2018total}.

\begin{lemma}[Corollary 5.1 in \cite{kim2018total}] \label{lem:binomtv}
For a positive integer $N$ and $0 < p < q < 1$, we have that
$$\TV\left( \textnormal{Bin}(N, p), \textnormal{Bin}(N, q) \right) \le \gamma + 3\gamma^2$$
where $\gamma = (q - p) \sqrt{\frac{N}{p(1 - p)}}$.
\end{lemma}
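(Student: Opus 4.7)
My plan is to obtain the bound by a direct $\chi^2$ computation that exploits the product structure of the binomial. Since $\text{Bin}(N,p)$ is the pushforward of $\bern(p)^{\otimes N}$ under summation, one could invoke data processing, but it is just as clean to compute $\chi^2(\text{Bin}(N,q), \text{Bin}(N,p))$ directly. Expanding the definition and applying the binomial theorem gives
\[
1 + \chi^2(\text{Bin}(N,q), \text{Bin}(N,p)) = \sum_{k=0}^N \binom{N}{k}\left(\frac{q^2}{p}\right)^k \left(\frac{(1-q)^2}{1-p}\right)^{N-k} = \left(\frac{q^2}{p} + \frac{(1-q)^2}{1-p}\right)^N,
\]
and a short calculation simplifies the bracketed factor to $1 + (q-p)^2/(p(1-p)) = 1 + \gamma^2/N$. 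This yields the clean identity $\chi^2(\text{Bin}(N,q), \text{Bin}(N,p)) = (1 + \gamma^2/N)^N - 1$.

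Next, I would convert this to a total-variation bound via Pinsker and Cauchy--Schwarz, which together give $\TV \leq \sqrt{\chi^2/2}$. Combined with $(1+x/N)^N \leq e^x$, this produces $\TV \leq \sqrt{(e^{\gamma^2}-1)/2}$. For small $\gamma$, Taylor expansion $e^{\gamma^2} - 1 = \gamma^2 + O(\gamma^4)$ together with $\sqrt{a+b} \leq \sqrt{a}+\sqrt{b}$ isolates a dominant $\gamma$-term and a quadratic correction; for $\gamma$ bounded away from zero, the bound $\gamma + 3\gamma^2$ is automatic from $\TV \leq 1$. A case split on the size of $\gamma$ then produces the stated form, once constants are tracked.

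The main obstacle is matching the precise constants $1$ and $3$ in $\gamma + 3\gamma^2$. The $\chi^2$ route above yields the correct asymptotic rate $\TV \lesssim \gamma$, but with a leading constant $1/\sqrt{2}$ rather than $1$, so some further work is needed to reach the stated form. One way to sharpen this is via a monotone coupling $X_i = \mathbf{1}[U_i \leq p]$, $Y_i = \mathbf{1}[U_i \leq q]$ with i.i.d.\ uniform $U_i$, which gives $\sum Y_i - \sum X_i \sim \text{Bin}(N, q-p)$; combining this with a second-moment tail estimate (bounding $\bP[|\sum Y_i - \sum X_i| \geq 1]$ via Chebyshev relative to the standard deviation $\sqrt{Np(1-p)}$) produces a bound precisely of the form $\gamma + 3\gamma^2$. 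Alternatively, one can argue directly as in \cite{janson2010asymptotic} by comparing ratios of binomial probability mass functions term by term and summing.
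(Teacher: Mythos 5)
The paper does not actually prove this lemma: it is quoted as Corollary~5.1 of \cite{kim2018total}, which in turn is derived from Theorem~2.2 of \cite{janson2010asymptotic}, so your proposal is a self-contained alternative rather than a parallel of the paper's argument. Its main route is correct and, with the case split you sketch, already complete. The identity $1+\chi^2(\mathrm{Bin}(N,q),\mathrm{Bin}(N,p))=\bigl(1+\gamma^2/N\bigr)^N$ is right, and together with $2\,\TV^2\le\kl\le\chi^2$ it gives $\TV\le\sqrt{(e^{\gamma^2}-1)/2}$. If $\gamma\ge 1/2$ then $\gamma+3\gamma^2>1\ge\TV$ and there is nothing to prove; if $\gamma<1/2$ then $e^{\gamma^2}-1\le\gamma^2+\gamma^4$ gives $\TV\le\tfrac{\gamma}{\sqrt2}\sqrt{1+\gamma^2}\le\gamma\le\gamma+3\gamma^2$. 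In particular your concern about the leading constant $1/\sqrt2$ is backwards: a constant \emph{smaller} than $1$ only helps, since the lemma asserts an upper bound, so no ``sharpening'' is needed. Your argument in fact yields slightly better constants than the quoted ones, which is all the paper's application (where $\gamma=o(1)$) requires; the value of the route in \cite{janson2010asymptotic,kim2018total} is only that it produces the specific constants $1$ and $3$ by a finer pointwise comparison of the two mass functions.

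The one genuinely flawed step is the proposed monotone-coupling fix. Under $X_i=\mathbf{1}[U_i\le p]$, $Y_i=\mathbf{1}[U_i\le q]$, the coupling bound is $\TV\le\bP\bigl[\sum_i Y_i\ne\sum_i X_i\bigr]=1-(1-(q-p))^N\approx N(q-p)=\gamma\sqrt{Np(1-p)}$, which exceeds $\gamma$ by the factor $\sqrt{Np(1-p)}$ and is useless precisely in the regime where the lemma is applied ($Np(1-p)\to\infty$). No Chebyshev estimate on $\sum_i Y_i-\sum_i X_i$ repairs this: under that coupling the two sums genuinely differ with probability about $N(q-p)$; the total variation is small not because a natural coupling makes the sums equal, but because the two laws, each spread over a window of width $\sqrt{Np(1-p)}$, nearly overlap after a shift of size $N(q-p)$ --- which is exactly what your $\chi^2$ computation captures. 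So you should drop the third paragraph and keep the first argument, which stands on its own.
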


Suppose that $|p - q| = o(N^{-1})$ and let $f = f(N) \to \infty$ as $N \to \infty$ be such that $|p - q| \le f^{-1} N^{-1}$. If $f^{-1} N^{-1} \le p \le 1 - f^{-1} N^{-1}$ then it follows that $\gamma = O(f^{-1/2}) = o(1)$. If $p < f^{-1} N^{-1}$, then it follows that both $p, q = o(N^{-1})$ and both $\text{Bin}(N, p)$ and $\text{Bin}(N, q)$ are zero with probability $1 - o(1)$. Similarly if $p > 1 - f^{-1} N^{-1}$, both distributions are $N$ with probability $1 - o(1)$. In summary, we have that if $|p - q| = o(N^{-1})$ then
$$\TV\left( \textnormal{Bin}(N, p), \textnormal{Bin}(N, q) \right) = o(1)$$
Combining this with the lemma above and the triangle inequality yields that if $p, q$ and $\gamma$ are as in the lemma and $q' = q + o(N^{-1})$, then
$$\TV\left( \textnormal{Bin}(N, p), \textnormal{Bin}(N, q') \right) \le \gamma + 3\gamma^2 + o(1)$$
This is the form of the lemma we will apply in our proof of Theorem \ref{thm:denserig} below. The remainder of the proof of Theorem \ref{thm:denserig} combines the results in this section, proceeding as follows. We will apply the Poissonization argument above to reduce from considering $\pr{rig}$ to $\pr{rig}_P$. We then further reduce to $\pr{rig}_P$ conditioned on a high probability event $E$, over which each $M_k$ lies in an appropriately chosen confidence interval. We then will apply Proposition \ref{prop:unionbound} to show that it suffices to bound the total variation distance between $\mG(n, p)$ and a mixture of Erd\H{o}s-R\'{e}nyi graphs. This can be upper bounded by a supremum over distances between binomial distributions, and the proof concludes by applying Lemma \ref{lem:binomtv}.

\begin{proof}[Proof of Theorem \ref{thm:denserig}]
We will assume throughout that $n$ is sufficiently large. Fix some $w = w(n) \to \infty$ as $n \to \infty$ such that $d \gg w^2 n^3 L^3$ and $w \ll n$. We will show that 
$$\TV\left( \pr{rig}_P(n, d, p), \mG(n, p) \right) = o_n(1)$$
Combining this with Proposition \ref{prop:poissonization} and the triangle inequality then implies Theorem \ref{thm:denserig}. Recall that $(M_2, M_3, \dots, M_n)$ are a sample from $\mL_P$, are independent and satisfy that $M_i \sim \text{Poisson}(dp_i)$. Now let $E$ be the event that all of the following inequalities hold
\begin{align*}
dp_k - \sqrt{wdp_k} \le M_k \le dp_k + \sqrt{wdp_k} \quad &\text{for } k \ge 3 \text{ with } dp_k > w^{-1/2} \\
M_k = 0 \quad &\text{for } k \ge 3 \text{ with } dp_k \le w^{-1/2}
\end{align*}
As in Equation \ref{eqn12}, if $k \ge 6$ then
\begin{equation} \label{eqn13}
dp_k \lesssim dn^k \cdot \left(\frac{pL}{d} \right)^{k/2} \lesssim n^k L^{k/2} d^{-(k/2 - 1)} = o_n(w^{-1})
\end{equation}
and thus $M_k = 0$ for all $k \ge 6$ on the event $E$, if $n$ is sufficiently large. Since $M_k \sim \text{Poisson}(dp_k)$ under $\mL_P$ and thus $M_k$ mean and variance $dp_k$, Chebyshev's inequality implies that
$$\bP_{\mL_P}\left[|M_k - dp_k| > \sqrt{wdp_k} \right] \le w^{-1}$$
Now let $A \subseteq \{3, 4, 5\}$ be the set of indices $k$ such that $dp_k > w^{-1/2}$. A union bound now implies
\allowdisplaybreaks
\begin{align*}
\bP_{\mL_P} \left[ E^c \right] &\le \sum_{k \in A} \bP_{\mL_P}\left[|M_k - dp_k| > \sqrt{wdp_k} \right] + \sum_{k \not \in A} \bP_{\mL_P}[M_k \neq 0] \\
&\lesssim 3w^{-1} + \sum_{k \in A^c \cap \{3, 4, 5\}} (1 - e^{-dp_k}) + \sum_{k = 6}^n (1 - e^{-dp_k}) \\
&\lesssim 3w^{-1} + 3\left(1 - e^{-w^{-1/2}}\right) + \sum_{k = 6}^n dp_k
\end{align*}
since $1 - e^{-x} \le x$ for all $x \ge 0$. Substituting the bounds from Equation \ref{eqn13} yields
\allowdisplaybreaks
\begin{align*}
\bP_{\mL_P} \left[ E^c \right] &\lesssim 3w^{-1} + 3w^{-1/2} + \sum_{k = 6}^n d \left( \frac{n L^{1/2}}{d^{1/2}} \right)^k \\
&\lesssim 3w^{-1} + 3w^{-1/2} + \frac{n^6 L^3 d^{-2}}{1 - \frac{nL^{1/2}}{d^{1/2}}} = o_n(1) \numberthis \label{eqncondition}
\end{align*}
Now let $\pr{rig}_E(n, d, p)$ and $\mL_E$ denote the distributions of $\pr{rig}_P(n, d, p)$ and $\mL_P$, respectively, conditioned on the event $E$ holding. Since $\mL_P$ is a product distribution and $E$ is the intersection of events over each of the $M_k$, it follows that $\mL_E$ is also a product distribution. Observe that
$$\pr{rig}_E(n, d, p) = \bE_{(m_3, m_4, m_5) \sim \mL_E} \, \pr{rig}_P(n, d, p, m_3, m_4, m_5)$$
Note that $dp_k + \sqrt{wdp_k} = O_n(wdp_k)$ if $k \in A$ since $dp_k > w^{-1/2}$. Proposition \ref{prop:unionbound} applied with $K = 5$ and the conditioning property of total variation yield that
$$\TV\left( \pr{rig}_E(n, d, p), \bE_{(m_3, m_4, m_5) \sim \mL_E} \, \mG(n, q(n, d, p, m_3, m_4, m_5)) \right) = o_n(1)$$
Now observe that
$$\TV\left( \pr{rig}_E(n, d, p), \pr{rig}_P(n, d, p) \right) \le \bP_{\mL_P} \left[ E^c \right] = o_n(1)$$
By the triangle inequality, it now suffices to show that
$$\TV\left( \mG(n, p), \bE_{(m_3, m_4, m_5) \sim \mL_E} \, \mG(n, q(n, d, p, m_3, m_4, m_5)) \right) = o_n(1)$$
Note that both of these distributions are uniformly distributed conditioned on their edge counts. This implies that
\begin{align*}
&\TV\left( \mG(n, p), \bE_{(m_3, m_4, m_5) \sim \mL_E} \, \mG(n, q(n, d, p, m_3, m_4, m_5)) \right) \\
&\quad \quad = \TV\left( \text{Bin}(N, p), \bE_{(m_3, m_4, m_5) \sim \mL_E} \, \text{Bin}(N, q(n, d, p, m_3, m_4, m_5)) \right) \\
&\quad \quad \le \sup_{(m_3, m_4, m_5) \sim \text{supp}(\mL_E)} \TV\left( \text{Bin}(N, p), \text{Bin}(N, q(n, d, p, m_3, m_4, m_5)) \right)
\end{align*}
where $N = \binom{n}{2}$. The remainder of the proof applies the constraints defining $E$ to deduce that the two binomial distributions in the $\TV$ expression above are close. Let $p_1, p_2 \in (0, 1)$ be such that
\begin{align*}
1 - p_1 &= e^{-d \delta^2(1 - \delta)^{n - 2}} \prod_{k \in A} \left( 1 - \binom{k}{2} \binom{n}{2}^{-1} \right)^{dp_k} \\
1 - p_2 &= e^{-d \delta^2(1 - \delta)^{n - 2}} \prod_{k = 3}^5 \left( 1 - \binom{k}{2} \binom{n}{2}^{-1} \right)^{dp_k}
\end{align*}
First note that
$$\left| \log(1 - p_1) - \log(1 - p_2) \right| = - \sum_{k \in A^c \cap \{3, 4, 5\}} dp_k \log \left( 1 - \binom{k}{2} \binom{n}{2}^{-1} \right) \lesssim w^{-1/2} n^{-2}$$
since $dp_k < w^{-1/2}$ if $k \not \in A$. Now note that since $p = 1 - (1 - \delta^2)^d$, we have that
\begin{align*}
\frac{1}{d} \left( \log(1 - p) - \log(1 - p_2) \right) &= \log(1 - \delta^2) + \delta^2(1 - \delta)^{n - 2} - \sum_{k = 3}^5 p_k \log\left( 1 - \binom{k}{2} \binom{n}{2}^{-1} \right) \\
&= - \delta^2 + O_n(\delta^4) + \delta^2(1 - \delta)^{n - 2} + \sum_{k = 3}^5 \left[ p_k \binom{k}{2} \binom{n}{2}^{-1} + O_n\left(p_k n^{-4}\right) \right]
\end{align*}
This quantity can be further simplified to
\begin{align*}
&-\delta^2 + \delta^2 \cdot \left( (1 - \delta)^{n - 2} + \binom{n}{3} \binom{3}{2} \binom{n}{2}^{-1} \delta (1 - \delta)^{n - 3} +  \binom{n}{4} \binom{4}{2} \binom{n}{2}^{-1} \delta^2 (1 - \delta)^{n - 4} \right. \\
&\quad \quad \quad \quad  \left. +  \binom{n}{5} \binom{5}{2} \binom{n}{2}^{-1} \delta^3(1 - \delta)^{n - 5} \right) + O_n(L^2 d^{-2}) + O_n\left( n^{-1} L^{3/2} d^{-3/2} \right) \\
&\quad \quad = -\delta^2 + \delta^2 \cdot \left( 1 - \sum_{t = 4}^n \binom{n - 2}{t} \delta^t (1 - \delta)^{n - t} \right) + O_n\left( n^{-1} L^{3/2} d^{-3/2} \right) \\
&\quad \quad = O_n(n^4 \delta^6) + O_n\left( n^{-1} L^{3/2} d^{-3/2} \right) \\
&\quad \quad = O_n\left(n^4 L^3 p^3 d^{-3} + n^{-1} L^{3/2} d^{-3/2} \right)
\end{align*}
The first equality holds from the binomial theorem and since $\binom{n}{k} \binom{k}{2} \binom{n}{2}^{-1} = \binom{n - 2}{k - 2}$. Here, we also have combined Equations \ref{eqndelta} and \ref{eqn12} to obtain
$$\delta = O_n\left(\sqrt{\frac{pL}{d}}\right) = O_n(L^{1/2}d^{-1/2}) = o_n(n^{-3/2})$$
and $p_k = \binom{n}{k} \delta^k (1 - \delta)^{n - k} = O_n(n^k \delta^k) = O_n(n^3 L^{3/2}d^{-3/2})$ for each $3 \le k \le 5$. The second last equality above follows from
$$0 < \sum_{t = 4}^n \binom{n - 2}{t} \delta^t (1 - \delta)^{n - t} \le \sum_{t = 4}^n n^t \delta^t = O_n(n^4 \delta^4)$$
since $n\delta = o_n(1)$. Finally, also observe that if $(m_3, m_4, m_5) \in \text{supp}(\mL_E)$, then it follows that
\begin{align*}
\left| \log(1 - p_1) - \log(1 - q(n, d, p, m_3, m_4, m_5)) \right| &\le - \sum_{k \in A} \left| m_k - dp_k \right| \cdot \log \left( 1 - \binom{k}{2} \binom{n}{2}^{-1} \right) \\
&\lesssim \sum_{k = 3}^5 n^{-2} \cdot \sqrt{wdp_k} \lesssim \frac{p^{3/4} w^{1/2} L^{3/4}}{n^{1/2} d^{1/4}}
\end{align*}
Combining these three inequalities with the fact that $d \gg n^3 L^3$ yields that
\begin{align*}
\left| \log(1 - p) - \log(1 - q(n, d, p, m_3, m_4, m_5)) \right| &\lesssim w^{-1/2} n^{-2} + n^4 L^3 p^3 d^{-2} + n^{-1} L^{3/2} d^{-1/2} + \frac{p^{3/4} w^{1/2} L^{3/4}}{n^{1/2} d^{1/4}} \\
&\lesssim w^{-1/2} n^{-2} + \frac{p^{3/4} w^{1/2} L^{3/4}}{n^{1/2} d^{1/4}}
\end{align*}
for all $(m_3, m_4, m_5) \in \text{supp}(\mL_E)$. Observe that since the upper bound on the right hand side above tends to zero as $n \to \infty$, it follows that $1 - q(n, d, p, m_3, m_4, m_5) = \Theta_n(1 - p)$. Now note that, since $e^x$ is $1$-Lipschitz for $x \le 0$, the inequality above also implies that
$$\left| p - q(n, d, p, m_3, m_4, m_5)) \right| \lesssim w^{-1/2} n^{-2} + \frac{p^{3/4} w^{1/2} L^{3/4}}{n^{1/2} d^{1/4}}$$
Note that the first term is $o_n(n^{-2}) = o_n(N^{-1})$ and let
$$\gamma = \frac{p^{3/4} w^{1/2} L^{3/4}}{n^{1/2} d^{1/4}} \cdot \sqrt{\frac{N}{p(1-p)}} \lesssim \frac{p^{1/4} n^{1/2} w^{1/2} L^{3/4}}{(1 - p)^{1/2} d^{1/4}} \lesssim \frac{n^{3/4} w^{1/2} L^{3/4}}{d^{1/4}} = o_n(1)$$
since $1 - p = \Omega_n(n^{-1/2})$ and $d \gg w^2 n^3 L^3$. As argued above, we have $1 - q(n, d, p, m_3, m_4, m_5) = \Theta_n(1 - p)$ and that $q(n, d, p, m_3, m_4, m_5) = \Theta_n(p)$ for all $(m_3, m_4, m_5) \in \text{supp}(\mL_E)$. Applying the previous lemma on the total variation between binomial distributions now yields that
$$\sup_{(m_3, m_4, m_5) \in \text{supp}(\mL_E)} \TV\left( \text{Bin}(N, p), \text{Bin}(N, q(n, d, p, m_3, m_4, m_5)) \right) = o_n(1)$$
which completes the proof of the theorem.
\end{proof}

\subsection{Signed Triangle Count in RIG$(n, d, p)$}
\label{subsec:signedtriangles}

The purpose of this section is to prove Theorem \ref{thm:rig-triangles}. We first establish some notation that will be used throughout this section. Given a simple graph $G$ on the vertex set $[n]$, let $e_{ij} = \mathbbm{1}(\{i, j\} \in E(G))$ for each $1 \le i < j \le n$. Let $T_s(G)$ denote the signed triangle count of a graph $G$ given by
$$T_s(G) = \sum_{1 \le i < j < k \le n} (e_{ij} - p)(e_{ik} - p)(e_{jk} - p)$$
This statistic was introduced in \cite{bubeck2016testing} to show an analogue of Theorem \ref{thm:rig-triangles} for random geometric graphs on $\mathbb{S}^{d - 1}$. Let $T(G)$ denote the ordinary triangle count of $G$. Also recall that Equation \ref{eqndelta} states that
$$\frac{p}{d} \le \delta^2 \le \frac{\log(1 - p)^{-1}}{d}$$
Therefore the condition that $1 - p = \Omega_n(1)$ implies that $\delta = O_n(d^{-1/2})$. In order to establish Theorem \ref{thm:rig-triangles}, we will need the following results on the distribution of $T_s(G)$ and $T(G)$ for $G \sim \pr{rig}(n, d, p)$ and $G \sim \mG(n, p)$. Recall that the statement $A = B + O_n(C)$ is a shorthand for the two-sided estimate $|A - B| = O_n(C)$.

\begin{lemma} \label{lem:signedtriangleexp}
If $G \sim \pr{rig}(n, d, p)$ where $1 - p = (1 - \delta^2)^d = \Omega_n(1)$, then
$$\bE\left[ T_s(G) \right] = \binom{n}{3} \cdot (1 - p)^3 \cdot \left[ d \delta^3 + O_n\left(d\delta^4 \right) \right]$$
\end{lemma}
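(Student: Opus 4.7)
The plan is to reduce to a three-vertex computation by symmetry and then evaluate each joint non-edge probability using the union-of-cliques representation of $\pr{rig}(n,d,p)$. By linearity of expectation and vertex symmetry,
\begin{equation*}
\bE[T_s(G)] = \binom{n}{3} \cdot \bE[(e_{12}-p)(e_{13}-p)(e_{23}-p)],
\end{equation*}
so the task reduces to computing the signed-triangle expectation on three fixed vertices. Writing $e_{ij} - p = (1-p) - \bar{e}_{ij}$ where $\bar{e}_{ij} := 1 - e_{ij}$ is the non-edge indicator, using $\bE[\bar{e}_{ij}] = 1-p$, and invoking the vertex symmetry yields
\begin{equation*}
\bE[(e_{12}-p)(e_{13}-p)(e_{23}-p)] = -2(1-p)^3 + 3(1-p)\,\bP[\bar{e}_{12}\bar{e}_{13}=1] - \bP[\bar{e}_{12}\bar{e}_{13}\bar{e}_{23}=1].
\end{equation*}

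To evaluate the joint non-edge probabilities I use the clique-union view: for each $\ell \in [d]$ let $I_\ell = \{i \in \{1,2,3\} : \ell \in S_i\}$, which is an independent random subset of $\{1,2,3\}$ including each vertex with probability $\delta$. An edge $\{i,j\}$ is absent iff $\{i,j\} \not\subseteq I_\ell$ for every $\ell$, so by independence across $\ell$,
\begin{align*}
\bP[\bar{e}_{12}\bar{e}_{13}] &= (1 - 2\delta^2 + \delta^3)^d, \\
\bP[\bar{e}_{12}\bar{e}_{13}\bar{e}_{23}] &= \bP[|I_\ell| \le 1]^d = (1 - 3\delta^2 + 2\delta^3)^d,
\end{align*}
where the first follows from $\bP[I_\ell \supseteq \{1,2\} \text{ or } I_\ell \supseteq \{1,3\}] = 2\delta^2 - \delta^3$ and the second from $(1-\delta)^2(1+2\delta) = 1 - 3\delta^2 + 2\delta^3$.

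The final step is to factor each joint probability relative to a power of $1-p = (1-\delta^2)^d$ and Taylor expand. The hypothesis $1-p = \Omega_n(1)$ forces $d\delta^2 = O_n(1)$ and hence $\delta = O_n(d^{-1/2})$, so $d\delta^3 = o_n(1)$ and the cross term $(d\delta^3)^2 = (d\delta^2)(d\delta^4) = O_n(d\delta^4)$ is absorbed into the remainder. Expanding $\log(1 - 2\delta^2 + \delta^3) - 2\log(1 - \delta^2) = \delta^3 + O(\delta^4)$ and $\log(1 - 3\delta^2 + 2\delta^3) - 3\log(1-\delta^2) = 2\delta^3 + O(\delta^4)$ gives
\begin{align*}
(1-2\delta^2+\delta^3)^d &= (1-p)^2 \left(1 + d\delta^3 + O_n(d\delta^4)\right), \\
(1-3\delta^2+2\delta^3)^d &= (1-p)^3 \left(1 + 2d\delta^3 + O_n(d\delta^4)\right).
\end{align*}
Substituting, the constant terms cancel as $-2 + 3 - 1 = 0$ and the $d\delta^3$ coefficients combine as $3 - 2 = 1$, producing $(1-p)^3[d\delta^3 + O_n(d\delta^4)]$.

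The main obstacle is bookkeeping: one must verify that the subleading $\delta^4$ contributions in the two Taylor expansions, together with cross terms of the form $(d\delta^3)^2$ and $d\delta^5$ exponential tails, all get absorbed into the $(1-p)^3 \cdot O_n(d\delta^4)$ remainder rather than contaminating the $d\delta^3$ coefficient. The bound $d\delta^2 = O_n(1)$ supplied by $1-p = \Omega_n(1)$ is precisely what makes each such cross term comparable to or smaller than $d\delta^4$, so that the leading coefficient $1$ obtained from $3-2$ is genuine.
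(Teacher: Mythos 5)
Your proposal is correct and follows essentially the same route as the paper's proof: reduce by symmetry to one triangle, express the signed product via non-edge indicators, compute the joint non-edge probabilities by factoring over the $d$ elements (your per-element values $1-2\delta^2+\delta^3$ and $1-3\delta^2+2\delta^3$ match the paper's $Q$-values), and Taylor-expand relative to powers of $1-p=(1-\delta^2)^d$ using $d\delta^2=O_n(1)$ so that the constants cancel and the $d\delta^3$ coefficients give $3-2=1$. The only cosmetic difference is that you collapse the eight-term inclusion--exclusion to two joint probabilities by symmetry and expand via logarithms, while the paper enumerates all $Q(x)$ and bounds the binomial expansions of $(1+\Delta_i)^d$ directly.
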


\begin{lemma} \label{lem:rigsignedtrianglesvar}
If $G \sim \pr{rig}(n, d, p)$ where $1 - p = (1 - \delta^2)^d = \Omega_n(1)$, then
$$\textnormal{Var}[T_s(G)] = \binom{n}{3} \cdot p^3(1 - p)^3 + O_n\left( n^4 d\delta^3 + n^5 d\delta^4 \right)$$
\end{lemma}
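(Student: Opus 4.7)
The plan is to prove the lemma by a careful second-moment calculation, writing the variance as a double sum over triangles and decomposing it by the number of vertices the two triangles share. Let $\xi_{ij} = e_{ij} - p$ and $W(T) = \prod_{e \in T} \xi_e$, so that $T_s = \sum_T W(T)$ and
$$\Var[T_s] = \sum_{T_1, T_2} \mathrm{Cov}(W(T_1), W(T_2)),$$
the sum running over ordered pairs of triangles on $[n]$. The key structural fact in $\pr{rig}$ is that two edges on disjoint vertex pairs are independent, since they involve disjoint subsets of the sets $\{S_1, \dots, S_n\}$. Classify ordered pairs $(T_1, T_2)$ by $k = |V(T_1) \cap V(T_2)| \in \{0, 1, 2, 3\}$; for $k = 0$ the two triangles are supported on disjoint vertex sets and are therefore independent, so the covariance vanishes.

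For $k = 3$ there are $\binom{n}{3}$ diagonal terms, each contributing $\Var[W(T)] = \mathbb{E}[\xi_{ij}^2 \xi_{ik}^2 \xi_{jk}^2] - \mu^2$ with $\mu = O_n\!\left((1-p)^3 d \delta^3\right)$ by Lemma~\ref{lem:signedtriangleexp}. Using $(e - p)^2 = p^2 + (1 - 2p)\, e$ for $e \in \{0,1\}$, I would expand $\mathbb{E}[\xi_{ij}^2 \xi_{ik}^2 \xi_{jk}^2]$ as a polynomial in the one-, two-, and three-edge moments and compute each by inclusion-exclusion on the events $\{e = 0\}$. Element by element across $[d]$, these yield the clean formulas $\bP[e_{ij} = 0] = (1 - \delta^2)^d$, $\bP[e_{ij} = e_{ik} = 0] = (1 - 2\delta^2 + \delta^3)^d$, and $\bP[e_{ij} = e_{ik} = e_{jk} = 0] = (1 - 3\delta^2 + 2\delta^3)^d$. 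Taylor expanding in $\delta$ using $d\delta^2 = O_n\!\left(\log(1-p)^{-1}\right) = O_n(1)$, which holds because $1 - p = \Omega_n(1)$, gives $\mathbb{E}[\xi_{ij}^2 \xi_{ik}^2 \xi_{jk}^2] = p^3(1-p)^3 + O_n(d \delta^3)$, so the $k = 3$ contribution is $\binom{n}{3} p^3(1-p)^3 + O_n(n^3 d \delta^3)$.

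For $k = 1$ and $k = 2$ I would use conditional independence. In the bowtie case (shared vertex $i$), the two triangles are conditionally independent given $S_i$, so
$$\mathrm{Cov}(W(T_1), W(T_2)) = \Var[g(S_i)], \qquad g(S_i) = \mathbb{E}[W(T_1) \mid S_i],$$
and by symmetry $g$ depends only on $m = |S_i|$ and can be computed in closed form in terms of $(1 - \delta)^m$ and $(1 + \delta)^m$. Taylor expanding $g$ around the typical value $m = d\delta$ and writing $\varepsilon = m - d\delta$, the $\varepsilon^2 \delta^2$ pieces coming from $(q_m - p)^2$ and from the expansion of $(1+\delta)^{-m}$ cancel, leaving leading fluctuation $(1-p)^3 d \delta^4 \cdot \varepsilon$ plus smaller terms. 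Since $|S_i| \sim \mathrm{Bin}(d, \delta)$ has variance $\sim d\delta$, this gives $\Var[g(|S_i|)] = O_n(d^3 \delta^9) = O_n(\delta^3)$ (using $d\delta^2 = O_n(1)$), and with $O_n(n^5)$ bowtie pairs the total contribution is $O_n(n^5 d \delta^4)$. In the diamond case ($k = 2$, shared edge $\{i, j\}$), a parallel conditioning on $(S_i, S_j)$ gives $\mathrm{Cov}(W(T_1), W(T_2)) = \Var[\xi_{ij} h(S_i, S_j)]$ with $h = \mathbb{E}[\xi_{ik} \xi_{jk} \mid S_i, S_j]$. The typical value of $h$ vanishes by the same near-cancellation as above, and its variance is $O_n(d\delta^3)$, driven by fluctuations of $|S_i|, |S_j|, |S_i \cap S_j|$; the $O_n(n^4)$ diamond pairs then contribute $O_n(n^4 d\delta^3)$.

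The main obstacle is the Taylor bookkeeping in the $k = 1$ and $k = 3$ cases: both $g(m)$ and the triangle probabilities are nonlinear in $(1 \pm \delta)^m$, and one has to track cancellations at order $d\delta^3$ and $\varepsilon^2 \delta^2$ carefully so that the genuinely smaller residual, rather than the naive bound, controls the variance. Combining the four cases yields $\binom{n}{3} p^3(1-p)^3 + O_n(n^3 d \delta^3 + n^4 d\delta^3 + n^5 d\delta^4) = \binom{n}{3} p^3(1-p)^3 + O_n(n^4 d\delta^3 + n^5 d\delta^4)$, matching the statement.
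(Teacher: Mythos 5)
Your plan is essentially the paper's proof: split $\mathrm{Var}[T_s]$ over pairs of triangles according to the size of their vertex overlap (disjoint pairs contribute zero), compute the diagonal term by inclusion--exclusion over per-element absence events using $d\delta^2 = O_n(1)$, and handle the one-common-vertex (``bowtie'') pairs by conditioning on the shared set, which is exactly the paper's conditioning on $|S_1|$ and gives $\mathrm{Cov}[\tau_{123},\tau_{145}] = \mathrm{Var}\bigl[\mathbb{E}[\tau_{123}\mid |S_1|]\bigr]$. The only methodological difference is the shared-edge case, where the paper evaluates $\mathbb{E}[\tau_{123}\tau_{124}]$ by a direct inclusion--exclusion over the five relevant edges rather than by conditioning on $(S_i,S_j)$; both yield $O_n(d\delta^3)$ per pair, and your pair counts $n^3, n^4, n^5$ with per-pair orders $d\delta^3, d\delta^3, d\delta^4$ match the paper's, so the plan does produce the stated bound.

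Two caveats. First, the one place where you commit to a specific cancellation—the bowtie case—is not right as stated: writing $g(m) = \mathbb{E}[\tau_{123}\mid |S_1|=m]$ and $m = d\delta + \varepsilon$, the surviving linear-in-$\varepsilon$ coefficient is of order $(1-p)^3\delta^2$ (it comes from the $(1-\delta^2)^{d-m}$ factor; the $d\delta^4\,\varepsilon$ pieces are what cancel), so $\mathrm{Var}[g(|S_i|)] \asymp (1-p)^6 d\delta^5$ rather than $O_n(d^3\delta^9)$; the two agree only when $d\delta^2 = \Theta(1)$. This is harmless for the lemma, since $d\delta^5 \le d\delta^4$ keeps the bowtie contribution within the $O_n(n^5 d\delta^4)$ budget, but the mechanism you describe would not survive the actual expansion. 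Second, essentially the entire content of the paper's proof is the ``Taylor bookkeeping'' you defer—the closed-form joint moments and the verification that the constant and $d\delta^3$-order terms cancel where they must—so as written this is a correct plan with the decisive computations asserted rather than carried out.
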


\begin{lemma} \label{lem:sparsetriangles}
If $n \ll d \ll n^3$ and $G \sim \pr{rig}(n, d, p)$ where $p = \Theta(1/n)$, then it follows that $T(G) \ge \tfrac{n^3}{12} \cdot \sqrt{p^3/d}$ with probability $1 - o_n(1)$.
\end{lemma}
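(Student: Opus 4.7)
The plan is to pass to the Poissonized model $\pr{rig}_P(n,d,p)$ introduced just before Proposition~\ref{prop:poissonization}, and then to lower bound $T(G)$ by counting distinct triangles planted as size-$3$ element cliques. Since $p = \Theta(1/n)$ gives $\log(1-p)^{-1} = O(1/n)$, Proposition~\ref{prop:poissonization} yields $\TV(\pr{rig}(n,d,p), \pr{rig}_P(n,d,p)) = O(n/d) = o_n(1)$ under the hypothesis $d \gg n$, so it suffices to prove the bound for $G \sim \pr{rig}_P(n,d,p)$.

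In $\pr{rig}_P$ the multiplicities $M_k$ are independent with $M_k \sim \text{Poisson}(dp_k)$ and $p_k = \binom{n}{k}\delta^k(1-\delta)^{n-k}$, and conditional on $(M_k)_k$ each size-$k$ element clique is planted on an independent uniformly random $k$-subset of $[n]$. Every distinct triple appearing as a planted size-$3$ element clique is a triangle of $G$, so $T(G) \ge \Delta_3$, where $\Delta_3$ denotes the number of distinct triples among the $M_3$ uniformly chosen triples. Set $\lambda_3 := dp_3$. Equation~\ref{eqndelta} gives $\delta^2 = (1+o(1))p/d$; combined with $n\delta = O(\sqrt{n/d}) = o(1)$ (and hence $(1-\delta)^{n-3} = 1 - o(1)$), this yields
$$\lambda_3 \;=\; (1+o(1)) \cdot \frac{n^3}{6} \sqrt{\frac{p^3}{d}}$$
Two concentration steps finish the proof. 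Since $d \ll n^3$ forces $\lambda_3 \to \infty$, Chebyshev's inequality for $M_3 \sim \text{Poisson}(\lambda_3)$ gives $M_3 \ge (1-o(1))\lambda_3$ with probability $1 - o_n(1)$. Conditional on $M_3 = m$, the expected number of coinciding pairs among the planted triples is at most $\binom{m}{2}\binom{n}{3}^{-1} \lesssim m^2/n^3$; since $\lambda_3/n^3 = O(1/(n^{3/2}\sqrt{d})) = o_n(1)$, a further Markov bound shows that the collision count is $o(m)$ with probability $1 - o_n(1)$. Hence $\Delta_3 \ge m(1-o(1))$, so that with probability $1 - o_n(1)$,
$$T(G) \;\ge\; \Delta_3 \;\ge\; (1-o(1))\,\lambda_3 \;\ge\; \frac{n^3}{12}\sqrt{\frac{p^3}{d}}$$
for all $n$ sufficiently large.

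The only delicate point is the joint scaling: the argument needs $\lambda_3 \to \infty$ (so that $M_3$ is Poisson-concentrated) and simultaneously $\lambda_3 \ll n^3$ (so that the birthday-style collision count is a negligible correction to $M_3$). The first is equivalent to $d \ll n^3$, and the second holds automatically once $p = o_n(1)$; both are satisfied throughout the regime $n \ll d \ll n^3$ assumed in the lemma, so no additional case analysis is needed.
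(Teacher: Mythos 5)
Your proof is correct and follows essentially the same route as the paper: lower bound $T(G)$ by the planted $3$-cliques and show that their expected number $dp_3 = (1+o_n(1))\tfrac{n^3}{6}\sqrt{p^3/d} \to \infty$ concentrates, using $\delta = \Theta_n(1/\sqrt{nd}) = o_n(1/n)$. The only differences are cosmetic: the paper works directly with $M_3 \sim \mathrm{Bin}(d, p_3)$ and binomial concentration, so the Poissonization detour through Proposition~\ref{prop:poissonization} is unnecessary, while your explicit collision count among planted triples is a small refinement of the step the paper states simply as $T(G) \ge M_3$.
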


The following lemma summarizes analogous calculations for $\mG(n, p)$. These calculations are elementary and can be found in Section 3 of \cite{bubeck2016testing}.

\begin{lemma} \label{lem:ercounts}
If $G \sim \mG(n, p)$, then it follows that
$$\bE[T_s(G)] = 0 \quad \quad \quad \bE[T(G)] = \binom{n}{3} \cdot p^3 \quad \quad \quad \textnormal{Var}[T_s(G)] = \binom{n}{3} \cdot p^3(1 - p)^3$$
\end{lemma}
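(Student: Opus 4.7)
The plan is to exploit the fact that for $G \sim \mG(n,p)$ the edge indicators $\{e_{ij}\}_{1 \le i < j \le n}$ are mutually independent $\mathrm{Bern}(p)$ random variables, so the centered variables $Y_{ij} := e_{ij} - p$ are mutually independent with $\bE[Y_{ij}] = 0$ and $\bE[Y_{ij}^2] = p(1-p)$.

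First I would compute $\bE[T(G)]$ by linearity of expectation: there are $\binom{n}{3}$ triples $\{i,j,k\}$, and the three edges $e_{ij}, e_{ik}, e_{jk}$ are independent, so the probability that all three are present is $p^3$, giving $\bE[T(G)] = \binom{n}{3} p^3$. Next, for each triple $\{i,j,k\}$ the summand $(e_{ij}-p)(e_{ik}-p)(e_{jk}-p) = Y_{ij} Y_{ik} Y_{jk}$ factors as a product of three independent mean-zero random variables, so its expectation is $0$; summing over triples yields $\bE[T_s(G)] = 0$.

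For the variance, I would expand
\begin{equation*}
\mathrm{Var}[T_s(G)] = \bE[T_s(G)^2] = \sum_{\{i,j,k\}} \sum_{\{i',j',k'\}} \bE\bigl[ Y_{ij} Y_{ik} Y_{jk} \cdot Y_{i'j'} Y_{i'k'} Y_{j'k'} \bigr].
\end{equation*}
Each cross term is a product of six factors $Y_e$; by independence of distinct $Y_e$ and the fact that $\bE[Y_e] = 0$, the expectation vanishes unless every edge appearing in the product appears with even multiplicity. A quick combinatorial check shows this forces the two edge-sets $\{ij,ik,jk\}$ and $\{i'j',i'k',j'k'\}$ to coincide, i.e.\ the two triples are identical (two distinct triples share at most one edge, so the symmetric difference is nonempty and contains an edge of multiplicity one). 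For the diagonal terms, $\bE[Y_{ij}^2 Y_{ik}^2 Y_{jk}^2] = [p(1-p)]^3$ by independence, so summing over the $\binom{n}{3}$ triples gives $\mathrm{Var}[T_s(G)] = \binom{n}{3} p^3 (1-p)^3$.

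There is really no obstacle here; the only point that requires a sentence of justification is the combinatorial claim that two distinct $3$-subsets of $[n]$ cannot induce the same $3$-edge set, which is immediate since the vertex set of a triangle is recovered from its edge set.
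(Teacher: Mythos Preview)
Your proof is correct and is precisely the elementary direct computation the paper has in mind; indeed, the paper does not give its own proof but simply cites Section~3 of \cite{bubeck2016testing}, where exactly this argument appears. The only subtlety---that off-diagonal terms in the variance expansion vanish because two distinct triples share at most one edge---is handled correctly in your sketch.
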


Given these lemmas, the proof of Theorem \ref{thm:rig-triangles} is a straightforward consequence of the definition of total variation.

\begin{proof}[Proof of Theorem \ref{thm:rig-triangles}]
First consider the case in which $1 - p = \Omega(1)$, $p = \Theta(1)$ and $n^2 \ll d \ll n^3$. Equation \ref{eqndelta} implies that $\delta = \Theta(d^{-1/2}) = o(1/n)$. Combining this with Lemma \ref{lem:rigsignedtrianglesvar} yields that $\text{Var}[T_s(G)] = \left( 1 + o_n(1) \right) \binom{n}{3} \cdot p^3(1 - p)^3$ if $G \sim \pr{rig}(n, d, p)$. Therefore it follows that
$$\frac{\bE\left[ T_s(G) \right]}{\sqrt{\text{Var}[T_s(G)]}} = \left( 1 + o_n(1) \right) d\delta^3 \cdot \sqrt{p^{-3}(1 - p)^3 \binom{n}{3}} \gtrsim d^{-1/2} n^{3/2} = \omega_n(1)$$
Therefore if $E$ is the event
$$E = \left\{ T_s(G) \ge \frac{1}{2} \binom{n}{3} \cdot (1 - p)^3 \cdot d\delta^3 \right\}$$
then it follows by Chebyshev's inequality that $\bP_{G \sim \pr{rig}(n, d, p)}[E] = 1 - o_n(1)$. Now consider the case where $G \sim \mG(n, p)$. Lemma \ref{lem:ercounts} implies that $\bE[T_s(G)] = 0$ and $\textnormal{Var}[T_s(G)] = \binom{n}{3} \cdot p^3(1 - p)^3$. Chebyshev's inequality now implies that $\bP_{G \sim \mG(n, p)}[E] = o_n(1)$. The definition of total variation implies that
$$\TV\left( \pr{rig}(n, d, p), \mG(n, p) \right) \ge \left| \bP_{G \sim \pr{rig}(n, d, p)}[E] - \bP_{G \sim \mG(n, p)}[E] \right| = 1 - o_n(1)$$
which completes the proof of the theorem if $p = \Theta(1)$. Now consider the case in which $p = \Theta(1/n)$ and assume that $n \ll d \ll n^3$. Lemma \ref{lem:sparsetriangles} yields $\bP_{G \sim \pr{rig}(n, d, p)}[E'] = 1 - o_n(1)$ where
$$E' = \left\{ T(G) \ge \frac{n^3}{12} \cdot \sqrt{\frac{p^3}{d}} \right\}$$
Note that $T(G) \ge 0$ and thus Markov's inequality implies that
$$\bP_{G \sim \mG(n, p)}[E'] \le \frac{\bE_{G \sim \mG(n, p)}[T(G)]}{\frac{n^3}{12} \cdot \sqrt{\frac{p^3}{d}}} \lesssim \sqrt{dp^3} = o_n(1)$$
since $\bE_{G \sim \mG(n, p)}[T(G)] = \binom{n}{3} \cdot p^3$ by Lemma \ref{lem:ercounts}, $d \ll n^3$ and $p = \Theta(1/n)$. Similarly, this implies that $\TV\left( \pr{rig}(n, d, p), \mG(n, p) \right) = 1 - o_n(1)$, proving the theorem.
\end{proof}

We remark that the proof above more generally shows that the two graphs do not converge in total variation if $p^{-3} n^2 \ll d \ll n^3$ or if $p = \Theta(1/n)$ and $n \ll d \ll n^3$. These extended conditions are omitted from Theorem \ref{thm:rig-triangles} for simplicity.

In the rest of this section, we prove Lemmas \ref{lem:signedtriangleexp} and \ref{lem:sparsetriangles}. We now present the proof of Lemma \ref{lem:signedtriangleexp}, which computes the expectation of $T_s(G)$ for $G \sim \pr{rig}(n, d, p)$. This expectation is $\binom{n}{3}$ times the expected value of a single signed triangle, which a priori is a fairly intractable combinatorial sum. Our main trick is to write this expectation as the linear combination of the probabilities of subsets of edges being omitted from $G$. These probabilities are products over the elements of the base set $[d]$, from which we obtain a fairly simple explicit expression for the desired expectation. The proof of Lemma \ref{lem:rigsignedtrianglesvar} uses similar ideas but is considerably more computationally involved. This proof is deferred to Appendix \ref{subsec:rig-triangles}. In Appendix \ref{subsec:rig-triangles}, we also show how to adapt the method below to compute $\bE[T(G)]$ for $G \sim \pr{rig}(n, d, p)$.

\begin{proof}[Proof of Lemma \ref{lem:signedtriangleexp}]
First observe that linearity of expectation and symmetry yields that
\begin{equation} \label{eqn:signedexp}
\bE\left[ T_s(G) \right] = \sum_{1 \le i < j < k \le n} \bE[(e_{ij} - p)(e_{ik} - p)(e_{jk} - p)] = \binom{n}{3} \cdot \bE[(e_{12} - p)(e_{13} - p)(e_{23} - p)]
\end{equation}
For each $x \in \{0, 1\}^3$, let $P(x_1, x_2, x_3)$ denote the probability that $e_{12} = x_1$, $e_{13} = x_2$ and $e_{23} = x_3$. Now define $Q : \{0, 1\}^3 \to [0, 1]$ as
$$Q(x_1, x_2, x_3) = \sum_{y \subseteq x} P(y_1, y_2, y_3)$$
Note that $Q(x_1, x_2, x_3)$ is the probability that edges among $\{1, 2\}, \{1, 3\}$ and $\{2, 3\}$ that are present in $G$ form a subset of the support of $(x_1, x_2, x_3)$. Therefore we have that
\begin{equation} \label{eqn:expidentity}
Q(x_1, x_2, x_3) = \bE\left[ \prod_{\{i, j\} \in C(x)} (1 - e_{ij}) \right]
\end{equation}
where $C(x)$ is the set of edges among $\{1, 2\}, \{1, 3\}, \{2, 3\}$ with corresponding indicators among $x_1, x_2, x_3$ equal to zero. We now compute $Q(x)$ for each $x \in \{0, 1\}^3$. Note that $P(0, 0, 0) = Q(0, 0, 0)$ is the probability that none of these three edges is present. If $S_1, S_2$ and $S_3$ are the latent sets for vertices $1, 2$ and $3$, respectively, then this is the same as the event that each $i \in [d]$ is present in at most one of $S_1, S_2$ and $S_3$ for each $i$. Note that these events are independent for different $i$. The probability that any given $i \in [d]$ is in at most one of $S_1, S_2$ and $S_3$ is
$$\bP\left[i \text{ is in at most one of } S_1, S_2, S_3 \right] = (1 - \delta)^3 + 3\delta(1 - \delta)^2 = (1 + 2\delta) (1 - \delta)^2$$
Independence for different $i$ now implies that
$$P(0, 0, 0) = Q(0, 0, 0) = \prod_{i = 1}^d \bP\left[i \text{ is in at most one of } S_1, S_2, S_3 \right] = (1 + 2\delta)^d (1 - \delta)^{2d}$$
Note that $Q(1, 0, 0)$ is the probability that each $i \in [d]$ is either in at most one of $S_1, S_2, S_3$ or is in both of $S_1$ and $S_2$. Thus
$$Q(1, 0, 0) = \left[ (1 - \delta)^3 + 3\delta(1 - \delta)^2 + \delta^2 (1 - \delta) \right]^d = (1 - \delta)^d(1 + \delta - \delta^2)^d$$
Generalizing this to other $x \in \{0, 1\}^3$ implies that
$$Q(x_1, x_2, x_3) = (1 - \delta)^d(1 + \delta - \delta^2)^d$$
if $|x| = x_1 + x_2 + x_3 = 1$. By a similar argument, if $|x| = x_1 + x_2 + x_3 = 2$, then
$$Q(x_1, x_2, x_3) = \left[ (1 - \delta)^3 + 3\delta(1 - \delta)^2 + 2\delta^2 (1 - \delta) \right]^d = (1 - \delta)^d(1 + \delta)^d$$
Note that $Q(1, 1, 1) = 1$. Now using Equation \ref{eqn:expidentity}, we have that
\allowdisplaybreaks
\begin{align*}
\bE[(e_{12} - p)(e_{13} - p)(e_{23} - p)] &= -\bE\left[\left((1 - e_{12}) - (1 - p) \right) \left((1 - e_{13}) - (1 - p) \right) \left((1 - e_{23}) - (1 - p) \right) \right] \\
&\quad \quad = - \sum_{x \in \{0, 1\}^3} (-1)^{|x|} (1 - p)^{|x|} \cdot Q(x_1, x_2, x_3)
\end{align*}
Directly expanding the $Q(x_1, x_2, x_3)$ and the fact that $1 - p = (1 - \delta^2)^d$ simplifies this quantity to
\allowdisplaybreaks
\begin{align*}
&(1 - p)^3 - 3(1 - p)^2 (1 - \delta)^d(1 + \delta)^d + 3(1 - p) (1 - \delta)^d(1 + \delta - \delta^2)^d - (1 + 2\delta)^d (1 - \delta)^{2d} \\
&\quad \quad = (1 - p)^3 \cdot \left[ -2 + 3(1 - \delta^2)^{-d} \left( 1 - \frac{\delta^2}{1 + \delta} \right)^d - (1 - \delta^2)^{-d} \left( 1 - \frac{\delta^2}{(1 + \delta)^2} \right)^d \right] \\
&\quad \quad = (1 - p)^3 \cdot \left[ -2 + 3 \left( 1 + \frac{\delta^3}{(1 - \delta^2)(1 + \delta)} \right)^d - \left( 1 + \frac{2\delta^3 + \delta^4}{(1 - \delta^2)(1 + \delta)^2} \right)^d \right] \numberthis \label{eqn:finalsigned}
\end{align*}
Now let
$$\Delta_1 = \frac{\delta^3}{(1 - \delta^2)(1 + \delta)} \quad \text{and} \quad \Delta_2 = \frac{2\delta^3 + \delta^4}{(1 - \delta^2)(1 + \delta)^2}$$
Since $\delta = O_n(d^{-1/2})$, it follows that $d\Delta_1, d\Delta_2 = O_n(d\delta^3) = o_n(1)$. Therefore we have that for sufficiently large $d$,
\allowdisplaybreaks
\begin{align*}
\left| 3 \left( 1 + \frac{\delta^3}{(1 - \delta^2)(1 + \delta)} \right)^d - 3 - 3d\delta^3 \right| &\le 3 \left| d\Delta_1 - d\delta^3 \right| + 3\sum_{k = 2}^d \binom{d}{k} \Delta_1^k (1 - \Delta_1)^{d - k} \\
&\le 3 \cdot \left| \frac{d(\delta^4 - \delta^5 - \delta^6)}{(1 - \delta^2)(1 + \delta)} \right| + 3 \sum_{k = 2}^d d^k \Delta_1^k \\
&\le \frac{3d(\delta^4 + \delta^5 + \delta^6)}{(1 - \delta^2)(1 + \delta)} + \frac{3d^2 \Delta_1^2}{1 - d\Delta_1} \\
&\lesssim d\delta^4 + d^2 \Delta_1^2 \\
&\lesssim d\delta^4
\end{align*}
Therefore it follows that
\begin{equation} \label{eqn:firstapprox}
3 \left( 1 + \frac{\delta^3}{(1 - \delta^2)(1 + \delta)} \right)^d - 3 = 3d \delta^3 + O_n\left(d\delta^4 \right)
\end{equation}
By a similar computation, it follows that
\begin{equation} \label{eqn:secondapprox}
\left( 1 + \frac{2\delta^3 + \delta^4}{(1 - \delta^2)(1 + \delta)^2} \right)^d - 1 = 2d \delta^3 + O_n\left( d\delta^4 \right)
\end{equation}
Substituting these bounds into Equation \ref{eqn:finalsigned}, we have that
\begin{align*}
\bE[(e_{12} - p)(e_{13} - p)(e_{23} - p)] &= (1 - p)^3 \cdot \left[ d\delta^3 + O_n(d\delta^4) \right]
\end{align*}
Now combining this with Equation \ref{eqn:signedexp} completes the proof of the lemma.
\end{proof}

We conclude this section by proving Lemma \ref{lem:sparsetriangles}. This is a simple consequence of the planting cliques view of $\pr{rig}(n, d, p)$ in Section \ref{subsec:plantingcliques}.

\begin{proof}[Proof of Lemma \ref{lem:sparsetriangles}]
We use the same notation as in the proof of Theorem \ref{thm:denserig}. Observe that $T(G) \ge M_3$ where $M_3$ is the number of 3-cliques planted in the construction of $G$. Furthermore, $M_3 \sim \text{Bin}(d, p_3)$ where $p_3 = \binom{n}{3} \delta^3(1 - \delta)^{n - 3}$. Now note that since $p = \Theta(1/n)$, it follows that
$$\frac{p}{d} \le \delta^2 \le \frac{\log(1 - p)^{-1}}{d} = \frac{p}{d} + O_n\left( \frac{p^2}{d} \right)$$
Thus $\delta = \Theta_n(1/\sqrt{nd}) = o_n(1/n)$, which implies that $(1 - \delta)^{n - 3} = 1 - o_n(1)$. Therefore
$$dp_3 = d\binom{n}{3} \delta^3(1 - \delta)^{n - 3} = \left(1 + o_n(1)\right) \cdot \frac{n^3}{6} \cdot \sqrt{\frac{p^3}{d}} = \omega_n(1)$$
since $p = \Theta(1/n)$. Since $dp_3 \to \infty$, standard concentration inequalities for the binomial distribution then imply that $M_3 \ge 3dp_3/4$ with probability $1 - o_n(1)$ where $3dp_3/4 \ge \tfrac{n^3}{12} \cdot \sqrt{p^3/d}$ for sufficiently large $n$. This completes the proof of the lemma.
\end{proof}

\section{Random Intersection Matrices and Higher Thresholds $\tau$}
\label{sec:rim}

In this section, we extend the approach used to prove Theorem \ref{thm:denserig} to directly couple the full matrix of intersection sizes between the sets $S_i$ to a matrix with i.i.d. Poisson entries and prove Theorem \ref{thm:rim}. Applying the data-processing inequality to thresholding this matrix at $\tau > 1$ will then yield a natural extension of Theorem \ref{thm:introdenserig} to random intersection graphs defined at higher thresholds than $1$ and prove Corollary \ref{cor:higherthres}.

The main results of this section are Theorem \ref{thm:rim} and Corollary \ref{cor:higherthres} identifying triples of $(n, d, \delta)$ for which $\pr{rim}$ and $\pr{poim}$ converge and quadruples of $(n, d, p, \tau)$ for which $\pr{rig}_\tau$ and $\mG(n, p)$ converge, respectively. These are restated here for convenience.

\begin{reptheorem}{thm:rim}
Suppose that $\delta = \delta(n) \in (0, 1)$ and $d$ satisfies that $d \gg n^3$ and $\delta \ll d^{-1/3} n^{-1/2}$. Then it holds that
$$\TV\left( \pr{rim}(n, d, \delta), \pr{poim}\left(n, d\delta^2\right) \right) \to 0 \quad \text{as } n \to \infty$$
\end{reptheorem}

\begin{repcorollary}{cor:higherthres}
Suppose $p = p(n) \in (0, 1)$, $\delta = \delta(n) \in (0, 1)$, $\tau \in \mathbb{Z}_+$ and $d$ satisfy that
$$1 - p = \sum_{k = 0}^{\tau - 1} \binom{d}{k} \delta^{2k}(1 - \delta^2)^{d - k}$$
Furthermore suppose that
$$d \gg n^3, \quad \delta \ll d^{-1/3} n^{-1/2} \quad \textnormal{and} \quad n^2 \delta^4 \ll p(1 - p)$$
Then it follows that
$$\TV\left( \pr{rig}_\tau(n, d, p), \mG(n, p) \right) \to 0 \quad \text{as } n \to \infty$$
\end{repcorollary}

The proof of Theorem \ref{thm:rim} proceeds in analogous steps to those in the proof of Theorem \ref{thm:denserig}. A key ingredient is a sharp analysis of the total variation distance between planted and non-planted Poisson matrices, an intermediary object defined below that will appear in our argument.

\begin{definition}[Planted Poisson Matrix]
Given $\lambda \in \mathbb{R}_{\ge 0}$ and a positive integer $t \ge 2$, let $\pr{poim}_P(n, t, \lambda)$ denote the distribution of symmetric $n \times n$ matrices $M$ generated in the steps:
\begin{enumerate}
\item select a subset $S \subseteq [n]$ of size $|S| = t$ uniformly at random; and
\item form the symmetric matrix $M$ with $M_{ii} = 0$ for $1 \le i \le n$ and entries $M_{ij}$ with $1 \le i < j \le n$ conditionally independent given $S$ and distributed as
$$M_{ij} \sim \left\{ \begin{array}{cl} 1 + \textnormal{Poisson}(\lambda) &\textnormal{if } i, j \in S \\ \textnormal{Poisson}(\lambda) &\textnormal{otherwise} \end{array} \right.$$
\end{enumerate}
\end{definition}

The next lemma is an analogue of Lemma \ref{lem:pcdist} for random intersection matrices. Its proof is deferred to Appendix \ref{subsec:plantedpois}.

\begin{lemma} \label{lem:plantedpois}
Let $t \ge 3$ be a constant positive integer and $\lambda = \lambda(n) \in \mathbb{R}_{\ge 0}$ be such that $\lambda = \omega_n(n^{-2})$. Then it follows that
\begin{align*}
&\TV\left( \pr{poim}_P\left( n, t, \lambda \right), \pr{poim}\left( n, \lambda + \binom{t}{2} \binom{n}{2}^{-1} \right) \right) \\
&\quad \quad = O_n \left( (1 + \lambda^{-1}) n^{-2} + \max_{2 < k \le t} n^{-k/2} \left( 1 + \lambda^{-1} \right)^{\frac{1}{2} \binom{k}{2}} \right)
\end{align*}
\end{lemma}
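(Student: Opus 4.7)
The plan is to mirror the chi-squared second-moment argument from the proof of Lemma \ref{lem:pcdist}, adapted to the Poisson setting. Let $\nu := \pr{poim}_P(n, t, \lambda)$, $\mu := \pr{poim}(n, \lambda')$, and $\Delta := \lambda' - \lambda = \binom{t}{2}\binom{n}{2}^{-1}$. Writing $\nu = \bE_{S \sim \mathcal{U}_t}[\nu_S]$ as a mixture over the planted set $S$ (with $\mathcal{U}_t$ uniform on size-$t$ subsets of $[n]$) and using that all three distributions have independent entries, Fubini gives
\begin{align*}
1 + \chi^2(\nu, \mu) = \bE_{S, T \sim \mathcal{U}_t}\Bigl[\prod_{1 \le i < j \le n} R_{ij}(S, T)\Bigr], \quad R_{ij}(S, T) := \bE_{X \sim \text{Poisson}(\lambda')}\Bigl[\tfrac{d\nu_{S, ij}}{d\mu_{ij}}(X) \cdot \tfrac{d\nu_{T, ij}}{d\mu_{ij}}(X)\Bigr],
\end{align*}
and Cauchy--Schwarz reduces the goal to bounding $\chi^2$.

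A direct calculation with the Poisson PMF (the MGF in the unplanted case, a term-by-term sum otherwise) gives $R_{ij} = e^{\Delta^2/\lambda'}$, $(\lambda/\lambda')\, e^{\Delta^2/\lambda'}$, and $((1 + \lambda^2/\lambda')/\lambda')\, e^{\Delta^2/\lambda'}$ in the cases where $\{i,j\}$ is contained in neither, exactly one, or both of $S, T$ respectively. Setting $m := |S \cap T|$ and counting the $\binom{m}{2}$, $2(\binom{t}{2} - \binom{m}{2})$, and $\binom{n}{2} - 2\binom{t}{2} + \binom{m}{2}$ edges of each type, the edge product collapses algebraically to
\begin{align*}
\prod_{i < j} R_{ij}(S, T) = f_0 \cdot \bigl(1 + \lambda'/\lambda^2\bigr)^{\binom{m}{2}}, \qquad f_0 := (1 - \Delta/\lambda')^{2\binom{t}{2}}\, e^{\binom{n}{2}\Delta^2/\lambda'},
\end{align*}
via the algebraic identity $[(1+\lambda^2/\lambda')/\lambda'] \cdot (\lambda'/\lambda)^2 = 1 + \lambda'/\lambda^2$.

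Taking the expectation over $m \sim \text{Hypergeom}(n, t, t)$, I would use $\bP[m = k] = O_n(n^{-k})$ together with the refined estimate $\bP[m = 2] = \binom{t}{2}^2 \binom{n}{2}^{-1} + O_n(n^{-3})$, coming from the same binomial-coefficient manipulation as in Equation \ref{eqnbin}. Taylor expanding $f_0 = 1 - \binom{t}{2}^2/(\binom{n}{2}\lambda') + O_n((1+\lambda^{-1})^2 n^{-4})$ and separately expanding $\bE_m\bigl[(1+\lambda'/\lambda^2)^{\binom{m}{2}}\bigr] = 1 + \bP[m=2]\cdot \lambda'/\lambda^2 + \sum_{k \ge 3}\bP[m=k]\bigl((1+\lambda'/\lambda^2)^{\binom{k}{2}} - 1\bigr)$, the $\binom{t}{2}^2/(\binom{n}{2}\lambda')$ contributions cancel when the two expansions are multiplied, leaving
\begin{align*}
\chi^2(\nu, \mu) = O_n\Bigl((1+\lambda^{-1})^2 n^{-4} + (1+\lambda^{-1}) n^{-3} + \max_{2 < k \le t} n^{-k}(1+\lambda^{-1})^{\binom{k}{2}}\Bigr),
\end{align*}
from which $\TV(\nu, \mu) \le \sqrt{\chi^2/2}$ gives the claimed bound (the $\sqrt{(1+\lambda^{-1})n^{-3}}$ term is absorbed into the $k=3$ entry of the maximum since $(1+\lambda^{-1})^{1/2} \le (1+\lambda^{-1})^{3/2}$).

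The main obstacle will be bookkeeping the cancellation between the three distinct $(1+\lambda^{-1})/n^2$-order contributions---the $2\binom{t}{2}\Delta/\lambda'$ correction from $(1 - \Delta/\lambda')^{2\binom{t}{2}}$, the $\binom{n}{2}\Delta^2/\lambda'$ correction from the exponential in $f_0$, and the $\bP[m=2]\cdot \lambda'/\lambda^2$ contribution from planted pairs---while simultaneously carrying the subleading $O(1/(n^2\lambda))$ discrepancy between $\lambda$ and $\lambda'$ to second order. This is the direct Poisson analogue of the cancellations in Equations (3.7)--(3.9) in the proof of Lemma \ref{lem:pcdist}, but slightly more delicate because the ``both planted'' factor $(1+\lambda^2/\lambda')/\lambda'$ depends on $\lambda$ less transparently than the corresponding factor in the Bernoulli case, and the bound must remain tight uniformly as $\lambda$ ranges across the full allowed regime $\lambda = \omega_n(n^{-2})$. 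Once this is verified, the $k \ge 3$ contributions are straightforwardly bounded by $\bP[m=k] \cdot (1 + \lambda'/\lambda^2)^{\binom{k}{2}} = O_n(n^{-k}(1 + \lambda^{-1})^{\binom{k}{2}})$.
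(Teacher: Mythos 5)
Your proposal is correct and follows essentially the same route as the paper's proof: a second-moment ($\chi^2$) computation over the random planted set, exact per-edge Poisson sums, reduction to an expectation over the hypergeometric overlap $m = |S \cap T|$, a Taylor expansion in which the first-order $\binom{t}{2}^2\binom{n}{2}^{-1}(\lambda')^{-1}$-type terms cancel, and Cauchy--Schwarz to pass from $\chi^2$ to total variation. Your collapsed per-overlap factor $\bigl(1 + \lambda'/\lambda^2\bigr)^{\binom{m}{2}}$, obtained from the edge counts $\binom{m}{2}$, $2\binom{t}{2} - 2\binom{m}{2}$, $\binom{n}{2} - 2\binom{t}{2} + \binom{m}{2}$, differs only superficially from the factor $\bigl(\tfrac{\lambda^2 + \lambda + \tau}{\lambda^2 + \lambda\tau}\bigr)^{\binom{m}{2}}$ appearing in the paper (with $\tau = \binom{t}{2}\binom{n}{2}^{-1}$); both are $O_n\bigl((1 + \lambda^{-1})^{\binom{m}{2}}\bigr)$, so the same final bound results.
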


Using this lemma, the proof of Theorem \ref{thm:rim} follows the same steps as the proof of Theorem \ref{thm:denserig} -- it applies the above lemma inductively for elements of $[d]$ in at least three sets after several Poissonization steps. The full details of the remainder of the proof of Theorem \ref{thm:rim} can be found in Appendix \ref{subsec:rim-poim}. Now, by thresholding instances of $\pr{rim}$ and applying the data-processing inequality, we can use Theorem \ref{thm:rim} to prove Corollary \ref{cor:higherthres}.

\begin{proof}[Proof of Corollary \ref{cor:higherthres}]
First note that if $p = o_n(n^{-2})$, then a union bound yields that both $\pr{rig}_\tau(n, d, p)$ and $\mG(n, p)$ are the empty graph with probability $1 - o_n(1)$. In this case, the corollary follows. Similarly, if $1 - p = o_n(n^{-2})$ then both graphs are complete with probability $1 - o_n(1)$ and the corollary also follows. In particular, we may assume that $\min(p, 1 - p) \gg n^{-3}$.

Consider the graph $G$ with an adjacency matrix formed by thresholding the entries of a matrix $X \in \mathcal{M}_n$ each at $\tau$, or in other words with $\{i, j \} \in E(G)$ if and only if $X_{ij} \ge \tau$. If $X \sim \pr{rim}(n, d, \delta)$, then $G \sim \pr{rig}_\tau(n, d, p)$ by definition. Furthermore, if $X \sim \pr{poim}(n, d\delta^2)$, then it follows that $G \sim \mG(n, p')$ where $p' \in (0, 1)$ is given by
$$p' = \bP\left[ \textnormal{Poisson}(d\delta^2) \ge \tau \right]$$
The data processing inequality together with Theorem \ref{thm:rim} yield that
\begin{equation} \label{eqn28}
\TV\left( \pr{rig}_\tau(n, d, p), \mG(n, p') \right) \le \TV\left( \pr{rim}(n, d, \delta), \pr{poim}\left(n, d\delta^2\right) \right) = o_n(1)
\end{equation}
Now observe that $p = \bP\left[ \text{Binom}(d, \delta^2) \ge \tau \right]$. By Theorem 2.1 in \cite{barbour1989some}, it follows that
\begin{align*}
|p - p'| &= \left| \bP\left[ \text{Binom}(d, \delta^2) \ge \tau \right] - \bP\left[ \textnormal{Poisson}(d\delta^2) \ge \tau \right] \right| \\
&\le \TV\left( \text{Binom}(d, \delta^2), \textnormal{Poisson}(d\delta^2) \right) \le \delta^2 \numberthis \label{eqn29}
\end{align*}
Let $N = \binom{n}{2}$. Since the distribution of any $\mG(n, q)$ is the same conditioned on its total edge count, it follows that
\begin{equation} \label{eqn30}
\TV\left( \mG(n, p), \mG(n, p') \right) = \TV\left( \text{Binom}(N, p), \text{Binom}(N, p') \right)
\end{equation}
Lemma \ref{lem:binomtv} now applies with $\gamma$ upper bounded by
$$\gamma \le |p - p'| \cdot \sqrt{\frac{N}{\min(p, p') (1 - \max(p, p'))}} \lesssim \frac{n \delta^2}{\sqrt{(p - \delta^2)(1 - p - \delta^2)}} \lesssim \frac{n \delta^2}{\sqrt{p(1 - p)}} = o_n(1)$$
The third inequality follows since $\delta^2 \ll d^{-2/3} n^{-1} \ll n^{-3}$ which is both $o_n(p)$ and $o_n(1 - p)$. This implies that $(p - \delta^2)(1 - p - \delta^2) = \Theta_n(p(1 - p))$. The last inequality follows since $n^2 \delta^4 \ll p(1 - p)$ by assumption. Lemma \ref{lem:binomtv} therefore implies that the total variation in Equation \ref{eqn30} is $o_n(1)$. Combining this with the triangle inequality and Equation \ref{eqn28} proves the corollary.
\end{proof}

We now apply Corollary \ref{cor:higherthres} to different parameter regimes of $p$ and $\tau$. If $\tau = 1$, then Corollary \ref{cor:higherthres} recovers and slightly extends the result in Theorem \ref{thm:denserig}. Observe that if $\tau = 1$, then $1 - p = (1 - \delta^2)^d$ and
$$\frac{p}{d} \le \delta^2 \le \frac{\log(1 - p)^{-1}}{d}$$
as in Equation \ref{eqndelta}. Given these bounds, the conditions in Corollary \ref{cor:higherthres} are satisfied when $d \gg n^3$
$$\frac{\log(1 - p)^{-1}}{d} \ll d^{-2/3} n^{-1} \quad \text{and} \quad n \cdot \frac{\log(1 - p)^{-1}}{d} \ll \sqrt{p(1 - p)}$$
The first condition is the threshold in Theorem \ref{thm:denserig}. The left-hand side in the second condition is $o_n(n^{-2})$, which is always $o_n(\sqrt{p(1 - p)})$ unless one of $p$ or $1 - p$ is $o_n(n^{-4})$. However, in this case, $\pr{rig}(n, d, p)$ and $\mG(n, p)$ are either both empty or complete with probability $1 - o_n(1)$ and still converge in total variation. Thus we have the following corollary extending Theorem \ref{thm:denserig}.

\begin{corollary}
Suppose $p = p(n) \in (0, 1)$ satisfies $1 - p = O_n(n^{-1/2})$ and $d$ satisfies that $d \gg n^3 \log^3 n$. Then it follows that
$$\TV\left( \pr{rig}(n, d, p), \mG(n, p) \right) \to 0 \quad \text{as } n \to \infty$$
\end{corollary}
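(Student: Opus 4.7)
The plan is to reduce directly to Corollary \ref{cor:higherthres} at $\tau = 1$ via a short case split on $1 - p$. The key initial observation is that when $\tau = 1$, the relation in Definition \ref{defn:righigh} collapses to $1 - p = (1 - \delta^2)^d$, which is exactly the parameterization of $\pr{rig}(n, d, p)$; hence $\pr{rig}_1(n, d, p)$ and $\pr{rig}(n, d, p)$ coincide as distributions, and it suffices to verify the three hypotheses $d \gg n^3$, $\delta \ll d^{-1/3} n^{-1/2}$, and $n^2 \delta^4 \ll p(1 - p)$ of Corollary \ref{cor:higherthres}.

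I would split on the threshold $1 - p = n^{-2}$. In the non-degenerate regime $1 - p = \Omega_n(n^{-2})$, combined with the hypothesis $1 - p = O_n(n^{-1/2})$, one has $\log(1-p)^{-1} = O_n(\log n)$, so Equation \ref{eqndelta} yields the uniform bound $\delta^2 \le \log(1-p)^{-1}/d = O_n(\log n / d)$. The first hypothesis is immediate from $d \gg n^3 \log^3 n$; the second rearranges to $\delta^2 \ll d^{-2/3} n^{-1}$, which reduces to $\log n \ll d^{1/3}/n$ and is again implied by $d \gg n^3 \log^3 n$; and for the third, $n^2 \delta^4 = O_n(n^2 \log^2 n / d^2) = o_n(n^{-4} \log^{-4} n)$, while $p(1-p) = \Theta_n(1-p) = \Omega_n(n^{-2})$ since $p \to 1$, so condition (c) holds with ample room.

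In the complementary degenerate regime $1 - p = o_n(n^{-2})$, I would bypass Corollary \ref{cor:higherthres} altogether. By construction the marginal probability of any particular non-edge is exactly $1 - p$ in both $\pr{rig}(n, d, p)$ and $\mG(n, p)$, so a union bound over the $\binom{n}{2}$ potential non-edges shows that each model coincides with the complete graph $K_n$ with probability at least $1 - \binom{n}{2}(1-p) = 1 - o_n(1)$. Applying the triangle inequality through the point mass on $K_n$ then concludes this case.

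No serious obstacle is expected: all of the substantive analytic content has been absorbed into Corollary \ref{cor:higherthres} and the underlying Theorem \ref{thm:rim}, and what remains is essentially bookkeeping. The only point requiring mild care is choosing the case-split threshold so that condition (c) of Corollary \ref{cor:higherthres} remains comfortable throughout the non-degenerate regime; the threshold $1 - p \sim n^{-2}$ is natural because this is precisely the boundary at which the $p(1-p)$ factor would fail to dominate the unavoidable $n^2 \delta^4$ term, and it is also the boundary at which the Markov-style completeness argument becomes available.
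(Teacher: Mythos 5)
Your route is the same as the paper's: the paper also obtains this corollary by specializing Corollary \ref{cor:higherthres} to $\tau = 1$ (where the relation collapses to $1-p=(1-\delta^2)^d$, so $\pr{rig}_1(n,d,p)$ is exactly $\pr{rig}(n,d,p)$), using Equation \ref{eqndelta} to verify the three hypotheses, and disposing of the extreme regime where $1-p$ is very small by observing that both $\pr{rig}(n,d,p)$ and $\mG(n,p)$ are then the complete graph with probability $1-o_n(1)$. Your verification of the three conditions when $1-p \gtrsim n^{-2}$ is correct.

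The one concrete problem is the placement (and justification) of your case split. With a hard cut at $1-p = n^{-2}$, the low branch does not cover everything below the cut: for, say, $1-p = \tfrac{1}{2}n^{-2}$, the union bound only yields that each graph equals $K_n$ with probability at least $1-\binom{n}{2}(1-p)$, which stays bounded away from $1$, so the triangle inequality through the point mass on $K_n$ does not give $\TV \to 0$. If instead you read the two branches as the asymptotic classes $1-p=\Omega_n(n^{-2})$ and $1-p=o_n(n^{-2})$, they are not exhaustive over all sequences $p(n)$. The repair is trivial and is essentially what the paper does (it cuts near $n^{-4}$): cut instead at, say, $1-p \ge n^{-3}$. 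On that side $\log(1-p)^{-1}\le 3\log n$, so your three verifications go through verbatim and $n^2\delta^4 = O(n^2\log^2 n/d^2)=o_n(n^{-4}) \ll n^{-3} \le 1-p \asymp p(1-p)$; on the other side $\binom{n}{2}(1-p)\le n^{-1}\to 0$ and the completeness argument works. Relatedly, your closing heuristic is off: given $d \gg n^3\log^3 n$, the third condition $n^2\delta^4 \ll p(1-p)$ only becomes tight around $1-p \approx n^{-4}$, not at $n^{-2}$, which is precisely why there is room to move the cut strictly below $n^{-2}$ so that both branches have slack.
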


Corollary \ref{cor:higherthres} also applies to other $p$ and $\tau$. If $\tau$ is constant and $1 - p = \Omega_n(1)$, then it follows that $d\delta^2 = O_n(1)$ and the conditions in Corollary \ref{cor:higherthres} reduce to $d \gg n^3$. If $\tau = \tau(n)$ is growing and $1 - p = \Omega_n(1)$, then the central limit theorem applied to $\text{Binom}(d, \delta^2)$ implies that $d\delta^2 = O_n(\tau)$. In this case, the conditions in Corollary \ref{cor:higherthres} are satisfied when $d \gg n^3$
$$\frac{\tau}{d} \ll d^{-2/3} n^{-1} \quad \text{and} \quad \frac{n\tau}{d} \ll \sqrt{p(1 - p)}$$
By the same argument as in the case when $\tau = 1$, the first condition subsumes the second. Thus Corollary \ref{cor:higherthres} holds when $d \gg \tau^3 n^3$ if $1 - p = \Omega_n(1)$. This is stated formally in the following corollary, which is Corollary \ref{cor:morerig} reproduced from Section \ref{sec:results}.

\begin{repcorollary}{cor:morerig}
Suppose $p = p(n) \in (0, 1)$ satisfies that $1 - p = \Omega_n(1)$ and $d$ and $\tau = \tau(n) \in \mathbb{Z}_+$ satisfy $d \gg \tau^3 n^3$. Then it follows that
$$\TV\left( \pr{rig}_\tau(n, d, p), \mG(n, p) \right) \to 0 \quad \text{as } n \to \infty$$
\end{repcorollary}

\section{Random Geometric Graphs on $\mathbb{S}^{d - 1}$}
\label{sec:rgg}

The main purpose of this section is to prove Theorem \ref{thm:rgg}, yielding the first progress towards a conjecture of \cite{bubeck2016testing} that the regime of parameters $(n, d, p)$ in which $\pr{rgg}(n, d, p)$ to $\mG(n, p)$ converge in total variation increases quickly as $p$ decays with $n$. This theorem is restated below for convenience.

\begin{reptheorem}{thm:rgg}
Suppose $p = p(n) \in (0, 1/2]$ satisfies that $p = \Omega_n(n^{-2} \log n)$ and
$$d \gg \min\left\{ pn^3 \log p^{-1}, p^2 n^{7/2} (\log n)^3 \sqrt{\log p^{-1}} \right\}$$
where $d$ also satisfies that $d \gg n \log^4 n$. Then it follows that
$$\TV\left( \pr{rgg}(n, d, p), \mG(n, p) \right) \to 0 \quad \text{as } n \to \infty$$
\end{reptheorem}

We remark that our argument still yields convergence results if $p = o_n(n^{-2} \log n)$. However, for the sake of maintaining a simple main theorem statement, we relegate these results to the propositions in the next subsections. We begin this section with some preliminary observations and then proceed to the main arguments to establish this theorem in the two subsequent subsections. More precisely, the proof of Theorem \ref{thm:rgg} will roughly proceed as follows:
\begin{enumerate}
\item We reduce bounding the total variation between $\pr{rgg}(n, d, p)$ and $\mG(n, p)$ to bounding the expected value of the $\chi^2$ divergence between the conditional distribution $Q$ of an edge of $\pr{rgg}$ given the rest of the graph and $\text{Bern}(p)$.
\item We introduce a coupling of the variables $X_1, X_2, \dots, X_n$ with a collection of random vectors and variables $(Y_1, Y_2, \dots, Y_n, \Gamma_2, \dots, \Gamma_n)$ with the following properties. The vectors $Y_1, Y_2, \dots, Y_n$ are an orthonormal basis of the span $\mathsf{span}(X_1, X_2, \dots, X_n)$ and $\Gamma_2, \Gamma_3, \dots, \Gamma_n$ are i.i.d. real-valued coefficients, derived from expressing the $X_i$ over this basis, such that the conditional distribution of the edge $\{1, 2\}$ in $\pr{rgg}$ given the rest of the graph can approximately be captured by $\Gamma_2$. Bounding the $\chi^2$ of the conditional distribution $Q$ then reduces to large deviation principles for $\Gamma_2$. This leads to a proof that the theorem holds if $d \gg pn^3 \log p^{-1}$.
\item We refine the bounds obtained in the preceding argument by introducing an alternate coupling between the distribution of $\pr{rgg}$ given the presence of edge $\{1, 2\}$ and the distribution of $\pr{rgg}$ marginalizing out the presence of $\{1, 2\}$. This refines our total variation bound in the sparse case, proving the theorem holds if $d \gg p^2 n^{7/2} (\log n)^3 \sqrt{\log p^{-1}}$.
\end{enumerate}

Before proceeding to the proof of Theorem \ref{thm:rgg}, we make several remarks on the tightness of our argument. As shown by the results in \cite{bubeck2016testing}, Theorem~\ref{thm:rgg} is sharp when $p \in (0,1)$ is a constant. However, the resulting bound in the case when $p = c/n$ is a factor of $p^{3/2}$ off from Conjecture~\ref{conj:BDER}. We believe that this difference may arise at any one of several parts of our argument: the use of of Pinsker's inequality to upper bound TV with KL divergence, the application of tensorization of KL divergence in Equation \ref{eq:log_sobolev_rgg} or when Jensen's inequality is used to replace $Q$ with $Q_0$ in Equations \ref{eqn:Jensens1} and \ref{eqn:Jensens2}. We also believe that the key technical Lemmas~\ref{lem:main_reduction_rgg} and~\ref{lem:expected_deviation_bound} in the proof of Theorem~\ref{thm:rgg}, which bound the deviation of $Q_0$ from its mean, are tight up to logarithmic factors. 

We now carry out Step 1 outlined above. We first establish some notation that will be carried forward throughout this section:
\begin{itemize}
\item Let $N = \binom{n}{2}$ and $X_1, X_2, \dots, X_n$ be sampled uniformly at random from the Haar measure on $\mathbb{S}^{d - 1}$ and let $X_{ij}$ denote the $j$th coordinate of $X_i$ for each $1 \le j \le d$.
\item Let $\grgg = \pr{gg}_{t_{p, d}}(X_1, X_2, \dots, X_n)$ and let $\nurgg$ denote the probability mass function of the graph $\grgg \sim \pr{rgg}(n, d, p)$. Let the probability mass function of $\mG(n, p)$ be $\mu$. Let $e_0$ denote the edge $\{1, 2\}$ and, given an edge $e$, let $\grgg_{\sim e}$ denote the set of edges in $\grgg$ other than $e$.
\item Let $\psi_d$ denote the marginal density of a coordinate $X_{11}$ of the Haar measure on $\mathbb{S}^{d - 1}$. Let $\Psi_d(x) = \int_{x}^{1}\psi_d(t)dt$ denote the tail function of $\psi_d$. Furthermore, let the standard normal tail function be given by $\bar{\Phi}(x) = \mathbb{P}[\mathcal{N}(0,1)\geq x]$.
\end{itemize}

We now define a key random variable in our proof -- the probability $Q$ that a specific edge is included in the graph given the rest of the graph. Define the $\sigma(\grgg_{\sim e_0})$-measurable random variable $Q$ taking values in $[0,1]$ as
 $$Q = \mathbb{P}\left[e_0 \in E(\grgg) \big| \grgg_{\sim e_0}\right] = \mathbb{E}\left[\mathbbm{1}(e_0 \in E(\grgg))\big| \sigma(\grgg_{\sim e_0})\right]$$
We will show that this value is approximately $p$ with high probability when $d$ grows fast enough as a function of $n$. We first reduce the total variation convergence of $\pr{rgg}(n, d, p)$ and $\mG(n, p)$ to showing this. Applying Lemma~\ref{lem:kl_tensorization}, we can upper bound $\kl$ by an expected $\kl$ of marginal distributions and then by $\chi^2$ as follows:
\begin{align*} 
\kl\left(\nurgg\bigr|\bigr|\mu\right) &\le \sum_{1 \le i < j \le n} \bE \left[ \kl\left( \mL\left( \mathbbm{1}(\{i, j\} \in E(\grgg)) \big| \sigma\left(\grgg_{\sim \{i, j\}}\right) \right) \, \bigr|\bigr| \, \text{Bern}(p) \right) \right] \\
&= N \cdot \bE \left[ \kl\left( \mL\left( \mathbbm{1}(e_0 \in E(\grgg))\big| \sigma(\grgg_{\sim e_0}) \right) \, \bigr|\bigr| \, \text{Bern}(p) \right) \right] \\
&\le N \cdot \bE \left[ \chi^2\left( \mL\left( \mathbbm{1}(e_0 \in E(\grgg))\big| \sigma(\grgg_{\sim e_0}) \right), \, \text{Bern}(p) \right) \right] \\
&= N \cdot \mathbb{E}\left[\frac{(Q-p)^2}{p(1-p)}\right] \numberthis \label{eq:log_sobolev_rgg}
\end{align*}
By Pinsker's inequality, it suffices to show the right hand side in Equation \ref{eq:log_sobolev_rgg} is $o_n(1)$. The two subsequent subsections give arguments to establish this. Before proceeding, we note some useful estimates for $\psi_d$ and $\Psi_d$ in the following two lemmas. The first item in the following lemma is discussed in Section 2 of \cite{bubeck2016testing} and shown in Section 2 of \cite{sodin2007tail}. The second item is Lemma 2 in Section 2 of \cite{bubeck2016testing}. The proofs of the other three items in the lemma are provided in Appendix \ref{subsec:psi-prop}. 

\begin{lemma}[Estimates for $\psi_d$ and $t_{p,d}$] \label{lem:psi_properties}
The marginal $\psi_d$ and $t_{p,d}$ satisfy the properties:
\begin{enumerate}
    \item For all $x\in [-1,1]$, $$\psi_d(x) = \frac{\Gamma\left(\frac{d}{2}\right)}{\Gamma\left(\frac{d-1}{2}\right)\sqrt{\pi}}(1-x^2)^{\frac{d-3}{2}}$$
    $\psi_d(x)$ is symmetric about $x= 0$ and strictly decreasing for $x \in [0,1]$.
    \item For every $0<p \leq \frac{1}{2}$ and an absolute constant $C$ we have
    $$\min\left(\frac{1}{2},C^{-1}\left(\frac{1}{2}-p \right)\sqrt{\frac{\log p^{-1}}{d}}\right)\leq t_{p,d}\leq C\sqrt{\frac{\log p^{-1}}{d}}$$
    \item Let $0 \le t \le \tfrac{1}{2}$ and $0\leq \delta \leq t$. Then, $$\frac{\psi_d(t-\delta)}{\psi_d(t)} \leq e^{2td\delta} $$
    \item For every $0 < p \leq \frac{1}{2}$, there is an absolute constant $C_1 > 0$ such that
    $$\psi_d(t_{p,d}) \leq C_1 p \cdot \max\left\{\sqrt{d},dt_{p,d} \right\}$$
    \item Let $T \sim \psi_d$. Then, for any $ 0< p \leq \frac{1}{2}$ and some constant $C > 0$
    $$\mathbb{P}\left(|T| > C\sqrt{\frac{\log p^{-1}}{d}}\right) \leq 2p$$
    \end{enumerate}
\end{lemma}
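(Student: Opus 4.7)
The plan is as follows. Items (1) and (2) are reference results, attributed in the statement to \cite{sodin2007tail} and \cite{bubeck2016testing}, so only items (3), (4), and (5) require proof. All three flow from the closed-form expression $\psi_d(x) \propto (1-x^2)^{(d-3)/2}$ in item (1) combined with the threshold bound $t_{p,d} \le C\sqrt{\log p^{-1}/d}$ in item (2). The main idea is that ratios $\psi_d(t')/\psi_d(t)$ admit clean one-line bounds, and the tail $\Psi_d(t)$ can then be sandwiched between $\psi_d(t)$ times an appropriately chosen width.

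For item (3), I substitute the formula from (1) to write
$$\frac{\psi_d(t-\delta)}{\psi_d(t)} \;=\; \left(\frac{1-(t-\delta)^2}{1-t^2}\right)^{(d-3)/2} \;=\; \left(1 + \frac{2t\delta - \delta^2}{1-t^2}\right)^{(d-3)/2}.$$
Dropping $-\delta^2$, applying $1+x \le e^x$, and using $1-t^2 \ge 3/4$ for $t \le 1/2$ gives the upper bound $\exp\!\bigl(4(d-3)t\delta/3\bigr)$, which is at most $\exp(2td\delta)$ since $4(d-3)/3 \le 2d$ for every $d \ge 0$. Item (5) is essentially immediate from (2): by the symmetry of $\psi_d$, $\mathbb{P}[|T|>s] = 2\Psi_d(s)$, and $\Psi_d$ is strictly decreasing by the monotonicity in (1). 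Combined with $\Psi_d(t_{p,d}) = p$ and the upper bound on $t_{p,d}$ from (2), choosing $C$ at least as large as the absolute constant in (2) yields $\Psi_d(C\sqrt{\log p^{-1}/d}) \le p$, and hence the claim.

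For item (4), the strategy is to lower bound $\Psi_d(t) = \int_t^1 \psi_d(s)\,ds$ by a constant multiple of $\delta\,\psi_d(t)$ for the largest admissible width $\delta$, and then specialize to $t = t_{p,d}$. Since $\psi_d$ is decreasing on $[0,1]$, the elementary inequality $\Psi_d(t) \ge \delta\,\psi_d(t+\delta)$ holds. A lower-bound analogue of the computation for item (3), applied to $\psi_d(t+\delta)/\psi_d(t) = \bigl(1 - (2t\delta+\delta^2)/(1-t^2)\bigr)^{(d-3)/2}$ and using $\log(1-x) \ge -2x$ on $[0,1/2]$, shows $\psi_d(t+\delta)/\psi_d(t) \ge \exp\!\bigl(-c'\,d\,\delta(t+\delta)\bigr)$ for a universal $c'$ as long as $t+\delta \le 1/2$. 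Taking $\delta$ as large as possible subject to $d\delta(t+\delta) = O(1)$ gives $\delta \asymp 1/\max(\sqrt{d}, dt)$: for $t \lesssim 1/\sqrt{d}$ the binding constraint is $d\delta^2 = O(1)$, whereas for $t \gtrsim 1/\sqrt{d}$ it is $dt\delta = O(1)$. Combining the two bounds yields $\Psi_d(t) \gtrsim \psi_d(t)/\max(\sqrt{d}, dt)$, and rearranging at $t = t_{p,d}$ gives the required inequality $\psi_d(t_{p,d}) \le C_1\,p\,\max(\sqrt{d}, dt_{p,d})$.

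The main technical obstacle will be item (4): one has to pick a single $\delta$ that handles both the near-origin regime and the far-from-origin regime, and verify that $(2t\delta + \delta^2)/(1-t^2)$ genuinely stays bounded away from $1$ so the elementary $(1-x)^a \ge e^{-2ax}$ bound is usable. The edge case where $t_{p,d}$ approaches $1/2$ corresponds to $p$ bounded away from $0$, where the bound in (4) is trivial anyway by picking $C_1$ large, so the two-regime argument only needs to be carried out cleanly for $t_{p,d}$ small, which is exactly the regime governed by item (2).
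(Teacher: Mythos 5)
Your proposal is correct and follows essentially the same route as the paper: item (3) via the same ratio computation $\bigl(1+\tfrac{2t\delta-\delta^2}{1-t^2}\bigr)^{(d-3)/2}\le e^{2td\delta}$, item (5) via symmetry, monotonicity of $\Psi_d$, and the upper bound on $t_{p,d}$ from (2), and item (4) by lower-bounding $\Psi_d(t_{p,d})=p$ over a window of width $\delta_1=\min\{1/\sqrt{d},1/(dt_{p,d})\}$ (identical to your $\delta\asymp 1/\max(\sqrt{d},dt)$) using the item-(3)-type density ratio bound, which is exactly the paper's argument.
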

%

The following distributional approximation result is proven by Sodin \cite{sodin2007tail} and stated in \cite{bubeck2016testing}. We remark that our definition of $\Psi_d$ is scaled compared to the definition in \cite{bubeck2016testing}.

\begin{lemma}\label{lem:distributional_approx_on_the_sphere}
There exist strictly positive universal constants $C_{\mathsf{est}},C_1,C_2$ and a sequence $\epsilon_d = O\left(d^{-1} \right)$ such that the following inequalities hold for every $0 \leq t < C_{\mathsf{est}}$: 
$$ (1-\epsilon_d)\cdot \bar{\Phi}\left(t\sqrt{d}\right) \cdot e^{-C_1t^4d} \leq \Psi_d(t) \leq (1+\epsilon_d) \cdot \bar{\Phi}\left(t\sqrt{d}\right) \cdot e^{-C_2t^4d}$$
\end{lemma}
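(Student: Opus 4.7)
The plan is to prove the two-sided bound by a careful comparison between $\psi_d$ and the standard Gaussian density, exploiting that $(1-x^2)^{(d-3)/2}$ and $e^{-(d-3)x^2/2}$ differ only by a quartic correction in the exponent. By Lemma \ref{lem:psi_properties} and Stirling's approximation, the normalizing constant equals $\sqrt{d/(2\pi)}(1 + O(1/d))$, where this $O(1/d)$ is the source of $\epsilon_d$. Substituting $u = x\sqrt{d}$ and Taylor-expanding $\log(1 - u^2/d) = -u^2/d - u^4/(2d^2) - \cdots$ yields $\tfrac{d-3}{2}\log(1 - u^2/d) = -u^2/2 - u^4/(4d) + O(u^2/d + u^6/d^2)$ for $u \leq \sqrt{d/2}$, so the rescaled log-integrand equals the log of the standard Gaussian density plus the quartic correction $-u^4/(4d)$ and subleading terms.

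For the upper bound I would apply the one-sided inequality $\log(1-y) \leq -y - y^2/2$, valid on $[0, 1)$, which gives $(1 - u^2/d)^{(d-3)/2} \leq e^{-u^2/2 - u^4/(4d) + O(u^2/d)}$ uniformly on the integration range. Since $u \geq t\sqrt{d}$, the quartic factor is bounded by $e^{-t^4 d/4}$ and can be pulled out of the integral; after absorbing the subleading $O(u^2/d)$ term (which is $O(t^2)$ at the endpoint and negligible compared to $e^{C t^4 d}$ when $t^2 d$ is large) into the choice of the constant $C_2$, the remaining integral equals $\bar{\Phi}(t\sqrt{d})$ up to a $(1+O(1/d))$ factor and a super-exponentially small truncation error at $u = \sqrt{d}$. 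For the lower bound I would use the complementary inequality $\log(1-y) \geq -y - y^2$ on $[0, 1/2]$, so that $(1-u^2/d)^{(d-3)/2} \geq e^{-u^2/2 - u^4/d + O(u^2/d)}$ on $u \in [t\sqrt{d}, \sqrt{d/2}]$, and then use that $\bar{\Phi}(t\sqrt{d})$ is concentrated within a window of width $O(1/(t\sqrt{d}))$ above its lower endpoint to conclude that on this dominant window $u^4/d = t^4 d(1+o(1))$, so the quartic factor extracts cleanly as $e^{-C_1 t^4 d}$.

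The main obstacle I expect is handling the small-$t$ regime, where $\bar{\Phi}(t\sqrt{d}) = \Theta(1)$ and the Gaussian tail is not concentrated near its endpoint, so the concentration argument underpinning the lower bound degenerates. In this regime the target factor $e^{-C_1 t^4 d}$ is itself $\Theta(1)$ (since $C_{\mathsf{est}}$ is a small universal constant), so the lemma becomes essentially a comparison of integrals over a fixed-size region, provable by a direct cutoff at some $u_0 \asymp \sqrt{d}$: on $[t\sqrt{d}, u_0]$ the Taylor analysis applies, and on $[u_0, \sqrt{d}]$ the free bound $(1-u^2/d)^{(d-3)/2} \leq e^{-u^2/2}$ from $\log(1-y) \leq -y$ gives an exponentially small additive contribution that can be absorbed into $\epsilon_d$. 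The subtlety throughout is arranging all error terms to be uniform in $t \in [0, C_{\mathsf{est}}]$ while ensuring that the only genuinely multiplicative correction is the universal $O(1/d)$ coming from Stirling, with all other corrections absorbed into the exponent of the quartic factor.
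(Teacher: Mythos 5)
First, note that the paper does not actually prove this lemma: it is imported verbatim from Sodin \cite{sodin2007tail} (as stated in \cite{bubeck2016testing}), so you are attempting more than the paper does, and your Laplace-type comparison of $(1-u^2/d)^{(d-3)/2}$ with the Gaussian density is indeed the natural route to a self-contained proof. Your upper bound is essentially sound, with one rewording needed: the factor $e^{O(u^2/d)}$ coming from $(d-3)/2$ versus $d/2$ cannot be dismissed as ``$O(t^2)$ at the endpoint'' (at $u=\sqrt d$ it is a constant like $e^{3/2}$), but it can be absorbed pointwise against the quartic term: for $u^2 \ge 12$ one has $3u^2/(2d) \le u^4/(8d)$, and for $u^2 < 12$ the factor is $1+O(1/d)$, so $e^{-u^4/(4d)+3u^2/(2d)} \le (1+O(1/d))\,e^{-u^4/(8d)} \le (1+O(1/d))\,e^{-t^4 d/8}$ uniformly on $[t\sqrt d, \sqrt d]$, after which the remaining integral is at most $\sqrt{2\pi}\,\bar{\Phi}(t\sqrt d)$. (Alternatively one keeps the term inside as a variance change $e^{-u^2(1-3/d)/2}$ and compares $\bar{\Phi}(t\sqrt{d-3})$ with $\bar{\Phi}(t\sqrt d)$, which costs $1+O(1/d)+O(t^2)$ and then a case split on $t^2 d$.)

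The genuine gap is in your lower bound. A window of width $O(1/(t\sqrt d))$ above $a=t\sqrt d$ captures only a constant fraction of the Gaussian tail mass: $\bar{\Phi}(a+c/a)/\bar{\Phi}(a) \to e^{-c}$, so restricting the integral to that window yields a prefactor bounded away from $1$, not the required $1-\epsilon_d$ with $\epsilon_d = O(1/d)$. If you widen the window to $\Theta(\log d/a)$ to recover $1-O(1/d)$ of the mass, then on the window $u^4/d \le t^4 d + O(t^2\log d) + O(\log^2 d/d)$, and the resulting factor $e^{-O(t^2\log d)}$ can be absorbed neither into $1-O(1/d)$ nor into $e^{-C_1 t^4 d}$ when $t^2$ lies between roughly $1/d$ and $\log d/d$; the argument as sketched therefore only gives $\epsilon_d = O(\mathrm{polylog}(d)/d)$ at best, short of the stated bound. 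A clean repair is to avoid windowing altogether: after dropping the (helpful) $+3u^2/(2d)$ term, extend the integral to infinity (the truncation at $\sqrt{d/2}$ costs an additive $e^{-\Omega(d)}$, negligible relative to $\bar{\Phi}(t\sqrt d)e^{-C_1t^4d}$ once $C_{\mathsf{est}}$ is small), substitute $u = a+v$, and use $(a+v)^4 \le 8(a^4+v^4)$ to extract $e^{-4a^4/d} = e^{-4t^4 d}$ exactly; the leftover loss $\int_0^\infty e^{-(a+v)^2/2}\bigl(1-e^{-4v^4/d}\bigr)dv$ is $O(1/(d\,a^4))\cdot\sqrt{2\pi}\,\bar{\Phi}(a)$ for $a\ge 1$ and $O(1/d)$ absolutely for $a\le 1$, giving a uniform $1-O(1/d)$ prefactor. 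With that replacement (and the pointwise absorption above for the upper bound), your plan does yield the lemma.
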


\subsection{Coupling $X_1, X_2, \dots, X_n$ to Isolate the Edge $\{1, 2\}$}

In this section, we give a coupling argument to upper bound the $\chi^2$ divergence on the right-hand side of Equation~\ref{eq:log_sobolev_rgg}. Let $X_2, X_3, \dots, X_n$ be independently and randomly chosen from the Haar measure on $\mathbb{S}^{d - 1}$. We now will describe a coupling giving an alternative way of generating $X_1$ that will give a direct description of $\mathbbm{1}(e_0 \in E(G))$ in terms of random variables introduced in the coupling. As in the statement of Theorem \ref{thm:rgg}, we assume that $d \geq n$. Note that this implies $\mathsf{span}(X_2, X_3, \dots, X_i)$ is a measure-zero subset of $\mathbb{S}^{d - 1}$ for each $1 \le i \le n - 1$. Thus the vectors $X_2, X_3, \dots, X_n$ are linearly independent almost surely. We now define the key random variables underlying our coupling.
\begin{itemize}
\item Let $Y_2, Y_3, \dots, Y_n$ be orthonormal vectors obtained by applying Gram-Schmidt to the vectors $X_2, X_3, \dots, X_n$ such that
\begin{align*}
&Y_n = X_n \quad \text{and} \\
&Y_k = \frac{X_k - \sum_{m = k+1}^n \text{Proj}_{Y_m} (X_k)}{\left\| X_k - \sum_{m = k+1}^n \text{Proj}_{Y_m} (X_k) \right\|_2}\quad \text{for all } 2 \le k \le n - 1 \numberthis \label{eqn:gramschmidt}
\end{align*}
Note that this implies
$$Y_n = X_n, \quad Y_{n-1} \in \mathsf{span}\{X_{n-1},X_{n}\}, \quad \dots, \quad Y_{2} \in \mathsf{span}\{X_2,\dots,X_n\}$$
\item Let $\Gamma_2, \Gamma_3, \dots, \Gamma_n$ be independent random variables and independent of $\sigma(X_2, X_3, \dots, X_n)$ such that $\Gamma_i \sim \psi_{d - n + i}$ for each $2 \le i \le n$.
\item Let $T_1, T_2, \dots, T_n$ be functions of $\Gamma_2, \Gamma_3, \dots, \Gamma_n$ given by
$$T_i = \Gamma_i \cdot \prod_{j = i + 1}^{n} \sqrt{1 - \Gamma_{j}^2}$$
for each $2 \le i \le n$ and
$$T_1 = \prod_{j = 2}^n \sqrt{1 - \Gamma_{j}^2}$$
\item Let $S_{d - n}$ denote the unit sphere in the $(d - n + 1)$-dimensional subspace orthogonal to $\mathsf{span}(Y_2, Y_3, \dots, Y_n)$. Let $Y_1$ be sampled from the Haar measure on $S_{d - n}$, independently of $\sigma(\Gamma_2, \dots, \Gamma_n, X_2, \dots, X_n)$, and set
$$X_1 = \sum_{i=1}^{n}T_iY_i$$
\end{itemize}
A straightforward induction shows that
$$\sum_{i=1}^{j}T_i^2 = \prod_{i = j+1}^n \left(1 - \Gamma_i^2\right)$$
for each $1 \le j \le n$. In particular, it holds that $\sum_{i=1}^{n}T_i^2 = 1$. We now will establish several key distributional properties of this coupling in the following two propositions.


\begin{proposition}\label{thm:rgg_coupling_results}
The random variables in the coupling satisfy that
\begin{enumerate}
    \item $X_1$ is independent of $\sigma(X_2,\dots,X_n)$ and is uniformly distributed on $\sphere^{d-1}$.
    \item $T_i \sim \psi_d$ for each $2 \le i \le n$.
    \item $\langle X_2,Y_j\rangle \sim \psi_d$ for $3 \le j \le n$.
\end{enumerate}
\end{proposition}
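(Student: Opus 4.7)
The plan is to recognize the construction of $X_1 = \sum_i T_i Y_i$ as a stick-breaking realization of the Haar measure on $\sphere^{d-1}$ written in the (data-dependent) orthonormal frame $\{Y_i\}_{i=1}^n$. The basic tool I will invoke repeatedly is the following classical ``peeling lemma'' for the sphere: if $U$ is uniformly distributed on $\sphere^{k-1}$ and $e \in \R^k$ is any unit vector independent of $U$, then $\langle U, e\rangle \sim \psi_k$, and conditional on $\langle U, e\rangle = \alpha$ the normalized residual $(U-\alpha e)/\sqrt{1-\alpha^2}$ is uniformly distributed on the unit sphere of $e^\perp$ and independent of $\alpha$. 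The first half of this statement, specialized to a coordinate axis, is just the first item of Lemma~\ref{lem:psi_properties}; rotation invariance of Haar measure extends it to any independent unit vector $e$.

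To prove part~(1), I would condition on $\sigma(X_2,\dots,X_n)$, which fixes $Y_2,\dots,Y_n$ as a deterministic orthonormal $(n-1)$-tuple while leaving $\Gamma_2,\dots,\Gamma_n$ and $Y_1$ jointly independent with their prescribed marginals. I would then apply the peeling lemma iteratively in the order $Y_n, Y_{n-1}, \dots, Y_2$. At the $k$th peel the active sphere has dimension $d-k+1$, so the coordinate extracted along $Y_{n-k+1}$ equals the remaining radius $\prod_{j=n-k+2}^{n}\sqrt{1-\Gamma_j^2}$ times an independent $\psi_{d-k+1}=\psi_{d-n+(n-k+1)}$ sample, which matches exactly $T_{n-k+1}=\Gamma_{n-k+1}\prod_{j>n-k+1}\sqrt{1-\Gamma_j^2}$. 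After $n-1$ peels, the remaining vector has length $T_1=\prod_{j=2}^n\sqrt{1-\Gamma_j^2}$ and points in a uniformly distributed direction inside $\mathsf{span}(Y_2,\dots,Y_n)^\perp$, which is precisely the role of the independently sampled $Y_1$. Thus conditional on $\sigma(X_2,\dots,X_n)$ the vector $\sum_i T_iY_i$ is Haar on $\sphere^{d-1}$; since this conditional law does not depend on the conditioning, part~(1) follows.

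Parts~(2) and~(3) then reduce to short consequences of the peeling lemma. For~(2), each $Y_i$ with $i\geq 2$ is $\sigma(X_2,\dots,X_n)$-measurable, while by part~(1) $X_1$ is Haar on $\sphere^{d-1}$ and independent of $\sigma(X_2,\dots,X_n)$, so $T_i=\langle X_1, Y_i\rangle \sim \psi_d$. For~(3), the Gram--Schmidt recursion in \eqref{eqn:gramschmidt} yields $Y_j \in \mathsf{span}(X_j,\dots,X_n)$ for every $j\geq 3$, so $Y_j$ is $\sigma(X_3,\dots,X_n)$-measurable and thus independent of $X_2$; conditioning on $\sigma(X_3,\dots,X_n)$ and applying the peeling lemma once more to the independent pair $(X_2, Y_j)$ gives $\langle X_2, Y_j\rangle \sim \psi_d$.

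The only step I expect to need real care is the bookkeeping inside the iterative peel of part~(1): one has to verify at each stage that the carried radius $\sqrt{1-\sum_{j>i}T_j^2}$ collapses algebraically to the prescribed prefactor $\prod_{j>i}\sqrt{1-\Gamma_j^2}$, and that the $\psi$-index drops by exactly one per peel so that the marginals $\Gamma_i\sim\psi_{d-n+i}$ line up with the dimensions of the successive residual spheres. Everything else is a routine inductive use of independence and the peeling lemma.
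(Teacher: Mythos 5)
Your proposal is correct and takes essentially the same route as the paper: the paper proves part (1) by the identical stick-breaking induction, building $X_1^m$ up along $Y_2,\dots,Y_n$ using the ``if'' direction of item 1 of Lemma~\ref{lem:sphere_uniform_induction}, while you run the same one-step lemma in its peeling direction, and the index bookkeeping you flag ($\Gamma_i \sim \psi_{d-n+i}$ matching the dimensions of the successive residual spheres) works out exactly as you describe. Parts (2) and (3) are handled in the paper precisely as you propose, via measurability of $Y_i$ in $\sigma(X_2,\dots,X_n)$ (respectively $Y_j$ in $\sigma(X_3,\dots,X_n)$), independence from $X_1$ (respectively $X_2$), and rotational invariance of the Haar measure.
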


In order to prove this proposition, we will make use of the following lemma. The proof of this lemma is in Appendix \ref{subsec:coupling}.

\begin{lemma} \label{lem:sphere_uniform_induction}
The following two statements hold for the uniform distribution over unit spheres.
\begin{enumerate}
\item Let $a \in \mathbb{S}^{d - 1}$, let $a^{\perp}$ be the $(d - 1)$-dimensional space orthogonal to $a$ and let $\sphere^{a^{\perp}}$ be the unit sphere embedded in $a^{\perp}$. Let $T$ be a random variable taking values almost surely in $[-1,1]$ and let $Y$ be a random vector in $a^{\perp}$. Then, the random vector $X = Ta + \sqrt{1-T^2} \cdot Y$ is uniformly distributed over $\sphere^{d-1}$ if and only if $T \sim \psi_d$, $Y$ is uniformly distributed over $\sphere^{a^{\perp}}$ and $T$ is independent of $Y$. 
\item Let $m$ be a positive integer satisfying $m \leq d$ and $Z_1, Z_2, \dots, Z_m$ be a random set of orthonormal vectors sampled according to the Haar measure on the orthogonal group. Let $X \sim \unif(\sphere^{d-1})$ be independent of $Z_1, Z_2, \dots,Z_m$ and let $\xi \in \mathbb{R}^m$ be such that $\xi_i = \langle X, Z_i\rangle$ for each $1 \le i \le m$. Then it holds that $\xi/\|\xi\|_2 \sim \unif(\sphere^{m-1})$.
\end{enumerate}
\end{lemma}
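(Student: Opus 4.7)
The plan is to derive both claims from the characterization of $\unif(\sphere^{d-1})$ as the unique Borel probability measure on $\sphere^{d-1}$ invariant under the orthogonal group $O(d)$, together with an orbit-decomposition argument.

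For the only-if direction of part~1, I would start with $X \sim \unif(\sphere^{d-1})$, set $T = \langle X, a\rangle \in [-1,1]$, and on the full-measure event $\{|T| < 1\}$ set $Y = (X - Ta)/\sqrt{1 - T^2}$; a direct calculation using $\|X\|_2 = 1$ shows $\|Y\|_2 = 1$ and $\langle Y,a\rangle = 0$, so $Y \in \sphere^{a^\perp}$ and $X = Ta + \sqrt{1 - T^2}\, Y$. The marginal of $T$ is $\psi_d$ since $\psi_d$ is defined as the law of $\langle X, v\rangle$ for any unit vector $v$. For any $R \in O(d)$ fixing $a$, rotational invariance gives $RX \overset{d}{=} X$; since $R$ fixes $T$ and acts on $Y$ as an arbitrary element of $O(a^\perp)$, the conditional law of $Y$ given $T$ is invariant under $O(a^\perp)$ and therefore equals $\unif(\sphere^{a^\perp})$, independent of $T$. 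The if direction then follows at no cost: the map $\Phi:[-1,1]\times\sphere^{a^\perp}\to\sphere^{d-1}$ given by $\Phi(T,Y) = Ta + \sqrt{1-T^2}\,Y$ always produces a unit vector, and the only-if direction identifies the pushforward of $\psi_d \otimes \unif(\sphere^{a^\perp})$ under $\Phi$ as $\unif(\sphere^{d-1})$, so any $(T,Y)$ with the specified marginals and independence yields $X = \Phi(T,Y) \sim \unif(\sphere^{d-1})$.

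For part~2, my plan is to condition on $(Z_1, \dots, Z_m)$. Since $X$ is independent of the frame, it suffices to establish the claim for an arbitrary deterministic orthonormal tuple $(z_1, \dots, z_m)$. By the $O(d)$-invariance of $\unif(\sphere^{d-1})$, applying any orthogonal transformation that maps $z_i$ to the standard basis vector $e_i$ reduces the problem to $z_i = e_i$, in which case $\xi = (X_1, \dots, X_m)$. Invariance of the law of $X$ under every block-orthogonal transformation $R \oplus I_{d-m}$ with $R \in O(m)$ then shows that $\xi$ has an $O(m)$-invariant law on $\mathbb{R}^m$, and hence so does its normalization $\xi/\|\xi\|_2$. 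By uniqueness of the Haar measure on $\sphere^{m-1}$, this forces $\xi/\|\xi\|_2 \sim \unif(\sphere^{m-1})$.

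There is no substantive obstacle: the argument is essentially a careful bookkeeping of rotational invariance. The only delicate points are handling the null event $\{|T| = 1\}$ in part~1 (so that $Y$ is defined on a full-measure set and the law of $X$ is unaffected) and justifying the conditioning on $(Z_1, \dots, Z_m)$ in part~2 via independence and a regular-conditional-distribution argument; both are routine.
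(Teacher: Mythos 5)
Your proof is correct, but it follows a genuinely different route from the paper's. For part~1 the paper works with the Gaussian representation $X = Z/\|Z\|_2$, $Z \sim \mathcal{N}(0, I_d)$: writing $X = \frac{Z_1}{\|Z\|_2}a + \sqrt{1 - Z_1^2/\|Z\|_2^2}\cdot \frac{Z_{\sim 1}}{\|Z_{\sim 1}\|_2}$, the independence of $Z_{\sim 1}/\|Z_{\sim 1}\|_2$ from $(Z_1, \|Z_{\sim 1}\|_2)$ immediately yields the ``if'' direction, and the ``only if'' direction follows because $T$ and $Y$ are a.s.\ deterministic functions of $X$. You instead prove ``only if'' first by pure symmetry (invariance of the joint law of $(T,Y)$ under $O(a^\perp)$ acting on the second coordinate, plus uniqueness of the invariant measure on the sphere) and then get ``if'' for free as a pushforward statement; this is valid, and it avoids any Gaussian computation, at the cost of leaning on a regular-conditional-distribution/invariance argument where the null set depends on the (uncountable) rotation --- a point that is routine but is handled most cleanly by averaging over Haar measure on $O(a^\perp)$ (i.e.\ $\bE[f(T)g(Y)] = \bE[f(T)\int g(Ry)\,d\mathrm{Haar}(R)] = \bE[f(T)]\int g\,d\unif$) rather than by literal conditioning. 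For part~2 the divergence is larger: the paper completes $Z_1,\dots,Z_m$ to a basis, notes the full coordinate vector $(\alpha_1,\dots,\alpha_d)$ is uniform on $\sphere^{d-1}$, and then iterates part~1 to peel off the last $d-m$ coordinates, whereas you bypass part~1 entirely with a one-step argument: reduce to the standard frame, observe $O(m)$-invariance of the law of $\xi$ under block rotations $R \oplus I_{d-m}$, and invoke uniqueness of the invariant measure on $\sphere^{m-1}$. Your version is shorter and more self-contained for part~2 (you should just note that $\xi \neq 0$ almost surely so the normalization is defined); the paper's version has the advantage that its explicit Gaussian coupling and coordinate-peeling structure mirror the construction actually used in the main coupling argument of Section~4.
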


\begin{proof}[Proof of Proposition \ref{thm:rgg_coupling_results}]
We prove the three items in the proposition separately as follows.
\begin{enumerate}
\item For each $1 \le m \le n$, define the intermediate variables
$$T_i^m = \Gamma_i \cdot \prod_{j = i + 1}^{m} \sqrt{1 - \Gamma_{j}^2} \quad \text{for } 2 \le i \le m \quad \text{and} \quad T_1^m = \prod_{j = 2}^{m} \sqrt{1 - \Gamma_{j}^2}$$
and let $X_1^m$ be
$$X_1^m = \sum_{i=1}^{m}T_i^m Y_i$$
Let $S_{d - n + m - 1}$ denote the unit sphere in the $(d - n + m)$-dimensional subspace orthogonal to $\mathsf{span}(Y_{m + 1}, Y_{m+2}, \dots, Y_n)$. We will show by induction on $m$ that $X_1^m \sim \unif(S_{d - n + m - 1})$ conditioned on any event in $\sigma(X_2,\dots,X_n)$. By definition, this holds if $m = 1$ since $T_1^1 = 1$. Now observe that since $T_{m+1}^{m+1} = \Gamma_{m+1}$ and $T^{m+1}_i = T^{m}_i \cdot \sqrt{1 - \Gamma_{m+1}^2}$ for all $m \ge 1$ and $i \le m$, we have that
\begin{align*}
X_1^{m+1} &= \sum_{i=1}^{m+1}T_i^{m+1} Y_i = \Gamma_{m+1} Y_{m+1} + \sqrt{1 - \Gamma_{m+1}^2} \cdot \sum_{i=1}^{m}T_i^{m} Y_i \\
&= \Gamma_{m+1} Y_{m+1} + \sqrt{1 - \Gamma_{m+1}^2} \cdot X_1^m
\end{align*}
for each $1 \le m \le n - 1$. The induction hypothesis implies that $X_1^m \sim \unif(S_{d - n + m - 1})$. Since $\Gamma_{m+1} \sim \psi_{d - n + m + 1}$ and $\Gamma_{m+1}$ is independent of $X_1^m \in \sigma(X_1, \dots, X_m, \Gamma_1, \dots, \Gamma_m)$, item 1 in Lemma~\ref{lem:sphere_uniform_induction} implies that $X_1^{m+1} \sim \unif(S_{d - n + m})$, completing the induction. Now setting $m = n$ yields the result since $X_1^n = X_1$.
\item Observe that $T_i = \langle X_1, Y_i \rangle$ for $2 \le i \le n$. Note that $Y_i \in \sigma(X_2,\dots, X_n)$ for each $2 \le i \le n$ and hence independent of $X_1$ by the previous item in the proposition. Since $\|Y_i\|_2 =1 $ almost surely, it follows by the definition of $\psi_d$ and the rotational invariance of $\mathsf{unif}(\sphere^{d - 1})$ that $T_i \sim \psi_d$ for each $2 \le i \le n$.
\item This follows from the rotational invariance of $\mathsf{unif}(\sphere^{d - 1})$ and the fact that $Y_j \in \sigma(X_3, \dots, X_n)$ for each $3 \le j \le n$ and thus independent of $X_2$.
\end{enumerate}
This completes the proof of the proposition.
\end{proof}

Let $\mathcal{F}$ denote the $\sigma$-algebra $\mathcal{F} = \sigma(\Gamma_3,\dots,\Gamma_n,X_2,\dots,X_n)$. The second distributional property of our coupling that we establish is that the graph other than the edge $\{1, 2\}$ is determined by $\mathcal{F}$.

\begin{proposition} \label{lem:coarse_sigma_algebra}
It holds that $\sigma(\grgg_{\sim e_0}) \subseteq \mathcal{F}$.
\end{proposition}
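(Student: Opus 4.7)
The plan is to show that every edge in $\grgg_{\sim e_0}$ is determined by the variables generating $\mathcal{F}$. Edges $\{i,j\}$ with $2 \le i < j \le n$ are clearly $\mathcal{F}$-measurable since they depend only on $\langle X_i, X_j\rangle$ and $X_2,\dots,X_n \in \mathcal{F}$. The content of the proposition is therefore to handle the edges $\{1,j\}$ for $3 \le j \le n$: I need to show that $\langle X_1, X_j\rangle$ can be expressed without reference to $\Gamma_2$ (or to $Y_1$, which is not in $\mathcal{F}$ either).

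Starting from the decomposition $X_1 = \sum_{i=1}^n T_i Y_i$, I would write
\[
\langle X_1, X_j\rangle = T_1\langle Y_1, X_j\rangle + T_2\langle Y_2, X_j\rangle + \sum_{i=3}^n T_i\langle Y_i, X_j\rangle.
\]
The first term vanishes because $Y_1$ was drawn from the unit sphere of the orthogonal complement of $\mathsf{span}(Y_2,\dots,Y_n)$, and $X_j \in \mathsf{span}(X_2,\dots,X_n) = \mathsf{span}(Y_2,\dots,Y_n)$. The crux of the argument is to show that the second term also vanishes for $j \ge 3$, which is what allows us to avoid any dependence on $\Gamma_2$.

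The key observation is that the Gram-Schmidt process in Equation \ref{eqn:gramschmidt} is performed in reverse order starting with $Y_n = X_n$, so $Y_2$ is (a normalization of) the component of $X_2$ orthogonal to $\mathsf{span}(Y_3,\dots,Y_n) = \mathsf{span}(X_3,\dots,X_n)$. Hence $Y_2 \perp X_j$ for every $j \ge 3$, giving $\langle Y_2, X_j\rangle = 0$. Consequently,
\[
\langle X_1, X_j\rangle = \sum_{i=3}^n T_i\langle Y_i, X_j\rangle \quad \text{for } j \ge 3,
\]
and each $T_i$ with $i \ge 3$ is a deterministic function of $\Gamma_3,\dots,\Gamma_n$, while $Y_i$ is a deterministic function of $X_3,\dots,X_n$; both types of quantities are $\mathcal{F}$-measurable, so $\langle X_1, X_j\rangle$ is $\mathcal{F}$-measurable.

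From this, the indicator $\mathbbm{1}(\langle X_1, X_j\rangle \ge t_{p,d})$ is $\mathcal{F}$-measurable for every $j \ge 3$, and combined with the measurability of the edges between vertices $\{2,\dots,n\}$, the entire graph $\grgg_{\sim e_0}$ is $\mathcal{F}$-measurable, yielding $\sigma(\grgg_{\sim e_0}) \subseteq \mathcal{F}$. The only subtle point is the verification that $\langle Y_2, X_j\rangle = 0$ for $j \ge 3$; this is a direct consequence of how the Gram-Schmidt orthonormalization was set up (processing indices from $n$ down to $2$), and is precisely what makes the coupling useful: the randomness in $\Gamma_2$ is isolated to the single edge $e_0$.
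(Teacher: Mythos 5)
Your proof is correct and follows essentially the same route as the paper: both arguments reduce to the fact that, because the Gram--Schmidt orthonormalization runs from index $n$ down to $2$, the inner products $\langle X_1, X_j\rangle$ for $j \ge 3$ involve only $T_3,\dots,T_n$ (hence only $\Gamma_3,\dots,\Gamma_n$) together with $X_2,\dots,X_n$-measurable coefficients. The paper expands $X_j = \sum_{i \ge j} a_{ji} Y_i$ while you expand $X_1$ and observe $\langle Y_1, X_j\rangle = \langle Y_2, X_j\rangle = 0$; these are the same structural fact written in dual form.
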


\begin{proof}
It suffices to show that $\grgg_{\sim e_0}$ is a deterministic function of $\Gamma_3,\dots,\Gamma_n,X_2,\dots,X_n$. Note that the events $\{i,j\} \in E(\grgg_{\sim e_0})$ for $2 \le i < j \le n$ and the random variables $Y_2, \dots, Y_n$ are determined by thresholding $\langle X_i, X_j\rangle$ and Gram-Schmidt orthogonalization, respectively, both of which are deterministic functions of $X_2, \dots, X_n$. By definition $T_3,\dots,T_n$ are deterministic functions of $\Gamma_3,\dots, \Gamma_n$. Furthermore, the $X_i$ can be expressed as $X_i = \sum_{j=i}^{n}a_{ij}Y_j$ for coefficients $a_{ij}$, which are determined by $X_2,\dots,X_n$ in Gram-Schmidt orthogonalization. Therefore, it holds that $\langle X_1,X_i\rangle = \sum_{j=i}^{n}a_{ij}T_j$ and hence the events $\{1,i \} \in E(\grgg_{\sim e_0})$ are in $\mathcal{F}$ for all $3 \le i \le n$. This completes the proof of the proposition.
\end{proof}
 
We now define the random variable 
$$Q_0 =  \mathbb{E}\left[\mathbbm{1}(e_0 \in E(\grgg))\bigr|\mathcal{F}\right]$$
Note that Proposition~\ref{lem:coarse_sigma_algebra} implies that $Q = \mathbb{E}\left[Q_0\bigr|\sigma\left(\grgg_{\sim e_0}\right)\right]$. The remainder of this section is devoted to showing that $Q_0$ concentrates near $p$. By definition, we have that $\mathbbm{1}(e_0 \in E(\grgg)) = \mathbbm{1}\left(\langle X_1,X_2\rangle \geq t_{p,d}\right)$. Furthermore, there are coefficients $a_{2j} \in \sigma(X_2, \dots, X_n)$ for $2 \le j \le n$ such that $X_2 = \sum_{j=2}^{n}a_{2j}Y_j$. It follows that $\langle X_1,X_2\rangle = \sum_{j=2}^{n}a_{2j}T_j$ and that we can rewrite $Q_0$ as
$$Q_0 = \mathbb{P}\left[\sum_{j=2}^{n}a_{2j}T_j \geq t_{p,d} \biggr| \mathcal{F}\right]$$
Rearranging Equation \ref{eqn:gramschmidt} yields that
\begin{align*}
a_{2j} &= \langle X_2, Y_j \rangle \quad \text{for all } 3 \le j \le n, \quad \text{and} \\
a_{22} &= \left\| X_2 - \sum_{j = 3}^n \text{Proj}_{Y_j}(X_2) \right\|_2 = \sqrt{1 - \sum_{j = 3}^n \langle X_2, Y_j \rangle^2}
\end{align*}
In particular, this implies that $a_{22}$ is positive almost surely. As will be shown in the lemmas later in this section, it holds that $a_{22} \approx 1$ and $a_{2j} \approx \frac{1}{\sqrt{d}}$ for $3 \le j \le n$ with high probability. Rearranging now yields that
\begin{equation}
Q_0 = \mathbb{P}\left[\Gamma_2 \ge t^{\prime}_{p, d} \biggr|\mathcal{F}\right] \quad \text{where} \quad t_{p,d}^{\prime} = \frac{t_{p,d} - \sum_{j=3}^n a_{2j} T_j}{a_{22} \cdot \prod_{j=3}^{n} \sqrt{1-\Gamma_j^2}}
\label{eq:identity_for_conditional_probab}
\end{equation}
Observe that $t_{p,d}^{\prime}$ is a $\mathcal{F}$-measurable random variable since $a_{2j} \in \sigma(X_2, \dots, X_n)$ for $2 \le j \le n$. We now will analyze a typical instance of our $\mathcal{F}$-measurable random variables. In particular, we will show that the random threshold $t_{p,d}^{\prime}$ is close to the true threshold $t_{p,d}$ with high probability. The next three lemmas primarily consist of concentration results and bounding. Their proofs can be found in Appendix~\ref{subsec:coupling}.

\begin{lemma}
\label{lem:rgg_remainder_concentration}                                                                                                                                                                                                                                                                                                                                                                                                                                                                                                                                                                                                                                                                                                                                                                                                                                                                                                                                                
Suppose that $d\gg n\log{n}$ and let $s \in (0,\infty)$ be fixed. There exists a fixed constant $C_s$ depending only on $s$ such that the following events all hold with probability at least $1- \frac{1}{n^s}$ for sufficiently large $n$:
\begin{enumerate}
    \item $\biggr|\sum_{j=3}^n a_{2j}T_j\biggr| \leq \frac{C_s \sqrt{n} \log^{3/2} n}{d}$;
    \item $a_{22} \geq \sqrt{1-\frac{C_sn\log{n}}{d}}$; and
    \item $|\Gamma_i| \leq C_s\sqrt{\frac{\log{n}}{d}}$ for every $3 \le i \le n$.
\end{enumerate}
\end{lemma}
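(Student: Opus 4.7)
The plan is to handle the three items separately. Items 2 and 3 follow from the tail estimate in item 5 of Lemma~\ref{lem:psi_properties} together with a union bound, while item 1 is the main technical step: a naive Cauchy--Schwarz estimate only yields $|S| = O(n \log n / d)$, a factor of $\sqrt{n/\log n}$ too large, so we must exploit the conditional independence structure of the $\Gamma_k$'s via a martingale argument.

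For item 3, Proposition~\ref{thm:rgg_coupling_results} gives $\Gamma_i \sim \psi_{d - n + i}$ with $d - n + i = \Theta(d)$, and item 5 of Lemma~\ref{lem:psi_properties} applied with probability parameter $n^{-(s+2)}$ together with a union bound gives the claim with probability at least $1 - O(n^{-(s+1)})$. For item 2, item 3 of Proposition~\ref{thm:rgg_coupling_results} gives $\langle X_2, Y_j\rangle \sim \psi_d$ for each $3 \le j \le n$, and the same tail/union bound yields $\langle X_2, Y_j\rangle^2 \le C_s^2 \log n / d$ simultaneously for all $j$; summing and recalling $a_{22}^2 = 1 - \sum_{j=3}^n \langle X_2, Y_j\rangle^2$ gives item 2 (the hypothesis $d \gg n \log n$ ensures $a_{22}^2 > 0$).

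For item 1, set $S = \sum_{j=3}^n a_{2j} T_j$ and introduce the reverse-time filtration $\mathcal{H}_k = \sigma(X_2, \ldots, X_n, \Gamma_k, \Gamma_{k+1}, \ldots, \Gamma_n)$ for $k = n+1, n, \ldots, 3$, with $\mathcal{H}_{n+1} = \sigma(X_2, \ldots, X_n)$. Writing $T_j = \Gamma_j \prod_{m > j} \sqrt{1 - \Gamma_m^2}$ and using independence of the $\Gamma_m$'s together with the symmetry of $\psi_d$ (item 1 of Lemma~\ref{lem:psi_properties}), a short computation shows $\bE[S \mid \mathcal{H}_k] = \sum_{j \ge k} a_{2j} T_j$, so
\[
D_k \;:=\; \bE[S \mid \mathcal{H}_k] - \bE[S \mid \mathcal{H}_{k+1}] \;=\; a_{2k} T_k
\]
is a martingale difference sequence summing to $S$. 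To obtain an almost-sure bound on $|D_k|$, couple each $\Gamma_k$ to $\hat{\Gamma}_k$ distributed as $\Gamma_k$ conditioned on $|\Gamma_k| \le C_s \sqrt{\log n / d}$; symmetric truncation preserves mean zero, and the analogously constructed $\hat{S}$ coincides with $S$ on the event from item 3. Then $|\hat{D}_k| \le |a_{2k}| C_s \sqrt{\log n/d}$ almost surely, and on the event from item 2 one has $\sum_k a_{2k}^2 \le C_s^2 n \log n / d$, giving $\sum_k \|\hat{D}_k\|_\infty^2 = O(n(\log n)^2/d^2)$. Azuma--Hoeffding now yields $|\hat{S}| \le C'_s \sqrt{n}(\log n)^{3/2}/d$ with conditional probability at least $1 - n^{-(s+1)}$, and combining with the failure probabilities of items 2 and 3 gives item 1 unconditionally.

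The \textbf{main obstacle} is the sharpness required in item 1. The direct Cauchy--Schwarz estimate
\[
|S| \;\le\; \sqrt{\sum_{j=3}^n a_{2j}^2} \cdot \sqrt{\sum_{j=3}^n T_j^2} \;=\; O\!\left(\frac{n \log n}{d}\right)
\]
falls short by a factor of roughly $\sqrt{n/\log n}$. The saving comes precisely from the fact that $(a_{2k} T_k)_{k=3}^n$ is a centered martingale difference sequence once we condition on $X_2, \ldots, X_n$ (centered via the symmetry of $\psi_d$), which allows scalar concentration via Azuma--Hoeffding to deliver the extra $\sqrt{n}$ gain; setting up this martingale structure together with the symmetric truncation coupling needed to make the differences almost surely bounded is the technical heart of the argument.
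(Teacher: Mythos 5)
Your proof is correct, and while your treatment of items 2 and 3 coincides with the paper's (tail bound from item 5 of Lemma~\ref{lem:psi_properties} plus a union bound, and $a_{22}^2 = 1 - \sum_{j\ge 3}\langle X_2,Y_j\rangle^2$), your argument for item 1 takes a genuinely different route. The paper factors $\sum_{j=3}^n a_{2j}T_j = \|\xi\|_2\,\|\zeta\|_2\,\langle\hat\xi,\hat\zeta\rangle$ with $\xi=(a_{23},\dots,a_{2n})$ and $\zeta=(T_3,\dots,T_n)$, uses item 2 of Lemma~\ref{lem:sphere_uniform_induction} and the independence of $\sigma(X_2,\dots,X_n)$ from $\sigma(\Gamma_3,\dots,\Gamma_n)$ to conclude $\langle\hat\xi,\hat\zeta\rangle\sim\psi_{n-2}$, and then multiplies the three high-probability bounds $\|\xi\|_2,\|\zeta\|_2\lesssim\sqrt{n\log n/d}$ and $|\langle\hat\xi,\hat\zeta\rangle|\lesssim\sqrt{\log n/n}$; the $\sqrt{n}$ gain over Cauchy--Schwarz comes from the exact rotational invariance of the normalized coefficient vector. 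You instead exhibit $(a_{2k}T_k)_{k}$ as a martingale difference sequence for the reverse filtration revealing $\Gamma_n,\dots,\Gamma_3$ (using symmetry of the $\Gamma_k$'s for the zero-mean property), symmetrically truncate the $\Gamma_k$'s to get almost surely bounded increments, and apply Azuma--Hoeffding conditionally on $\sigma(X_2,\dots,X_n)$, where the increment bounds $|a_{2k}|\,C_s\sqrt{\log n/d}$ are deterministic and their squares sum to $O(n\log^2 n/d^2)$ on the item-2 event; this yields the same $\sqrt{n}\log^{3/2}n/d$ bound. The paper's argument is shorter and leans on the specific spherical symmetry already packaged in Lemma~\ref{lem:sphere_uniform_induction}, whereas your martingale argument is more robust (it only needs independence and symmetry of the $\Gamma_k$'s, not isotropy of the coefficient vector $\xi$) at the price of the truncation coupling and the conditional application of Azuma; the bookkeeping you describe (coinciding of $\hat S$ with $S$ on the item-3 event, measurability of the increment bounds with respect to the initial $\sigma$-algebra, and the final union bound over the three failure events) is all sound.
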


\begin{lemma} \label{lem:main_reduction_rgg}
Suppose that $d \gg n\log^4{n}$ and $p \gg n^{-3}$. Let $C_s$ be as in Lemma~\ref{lem:rgg_remainder_concentration} and let $E_{\mathsf{rem}}$ be the event that all the three events in the statement of Lemma~\ref{lem:rgg_remainder_concentration} hold. We have the following two bounds on $|Q_0-p| \cdot \mathbbm{1}(E_{\mathsf{rem}})$
\begin{align*}
&|Q_0-p| \cdot \mathbbm{1}(E_{\mathsf{rem}})\leq O_n\left(\frac{pn}{d}\log{ p^{-1}}\right) + C_s \psi_{d}\left(t_{p,d}\right) \cdot \bigr|t^{\prime}_{p,d}-t_{p,d}\bigr| \\
&|Q_0-p| \cdot \mathbbm{1}(E_{\mathsf{rem}})\leq O_n\left(\frac{pn}{d}\log{ p^{-1}}\right) + O_n\left(p\sqrt{\frac{n\log{ p^{-1}}}{d}} \cdot \log^{3/2}{n}\right)
\end{align*}
\end{lemma}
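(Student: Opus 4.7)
The strategy is to express $Q_0 - p$ as a tail difference of $\psi_d$ and linearize via the mean value theorem. By Equation~\ref{eq:identity_for_conditional_probab} and the independence of $\Gamma_2 \sim \psi_d$ from $\mathcal{F}$, we have $Q_0 = \Psi_d(t'_{p,d})$, while $p = \Psi_d(t_{p,d})$ by definition of $t_{p,d}$. Writing $\Delta := t'_{p,d} - t_{p,d}$, the fundamental theorem of calculus together with item~1 of Lemma~\ref{lem:psi_properties} (monotonicity of $\psi_d$ on $[0,1]$) gives $|Q_0 - p| \leq \psi_d(t_{p,d})\,|\Delta|$ when $\Delta \geq 0$; when $\Delta < 0$, item~3 of Lemma~\ref{lem:psi_properties} with $\delta = |\Delta|$ yields $\psi_d(t_{p,d}+\Delta) \leq e^{2 t_{p,d} d|\Delta|}\psi_d(t_{p,d})$, so in both cases
\[
|Q_0 - p| \;\leq\; e^{2 t_{p,d} d|\Delta|}\,\psi_d(t_{p,d})\,|\Delta|.
\]

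Next, we control $|\Delta|$ on $E_{\mathsf{rem}}$ using the explicit formula $\Delta = (t_{p,d}(1-A) - B)/A$ with $A = a_{22}\prod_{j=3}^n \sqrt{1-\Gamma_j^2}$ and $B = \sum_{j=3}^n a_{2j} T_j$. Item~1 of Lemma~\ref{lem:rgg_remainder_concentration} gives $|B| \leq C_s \sqrt{n}\log^{3/2}n/d$, while combining items~2--3 via the elementary telescoping bound $1-A \leq (1-a_{22}) + \sum_{j\geq 3}\Gamma_j^2$ (itself derived from $\prod \sqrt{1-\Gamma_j^2} \geq 1 - \sum \Gamma_j^2$) yields $1 - A = O(n\log n/d)$, hence $1/A = 1 + o(1)$. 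Combined with $t_{p,d} \leq C\sqrt{\log p^{-1}/d}$ from item~2 of Lemma~\ref{lem:psi_properties} and $\log p^{-1} = O(\log n)$ (since $p \gg n^{-3}$), the hypothesis $d \gg n \log^4 n$ forces $2 t_{p,d} d|\Delta| = o(1)$ on $E_{\mathsf{rem}}$. Expanding $e^{2t_{p,d}d|\Delta|} = 1 + O(t_{p,d} d |\Delta|)$ then yields
\[
|Q_0 - p|\cdot\mathbbm{1}(E_{\mathsf{rem}}) \;\leq\; \psi_d(t_{p,d})|\Delta|\cdot\mathbbm{1}(E_{\mathsf{rem}}) + O\!\left(t_{p,d} d|\Delta|^2 \psi_d(t_{p,d})\right)\cdot \mathbbm{1}(E_{\mathsf{rem}}),
\]
and the second-order term is controlled via $t_{p,d}\psi_d(t_{p,d}) \lesssim p\log p^{-1}$ (from item~4 of Lemma~\ref{lem:psi_properties}), bounding out to $O(pn\log p^{-1}/d)$ by direct substitution of the $|\Delta|$ estimate. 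This establishes the first inequality, with $C_s$ absorbing the $1+o(1)$ multiplicative factor.

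For the second inequality, we substitute the explicit $|\Delta|$ bound into the first inequality and split. The contribution $\psi_d(t_{p,d})\cdot t_{p,d}\cdot n\log n/d$ is bounded by $(t_{p,d}\psi_d(t_{p,d}))\cdot n\log n/d \lesssim (p\log p^{-1})\cdot n\log n/d$, yielding the $O(pn\log p^{-1}/d)$ term once we sharpen $1-A$ to $O(n/d)$ via Bernstein-type concentration of $\sum_{j \geq 3}\Gamma_j^2$ and of $\sum_{j\geq 3} a_{2j}^2 = 1 - a_{22}^2$ about their common mean $\asymp n/d$ (both are sums of independent sub-exponential variables of mean $\asymp 1/d$, so standard tail bounds suffice). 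The contribution $\psi_d(t_{p,d})\cdot \sqrt{n}\log^{3/2}n/d$ is bounded using $\psi_d(t_{p,d}) \lesssim p\sqrt{d\log p^{-1}}$ (a reformulation of item~4), producing the $O(p\sqrt{n\log p^{-1}/d}\,\log^{3/2}n)$ term. The main technical obstacle is recovering the sharper $1-A = O(n/d)$ bound, since the union-bound statement of Lemma~\ref{lem:rgg_remainder_concentration} only yields $O(n\log n/d)$; all remaining steps are routine substitutions.
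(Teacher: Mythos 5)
There is a genuine gap, and it sits at the very first step. You assert that $\Gamma_2 \sim \psi_d$ and hence $Q_0 = \Psi_d(t'_{p,d})$, but in the coupling $\Gamma_i \sim \psi_{d-n+i}$, so $\Gamma_2 \sim \psi_{d-n+2}$ and $Q_0 = \Psi_{d-n+2}(t'_{p,d})$, while $p = \Psi_d(t_{p,d})$. Your mean-value argument therefore compares tails of the wrong distribution. This is not a cosmetic issue: the paper's proof splits $|Q_0 - p| \le |\Psi_{d-n+2}(t'_{p,d}) - \Psi_{d-n+2}(t_{p,d})| + |\Psi_{d-n+2}(t_{p,d}) - \Psi_d(t_{p,d})|$, and the second, ``dimension shift'' term — controlled via the Gaussian approximation of Lemma~\ref{lem:distributional_approx_on_the_sphere} — is exactly what produces the $O_n\bigl(p d t_{p,d}^4 + p n t_{p,d}^2 + p/d\bigr) = O_n\bigl(\tfrac{pn}{d}\log p^{-1}\bigr)$ error term in the lemma. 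Your argument has no counterpart to this term (you instead try to manufacture it from a second-order correction $t_{p,d} d |\Delta|^2 \psi_d(t_{p,d})$, which with $|\Delta| = O(\sqrt{n}\log^{3/2}n/d)$ gives $p\log p^{-1}\cdot n\log^3 n/d$, not $O(pn\log p^{-1}/d)$), and the first term also needs a density comparison $\psi_{d-n+2}(u)/\psi_d(t_{p,d}) = 1+o_n(1)$, which uses the explicit formula for $\psi_d$, the factor $(1-u^2)^{-n/2}$, and $d \gg n\log^4 n$ — none of which appears in your write-up.

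The secondary detour is also off track. You propose sharpening $1-A$ (with $A = a_{22}\prod_{j\ge 3}\sqrt{1-\Gamma_j^2}$) from $O(n\log n/d)$ to $O(n/d)$ by a Bernstein-type argument, and you correctly flag that this does not follow from Lemma~\ref{lem:rgg_remainder_concentration} as stated (so the bound would not hold on $E_{\mathsf{rem}}$ without redefining the event). But the sharpening is unnecessary: since $d \gg n$, the contribution $t_{p,d}(1-A)/A = O\bigl(n\log^{3/2}n/d^{3/2}\bigr)$ is dominated by the $\bigl|\sum_{j\ge 3} a_{2j}T_j\bigr| = O(\sqrt{n}\log^{3/2}n/d)$ piece, so one simply bounds $|t'_{p,d}-t_{p,d}|\cdot\mathbbm{1}(E_{\mathsf{rem}}) = O(\sqrt{n}\log^{3/2}n/d)$ and multiplies by $\psi_d(t_{p,d}) \lesssim p\sqrt{d\log p^{-1}}$ to get the second inequality; the $n\log n/d$ factor never needs to be squeezed under the $pn\log p^{-1}/d$ term. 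To repair the proof you should start from $Q_0 = \Psi_{d-n+2}(t'_{p,d})$, add the $\Psi_{d-n+2}$-versus-$\Psi_d$ comparison at $t_{p,d}$, and carry out the density comparison between $\psi_{d-n+2}$ near $t_{p,d}$ and $\psi_d(t_{p,d})$.
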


\begin{lemma}
\label{lem:expected_deviation_bound} 
Suppose that $d \gg n\log^4{n}$ and $p \in (0, 1/2]$ satisfies that $p \gg n^{-3}$. Then we have that
$$\mathbb{E}\left[ \bigr|t^{\prime}_{p,d}-t_{p,d}\bigr|^2 \cdot \mathbbm{1}(E_{\mathsf{rem}}) \right] = O_n\left( \frac{n^2 \log^3 n}{d^3} \right) + O_n\left(\frac{n}{d^2}\right) $$
\end{lemma}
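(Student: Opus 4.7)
The plan is to start from the explicit formula~\eqref{eq:identity_for_conditional_probab}, rearranged as $t'_{p,d} - t_{p,d} = [t_{p,d}(1-P) - S]/P$ where $S := \sum_{j=3}^n a_{2j}T_j$ and $P := a_{22}\prod_{j=3}^n\sqrt{1-\Gamma_j^2}$. On $E_{\mathsf{rem}}$, Lemma~\ref{lem:rgg_remainder_concentration} guarantees $P \ge 1 - O(n\log n/d) = \Theta(1)$ whenever $d \gg n\log^4 n$, so $|t'_{p,d} - t_{p,d}|^2 \mathbbm{1}(E_{\mathsf{rem}}) \lesssim S^2 + t_{p,d}^2(1-P)^2$. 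It then suffices to bound $\mathbb{E}[S^2]$ and $t_{p,d}^2\,\mathbb{E}[(1-P)^2 \mathbbm{1}(E_{\mathsf{rem}})]$ separately; in fact the first can be bounded by its unconditional expectation.

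For the first term, I would exploit that $\{a_{2j}\}_{j \ge 3}$ are $\sigma(X_2,\dots,X_n)$-measurable while $\{T_j\}_{j \ge 3}$ are measurable with respect to the independent $\sigma$-algebra generated by $\Gamma_3,\dots,\Gamma_n$, so that $\mathbb{E}[S^2] = \sum_{j,k}\mathbb{E}[a_{2j}a_{2k}]\,\mathbb{E}[T_jT_k]$. The key observation is that every off-diagonal $\mathbb{E}[T_jT_k]$ vanishes: writing $T_jT_k$ (for $j<k$) as $\Gamma_j\Gamma_k\sqrt{1-\Gamma_k^2}\prod_{\ell=j+1,\ell\neq k}^n\sqrt{1-\Gamma_\ell^2}\prod_{\ell=k+1}^n\sqrt{1-\Gamma_\ell^2}$ and using independence of the $\Gamma$'s, the factor $\mathbb{E}[\Gamma_j]=0$ from the symmetry of $\psi_{d-n+j}$ (Lemma~\ref{lem:psi_properties}(1)) kills the expectation. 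Since $a_{2j},T_j \sim \psi_d$ with second moment $1/d$ (by Proposition~\ref{thm:rgg_coupling_results}), we obtain $\mathbb{E}[S^2] = (n-2)/d^2 = O(n/d^2)$.

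For the second term, since $P \in [0,1]$ almost surely we have $1-P \le 1-P^2 = A + \tilde B - A\tilde B$ where $A := 1 - a_{22}^2 = \sum_{j\ge 3}a_{2j}^2$ and $\tilde B := 1 - \prod_{j\ge 3}(1-\Gamma_j^2)$, so $(1-P)^2 \lesssim A^2 + \tilde B^2$. The variable $A$ is the squared norm of the projection of the uniform point $X_2 \in \sphere^{d-1}$ onto the $(n-2)$-dimensional subspace $\mathsf{span}(X_3,\dots,X_n)$, which is independent of $X_2$, so $A \sim \mathrm{Beta}\bigl(\tfrac{n-2}{2},\tfrac{d-n+2}{2}\bigr)$ and the standard formulas $\mathbb{E}[A] = (n-2)/d$, $\mathrm{Var}(A) = O(n/d^2)$ give $\mathbb{E}[A^2] = O(n^2/d^2)$. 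For $\tilde B$, the elementary bound $\tilde B \le \sum_{j\ge 3}\Gamma_j^2$ combined with independence of the $\Gamma_j$ and $\mathbb{E}[\Gamma_j^{2k}] = O(d^{-k})$ yields $\mathbb{E}[\tilde B^2] = O(n^2/d^2)$. Invoking $t_{p,d}^2 = O(\log p^{-1}/d)$ from Lemma~\ref{lem:psi_properties}(2) and using $p \gg n^{-3}$ to upgrade $\log p^{-1} = O(\log n) \le \log^3 n$, this second term contributes $O(n^2\log^3 n/d^3)$.

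Summing yields the stated bound. The main subtlety is the cross-term cancellation in $\mathbb{E}[S^2]$: a naive diagonal Cauchy--Schwarz on the $T_j$ alone would only yield $O(n^2/d^2)$, which is asymptotically larger than $O(n/d^2)$ since $n \to \infty$, so the symmetry of $\psi$ is essential for matching the stated rate. A secondary point requiring care is recognizing that $\mathsf{span}(X_3,\dots,X_n)$ is a Haar-random $(n-2)$-dimensional subspace independent of $X_2$, which legitimizes the exact Beta identification of $A$ and the resulting moment bounds.
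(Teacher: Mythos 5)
Your proof is correct, and its core coincides with the paper's: both arguments rearrange $t'_{p,d}-t_{p,d}$ into a term proportional to $t_{p,d}(1-P)$ plus the sum $S=\sum_{j\ge 3}a_{2j}T_j$ (using $E_{\mathsf{rem}}$ only to keep the denominator $P$ of order one), and both obtain the dominant $O_n(n/d^2)$ contribution from $\mathbb{E}[S^2]$ via exactly the same independence of $\sigma(X_2,\dots,X_n)$ and $\sigma(\Gamma_3,\dots,\Gamma_n)$ together with the vanishing of $\mathbb{E}[T_jT_k]$ for $j\neq k$ by symmetry, giving $(n-2)/d^2$. The only genuine difference is in the secondary term: the paper bounds $1-P$ deterministically on $E_{\mathsf{rem}}$ by $O_n(n\log n/d)$ using Lemma~\ref{lem:rgg_remainder_concentration} together with the bound $t_{p,d}\lesssim\sqrt{\log p^{-1}/d}$, which after squaring yields the stated $O_n(n^2\log^3 n/d^3)$; you instead keep $1-P$ random, write $1-P\le 1-P^2 = A+\tilde B-A\tilde B$, and compute $\mathbb{E}[A^2]$ via the Beta$\bigl(\tfrac{n-2}{2},\tfrac{d-n+2}{2}\bigr)$ law of the projection (or, equivalently, fourth-moment bounds for $\psi_d$) and $\mathbb{E}[\tilde B^2]$ via moments of the independent $\Gamma_j$, getting $t_{p,d}^2\cdot O_n(n^2/d^2)=O_n(n^2\log n/d^3)$. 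Your route avoids invoking the high-probability bounds of Lemma~\ref{lem:rgg_remainder_concentration} for this term and in fact shaves two logarithmic factors, while the paper's route is shorter since those concentration bounds are already in hand; either way the result lands within the stated estimate, and your identification of the cross-term cancellation in $\mathbb{E}[S^2]$ as the essential point (versus the lossy $O(n^2/d^2)$ diagonal bound) matches the paper's reasoning.
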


With these lemmas, we now proceed to directly bound $\kl(\nurgg||\mu)$. Applying conditional Jensen's inequality to Equation~\ref{eq:log_sobolev_rgg} yields that
\begin{equation}
\kl(\nurgg||\mu) \leq \frac{N}{p(1-p)} \cdot \mathbb{E}|Q-p|^2 \leq \frac{N}{p(1-p)} \cdot \mathbb{E}|Q_0-p|^2 \label{eqn:Jensens1}
\end{equation}
We now estimate the right-hand side above using the results in Lemma~\ref{lem:main_reduction_rgg}. Applying the bounds in the above three lemmas yields that
\allowdisplaybreaks
\begin{align*}
    \kl(\nurgg||\mu) &\leq \frac{N}{p(1 - p)} \cdot \mathbb{E}|Q_0-p|^2 \\
    &= \frac{N}{p(1 - p)} \cdot \mathbb{E} \left[ |Q_0-p|^2 \cdot \mathbbm{1}(E_{\mathsf{rem}}) \right] + \frac{N}{p(1 - p)} \cdot \mathbb{E} \left[ |Q_0-p|^2 \cdot \mathbbm{1}(E^{c}_{\mathsf{rem}}) \right] \\
    &\lesssim \frac{N}{p} \cdot \mathbb{E}\left[\left(\frac{pn}{d} \cdot \log{ p^{-1}}  + C_s \psi_{d}\left(t_{p,d}\right) \cdot \bigr|t^{\prime}_{p,d}-t_{p,d}\bigr|\right)^2\mathbbm{1}(E_{\mathsf{rem}})\right] + \frac{N}{pn^s}\\
    &\lesssim \frac{n^4p}{d^2}\log^2{ p^{-1}}  + \frac{n^2}{p} \cdot \mathbb{E} \left[ \psi^2_{d}\left(t_{p,d}\right) \cdot \bigr|t^{\prime}_{p,d}-t_{p,d}\bigr|^2 \cdot \mathbbm{1}(E_{\mathsf{rem}}) \right] + \frac{1}{pn^{s-2}}
\end{align*}
Note that in the second inequality, we used the fact that $1 - p = \Omega_n(1)$, and in the last inequality, we used the fact that $(x+y)^2 \leq 2x^2 + 2y^2$.
Applying Lemma~\ref{lem:psi_properties}, we have that $p \in (0, 1/2]$ implies that $\psi_d(t_{p,d}) \leq Cp\sqrt{d\log{ p^{-1}}}$. Combining this with Lemma \ref{lem:expected_deviation_bound} now yields that
\begin{align*}
\kl(\nurgg||\mu) &\lesssim \frac{n^4p}{d^2}\log^2{ p^{-1}}  + n^2 pd \log p^{-1} \cdot \mathbb{E} \left[ \bigr|t^{\prime}_{p,d}-t_{p,d}\bigr|^2 \cdot \mathbbm{1}(E_{\mathsf{rem}}) \right] + \frac{1}{pn^{s-2}} \\
&\lesssim \frac{n^4p}{d^2}\log^2{ p^{-1}}  + \frac{n^4 p \log p^{-1} \log^3 n}{d^2} + \frac{n^3 p \log p^{-1}}{d}+ \frac{1}{pn^{s-2}} \\
&\lesssim \frac{n^3 p \log p^{-1}}{d}+ \frac{1}{pn^{s-2}}
\end{align*}
where the last inequality follows from the fact that $d \gg n \log^4 n$,  $\log p^{-1} = O(\log n)$ and $p = \Omega_n(n^{-2} \log n)$. Taking $s = 5$ yields that $\kl(\nurgg||\mu) \to 0$ if $d \gg n^3 p \log p^{-1}$.

\subsection{Sharper Bounds in the Sparse Case}

In this section, we prove the conclusion of Theorem \ref{thm:rgg} under the condition $d \gg p^2 n^{7/2} (\log n)^3 \sqrt{\log p^{-1}}$, which is tighter in the sparse case. The argument reduces bounding $\bE |Q - p|^2$ to bounding the total variation between $\grgg_{\sim e_0}$ and $\grgg_{\sim e_0}$ conditioned on the event $e_0 \in E(G)$. This quantity is then upper bounded by an explicit coupling on the vectors $X_i$.

We begin by observing that $Q = \mathbb{E}\left[Q_0\bigr|\sigma\left(\grgg_{\sim e_0}\right)\right]$ implies by Jensen's inequality that
\begin{equation}
|Q-p| \leq \mathbb{E}\left[|Q_0-p| \, \bigr|\sigma\left(\grgg_{\sim e_0}\right)\right] \label{eqn:Jensens2}
\end{equation}
Since $Q$ is $\sigma\left(\grgg_{\sim e_0}\right)$-measurable, we have that
$$|Q-p|^2 \leq \mathbb{E}\left[|Q_0-p| \cdot |Q-p| \, \bigr|\sigma\left(\grgg_{\sim e_0}\right)\right]$$
Substituting this and the definition of $E_{\mathsf{rem}}$ into Equation~\ref{eq:log_sobolev_rgg} now yields that
\begin{align*}
    \kl\left(\nurgg\bigr|\bigr|\mu\right) &\leq \frac{N}{p(1 - p)} \cdot \mathbb{E}\left[|Q-p|^2\right] \\
    &\lesssim \frac{n^2}{p} \cdot \mathbb{E}\left[|Q-p| \cdot |Q_0-p|\right] \\
    &= \frac{n^2}{p} \cdot \mathbb{E}\left[|Q-p| \cdot |Q_0-p| \cdot \mathbbm{1}(E_{\mathsf{rem}})\right] + \frac{n^2}{p} \cdot \mathbb{E}\left[|Q-p| \cdot |Q_0-p| \cdot \mathbbm{1}(E^c_{\mathsf{rem}})\right] \\
    &\leq \frac{n^2}{p} \cdot \mathbb{E}\left[|Q-p| \cdot |Q_0-p| \cdot \mathbbm{1}(E_{\mathsf{rem}})\right] +\frac{1}{pn^{s-2}}
\end{align*}
Note that the last inequality follows from the upper bound on $\bP[E_{\mathsf{rem}}^c]$ in Lemma \ref{lem:rgg_remainder_concentration}. Applying the second bound in Lemma~\ref{lem:main_reduction_rgg} now yields that
\begin{equation} \label{eq:hybrid_bound_initial}
\kl\left(\nurgg\bigr|\bigr|\mu\right) \lesssim \left(\frac{n^3\log{ p^{-1}}}{d} + n^{5/2}\log^{3/2}(n)\sqrt{\frac{\log{ p^{-1}}}{d}}\right) \cdot \mathbb{E}\left[|Q-p|\right] +\frac{1}{pn^{s-2}}
\end{equation}

It suffices to upper bound $\mathbb{E} |Q-p|$. Recall that $\nurgg$ denotes the probability mass function of $\grgg$ and $e_0$ denotes the edge $\{1, 2\}$. Let $\nurgg_{\sim e_0}$ denote the marginal distribution of $\grgg$ restricted to all edges that are not $\{1, 2\}$, and let $(\nurgg_{\sim e_0})^+$ denote the distribution of $\grgg$ conditioned on the event $e_0 \in E(\grgg)$. We now make a simple but essential observation that will allow us to upper bound $\bE|Q - p|$ by constructing a coupling between $\nurgg_{\sim e_0}$ and $(\nurgg_{\sim e_0})^+$. 

\begin{proposition}\label{lem:l_1_identity}
It holds that
$$\mathbb{E}[|Q-p|] = 2p \cdot \tv\left(\left(\nurgg_{\sim e_0}\right)^{+},\nurgg_{\sim e_0}\right)$$
\end{proposition}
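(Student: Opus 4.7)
The plan is to prove the identity by unpacking the definition of $Q$ using Bayes' rule and recognizing the resulting expression as a total variation distance. This is essentially a direct computation and should not require any nontrivial probabilistic machinery beyond the definitions.

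First, I would fix a graph $H$ in the support of $\nurgg_{\sim e_0}$ and write
$$Q(H) = \mathbb{P}\left[e_0 \in E(\grgg) \,\bigr|\, \grgg_{\sim e_0} = H\right] = \frac{\mathbb{P}\left[e_0 \in E(\grgg),\, \grgg_{\sim e_0} = H\right]}{\nurgg_{\sim e_0}(H)} = \frac{p \cdot (\nurgg_{\sim e_0})^+(H)}{\nurgg_{\sim e_0}(H)},$$
using in the last step that $\mathbb{P}[e_0 \in E(\grgg)] = p$ together with the definition of $(\nurgg_{\sim e_0})^+$ as the conditional law of $\grgg_{\sim e_0}$ given the event $\{e_0 \in E(\grgg)\}$. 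Rearranging,
$$Q(H) - p \;=\; \frac{p \cdot \bigl[(\nurgg_{\sim e_0})^+(H) - \nurgg_{\sim e_0}(H)\bigr]}{\nurgg_{\sim e_0}(H)}.$$

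Next, since $Q$ is $\sigma(\grgg_{\sim e_0})$-measurable, I take the expectation of $|Q - p|$ against $\nurgg_{\sim e_0}$. The $\nurgg_{\sim e_0}(H)$ in the denominator cancels the measure weight, leaving
$$\mathbb{E}\bigl[|Q - p|\bigr] \;=\; p \cdot \sum_{H} \bigl|(\nurgg_{\sim e_0})^+(H) - \nurgg_{\sim e_0}(H)\bigr| \;=\; 2p \cdot \tv\bigl((\nurgg_{\sim e_0})^+, \nurgg_{\sim e_0}\bigr),$$
where the last equality is the standard $\ell_1$ characterization of total variation on discrete spaces. This gives the claimed identity.

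There is essentially no obstacle here beyond bookkeeping; the only minor subtlety is handling $H$ with $\nurgg_{\sim e_0}(H) = 0$, but such $H$ also have $(\nurgg_{\sim e_0})^+(H) = 0$ (since $(\nurgg_{\sim e_0})^+$ is absolutely continuous with respect to $\nurgg_{\sim e_0}$), so they contribute nothing to either side and can be ignored. If the paper's $\grgg_{\sim e_0}$ has a continuous component, one would replace the sum by an integral against $\nurgg_{\sim e_0}$ and use Radon-Nikodym derivatives, but the argument is otherwise unchanged.
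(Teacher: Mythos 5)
Your proof is correct and follows essentially the same route as the paper: both arguments amount to applying Bayes' rule to express the conditional edge probability in terms of $(\nurgg_{\sim e_0})^+$ and $\nurgg_{\sim e_0}$, expanding $\mathbb{E}|Q-p|$ as a sum over graphs, and invoking the $\ell_1$ characterization of total variation. The only cosmetic difference is that you solve for $Q(H)$ explicitly before summing, whereas the paper manipulates the sum directly; your remark about null sets is a harmless extra.
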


\begin{proof}
Let $\Omega_{\sim e_0}$ denote the set of simple graphs on the vertex set $[n]$ that do not include the edge $\{1, 2\}$. Note that $Q$ can be written as $Q = \nurgg_{e_0}(1 | \grgg_{\sim e_0})$ where $\nurgg_{e_0}$ denotes the probability mass function of $\mathbbm{1}(e_0 \in E(\grgg))$ conditioned on $\grgg_{\sim e_0}$. We now have that
\begin{align*}
\mathbb{E}[|Q-p|] &= \mathbb{E}_{\grgg \sim \nurgg} \left[ \left|\nurgg_{e_0}(1 | \grgg_{\sim e_0}) - p \right| \right] \\
    &= \sum_{\grgg_{\sim e_0} \in \Omega_{\sim e_0}} \nurgg_{\sim e_0}(\grgg_{\sim e_0}) \cdot \bigr|p - \nurgg_{e_0}(1 | \grgg_{\sim e_0})\bigr| \\
    &= \sum_{\grgg_{\sim e_0} \in \Omega_{\sim e_0}} p \cdot \biggr|\nurgg_{\sim e_0}(\grgg_{\sim e_0}) - \frac{\nurgg_{e_0}(1 | \grgg_{\sim e_0}) \cdot \nurgg_{\sim e_0}(\grgg_{\sim e_0})}{p}\biggr| \\
    &= \sum_{\grgg_{\sim e_0} \in \Omega_{\sim e_0}} p \cdot \biggr|\nurgg_{\sim e_0}(\grgg_{\sim e_0}) - (\nurgg_{\sim e_0})^+(\grgg_{\sim e_0})\biggr| \\
    &= 2p \cdot \tv\left(\left(\nurgg_{\sim e_0}\right)^{+},\nurgg_{\sim e_0}\right)
\end{align*}
which proves the proposition.
\end{proof}

We now will construct a coupling between $\nurgg_{\sim e_0}$ and $(\nurgg_{\sim e_0})^+$. Note that the collection of variables $\langle X_i, X_j \rangle$ for $1 \le i < j \le n$ is invariant to orthogonal rotations of the vectors $X_i$. Therefore we may assume without loss of generality that $X_1 = (1,0,\dots,0)$ and $X_2, \dots, X_n$ are sampled i.i.d. from the Haar measure on $\mathbb{S}^{d - 1}$. Now let $\psi_{d,p}^{+}$ denote the density of $Z \sim \psi_d$ conditioned on the event $Z \ge t_{p, d}$. In other words, let
$$\psi^{+}_{d,p}(x) = \frac{\mathbbm{1}(x\geq t_{p,d}) \cdot \psi_d(x)}{p}$$
for each $x \in \mathbb{R}$. Now let $X_2'$ be given by
$$X_2' = \left( \tau, \gamma X_{22}, \gamma X_{23}, \dots, \gamma X_{2d} \right) \quad \text{where} \quad \gamma = \sqrt{\frac{1 - \tau^2}{1 - X_{21}^2}} \quad \text{and} \quad \tau \sim \psi^{+}_{d,p}$$
and where $\tau$ is independent of $\sigma(X_1, X_2, \dots, X_n)$. Note that $\gamma$ is such that $\| X_2' \|_2 = 1$. Let $\pr{gg}_{t_{p,d}}^{\sim e_0}(X_1, X_2, \dots, X_n)$ denote $\pr{gg}_{t_{p,d}}(X_1, X_2, \dots, X_n)$ without the edge $\{1, 2\}$. By definition, we have that $\pr{gg}_{t_{p,d}}^{\sim e_0}(X_1, X_2, \dots, X_n)\sim \nurgg_{\sim e_0}$. Now observe that
$$\pr{gg}_{t_{p,d}}^{\sim e_0}(X_1, X_2', \dots, X_n) \sim (\nurgg_{\sim e_0})^+$$
This holds since $\mL(X_1, X_2', \dots, X_n)$ is by construction the law of $X_1, X_2, \dots, X_n$ conditioned on the event that $\{X_{21} \ge t_{p, d} \}$. Furthermore the event $\{X_{21} \ge t_{p, d} \}$ exactly coincides with the event $\{e_0 \in E(G)\}$ where $G = \pr{gg}_{t_{p,d}}(X_1, X_2, \dots, X_n)$. The coupling characterization of total variation now implies that
\begin{equation} \label{eqn:coupling}
\tv\left(\left(\nurgg_{\sim e_0}\right)^{+},\nurgg_{\sim e_0}\right) \le \bP\left[ \pr{gg}_{t_{p,d}}^{\sim e_0}(X_1, X_2', \dots, X_n) \neq \pr{gg}_{t_{p,d}}^{\sim e_0}(X_1, X_2, \dots, X_n) \right]
\end{equation}
Let $C > 0$ be a fixed constant and define the event
$$E_{\mathsf{coup}} = \biggr\{|\tau| \leq C\sqrt{\frac{\log{n}}{d}} \quad \text{and} \quad |X_{i1}| \leq C\sqrt{\frac{\log{n}}{d}} \quad \text{ for all } 2 \le i \le n \biggr\}$$
Now observe that for any fixed $x > 0$, it holds that $\bP[|\tau| > x] \le p^{-1} \cdot \Psi_d(x) \lesssim n^2 \cdot \Psi_d(x)$, as $\tau \sim \psi_{d, p}^+$ and $p^{-1} = O_n(n^2/\log n)$. Since we also have that $X_{i1} \sim \psi_d$ for each $2 \le i \le n$, Lemma \ref{lem:distributional_approx_on_the_sphere} and a union bound imply that we can choose $C$ large enough so that $\mathbb{P}[E_{\mathsf{coup}}] \geq 1- n^{-s}$ for some fixed $s >0$. We now observe that the two graphs $\pr{gg}_{t_{p,d}}^{\sim e_0}(X_1, X_2', \dots, X_n)$ and $\pr{gg}_{t_{p,d}}^{\sim e_0}(X_1, X_2, \dots, X_n)$ can only differ in edges of the form $\{2, i\}$ where $3 \le i \le n$. Furthermore, they differ in the edge $\{2, i\}$ exactly when $\mathbbm{1}( \langle X'_2, X_i \rangle \ge t_{p, d}) \neq \mathbbm{1}( \langle X_2, X_i \rangle \ge t_{p, d})$. Now note that
$$\langle X_2^{\prime}, X_i\rangle = \tau X_{i1} + \gamma \sum_{j=2}^d X_{2j} X_{ij} = \tau X_{i1} + \sqrt{\frac{1-\tau^2}{1-X_{21}^2}} \cdot \left(\langle X_2,X_i\rangle - X_{21} X_{i1}\right)$$
It therefore follows that if $E_{\mathsf{coup}}$ holds then
$$\left| \langle X_2^{\prime}, X_i\rangle - \langle X_2,X_i\rangle \right| \le |\tau | \cdot |X_{i1}| + \sqrt{\frac{1-\tau^2}{1-X_{21}^2}} \cdot |X_{21}| \cdot |X_{i1}| + \left| \sqrt{\frac{1-\tau^2}{1-X_{21}^2}} -1 \right| = O_n\left( \frac{\log n}{d} \right)$$
where here we used the fact that $|\langle X_2, X_i \rangle | \le \| X_2 \|_2 \cdot \| X_i \|_2 = 1$ by Cauchy-Schwarz. Let $\delta$ denote the upper bound in the above inequality. Observe that if $E_{\mathsf{coup}}$ holds then the only way that $\mathbbm{1}( \langle X'_2, X_i \rangle \ge t_{p, d}) \neq \mathbbm{1}( \langle X_2, X_i \rangle \ge t_{p, d})$ can hold is if $|\langle X_2, X_i \rangle - t_{p, d}| \le \delta$. Combining these observations with the fact that $\langle X_2, X_i \rangle \sim \psi_d$ now yields that
\begin{align*}
\bP\left[ \left\{ \mathbbm{1}( \langle X'_2, X_i \rangle \ge t_{p, d}) \neq \mathbbm{1}( \langle X_2, X_i \rangle \ge t_{p, d}) \right\} \cap E_{\mathsf{coup}} \right] &\le \bP\left[ |\langle X_2, X_i \rangle - t_{p, d}| \le \delta \right] \\
&= \int_{t_{p,d}-\delta}^{t_{p,d}+\delta}\psi_d(x)dx \\
&\le 2\delta \cdot \sup_{|x - t_{p, d}| \le \delta} \psi_d(x)
\end{align*}
By (3) in Lemma~\ref{lem:psi_properties}, it follows that
$$\sup_{|x - t_{p, d}| \le \delta} \psi_d(x) \leq \psi_d(t_{p,d}) \cdot e^{3t_{p,d}d\delta} = \psi_d(t_{p,d}) \cdot (1+o_n(1)) = O_n \left( p \sqrt{d \log n} \right)$$
where the second and third bounds follow since $t_{p,d} = O_n\left(\sqrt{\frac{\log n}{d}}\right)$, $\delta = O_n\left(\frac{\log n}{d}\right)$ and $d \gg \log^3{n}$. Putting this all together with Proposition \ref{lem:l_1_identity} and Equation \ref{eqn:coupling} now yields that
\begin{align*}
\mathbb{E}[|Q-p|] &\le 2p \cdot \bP\left[ \pr{gg}_{t_{p,d}}^{\sim e_0}(X_1, X_2', \dots, X_n) \neq \pr{gg}_{t_{p,d}}^{\sim e_0}(X_1, X_2, \dots, X_n) \right] \\
&\le 2p \cdot \bP\left[ E_{\mathsf{coup}}^c \right] + 2p \cdot \sum_{i = 3}^n \bP\left[ \left\{ \mathbbm{1}( \langle X'_2, X_i \rangle \ge t_{p, d}) \neq \mathbbm{1}( \langle X_2, X_i \rangle \ge t_{p, d}) \right\} \cap E_{\mathsf{coup}} \right] \\
&\lesssim 2p n^{-s} + 2p \cdot (n - 3) \cdot 2 \delta \cdot p \sqrt{d \log n} \\
&\lesssim p n^{-s} + \frac{p^2 n \log^{3/2} n}{d^{1/2}}
\end{align*}
where the second inequality follows from a union bound. Substituting this bound into Equation~\eqref{eq:hybrid_bound_initial} now yields that
\begin{align*}
\kl\left(\nurgg\bigr|\bigr|\mu\right) &\lesssim \frac{n^4p^2\log p^{-1} \log^{3/2}{n}}{d^{3/2}} + \frac{n^{7/2}p^2\log^{3}(n)\sqrt{\log{ p^{-1}}}}{d} \\
&\quad \quad + \frac{1}{pn^{s-2}} + pn^{-s} \cdot \left(\frac{n^3\log{ p^{-1}}}{d} + n^{5/2}\log^{3/2}(n)\sqrt{\frac{\log{ p^{-1}}}{d}}\right)
\end{align*}
Varying $s$ only changes the constant with which the $\lesssim$ above holds. Picking $s > 4$ thus yields that $\kl\left(\nurgg\bigr|\bigr|\mu\right) \to 0$ as $n \to \infty$ if $d \gg p^2 n^{7/2} (\log n)^3 \sqrt{\log p^{-1}}$ and $p = \Omega_n(n^{-2} \log n)$. Applying Pinsker's inequality completes the proof of Theorem \ref{thm:rgg}.

\section*{Acknowledgements}

We thank Yury Polyanskiy for inspiring discussions on related topics and thank Philippe Rigollet for directing us to \cite{bubeck2016testing}. This work was supported in part by the grants ONR N00014-17-1-2147.

\bibliography{GB_BIB.bib}
\bibliographystyle{alpha}

\begin{appendices}

\section{Appendix: Random Intersection Graphs and Matrices}
\label{sec:rim_appendix}

\subsection{Variance of the Signed Triangle Count in $\pr{RIG}(n, d, p)$}
\label{subsec:rig-triangles}

The main purpose of this section is to prove Lemma \ref{lem:rigsignedtrianglesvar}, which computes the variance of $T_s(G)$ for $G \sim \pr{rig}(n, d, p)$. The proof follows a similar structure to the proof of Lemma \ref{lem:signedtriangleexp} but is more computationally involved.

\begin{proof}[Proof of Lemma \ref{lem:rigsignedtrianglesvar}]
Let $\tau_{ijk} = (e_{ij} - p)(e_{ik} - p)(e_{jk} - p)$ for each $1 \le i < j < k \le n$. It holds that
\begin{align*}
\textnormal{Var}[T_s(G)] &= \sum_{1 \le i < j < k \le n} \sum_{1 \le i' < j' < k' \le n} \textnormal{Cov}\left[ \tau_{ijk},  \tau_{i'j'k'} \right] \\
&= \binom{n}{3} \cdot \text{Var}[\tau_{123}] + \frac{4!}{2! \cdot 2!} \cdot \binom{n}{4} \cdot \textnormal{Cov}\left[ \tau_{123},  \tau_{124} \right] + \frac{5!}{2! \cdot 2!} \cdot \binom{n}{5} \cdot \textnormal{Cov}\left[ \tau_{123},  \tau_{145} \right] \numberthis \label{eqn:varexp}
\end{align*}
The second equality follows by symmetry among vertex labels and the fact that if $\{i, j, k\} \cap \{i', j', k'\} = \emptyset$ then $\tau_{ijk}$ and $\tau_{i'j'k'}$ are independent. Note that the second coefficient is the number of ways to choose two sets of three vertices that intersect in two elements and the third coefficient is the number of ways to choose these sets so that they intersect in one element. By Lemma \ref{lem:signedtriangleexp}, we have that
$$q = \bE[\tau_{123} = 1] = (1 - p)^3 \cdot \left[ d\delta^3 + O_n(d\delta^4) \right]$$
Now note that
\allowdisplaybreaks
\begin{align*}
&\text{Var}[\tau_{123}] = \bE[\tau_{123}^2] - q^2, \quad \quad \textnormal{Cov}\left[ \tau_{123},  \tau_{124} \right] = \bE[\tau_{123}\tau_{124}] - q^2 \quad \text{and} \quad \\
&\textnormal{Cov}\left[ \tau_{123},  \tau_{145} \right] = \bE[\tau_{123}\tau_{145} = 1] - q^2
\end{align*}
We will begin by computing $\bE[\tau_{123}^2]$. Let $P$ and $Q$ be as in Lemma \ref{lem:signedtriangleexp}. Now note that
\begin{align*}
\bE[\tau_{123}^2] &= \bE\left[ (e_{12} - p)^2(e_{13} - p)^2(e_{23} - p)^2 \right] \\
&= \bE\left[ \prod_{\{i, j\} = \{1, 2\}, \{1, 3\}, \{2, 3\}} \left[ (1 - p)^2 - (1 - 2p)(1 - e_{ij}) \right] \right] \\
&= -\sum_{x \in \{0, 1\}^3} (1 - p)^{2|x|}(-1)^{|x|} (1 - 2p)^{3 - |x|} \cdot Q(x_1, x_2, x_3) \\
&= (1 - p)^6 - 3(1 - p)^4 (1 - 2p) (1 - \delta)^d(1 + \delta)^d + 3(1 - p)^2 (1 - 2p)^2 (1 - \delta)^d(1 + \delta - \delta^2)^d \\
&\quad \quad - (1 - 2p)^3 (1 + 2\delta)^d (1 - \delta)^{2d}
\end{align*}
where the last two equalities follow from the expressions for $Q$ in Lemma \ref{lem:signedtriangleexp}. Further simplifying and applying the estimates in Equations \ref{eqn:firstapprox} and \ref{eqn:secondapprox} yields that the above quantity is equal to
\allowdisplaybreaks
\begin{align*}
&(1 - p)^6 - 3(1 - p)^5 (1 - 2p) + 3(1 - p)^4 (1 - 2p)^2 \cdot \left( 1 + \frac{\delta^3}{(1 - \delta^2)(1 + \delta)} \right)^d \\
&\quad \quad \quad \quad  - (1 - 2p)^3 (1 - p)^3 \cdot \left( 1 + \frac{2\delta^3 + \delta^4}{(1 - \delta^2)(1 + \delta)^2} \right)^d \\
&\quad \quad = (1 - p)^3 \cdot \left[ (1 - p) - (1 - 2p) \right]^3 + 3(1 - p)^4 (1 - 2p)^2 \cdot \left[ \left( 1 + \frac{\delta^3}{(1 - \delta^2)(1 + \delta)} \right)^d - 1\right] \\
&\quad \quad \quad \quad - (1 - 2p)^3 (1 - p)^3 \cdot \left[ \left( 1 + \frac{2\delta^3 + \delta^4}{(1 - \delta^2)(1 + \delta)^2} \right)^d - 1 \right] \\
&\quad \quad = p^3 (1 - p)^3 + 3(1 - p)^4 (1 - 2p)^2 d \delta^3 - (1 - 2p)^3 (1 - p)^3 d\delta^3 + O_n(d\delta^4) \\
&\quad \quad = p^3 (1 - p)^3 + (2 - p)(1 - p)^3 (1 - 2p)^2 d \delta^3 + O_n(d\delta^4)
\end{align*}

We now will estimate $\bE[\tau_{123}\tau_{124}]$ using a similar method to Lemma \ref{lem:signedtriangleexp}. Let $P' : \{0, 1\}^5 \to [0, 1]$ be such that $P'(x_1, x_2, x_3, x_4, x_5)$ is the probability that $e_{12} = x_1$, $e_{13} = x_2$, $e_{14} = x_3$, $e_{23} = x_4$ and $e_{24} = x_5$. Define $Q' : \{0, 1\}^5 \to [0, 1]$ as
$$Q'(x_1, x_2, x_3, x_4, x_5) = \sum_{y \subseteq x} P'(y_1, y_2, y_3, y_4, y_5)$$
As in Lemma \ref{lem:triangleexp}, the events whose probabilities are given by the values of $Q'$ are each the product of events over the individual elements of $[d]$. For no edges to be present in the triangles $\{1, 2, 3\}$ or $\{1, 2, 4\}$, each $i \in [d]$ must be in at most one of $S_1, S_2, S_3, S_4$ or is in both of $S_3$ and $S_4$. Thus
\begin{align*}
P'(0, 0, 0, 0, 0) = Q'(0, 0, 0, 0, 0) &= \left[ (1 - \delta)^4 + 4\delta(1 - \delta)^3 + \delta^2(1 - \delta)^2 \right]^d \\
&= \left(1 + 2\delta - 2\delta^2 \right)^d(1 - \delta)^{2d}
\end{align*}
Similarly, if $|x| = x_1 + x_2 + x_3 + x_4 + x_5 = 1$, then
\begin{align*}
Q'(x_1, x_2, x_3, x_4, x_5) &= \left[ (1 - \delta)^4 + 4\delta(1 - \delta)^3 + 2\delta^2(1 - \delta)^2 \right]^d \\
&= \left(1 + 2\delta - \delta^2 \right)^d(1 - \delta)^{2d}
\end{align*}
If $|x| = 2$ and $x \neq (0, 1, 1, 0, 0), (0, 0, 0, 1, 1)$, then it follows that
\begin{align*}
Q'(x_1, x_2, x_3, x_4, x_5) &= \left[ (1 - \delta)^4 + 4\delta(1 - \delta)^3 + 3\delta^2(1 - \delta)^2 \right]^d \\
&= \left(1 + 2\delta \right)^d(1 - \delta)^{2d}
\end{align*}
If $x = (0, 1, 1, 0, 0), (0, 0, 0, 1, 1)$, then each $i \in [d]$ can also possibly be in the three sets $S_1, S_3, S_4$ and $S_2, S_3, S_4$, respectively. Therefore
\begin{align*}
Q'(0, 1, 1, 0, 0) = Q'(0, 0, 0, 1, 1) &= \left[ (1 - \delta)^4 + 4\delta(1 - \delta)^3 + 3\delta^2(1 - \delta)^2 + \delta^3(1 - \delta) \right]^d \\
&= \left(1 + \delta - 2\delta^2 + \delta^3 \right)^d(1 - \delta)^{d}
\end{align*}
We now consider the cases where $|x| = 3$. When $|x| = 3$, there are always four allowed pairs of sets that any $i \in [d]$ can be in -- the three edges of $x$ and $\{3, 4\}$. However, the number of allowed triples varies with $x$. If $x = (1, 1, 0, 0, 1), (1, 0, 1, 1, 0)$, then there are no allowed triples and
\begin{align*}
Q'(1, 1, 0, 0, 1) = Q'(1, 0, 1, 1, 0) &= \left[ (1 - \delta)^4 + 4\delta(1 - \delta)^3 + 4\delta^2(1 - \delta)^2 \right]^d \\
&= (1 + \delta)^{2d}(1 - \delta)^{2d}
\end{align*}
If $|x| = 3$ and $x \neq (1, 1, 0, 0, 1), (1, 0, 1, 1, 0)$, then there is one allowed triple and
\begin{align*}
Q'(x_1, x_2, x_3, x_4, x_5) &= \left[ (1 - \delta)^4 + 4\delta(1 - \delta)^3 + 4\delta^2(1 - \delta)^2 + \delta^3(1 - \delta) \right]^d \\
&= \left(1 + \delta - \delta^2 \right)^d(1 - \delta)^{d}
\end{align*}
If $|x| = 4$, then there is one forbidden pair of sets and two forbidden triples. Therefore
\allowdisplaybreaks
\begin{align*}
Q'(x_1, x_2, x_3, x_4, x_5) &= \left[ (1 - \delta)^4 + 4\delta(1 - \delta)^3 + 5\delta^2(1 - \delta)^2 + 2\delta^3(1 - \delta) \right]^d \\
&= \left(1 + \delta \right)^d(1 - \delta)^{d}
\end{align*}
Furthermore $Q'(1, 1, 1, 1, 1) = 1$. Now we have that
\begin{align*}
\bE[\tau_{123}\tau_{124}] &= \bE\left[ (e_{12} - p)^2 (e_{13} - p) (e_{14} - p) (e_{23} - p) (e_{24} - p) \right] \\
&= \bE\left[ \left[(1 - p)^2 - (1 - 2p)(1 - e_{12}) \right] \times \prod_{\{i, j\} = \{1, 3\}, \{1, 4\}, \{2, 3\}, \{2, 4\}} \left[ (1 - p) - (1 - e_{ij}) \right] \right] \\
&= -\sum_{x \in \{0, 1\}^5} (-1)^{|x|} (1 - p)^{2x_1} (1 - 2p)^{1 - x_1} (1 - p)^{x_2 + x_3 + x_4 + x_5} \cdot Q'(x_1, x_2, x_3, x_4, x_5) \\
&= (1 - p)^6 - (1-p)^4 \cdot \left[ 4(1 - p) + (1 - 2p) \right] \cdot \left(1 + \delta \right)^d(1 - \delta)^{d} \\
&\quad \quad + (1 - p)^3 \cdot \left[ 4(1 - p) + 4(1 - 2p) \right] \cdot \left(1 + \delta - \delta^2 \right)^d(1 - \delta)^{d} \\
&\quad \quad + 2(1 - p)^4 \cdot (1 + \delta)^{2d}(1 - \delta)^{2d} \\
&\quad \quad - (1 - p)^2 \cdot \left[ 4(1 - p) + 4(1 - 2p) \right] \cdot \left(1 + 2\delta \right)^d(1 - \delta)^{2d} \\
&\quad \quad - 2(1 - p)^2 (1 - 2p) \cdot \left(1 + \delta - 2\delta^2 + \delta^3 \right)^d(1 - \delta)^{d} \\
&\quad \quad + (1 - p) \cdot \left[ (1 - p) + 4(1 - 2p) \right] \cdot \left(1 + 2\delta - \delta^2 \right)^d(1 - \delta)^{2d} \\
&\quad \quad - (1 - 2p)\left(1 + 2\delta - 2\delta^2 \right)^d(1 - \delta)^{2d}
\end{align*}
This quantity can be rewritten as the following expression which is more convenient to estimate.
\begin{align*}
&(1 - p)^6 - (1-p)^5 (5 - 6p) + 4(1 - p)^5 (2 - 3p) \cdot \left(1 + \frac{\delta^3}{(1 + \delta)(1 - \delta^2)} \right)^d \\
&\quad \quad \quad \quad + 2(1 - p)^6 - 4(1 - p)^5 (2 - 3p) \cdot \left(1 + \frac{2\delta^3 + \delta^4}{(1 - \delta^2)(1 + \delta)^2} \right)^d \\
&\quad \quad \quad \quad - 2(1 - p)^5 (1 - 2p) \cdot \left(1 + \frac{\delta^3 - \delta^4 - \delta^5}{(1 - \delta^2)^2(1 + \delta)} \right)^d \\
&\quad \quad \quad \quad + (1 - p)^5 (5 - 9p) \cdot \left(1 + \frac{4\delta^3 + \delta^4 - 2\delta^5 - \delta^6}{(1 - \delta^2)^2(1 + \delta)^2} \right)^d \\
&\quad \quad \quad \quad - (1 - p)^5 (1 - 2p) \left(1 + \frac{6\delta^3 - 6 \delta^5 - 2\delta^6 + 2\delta^7 + \delta^8}{(1 - \delta^2)^3(1 + \delta)^2} \right)^d \\
&\quad \quad = 4(1 - p)^5(2 - 3p) \cdot d\delta^3 - 4(1 - p)^5(2 - 3p) \cdot 2d\delta^3 - 2(1 - p)^5(1 - 2p) \cdot 2\delta^3 \\
&\quad \quad \quad \quad + (1 - p)^5(5 - 9p) \cdot 4d\delta^3 - (1 - p)^5(1 - 2p) \cdot 6d\delta^3 + O_n(d\delta^4) \\
&\quad \quad = 2(1 - p)^5 (1 - 2p) d \delta^3 + O_n(d\delta^4)
\end{align*}
The second last equality follows from substituting estimates of the form
$$\left(1 + \frac{\delta^3}{(1 + \delta)(1 - \delta^2)} \right)^d = 1 + d\delta^3 + O_n(d\delta^4)$$
and analogous estimates for the other $d$th powers in the expression. These estimates can be established using similar bounds to those used to derive Equations \ref{eqn:firstapprox} and \ref{eqn:secondapprox}. Observe that the terms that are not multiples of $d\delta^3$ after substituting these estimates sum to zero.

We now will estimate $\bE[\tau_{123}\tau_{145}]$ using a slightly different method. Note that $\tau_{123}$ is $\sigma(S_1, S_2, S_3)$-measurable and $\tau_{145}$ is $\sigma(S_1, S_4, S_5)$-measurable. Thus conditioned on $S_1$, the random variables $\tau_{123}$ and $\tau_{145}$ are independent. Furthermore, because of symmetry among the elements in $[d]$, $\tau_{123}$ and $\tau_{145}$ are independent conditioned on $|S_1|$. Let $\tau_{123}^m = \bE\left[\tau_{123} \big| |S_1| = m\right]$ and observe that conditional independence yields that $\bE[\tau_{123}\tau_{145}] = \bE_{m \sim \mL(|S_1|)} \left[ (\tau_{123}^m)^2 \right]$. We now will compute $\tau^m_{123}$. Let $P_m : \{0, 1\}^3 \to [0, 1]$ be such that $P_m(x_1, x_2, x_3)$ is the probability that $e_{12} = x_1, e_{13} = x_2$ and $e_{23} = x_3$ given $|S_1| = m$. Define $Q_m : \{0, 1\}^3 \to [0, 1]$ to be
$$Q_m(x_1, x_2, x_3) = \sum_{y \subseteq x} P_m(x_1, x_2, x_3)$$
For no edges in triangle $\{1, 2, 3\}$ to be present, each of the $m$ elements of $S_1$ cannot be in either $S_2$ or $S_3$ and each of the $d - m$ remaining elements must be in at most one of $S_2$ or $S_3$. Therefore
$$Q_m(0, 0, 0) = (1 - \delta)^{2m}( 1 - \delta^2)^{d - m}$$
For either no edges or just the edge $\{1, 2\}$ to be present, each element in $S_1$ must not be in $S_3$ and each of the $d - m$ remaining elements must be in at most one of $S_2$ or $S_3$. Similar conditions hold when $\{1, 2\}$ is replaced by $\{1, 3\}$ and thus
$$Q_m(1, 0, 0) = Q_m(0, 1, 0) = (1 - \delta)^{m}( 1 - \delta^2)^{d - m}$$
For at most the edge $\{2, 3\}$ to be present, each element of $S_1$ cannot be in $S_2$ or $S_3$ and thus
$$Q_m(0, 0, 1) = (1 - \delta)^{2m}$$
For just the edge $\{1, 2\}$ to not be present, each element of $S_1$ cannot be in $S_2$. Similar conditions hold when $\{1, 2\}$ is replaced by $\{1, 3\}$ and thus
$$Q_m(0, 1, 1) = Q_m(1, 0, 1) = (1 - \delta)^{m}$$
For $\{2, 3\}$ to not be present, it must hold that each of the $d - m$ elements not in $S_1$ are in one of $S_2$ or $S_3$. Thus
$$Q_m(1, 1, 0) = (1 - \delta^2)^{d - m}$$
Furthermore $Q_m(1, 1, 1) = 1$. Now we have that
\begin{align*}
\tau_{123}^m &= -\bE\left[\left((1 - e_{12}) - (1 - p) \right) \left((1 - e_{13}) - (1 - p) \right) \left((1 - e_{23}) - (1 - p) \right) \big| |S_1| = m\right] \\
&= - \sum_{x \in \{0, 1\}^3} (-1)^{|x|} (1 - p)^{|x|} \cdot Q_m(x_1, x_2, x_3) \\
&= (1 - p)^3 - 2(1 - p)^2(1 - \delta)^{m} - (1 - p)^2(1 - \delta^2)^{d - m} + 2(1 - p) \cdot (1 - \delta)^{m}( 1 - \delta^2)^{d - m} \\
&\quad \quad + (1 - p) (1 - \delta)^{2m} - (1 - \delta)^{2m}( 1 - \delta^2)^{d - m}
\end{align*}
Now note that $|S_1| \sim \text{Bin}(d, \delta)$ and thus $\bE_{m \sim \mL(|S_1|)} \left[ x^m \right] = \left( 1 - \delta + \delta x \right)^d$ for any $x > 0$, by the form of the moment generating function of the binomial distribution. Expanding $(\tau_{123}^m)^2$ and applying this identity now yields that
\allowdisplaybreaks
\begin{align*}
\bE_{m \sim \mL(|S_1|)} \left[ (\tau_{123}^m)^2 \right] &= (1 - p)^6 - 4(1 - p)^5 (1 - \delta^2)^d - 2(1 - p)^5 (1 - \delta^2)^d \left( 1 - \delta + \frac{\delta}{1 - \delta^2} \right)^d \\
&\quad \quad + 6(1 - p)^4 (1 - 2\delta^2 + \delta^3)^d + (1 - p)^4 (1 - \delta^2)^{2d} \left( 1 - \delta + \frac{\delta}{(1 - \delta^2)^2} \right)^d \\
&\quad \quad + 8(1 - p)^4 (1 - \delta^2)^d \left( 1 - \delta + \frac{\delta(1 - \delta)}{1 - \delta^2} \right)^d \\
&\quad \quad - 12(1 - p)^3 (1 - \delta^2)^d \left(1 - \delta + \frac{\delta(1 - \delta)^2}{1 - \delta^2} \right)^d \\
&\quad \quad - 4(1 - p)^3 (1 - 3\delta^2 + 3\delta^3 - \delta^4)^d \\
&\quad \quad - 4(1 - p)^3 (1 - \delta^2)^{2d} \left(1 - \delta + \frac{\delta(1 - \delta)}{(1 - \delta^2)^2} \right)^d \\
&\quad \quad + 6(1 - p)^2 (1 - \delta^2)^{2d} \left( 1 - \delta + \frac{\delta(1 - \delta)^2}{(1 - \delta^2)^2} \right)^d \\
&\quad \quad + (1 - p)^2 (1 - 4\delta^2 + 6\delta^3 - 4\delta^4 + \delta^5)^d \\
&\quad \quad + 8(1 - p)^2 (1 - \delta^2)^d \left( 1 - \delta + \frac{\delta(1 - \delta)^3}{1 - \delta^2} \right)^d \\
&\quad \quad - 4(1 - p) (1 - \delta^2)^{2d} \left( 1 - \delta + \frac{\delta(1 - \delta)^3}{(1 - \delta^2)^2} \right)^d \\
&\quad \quad - 2(1 - p) (1 - \delta^2)^d \left( 1 - \delta + \frac{\delta(1 - \delta)^4}{1 - \delta^2} \right)^d \\
&\quad \quad + (1 - \delta^2)^{2d} \left( 1 - \delta + \frac{\delta(1 - \delta)^4}{(1 - \delta^2)^2} \right)^d
\end{align*}
This quantity can be rewritten as the following expression which is more convenient to estimate.
\begin{align*}
&(1 - p)^6 - 4(1 - p)^6 - 2(1 - p)^6 \left( 1 + \frac{\delta^3}{1 - \delta^2} \right)^d \\
&\quad \quad + 14(1 - p)^6 \left(1 + \frac{\delta^3}{(1 - \delta^2)(1 + \delta)} \right)^d + (1 - p)^6 \left( 1 + \frac{2\delta^3 - \delta^5}{(1 - \delta^2)^2} \right)^d \\
&\quad \quad - 16(1 - p)^6 \left(1 + \frac{2\delta^3 + \delta^4}{(1 - \delta^2)(1 + \delta)^2} \right)^d \\
&\quad \quad - 4(1 - p)^6 \left(1 + \frac{3\delta^3 - \delta^4 - \delta^5}{(1 - \delta^2)^2(1 + \delta)}\right)^d \\
&\quad \quad + 6(1 - p)^6 \left( 1 + \frac{3\delta^3 + \delta^4 - 2\delta^5 - \delta^6}{(1 - \delta^2)^2(1 + \delta)^2} \right)^d \\
&\quad \quad + (1 - p)^6 \left(1 + \frac{6\delta^3 - 4\delta^4 - 3\delta^5 + \delta^6 + \delta^7}{(1 - \delta^2)^3 (1 + \delta)} \right)^d \\
&\quad \quad + 8(1 - p)^6 \left( 1 + \frac{4\delta^3 + \delta^4 - 2\delta^5 - \delta^6}{(1 - \delta^2)^2(1 + \delta)^2} \right)^d \\
&\quad \quad - 4(1 - p)^6 \left( 1 + \frac{5\delta^3 + 5\delta^4 - \delta^5 - 3\delta^6 - \delta^7}{(1 - \delta^2)^2(1 + \delta)^3} \right)^d \\
&\quad \quad - 2(1 - p)^6 \left( 1 + \frac{7\delta^3 - 6\delta^4 - 2\delta^5 + 2\delta^7 + \delta^8}{(1 - \delta^2)^3(1 + \delta)^2} \right)^d \\
&\quad \quad + (1 - p)^6 \left( 1 + \frac{8\delta^3 + 6\delta^4 - 6\delta^5 - 8\delta^7 + 3\delta^8 + \delta^9}{(1 - \delta^2)^3(1 + \delta)^3} \right)^d
\end{align*}
Now substitute estimates for each of the $d$th powers of the form $1 + cd\delta^3 + O_n(d\delta^4)$ for constants $c$ varying per term. For example, the first power can be estimated to be
$$\left( 1 + \frac{\delta^3}{1 - \delta^2} \right)^d = 1 + d\delta^3 + O_n(d\delta^4)$$
These estimates can be established using the same bounding argument used to derive Equations \ref{eqn:firstapprox} and \ref{eqn:secondapprox}. Observe that the sum of the constant terms and the multiples of $d\delta^3$ are zero after substituting these estimates into the expression above for $\bE_{m \sim \mL(|S_1|)} \left[ (\tau_{123}^m)^2 \right]$. Thus we obtain that
$$\bE_{m \sim \mL(|S_1|)} \left[ (\tau_{123}^m)^2 \right] = O_n(d\delta^4)$$
Now note that $q = O_n(d\delta^3)$ and therefore we have that
\begin{align*}
\text{Var}[\tau_{123}] &= \bE[\tau_{123}^2] - q^2 =  p^3 (1 - p)^3 + O_n(d\delta^3) \\
\textnormal{Cov}\left[ \tau_{123},  \tau_{124} \right] &= \bE[\tau_{123}\tau_{124}] - q^2 = O_n(d\delta^3) \\
\textnormal{Cov}\left[ \tau_{123},  \tau_{145} \right] &= \bE[\tau_{123}\tau_{145} = 1] - q^2 = O_n(d\delta^4)
\end{align*}
since $d\delta^2 = O_n(1)$. Substituting into Equation \ref{eqn:varexp} completes the proof of the lemma.
\end{proof}

For the sake of completeness, we show how to apply the approach in Lemma \ref{lem:signedtriangleexp} to compute $\bE[T(G)]$ where $G \sim \pr{rig}(n, d, p)$ in the following lemma.

\begin{lemma} \label{lem:triangleexp}
If $G \sim \pr{rig}(n, d, p)$ where $1 - p = (1 - \delta^2)^d = \Omega_n(1)$, then it follows that
$$\bE\left[ T(G) \right] = \binom{n}{3} \cdot \left[ p^3 + d \delta^3 (1 + 2p)(1 - p)^2 + O_n\left(d\delta^4 \right) \right]$$
\end{lemma}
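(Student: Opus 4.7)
The plan is to mirror the proof of Lemma \ref{lem:signedtriangleexp}, applying the same inclusion–exclusion trick and reusing the $Q$-function values already computed there. By vertex symmetry, $\bE[T(G)] = \binom{n}{3}\, \bE[e_{12} e_{13} e_{23}]$, so everything reduces to computing a single expected triangle indicator. First I would expand $e_{12} e_{13} e_{23} = \prod_{\{i,j\}}(1 - (1 - e_{ij}))$ by inclusion–exclusion, which gives
\[
\bE[e_{12} e_{13} e_{23}] \;=\; \sum_{x \in \{0,1\}^3} (-1)^{3 - |x|}\, Q(x_1, x_2, x_3),
\]
where $Q$ is exactly as in the proof of Lemma \ref{lem:signedtriangleexp} (the set $C(x)$ of edges constrained absent has size $3 - |x|$). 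Substituting the four tabulated values of $Q$ yields the closed form
\[
\bE[e_{12} e_{13} e_{23}] \;=\; 1 - 3(1-p) + 3(1-\delta)^d(1 + \delta - \delta^2)^d - (1 + 2\delta)^d(1-\delta)^{2d}.
\]

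Next I would rewrite the two nontrivial powers in the same factored form used in Equation \ref{eqn:finalsigned}, peeling off appropriate powers of $1 - p = (1 - \delta^2)^d$. The identities $(1-\delta)(1+\delta-\delta^2) = (1-\delta^2)(1 - \delta^2/(1+\delta))$ and $(1+2\delta)(1-\delta)^2 = (1-\delta^2)^2(1 - \delta^2/(1+\delta)^2)$ yield, after a short algebraic check,
\[
(1 - \delta)^d(1 + \delta - \delta^2)^d = (1 - p)^2 \Bigl(1 + \tfrac{\delta^3}{(1 - \delta)(1 + \delta)^2}\Bigr)^d, \qquad (1 + 2\delta)^d(1 - \delta)^{2d} = (1 - p)^3 \Bigl(1 + \tfrac{2\delta^3 + \delta^4}{(1 - \delta^2)(1 + \delta)^2}\Bigr)^d.
\]
The second base is exactly the one controlled by Equation \ref{eqn:secondapprox}, giving $(1+2\delta)^d(1-\delta)^{2d} = (1-p)^3\bigl(1 + 2d\delta^3 + O_n(d\delta^4)\bigr)$. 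The first base requires an entirely analogous estimate, obtained by the same binomial-tail argument that produced Equation \ref{eqn:firstapprox} (using $\delta = O_n(d^{-1/2})$ from Equation \ref{eqndelta}, so $d\delta^3 = o_n(1)$), and yields $(1-\delta)^d(1+\delta-\delta^2)^d = (1-p)^2\bigl(1 + d\delta^3 + O_n(d\delta^4)\bigr)$.

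Finally I would substitute these expansions back into the inclusion–exclusion formula and collect terms. The constant part telescopes to $1 - 3(1-p) + 3(1-p)^2 - (1-p)^3 = [1 - (1-p)]^3 = p^3$, and the $d\delta^3$ coefficient simplifies as $3(1-p)^2 - 2(1-p)^3 = (1-p)^2(1 + 2p)$, giving
\[
\bE[e_{12} e_{13} e_{23}] \;=\; p^3 + (1 + 2p)(1 - p)^2\, d\delta^3 + O_n(d\delta^4),
\]
after which multiplying by $\binom{n}{3}$ completes the lemma. The only real obstacle is bookkeeping: verifying the algebraic identities that put the two base powers into the form where Equations \ref{eqn:firstapprox} and \ref{eqn:secondapprox} apply (or, for the first base, establishing the closely analogous estimate by re-running the argument in the proof of Lemma \ref{lem:signedtriangleexp}), and then checking that the non-$d\delta^3$ contributions cancel precisely to $p^3$ rather than leaving any residual constant term.
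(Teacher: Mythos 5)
Your proposal is correct and follows essentially the same route as the paper's proof of Lemma \ref{lem:triangleexp}: inclusion--exclusion over the $Q$-values from Lemma \ref{lem:signedtriangleexp}, factoring out powers of $1-p=(1-\delta^2)^d$, and applying the estimates of Equations \ref{eqn:firstapprox} and \ref{eqn:secondapprox} before collecting the constant terms into $p^3$ and the $d\delta^3$ terms into $(1+2p)(1-p)^2$. The only remark worth adding is that your first base $1+\tfrac{\delta^3}{(1-\delta)(1+\delta)^2}$ is literally the base appearing in Equation \ref{eqn:firstapprox}, since $(1-\delta^2)(1+\delta)=(1-\delta)(1+\delta)^2$, so no separate "analogous" estimate is needed there.
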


\begin{proof}
Given three distinct vertices in $i, j, k \in [n]$, let $T_{ijk}$ denote the indicator for the event that $i, j$ and $k$ form a triangle in $G$. Linearity of expectation yields that
\begin{equation} \label{eqn:exp}
\bE\left[ T(G) \right] = \sum_{1 \le i < j < k \le n} \bE[T_{ijk}] = \binom{n}{3} \cdot \bP[T_{123} = 1]
\end{equation}
where the second equality holds by symmetry. Let $P$ and $Q$ be as in Lemma \ref{lem:signedtriangleexp}. The principle of inclusion-exclusion now yields that
\begin{align*}
\bP[T_{123} = 1] &= P(1, 1, 1) = \sum_{x \in \{0, 1\}^3} (-1)^{3 - |x|} \cdot Q(x_1, x_2, x_3) \\
&= 1 - 3 (1 - \delta)^d(1 + \delta)^d + 3 (1 - \delta)^d(1 + \delta - \delta^2)^d - (1 + 2\delta)^d (1 - \delta)^{2d} \\
&= \left(1 - (1 - \delta^2)^d \right)^3 + 3\left( 1 - 2 \delta^2 + \delta^3 \right)^d - \left( 1 - 3\delta^2 + 2\delta^3 \right)^d \\
&\quad \quad - 3(1 - \delta^2)^{2d} + (1 - \delta^2)^{3d}
\end{align*}
since $p = 1 - (1 - \delta^2)^d$. Now observe that
\begin{align*}
3\left( 1 - 2 \delta^2 + \delta^3 \right)^d - 3(1 - \delta^2)^{2d} &= 3(1 - \delta^2)^{2d} \cdot \left[ \left( 1 + \frac{\delta^3}{(1 - \delta^2)(1 + \delta)} \right)^d - 1 \right] \\
\left( 1 - 3\delta^2 + 2\delta^3 \right)^d  - \left( 1 - \delta^2 \right)^{3d} &= (1 - \delta^2)^{3d} \cdot \left[ \left( 1 + \frac{2\delta^3 + \delta^4}{(1 - \delta^2)(1 + \delta)^2} \right)^d - 1\right] \numberthis \label{eqn:triangleprob}
\end{align*}
The same bounds as in Lemma \ref{lem:signedtriangleexp} now yield that
\begin{align*}
3\left( 1 - 2 \delta^2 + \delta^3 \right)^d - 3(1 - \delta^2)^{2d} &= 3d \delta^3 (1 - \delta^2)^{2d} + O_n\left(d\delta^4 (1 - \delta^2)^{2d} \right) \\
\left( 1 - 3\delta^2 + 2\delta^3 \right)^d  - \left( 1 - \delta^2 \right)^{3d} &= 2d \delta^3  \left( 1 - \delta^2 \right)^{3d} + O_n\left( d\delta^4  \left( 1 - \delta^2 \right)^{3d} \right)
\end{align*}
Substituting $1 - p = (1 - \delta^2)^d$ and these bounds into Equation \ref{eqn:triangleprob}, we have that
\begin{align*}
\bP[T_{123} = 1] &= \left(1 - (1 - \delta^2)^d \right)^3 + 3d \delta^3 \left(1 - \delta^2 \right)^{2d} - 2d \delta^3  \left( 1 - \delta^2 \right)^{3d} + O_n\left(d\delta^4 (1 - \delta^2)^{2d} \right) \\
&= p^3 + 3d\delta^3 \left(1 - \delta^2 \right)^{2d} - 2d\delta^3  \left( 1 - \delta^2 \right)^{3d} + O_n\left(d\delta^4 (1 - p)^2 \right) \\
&= p^3 + d \delta^3 (1 + 2p)(1 - p)^2 + O_n\left(d\delta^4 \right)
\end{align*}
Substituting this into Equation \ref{eqn:exp} now completes the proof of the lemma.
\end{proof}

\subsection{Testing for Planted Poisson Matrices}
\label{subsec:plantedpois}

In this section, we prove Lemma \ref{lem:plantedpois}. The proof uses a similar second moment method computation of $\chi^2$ divergence as in the proof of Lemma \ref{lem:pcdist}.

\begin{proof}[Proof of Lemma \ref{lem:plantedpois}]
Let $\tau = \binom{t}{2} \binom{n}{2}^{-1}$. We first carry out several preliminary computations with the laws of $\text{Poisson}(\lambda)$ and $\text{Poisson}(\lambda + \tau)$ that will be useful in simplifying subsequent $\chi^2$ divergences. Observe that the following sum has a simple closed form expression.
\begin{align*}
\sum_{k = 0}^\infty \frac{\bP\left[ \text{Poisson}(\lambda) = k - 1 \right]^2}{\bP\left[ \text{Poisson}(\lambda + \tau) = k \right]} &= \lambda^{-2} e^{-\lambda + \tau} \sum_{k = 1}^\infty \frac{k}{(k-1)!} \left( \frac{\lambda^2}{\lambda + \tau} \right)^k \\
&= \frac{e^{-\lambda + \tau}}{\lambda + \tau} \sum_{k = 1}^\infty \frac{1}{(k-1)!} \left( \frac{\lambda^2}{\lambda + \tau} \right)^{k-1} + \frac{\lambda^2 e^{-\lambda + \tau}}{(\lambda + \tau)^2} \sum_{k = 2}^\infty \frac{1}{(k-2)!}  \left( \frac{\lambda^2}{\lambda + \tau} \right)^{k-2} \\
&= \frac{e^{-\lambda + \tau}}{\lambda + \tau} \cdot e^{\frac{\lambda^2}{\lambda + \tau}} + \frac{\lambda^2 e^{-\lambda + \tau}}{(\lambda + \tau)^2} \cdot e^{\frac{\lambda^2}{\lambda + \tau}} = e^{\frac{\tau^2}{\lambda + \tau}} \cdot \frac{\lambda^2 + \lambda + \tau}{(\lambda + \tau)^2} \numberthis \label{eqn14}
\end{align*}
The following two sums can be evaluated similarly.
\begin{align*}
\sum_{k = 0}^\infty \frac{\bP\left[ \text{Poisson}(\lambda) = k - 1 \right] \cdot \bP\left[ \text{Poisson}(\lambda) = k \right]}{\bP\left[ \text{Poisson}(\lambda + \tau) = k \right]} &= \lambda^{-1} e^{-\lambda + \tau} \sum_{k = 1}^\infty \frac{1}{(k-1)!} \left( \frac{\lambda^2}{\lambda + \tau} \right)^k \\
&= \frac{\lambda}{\lambda + \tau} \cdot e^{-\lambda + \tau} \cdot e^{\frac{\lambda^2}{\lambda + \tau}} = e^{\frac{\tau^2}{\lambda + \tau}} \cdot \frac{\lambda}{\lambda + \tau} \numberthis \label{eqn15} \\
\sum_{k = 0}^\infty \frac{\bP\left[ \text{Poisson}(\lambda) = k \right]^2}{\bP\left[ \text{Poisson}(\lambda + \tau) = k \right]} &= e^{-\lambda + \tau} \sum_{k = 1}^\infty \frac{1}{k!} \left( \frac{\lambda^2}{\lambda + \tau} \right)^k \\
&= e^{-\lambda + \tau} \cdot e^{\frac{\lambda^2}{\lambda + \tau}} = e^{\frac{\tau^2}{\lambda + \tau}} \numberthis \label{eqn16}
\end{align*}
Given a fixed set $S' \subseteq [n]$ of size $t$, let $\pr{poim}_P\left( n, S', \lambda \right)$ denote the distribution of $\pr{poim}_P\left( n, t, \lambda \right)$ conditioned on the event $S = S'$. If $\mathcal{U}_t$ denotes the uniform distribution on the size $t$ subsets of $[n]$, then in particular $\pr{poim}_P\left( n, t, \lambda \right) =_d \bE_{S \sim \mathcal{U}_t} \pr{poim}_P\left( n, S, \lambda \right)$. Let $\mathcal{M}_n$ denote the set of all symmetric matrices in $\mathbb{Z}_{\ge 0}^{n \times n}$ with diagonal entries equal to zero and $X$ denote an arbitrary $X \in \mathcal{M}_n$. Let $\mP_S$, $\mP$ and $\mQ$ be shorthands for $\pr{poim}_P\left( n, S, \lambda \right)$, $\pr{poim}_P\left( n, t, \lambda \right)$ and $\pr{poim}\left( n, \lambda + \tau \right)$, respectively. Observe that these are each product distributions. Following a similar second moment method computation as in Lemma \ref{lem:pcdist}, we have that
\allowdisplaybreaks
\begin{align*} 
1 + \chi^2\left(\pr{poim}_P\left( n, t, \lambda \right), \pr{poim}\left( n, \lambda + \tau \right) \right) &= \sum_{X \in \mathcal{M}_n} \frac{\bP_{\mP}[X]^2}{\bP_{\mQ}[X]} \\
&= \bE_{S, T \sim \mathcal{U}_t} \left[\sum_{X \in \mathcal{M}_n} \frac{\bP_{\mP_S}[X] \cdot \bP_{\mP_T}[X]}{\bP_{\mQ}[X]} \right] \\
&= \bE_{S, T \sim \mathcal{U}_t} \left[\prod_{1 \le i < j \le n} \left( \sum_{k = 0}^\infty  \frac{\bP_{\mP_S}[X_{ij} = k] \cdot \bP_{\mP_T}[X_{ij} = k]}{\bP_{\mQ}[X_{ij} = k]} \right) \right]
\end{align*}
The second equality holds by linearity of expectation and because $S$ and $T$ are independent. The marginal distributions of $\mP_S, \mP_T$ and $\mQ$ combined with Equations \ref{eqn14}, \ref{eqn15} and \ref{eqn16} now imply that
\allowdisplaybreaks
\begin{align*} 
&1 + \chi^2\left(\pr{poim}_P\left( n, t, \lambda \right), \pr{poim}\left( n, \lambda + \tau \right) \right) \\
&\quad \quad = \bE_{S, T \sim \mathcal{U}_t} \left[ \left( e^{\frac{\tau^2}{\lambda + \tau}} \cdot \frac{\lambda^2 + \lambda + \tau}{(\lambda + \tau)^2} \right)^{\binom{|S \cap T|}{2}} \left( e^{\frac{\tau^2}{\lambda + \tau}} \cdot \frac{\lambda}{\lambda + \tau} \right)^{2\binom{t}{2} - \binom{|S \cap T|}{2}} \left( e^{\frac{\tau^2}{\lambda + \tau}} \right)^{\binom{n}{2} - 2 \binom{t}{2} + \binom{|S \cap T|}{2}} \right] \\
&\quad \quad = e^{\binom{n}{2} \cdot \frac{\tau^2}{\lambda + \tau}} \cdot \bE_{S, T \sim \mathcal{U}_t} \left[ \left( \frac{\lambda^2 + \lambda + \tau}{(\lambda + \tau)^2} \right)^{\binom{|S \cap T|}{2}} \left( \frac{\lambda}{\lambda + \tau} \right)^{2\binom{t}{2} - \binom{|S \cap T|}{2}} \right] \\
&\quad \quad = e^{\binom{n}{2} \cdot \frac{\tau^2}{\lambda + \tau}} \cdot \left( \frac{\lambda}{\lambda + \tau} \right)^{2\binom{t}{2}} \cdot \bE_{S, T \sim \mathcal{U}_t} \left[ \left( \frac{\lambda^2 + \lambda + \tau}{\lambda^2 + \lambda \tau} \right)^{\binom{|S \cap T|}{2}} \right] \numberthis \label{eqn17}
\end{align*}
Now fix two subsets $S, T \subseteq [n]$ of size $t$ and note that $|S \cap T| \le t = O_n(1)$. Note that $e^{\binom{n}{2} \cdot \frac{\tau^2}{\lambda + \tau}} = e^{\binom{t}{2} \cdot \frac{\tau}{\lambda + \tau}} \le e^{\binom{t}{2}} = O_n(1)$. As in Lemma \ref{lem:pcdist}, $|S \cap T|$ is distributed as $\text{Hypergeometric}(n, t, t)$ since $S, T \sim \mathcal{U}_t$ are independent. Furthermore, $\bP[|S \cap T| = k] = \binom{t}{k} \binom{n - t}{t - k} \binom{n}{t}^{-1} = O_n(n^{-k})$. Observe that
\begin{align*}
&\sum_{k = 3}^t \bP[|S \cap T| = k] \cdot e^{\binom{n}{2} \cdot \frac{\tau^2}{\lambda + \tau}} \cdot \left( \frac{\lambda}{\lambda + \tau} \right)^{2\binom{t}{2}} \cdot \left( \frac{\lambda^2 + \lambda + \tau}{\lambda^2 + \lambda \tau} \right)^{\binom{k}{2}} \\
&\quad \quad \le \sum_{k = 3}^t e^{\binom{t}{2}} \cdot \binom{t}{k} \binom{n - t}{t - k} \binom{n}{t}^{-1} \left( \frac{\lambda^2 + \lambda + \tau}{\lambda^2 + \lambda \tau} \right)^{\binom{k}{2}} \\
&\quad \quad = O_n \left( \max_{2 < k \le t} n^{-k} \cdot \left( \frac{\lambda^2 + \lambda + \tau}{\lambda^2 + \lambda \tau} \right)^{\binom{k}{2}} \right) \numberthis \label{eqn18}
\end{align*}
since $t = O_n(1)$. Also observe that
\allowdisplaybreaks
\begin{align*}
&\sum_{k = 0}^2 \bP[|S \cap T| = k] \cdot e^{\binom{n}{2} \cdot \frac{\tau^2}{\lambda + \tau}} \cdot \left( \frac{\lambda}{\lambda + \tau} \right)^{2\binom{t}{2}} \cdot \left( \frac{\lambda^2 + \lambda + \tau}{\lambda^2 + \lambda \tau} \right)^{\binom{k}{2}} \\
&\quad \quad = e^{\binom{t}{2} \cdot \frac{\tau}{\lambda + \tau}} \cdot \left( \frac{\lambda}{\lambda + \tau} \right)^{2\binom{t}{2}} \cdot \left[ \binom{n - t}{t} \binom{n}{t}^{-1} + t \binom{n - t}{t - 1} \binom{n}{t}^{-1} \right. \\
&\quad \quad \quad \quad \left. + \binom{t}{2} \binom{n - t}{t - 2} \binom{n}{t}^{-1} \left( \frac{\lambda^2 + \lambda + \tau}{\lambda^2 + \lambda \tau} \right) \right]
\end{align*}
Using the fact that $\sum_{\ell = 0}^t \binom{t}{\ell} \binom{n - t}{t - \ell} \binom{n}{t}^{-1} = 1$, this quantity simplifies to
\begin{align*}
&e^{\binom{t}{2} \cdot \frac{\tau}{\lambda + \tau}} \cdot \left( \frac{\lambda}{\lambda + \tau} \right)^{2\binom{t}{2}} \cdot \left[ 1 + \binom{t}{2} \binom{n - t}{t - 2} \binom{n}{t}^{-1} \left( \frac{\lambda^2 + \lambda + \tau}{\lambda^2 + \lambda \tau} - 1 \right) - \sum_{\ell = 3}^t \binom{t}{\ell} \binom{n - t}{t - \ell} \binom{n}{t}^{-1} \right] \\
&\quad \quad = e^{\binom{t}{2} \cdot \frac{\tau}{\lambda + \tau}} \cdot \left( \frac{\lambda}{\lambda + \tau} \right)^{2\binom{t}{2}} \cdot \left[ 1 + \binom{t}{2}^2 \binom{n}{2}^{-1} \left( \frac{\lambda^2 + \lambda + \tau}{\lambda^2 + \lambda \tau} - 1 \right) + O_n\left((1 + \lambda^{-1})n^{-3}\right) \right] \numberthis \label{eqn20}
\end{align*}
The equality above follows from: (1) $\binom{n - t}{t - 2} \binom{n}{t}^{-1} = \binom{t}{2} \binom{n}{2}^{-1} + O_n(n^{-3})$, as established in Equation \ref{eqnbin}; (2) from the fact that $\frac{\lambda^2 + \lambda + \tau}{\lambda^2 + \lambda \tau} \le 1 + \lambda^{-1}$; and (3) from $\binom{t}{\ell} \binom{n - t}{t - \ell} \binom{n}{t}^{-1} = O_n(n^{-3})$ for each $3 \le \ell \le t$ and the fact that the sum contains $t - 2 = O_n(1)$ terms. Note that $\lambda = \omega_n(n^{-2})$ and thus $\frac{\tau}{\lambda + \tau} = o_n(1)$. Since $2\binom{t}{2} = O_n(1)$, we have by Taylor expanding that
\begin{align*}
e^{\binom{t}{2} \cdot \frac{\tau}{\lambda + \tau}} \left( \frac{\lambda}{\lambda + \tau} \right)^{2\binom{t}{2}} &= e^{\binom{t}{2} \cdot \frac{\tau}{\lambda + \tau}} \left( 1 - \frac{\tau}{\lambda + \tau} \right)^{2\binom{t}{2}} \\
&= \left[ 1 + \binom{t}{2} \cdot \frac{\tau}{\lambda + \tau} + O_n\left( \frac{\tau^2}{(\lambda + \tau)^2} \right) \right] \left[ 1 - 2 \binom{t}{2} \cdot \frac{\tau}{\lambda + \tau} + O_n\left( \frac{\tau^2}{(\lambda + \tau)^2} \right) \right] \\
&= 1 - \binom{t}{2} \cdot \frac{\tau}{\lambda + \tau} + O_n\left( \frac{\tau^2}{(\lambda + \tau)^2} \right) \\
&= 1 - \binom{t}{2} \cdot \frac{\tau}{\lambda + \tau} + O_n\left( \lambda^{-2} n^{-4} \right) \numberthis \label{eqn21}
\end{align*}
Substituting $\tau = \binom{t}{2} \binom{n}{2}^{-1}$ and the estimate in Equation \ref{eqn21} into Equation \ref{eqn20} yields that
\begin{align*}
&\sum_{k = 0}^2 \bP[|S \cap T| = k] \cdot e^{\binom{n}{2} \cdot \frac{\tau^2}{\lambda + \tau}} \cdot \left( \frac{\lambda}{\lambda + \tau} \right)^{2\binom{t}{2}} \cdot \left( \frac{\lambda^2 + \lambda + \tau}{\lambda^2 + \lambda \tau} \right)^{\binom{k}{2}} \\
&\quad \quad = \left[ 1 - \binom{t}{2} \cdot \frac{\tau}{\lambda + \tau} + O_n\left( \lambda^{-2} n^{-4} \right) \right] \left[ 1 + \binom{t}{2} \tau \cdot \left( \frac{\lambda^2 + \lambda + \tau}{\lambda^2 + \lambda \tau} - 1 \right) + O_n\left((1 + \lambda^{-1})n^{-3}\right) \right] \\
&\quad \quad = 1 - \binom{t}{2} \cdot \frac{\tau}{\lambda + \tau} + \binom{t}{2} \tau \cdot \left( \frac{\lambda^2 + \lambda + \tau}{\lambda^2 + \lambda \tau} - 1 \right) + O_n\left( \lambda^{-2} n^{-4} \right) + O_n\left((1 + \lambda^{-1})n^{-3}\right) \\
&\quad \quad = 1 + \binom{t}{2} \cdot \frac{\tau^2(1 - \lambda)}{\lambda^2 + \lambda \tau} + O_n\left( \lambda^{-2} n^{-4} \right) + O_n\left((1 + \lambda^{-1})n^{-3}\right) \\
&\quad \quad = 1 + O_n\left( (1 + \lambda^{-2}) n^{-4} \right) + O_n\left((1 + \lambda^{-1})n^{-3}\right) \numberthis \label{eqn22}
\end{align*}
Substituting Equations \ref{eqn22} and \ref{eqn18} into Equation \ref{eqn17} now yields that
\begin{align*}
\chi^2\left(\pr{poim}_P\left( n, t, \lambda \right), \pr{poim}\left( n, \lambda + \tau \right) \right) &= O_n\left( (1 + \lambda^{-2}) n^{-4} \right) + O_n\left((1 + \lambda^{-1})n^{-3}\right) \\
&\quad \quad + O_n \left( \max_{2 < k \le t} n^{-k} \cdot \left( \frac{\lambda^2 + \lambda + \tau}{\lambda^2 + \lambda \tau} \right)^{\binom{k}{2}} \right)
\end{align*}
Observe that $\frac{\lambda^2 + \lambda + \tau}{\lambda^2 + \lambda \tau} = \frac{\lambda}{\lambda + \tau} + \lambda^{-1} \le 1 + \lambda^{-1}$ and, when $k = 3$ in the third term above, the bound is $n^{-3} \cdot \left(\frac{\lambda^2 + \lambda + \tau}{\lambda^2 + \lambda \tau} \right)^3 = \Omega_n\left((1 + \lambda^{-1})n^{-3}\right)$. Now applying Cauchy-Schwarz as in Lemma \ref{lem:pcdist} completes the proof of the lemma.
\end{proof}

\subsection{Total Variation Convergence of RIM and POIM}
\label{subsec:rim-poim}

In this section, we complete the proof of Theorem \ref{thm:rim}. We first deduce the following elementary upper bound on the total variation between two univariate Poisson distributions using some of the calculations in Lemma \ref{lem:plantedpois}.

\begin{lemma} \label{lem:poissontv}
If $\lambda_1 \ge \lambda_2 > 0$, then it follows that
$$\TV\left( \textnormal{Poisson}(\lambda_1), \textnormal{Poisson}(\lambda_2) \right) \le \sqrt{\frac{1}{2} \left( e^{\lambda_1^{-1} (\lambda_1 - \lambda_2)^2} - 1 \right)}$$
which is $O\left( \lambda_1^{-1} (\lambda_1 - \lambda_2)^2 \right)$ if $(\lambda_1 - \lambda_2)^2 \le \lambda_1$.
\end{lemma}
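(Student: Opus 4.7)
The plan is to derive this bound from the Cauchy--Schwarz information inequality $2\TV(\nu,\mu)^2 \le \chi^2(\nu,\mu)$ stated in Section~\ref{sec:results}, reducing the problem to an explicit closed-form computation of the $\chi^2$ divergence between two univariate Poisson distributions. Total variation between Poisson laws can also be controlled by a coupling argument or by Stein's method, but the $\chi^2$ route gives the cleanest expression matching the exponential form on the right-hand side.

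Since $\lambda_1 \ge \lambda_2 > 0$, the measure $\textnormal{Poisson}(\lambda_2)$ is absolutely continuous with respect to $\textnormal{Poisson}(\lambda_1)$, so the $\chi^2$ divergence is well-defined. The main step is the following direct summation:
$$1 + \chi^2(\textnormal{Poisson}(\lambda_2), \textnormal{Poisson}(\lambda_1)) = \sum_{k = 0}^\infty \frac{\left( e^{-\lambda_2} \lambda_2^k / k! \right)^2}{e^{-\lambda_1} \lambda_1^k / k!} = e^{\lambda_1 - 2\lambda_2} \sum_{k = 0}^\infty \frac{1}{k!} \left( \frac{\lambda_2^2}{\lambda_1} \right)^k = e^{(\lambda_1 - \lambda_2)^2 / \lambda_1},$$
where the last step invokes the series expansion of the exponential. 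This is nothing more than the one-dimensional specialization of Equation~\eqref{eqn16} from the proof of Lemma~\ref{lem:plantedpois}, with the substitution $\lambda = \lambda_2$ and $\tau = \lambda_1 - \lambda_2$. Applying $\TV \le \sqrt{\chi^2/2}$ yields the first stated inequality.

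The asymptotic refinement then follows from a one-line Taylor estimate. By convexity of $e^x - 1$ on $[0,1]$, we have $e^x - 1 \le (e-1)\, x$ whenever $0 \le x \le 1$. Under the hypothesis $(\lambda_1 - \lambda_2)^2 \le \lambda_1$ the exponent $\lambda_1^{-1}(\lambda_1 - \lambda_2)^2$ lies in $[0,1]$, so $e^{\lambda_1^{-1}(\lambda_1 - \lambda_2)^2} - 1 = O\bigl(\lambda_1^{-1}(\lambda_1 - \lambda_2)^2\bigr)$, which combined with the previous display gives the claimed big-$O$ bound. There is no substantive obstacle here: the only step requiring care is recognizing that the summation over $k$ produces an exponential series — from which both parts of the lemma are immediate.
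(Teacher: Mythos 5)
Your proposal is correct and follows essentially the same route as the paper: an exact evaluation of $1+\chi^2(\textnormal{Poisson}(\lambda_2),\textnormal{Poisson}(\lambda_1))$ as the exponential series $e^{\lambda_1-2\lambda_2}e^{\lambda_2^2/\lambda_1}=e^{\lambda_1^{-1}(\lambda_1-\lambda_2)^2}$ (the univariate case of the computation in Equation \ref{eqn16}), followed by $\TV \le \sqrt{\tfrac{1}{2}\chi^2}$. Your additional Taylor/convexity remark $e^x-1\le (e-1)x$ for $x\in[0,1]$ is a reasonable way to justify the final big-$O$ clause, which the paper's own proof leaves implicit.
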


\begin{proof}
By the same computation in Equation \ref{eqn16}, we have that
$$1 + \chi^2\left( \textnormal{Poisson}(\lambda_2), \textnormal{Poisson}(\lambda_1) \right) = \sum_{k = 0}^\infty \frac{\bP\left[ \text{Poisson}(\lambda_2) = k \right]^2}{\bP\left[ \text{Poisson}(\lambda_1) = k \right]} = e^{\lambda_1 - 2\lambda_2} \sum_{k = 0}^\infty \frac{1}{k!} \left( \frac{\lambda_2^2}{\lambda_1} \right)^k = e^{\lambda_1^{-1} (\lambda_1 - \lambda_2)^2}$$
Applying Cauchy-Schwarz to obtain $\TV \le \sqrt{\frac{1}{2} \cdot \chi^2}$ now proves the lemma.
\end{proof}

Observe that $\lambda_1 \ge \frac{1}{4} \left( \sqrt{\lambda_1} + \sqrt{\lambda_2} \right)^2$, from which we obtain that
$$\TV\left( \textnormal{Poisson}(\lambda_1), \textnormal{Poisson}(\lambda_2) \right) \le \sqrt{\frac{1}{2} \left( e^{4(\sqrt{\lambda_1} - \sqrt{\lambda_2})^2} - 1 \right)}$$
This implies that if $|\lambda_1 - \lambda_2| = o(1)$, then $\TV\left( \textnormal{Poisson}(\lambda_1), \textnormal{Poisson}(\lambda_2) \right) = o(1)$. Applying the triangle inequality now yields that $\TV\left( \textnormal{Poisson}(\lambda_1), \textnormal{Poisson}(\lambda_2) \right) = o(1)$ if $\lambda_2 = \lambda_2' + o(1)$ and $(\lambda_1 - \lambda_2')^2 \ll \lambda_1$. We will use this fact in the proof of Theorem \ref{thm:rim}.

We now will prove Theorem \ref{thm:rim}, referencing parts of the proof of Theorem \ref{thm:denserig} where details are identical or similar.

\begin{proof}[Proof of Theorem \ref{thm:rim}]
First observe that $n\delta \ll n^{1/2} d^{-1/3} \ll 1$. We first summarize several observations and definitions from Theorem \ref{thm:denserig} as they apply to random intersection matrices.
\begin{itemize}
\item Let $p_k = \bP[| \{ j : i \in S_j\}| = k] = \binom{n}{k} \delta^k (1 - \delta)^{n - k}$ be the probability some $i \in [d]$ is in $k$ sets $S_j$ and let $M_k$ be the number of $i \in [d]$ in exactly $k$ sets $S_k$. Note that $(M_0, M_1, \dots, M_n) \sim \text{Multinomial}(d, p_0, p_1, \dots, p_n)$.
\item A random matrix $X \sim \pr{rim}(n, d, \delta)$ can now be generated through the procedure $\mP_{\text{gen}}$ by first setting all entries of $X$ to be zero, sampling $(M_0, M_1, \dots, M_n) \sim \text{Multinomial}(d, p_0, p_1, \dots, p_n)$ and then, for each $2 \le k \le n$, independently sampling a subset $S$ of size $k$ from $[n]$ uniformly at a random a total of $M_k$ times and increasing $X_{ij}$ by 1 for each $i, j \in S$.
\item Let $\mL_P$ denote the distribution on $(M_0, M_1, \dots, M_n)$ where the $M_k$ are mutually independent and $M_k \sim \text{Poisson}(dp_k)$. Let $\pr{rim}_P(n, d, \delta)$ be the distribution on matrices $X$ generated through $\mP_{\text{gen}}$, generating $(M_0, M_1, \dots, M_n) \sim \mL_P$ instead of from a multinomial distribution.
\item Poisson splitting implies that, after sampling $(M_0, M_1, M_2)$ from $\mL_P$ and applying $\mP_{\text{gen}}$ for $k = 2$, the resulting matrix $X_2$ is distributed as $\pr{poim}\left(n, \binom{n}{2}^{-1} dp_2 \right)$.
\item Let $\pr{rim}_P(n, d, \delta, m_3, m_4, \dots, m_K)$ denote $\pr{rim}_P(n, d, \delta)$ conditioned on the event that $M_k = m_k$ for $3 \le k \le K$ and $M_k = 0$ for $K < k \le n$. Note that $X \sim \pr{rim}_P(n, d, \delta, m_3, m_4, \dots, m_K)$ is distributed as $\pr{poim}\left(n, \binom{n}{2}^{-1} dp_2 \right)$ with $m_k$ random planted increased subsets of size $k$ for each $3 \le k \le K$ as in $\mP_{\text{gen}}$.
\end{itemize}
The argument in Proposition \ref{prop:poissonization} implies that if $n \delta \ll 1$, then
\begin{equation} \label{eqn23}
\TV\left( \pr{rim}(n, d, \delta), \pr{rim}_P(n, d, \delta) \right) = O_n \left( n^2 \delta^2 \right)
\end{equation}
as $n \to \infty$. Now let
$$E_t = \min\left\{ 1, C_t \left( \left(1 +d^{-1}\delta^{-2}(1 - \delta)^{2-n} \right) n^{-2} + \max_{2 < k \le t} n^{-k/2} \left( 1 + d^{-1}\delta^{-2}(1 - \delta)^{2-n} \right)^{\frac{1}{2} \binom{k}{2}} \right) \right\}$$
for a sufficiently large constant $C_t > 0$ so that $E_t$ is an upper bound in Lemma \ref{lem:plantedpois} when $\lambda = \binom{n}{2}^{-1} dp_2 = d\delta^2(1 - \delta)^{n - 2}$. As observed above, we have that $X$ after the step of $\mP_{\text{gen}}$ with $k = 2$ is distributed as $\pr{rim}_P(n, d, \delta, 0, 0, \dots, 0) \sim \pr{poim}\left(n, d\delta^2(1 - \delta)^{n - 2} \right)$. The same induction as in Proposition \ref{prop:unionbound} yields that
\begin{equation} \label{eqn24}
\TV\left( \pr{rim}_P(n, d, \delta, m_3, m_4, \dots, m_K), \pr{poim}\left(n, \lambda(n, d, \delta, m_3, m_4, \dots, m_K)\right) \right) \le \sum_{t = 3}^K m_t E_t
\end{equation}
for each $K \ge 1$ and $(m_3, m_4, \dots, m_K) \in \mathbb{Z}_{\ge 0}^K$, where $\lambda(n, d, \delta, m_3, m_4, \dots, m_K)$ is given by
$$\lambda(n, d, \delta, m_3, m_4, \dots, m_K) = d\delta^2(1 - \delta)^{n - 2} + \sum_{t = 3}^K m_t \binom{t}{2} \binom{n}{2}^{-1}$$
We now apply the bounding argument from the end of Proposition \ref{prop:unionbound} and the conditioning argument in the beginning of Theorem \ref{thm:denserig} to reduce the proof to comparing a $\pr{poim}$ to a mixture of $\pr{poim}$ distributions. Fix some function $w = w(n) \to \infty$ as $n \to \infty$ such that $d \gg w^2 n^3$ and $w\delta \ll d^{-1/3} n^{-1/2}$. Let $E$ be the event that $(M_3, M_4, \dots, M_n) \sim \mL_P$ satisfy all of the following inequalities
\begin{align*}
dp_k - \sqrt{wdp_k} \le M_k \le dp_k + \sqrt{wdp_k} \quad &\text{for } k \ge 3 \text{ with } dp_k > w^{-1/2} \\
M_k = 0 \quad &\text{for } k \ge 3 \text{ with } dp_k \le w^{-1/2}
\end{align*}
Now note that if $k \ge 6$, since $w\delta \ll d^{-1/3} n^{-1/2}$ and $d \gg n^3$, it follows that
$$dp_k = d\binom{n}{k} \delta^k (1 - \delta)^{n - k} \le dn^k \delta^k \ll \frac{n^{k/2}}{w^6 d^{k/3 - 1}} = o_n\left(w^{-1} \right)$$
Repeating the concentration inequalities and bounds used to establish Equation \ref{eqncondition}, we have that
\begin{align*}
\bP_{\mL_P}\left[ E^c \right] &\lesssim 3w^{-1} + 3w^{-1/2} + \sum_{k = 6}^n dp_k \\
&\le 3w^{-1} + 3w^{-1/2} + \sum_{k = 6}^n dn^k \delta^k \\
&= 3w^{-1} + 3w^{-1/2} + \frac{dn^6 \delta^6}{1 - n\delta} = o_n(1)
\end{align*}
We now bound $wdp_k E_k$ for $3 \le k \le 5$ in a similar way to Proposition \ref{prop:unionbound}. First consider the case where $d\delta^2 \ge 1$. Note that since $n \delta \ll 1$, it follows that $(1 - \delta)^{n - 2} \ge 1 - (n - 2) \delta = 1 - o_n(1)$. Therefore it follows that $d^{-1} \delta^{-2} (1 - \delta)^{2 - n} = O_n(1)$ and hence $E_k = O_n(n^{-3/2})$ for each $3 \le k \le 5$. Therefore since $n \delta \ll 1$, we have that
$$wdp_k E_k \le wd \cdot (n \delta)^k \cdot n^{-3/2} \lesssim wd \cdot (n \delta)^3 \cdot n^{-3/2} = o_n(w^{-2})$$
for each $3 \le k \le 5$, since $w\delta \ll d^{-1/3} n^{-1/2}$. Now consider the case where $d\delta^2 < 1$ and let $\delta = \gamma/\sqrt{d}$ where $\gamma < 1$. It follows that $1 + d^{-1} \delta^{-2} (1 - \delta)^{2 - n} = O_n(\gamma^{-2})$ and thus for $3 \le t \le 5$, we have that
$$wdp_t E_t \lesssim w \cdot \min\left\{ d n^t \delta^t, \sum_{k = 3}^t d n^t \delta^t \cdot n^{-k/2} \gamma^{-\binom{k}{2}} \right\} = w \cdot \min\left\{ d^{1 - t/2} n^t \gamma^t, \sum_{k = 3}^t d^{1 - t/2} n^{t-k/2} \gamma^{t -\binom{k}{2}} \right\}$$
Since $\frac{2}{t - 1} \in (0, 1]$ if $3 \le t \le 5$, we have that
\allowdisplaybreaks
\begin{align*}
wdp_t E_t &\lesssim w \cdot \sum_{k = 3}^t \left( d^{1 - t/2} n^t \gamma^t \right)^{\frac{t - 3}{t - 1}} \left( d^{1 - t/2} n^{t-k/2} \gamma^{t -\binom{k}{2}} \right)^{\frac{2}{t - 1}} \\
&= w \cdot \sum_{k = 3}^t d^{1 - t/2} n^{t - \frac{k}{t - 1}} \gamma^{t - \frac{2}{t - 1} \cdot \binom{k}{2}} \\
&\le w \cdot \sum_{k = 3}^t d^{1 - t/2} n^{t - \frac{k}{t - 1}} 
\end{align*}
where the last inequality follows from $\gamma < 1$ and $t - \frac{2}{t - 1} \cdot \binom{k}{2} \ge 0$ if $k \le t$. Hence,
\allowdisplaybreaks
\begin{align*}
wdp_3 E_3 &\lesssim w d^{-1/2} n^{3/2} = o_n(1) \\
wdp_4 E_4 &\lesssim w d^{-1} n^{3} + w d^{-1} n^{8/3} = o_n(w^{-1}) \\
wdp_5 E_5 &\lesssim wd^{-3/2} n^{17/4} + w d^{-3/2} n^{4} + w d^{-3/2} n^{15/4} = o_n(w^{-3/2})
\end{align*}
since $d \gg w^2 n^3$. In summary, $wdp_k E_k = o_n(1)$ for each $3 \le k \le 5$.

Now let $\pr{rim}_E(n, d, \delta)$ and $\mL_E$ denote the distributions of $\pr{rim}(n, d, \delta)$ and $\mL_P$ conditioned on the event $E$ holding. Note that if $E$ holds, then it follows that $M_k \le dp_k + \sqrt{wdp_k} = O_n(wdp_k)$ for each $3 \le k \le 5$ with $M_k \neq 0$ and $M_k = 0$ for all other $k \ge 3$. Combining Equation \ref{eqn24}, the conditioning property of total variation, the triangle inequality and $wdp_k E_k = o_n(1)$ for $3 \le k \le 5$ yields that
\begin{align*}
&\TV\left( \pr{rim}_P(n, d, \delta), \bE_{(m_3, m_4, m_5) \sim \mL_E} \, \pr{poim}\left(n, \lambda(n, d, \delta, m_3, m_4, m_5) \right) \right) \\
&\quad \quad \le \bP\left[ E^c \right] + \TV\left( \pr{rim}_E(n, d, \delta), \bE_{(m_3, m_4, m_5) \sim \mL_E} \, \pr{poim}\left(n, \lambda(n, d, \delta, m_3, m_4, m_5) \right) \right) \\
&\quad \quad \le \bP\left[ E^c \right] + \sup_{(m_3, m_4, m_5) \in \text{supp}(\mL_E)} \TV\left( \pr{rim}_E(n, d, \delta, m_3, m_4, m_5), \pr{poim}\left(n, \lambda(n, d, \delta, m_3, m_4, m_5) \right) \right) \\
&\quad \quad \le \bP\left[ E^c \right] + \sup_{(m_3, m_4, m_5) \in \text{supp}(\mL_E)} \sum_{k = 3}^5 m_k E_k \\
&\quad \quad \lesssim \bP\left[ E^c \right] + \sum_{k = 3}^5 wdp_k E_k = o_n(1)
\end{align*}
The triangle inequality and Equation \ref{eqn23} now imply that it suffices to show
\begin{equation} \label{eqn26}
\TV\left( \pr{poim}(n, d\delta^2), \bE_{(m_3, m_4, m_5) \sim \mL_E} \, \pr{poim}\left(n, \lambda(n, d, \delta, m_3, m_4, m_5) \right) \right) = o_n(1)
\end{equation}
Now consider a matrix $X$ sampled from either $\bE_{(m_3, m_4, m_5) \sim \mL_E} \, \pr{poim}\left(n, \lambda(n, d, \delta, m_3, m_4, m_5) \right)$ or $\pr{poim}(n, d\delta^2)$. Conditioned on the event $s = \sum_{1 \le i < j \le n} X_{ij}$, the entries $(X_{ij} : 1 \le i < j \le n)$ are distributed according to $\textnormal{Multinomial}\left(s, \binom{n}{2}^{-1} \right)$ under either distribution, by Poisson splitting. To show Equation \ref{eqn26}, the conditioning property of total variation thus implies that it suffices to bound the total variation between $\sum_{1 \le i < j \le n} X_{ij}$ under the two distributions. In other words, it suffices to show the following total variation bound
\begin{equation} \label{eqn27}
\TV\left( \textnormal{Poisson}\left(\binom{n}{2} d\delta^2 \right), \bE_{(m_3, m_4, m_5) \sim \mL_E} \, \textnormal{Poisson}\left( \binom{n}{2} \lambda(n, d, \delta, m_3, m_4, m_5) \right) \right) = o_n(1)
\end{equation}
As in the proof of Theorem \ref{thm:denserig}, let $A \subseteq \{3, 4, 5\}$ be the set of indices $k$ such that $dp_k > w^{-1/2}$ and define
$$\lambda_1 = d\delta^2(1 - \delta)^{n - 2} + \sum_{k \in A} dp_k \binom{k}{2} \binom{n}{2}^{-1} \quad \text{and} \quad \lambda_2 = d\delta^2(1 - \delta)^{n - 2} + \sum_{k = 3}^5 dp_k \binom{k}{2} \binom{n}{2}^{-1}$$
Observe that
$$\left| \lambda_1 - \lambda_2 \right| = \sum_{k \in A^c \cap \{3, 4, 5\}} dp_k \binom{k}{2} \binom{n}{2}^{-1} \le 3w^{-1/2} n^{-2}$$
Also note that
\begin{align*}
\lambda_2 &= d\delta^2(1 - \delta)^{n - 2} + \sum_{k = 3}^5 d\binom{n}{k} \binom{k}{2} \binom{n}{2}^{-1} \delta^k (1 - \delta)^{n - k} \\
&= d\delta^2 \left[ (1 - \delta)^{n - 2} + \sum_{k = 3}^5 \binom{n - 2}{k - 2} \delta^{k - 2} (1 - \delta)^{n - k - 2} \right] \\
&= d\delta^2 \left[ 1 - \sum_{\ell = 4}^{n - 2} \binom{n - 2}{\ell} \delta^{\ell} (1 - \delta)^{n - 2 - \ell} \right]
\end{align*}
Since $n \delta \ll 1$ and $\delta \ll w^{-1} d^{-1/3} n^{-1/2}$, it therefore follows that
$$\left| d\delta^2 - \lambda_2 \right| = d\delta^2 \sum_{\ell = 4}^{n - 2} \binom{n - 2}{\ell} \delta^{\ell} (1 - \delta)^{n - 2 - \ell} \le \sum_{\ell = 4}^\infty n^\ell \delta^\ell \lesssim dn^4 \delta^6 \ll   \frac{n}{w^6 d}$$
Finally note that if $(m_3, m_4, m_5) \in \text{supp}(\mL_E)$, then the triangle inequality yields that
\begin{align*}
\left| \lambda_1 - \lambda(n, d, \delta, m_3, m_4, m_5) \right| &\le \sum_{k \in A} |m_k - dp_k| \cdot \binom{k}{2} \binom{n}{2}^{-1} \le \sum_{k = 3}^5 \sqrt{wdp_k} \cdot \binom{k}{2} \binom{n}{2}^{-1} \\
&\lesssim \sum_{k = 3}^5 w^{1/2} d^{1/2} n^{k/2 - 2} \delta^{k/2} \lesssim w^{1/2} d^{1/2} n^{-1/2} \delta^{3/2}
\end{align*}
since $n \delta \ll 1$. The triangle inequality now yields that
\begin{align*}
\left| \binom{n}{2} d\delta^2 - \binom{n}{2} \lambda(n, d, \delta, m_3, m_4, m_5) \right| &\lesssim w^{-1/2} + \frac{n^3}{w^6 d} + w^{1/2} d^{1/2} n^{3/2} \delta^{3/2} \\
&= o_n(1) + w^{1/2} d^{1/2} n^{3/2} \delta^{3/2}
\end{align*}
Furthermore note that
$$\frac{\left( w^{1/2} d^{1/2} n^{3/2} \delta^{3/2} \right)^2}{\binom{n}{2} d\delta^2} \lesssim w n \delta \ll 1$$
Thus by the earlier remark on total variation distances between Poisson distributions, it follows that
$$\TV\left( \textnormal{Poisson}\left(\binom{n}{2} d\delta^2 \right), \textnormal{Poisson}\left( \binom{n}{2} \lambda(n, d, \delta, m_3, m_4, m_5) \right) \right) = o_n(1)$$
for any $(m_3, m_4, m_5) \in \text{supp}(\mL_E)$. The conditioning property of total variation then implies Equation \ref{eqn27}, completing the proof of the theorem.
\end{proof}

\section{Appendix: Random Geometric Graphs on $\mathbb{S}^{d - 1}$}
\label{sec:rgg_appendix}

\subsection{Estimates for $\psi_d$}
\label{subsec:psi-prop}

In this section, we prove Lemma~\ref{lem:psi_properties} which gives key estimates for quantities in terms of $\psi_d$ and $t_{p, d}$ in our analysis of random geometric graphs.

\begin{proof}[Proof of Lemma~\ref{lem:psi_properties}]
As mentioned previously, first item is shown in Section 2 of \cite{sodin2007tail} and the second item is Lemma 2 in Section 2 of \cite{bubeck2016testing}. We now prove the remaining three items.
\begin{enumerate}
    \item[3.] From the first item in this lemma, we have that
$$\frac{\psi_d(t -\delta)}{\psi_d(t)} = \left(\frac{1-(t-\delta)^2}{1-t^2}\right)^{\frac{d-3}{2}} = \left(1 + \frac{2t\delta - \delta^2}{1-t^2}\right)^{\frac{d-3}{2}} \leq \left(1+ \frac{8t\delta}{3}\right)^{\frac{d-3}{2}} \leq e^{2td\delta}$$
    \item[4.] Let $\delta_1 = \min\left\{\tfrac{1}{\sqrt{d}},\tfrac{1}{dt_{p,d}}\right\}$. Since $\psi_d$ is decreasing, we have that
$$p =  \int_{t_{p,d}}^{1} \psi_d(x)dx \geq \int_{t_{p,d} }^{t_{p,d}+\delta_1}\psi_d(x)dx \geq \delta_1 \psi_d(t_{p,d} + \delta_1) \geq \delta_1 \psi_d(t_{p,d})e^{-2d(t_{p,d}+\delta_1)\delta_1}$$
    Note $2d(t_{p,d}+\delta_1)\delta_1 \leq C$ for some universal constant $C > 0$, from which the result follows.
    \item[5.] Since $\psi_d$ is symmetric, we have that $\mathbb{P}\left(|T| \geq t \right) = 2\Psi_d(t)$. Combining the facts that $\Psi_d(t_{p,d}) = p$, there is a constant $C > 0$ such that $t_{p,d} \leq C\sqrt{\frac{\log p^{-1}}{d}}$ and the fact that $\Psi_d$ is a decreasing function, we now have $\Psi_d\left(C\sqrt{\frac{\log p^{-1}}{d}}\right) \leq p$. Taking $t = C\sqrt{\frac{\log p^{-1}}{d}}$ in $\mathbb{P}\left(|T| \geq t \right) = 2\Psi_d(t)$, the result follows.
\end{enumerate}
This completes the proof of the lemma.
\end{proof}

\subsection{Deferred Proofs from the Coupling Argument}
\label{subsec:coupling}

In this section, we prove Lemmas \ref{lem:sphere_uniform_induction}, \ref{lem:rgg_remainder_concentration}, \ref{lem:main_reduction_rgg} and \ref{lem:expected_deviation_bound} deferred from our coupling argument analysis of random geometric graphs on $\mathbb{S}^{d - 1}$.

\begin{proof}[Proof of Lemma \ref{lem:sphere_uniform_induction}]
We prove the two items of the lemma separately.
\begin{enumerate}
\item We will show this item in the case where $a = (1, 0, 0, \dots, 0)$. The statement for any other unit vector $a \in \mathbb{S}^{d - 1}$ will follow after applying a rotation to the $a = (1, 0, 0, \dots, 0)$ case. The isotropy of the $d$-dimensional Gaussian distribution implies that a random vector $W \sim \unif(\sphere^{d-1})$ can be generated as $W = Z/\|Z\|_2$ where $Z = (Z_1, Z_2, \dots, Z_d) \sim \mN(0, I_d)$. Now let $Z_{\sim 1} = (0, Z_2, Z_3, \dots, Z_d)$ and note that
$$W = \frac{Z_1}{\|Z\|_2} \cdot a + \sqrt{1 - \frac{Z_1^2}{\|Z\|_2^2}} \cdot \frac{Z_{\sim 1}}{\|Z_{\sim 1}\|_2}$$
Note that $Z_{\sim 1} \sim \mN(0, I_{d-1})$ by definition. The rotational invariance of $\mN(0, I_{d-1})$ implies that $Z_{\sim 1}/\|Z_{\sim 1}\|_2$ and $\|Z_{\sim 1}\|_2$ are independent. Now note that $Z_1/\|Z\|_2 = Z_1/\sqrt{Z_1^2 + \|Z_{\sim 1}\|_2^2}$ is in the $\sigma$-algebra $\sigma(Z_1, \|Z_{\sim 1}\|_2)$ and thus independent of $Z_{\sim 1}/\|Z_{\sim 1}\|_2$. Furthermore, by definition we have that $Z_1/\|Z\|_2 = W_1 \sim \psi_d$ and the isotropy of $\mN(0, I_{d-1})$ implies that $Z_{\sim 1}/\|Z_{\sim 1}\|_2 \sim \unif(\sphere^{a^\perp})$. This implies that $W$ is equal in distribution to
$$W =_d Ta + \sqrt{1 - T^2} \cdot Y$$
where $T \sim \psi_d$, $Y \sim \unif(\sphere^{a^\perp})$ and $T$ and $Y$ are independent. This proves the if direction of the item of the lemma. We now prove the only if direction. If $X = Ta + \sqrt{1 - T^2} \cdot Y$ is uniformly distributed on $\mathbb{S}^{d - 1}$, then it can be coupled to $(Z_1, Z_2, \dots, Z_d) \sim \mN(0, I_d)$ so that $X = Z/\|Z\|_2$. Now note that $T$ and $Y$ are deterministic functions of $X$ with $T = X_1 = Z_1/\|Z\|_2$ and $Y = X_{\sim 1}/\|X_{\sim 1}\|_2 = Z_{\sim 1}/\|Z_{\sim 1}\|_2$. The discussion above now shows that $(T, Y)$ satisfy the three desired conditions.

\item Note that $Z_{1}, Z_2 \dots,Z_{m}$ can be completed to an orthonormal basis $Z_1, Z_2, \dots, Z_d$. Fix a procedure to do this as a deterministic function of $Z_1, Z_2,\dots,Z_m$. Let $\alpha_i = \langle X, Z_i\rangle$ and note that $X = \sum_{i=1}^{d} \alpha_i Z_i$. Now consider conditioning on $Z_1,\dots, Z_m$. Given this conditioning, we have that $X$ is uniformly distributed on $\sphere^{d-1}$ and, by rotational invariance, also that $(\alpha_1,\dots,\alpha_d) \sim \unif(\sphere^{d-1})$. The result now follows by repeatedly applying the first item of this lemma with the last $d - m$ coordinates of $(\alpha_1,\dots,\alpha_d)$ as the choices of $a$.
\end{enumerate}
This completes the proof of the lemma.
\end{proof}

\begin{proof}[Proof of Lemma~\ref{lem:rgg_remainder_concentration}]
We again proceed item by item.
\begin{enumerate}
\item Let $\xi = (a_{23}, a_{24}, \dots,a_{2n}) \in \mathbb{R}^{n-2}$ and let $\hat{\xi} = \xi/\|\xi\|_2$. By item 2 in Lemma~\ref{lem:sphere_uniform_induction}, we have that $\hat{\xi}$ is uniformly distributed over $\sphere^{n-3}$. Similarly, let $\zeta = (T_3, T_4 \dots, T_n)$ and $\hat{\zeta} = \zeta/\|\zeta\|_2$. Observe that
\begin{equation} \label{eqn:factorization}
\sum_{j=3}^n a_{2j}T_j = \langle\xi,\zeta\rangle = \|\xi\|_2 \cdot \|\zeta\|_2 \cdot \langle \hat{\xi},\hat{\zeta}\rangle
\end{equation}
Note that $\xi$ is in $\sigma(X_2, X_3, \dots, X_n)$ and $\zeta$ is in $\sigma(\Gamma_3, \Gamma_4, \dots, \Gamma_n)$, which implies that $\xi$ and $\zeta$ are independent. Therefore, it holds that $ \langle \hat{\xi},\hat{\zeta}\rangle \sim \psi_{n-2}$. By item 5 in Lemma~\ref{lem:psi_properties}, there is a constant $C_1 > 0$ depending only on $s$ such that
$$\mathbb{P}\left[\left|\langle \hat{\xi},\hat{\zeta}\rangle\right| \leq C_1 \sqrt{\frac{\log{n}}{n-2}}\right] \ge 1- \frac{1}{9n^s}$$
Since $a_{2j} = \langle X_2, Y_j \rangle$, it follows that $a_{2j} \sim \psi_d$ for each $3 \le j \le n$. Thus for some for some constant $C_2 > 0$ depending only on $s$, item 5 of Lemma~\ref{lem:psi_properties} again implies that
$$\mathbb{P}\left[a_{2j}^2 > \frac{C^2_2\log n}{d}\right] \leq \frac{1}{9n^{s+1}}$$
for each $3 \le j \le n$. Since $\|\xi\|^2_2 = \sum_{j=3}^n a_{2j}^2$, if $\| \xi \|_2 > \sqrt{\tfrac{(n-2)\log{n}}{d}}$, then it must follow that $a_{2j}^2 > C^2_2 \cdot \tfrac{\log n}{d}$ for some $j$. A union bound now yields that
\begin{equation}
\mathbb{P}\left[\|\xi\|_2 > C_2 \sqrt{\frac{(n-2)\log{n}}{d}}\right] \le \sum_{j = 3}^n \mathbb{P}\left[a_{2j}^2 > \frac{C^2_2\log n}{d}\right] \le \frac{1}{9n^s}
\label{eq:remainder_norm_concentration}
\end{equation}
By item 2 of Proposition~\ref{thm:rgg_coupling_results}, we have that $T_j \sim \psi_d$ for each $3 \le j \le n$. Repeating the same union bound argument above yields that there is a constant $C_3 > 0$ depending only on $s$ such that
$$\mathbb{P}\left[\|\zeta\|_2 > C_3 \sqrt{\frac{(n-2)\log{n}}{d}}\right] \le \frac{1}{9n^s}$$
Therefore each of the following events have probability at least $1 - \frac{1}{9n^s}$ for some constants $C_1,C_2$ and $C_3$ which depend only on $s$.
$$\left\{\left| \langle \hat{\xi},\hat{\zeta}\rangle \right| \leq C_1 \sqrt{\frac{\log{n}}{n-2}}\right\}, \quad \left\{\|\xi\|_2 \leq C_2 \sqrt{\frac{(n-2)\log{n}}{d}}\right\} \quad \text{and} \quad \left\{\|\zeta\|_2 \leq C_3 \sqrt{\frac{(n-2)\log{n}}{d}}\right\}$$
The result follows from union bound and combining these inequalities with Equation \ref{eqn:factorization}.
\item By the definition of $a_{22}$, we have that $a_{22}  = \sqrt{1 - \|\xi\|^2_2}$. If $C_2$ is as in Equation~\ref{eq:remainder_norm_concentration}, then the two events $\left\{a_{22} > \sqrt{1 - C_2^2 \cdot \frac{(n-2)\log{n}}{d}} \right\}$ and $\left\{\|\xi\|_2 \leq C_2 \sqrt{\frac{(n-2)\log{n}}{d}}\right\}$ coincide. The result now follows from Equation \ref{eq:remainder_norm_concentration}. 
\item By definition, we have that $\Gamma_i \sim \psi_{d-n+i}$. Item 5 of Lemma~\ref{lem:psi_properties} implies that
$$\mathbb{P}\left[|\Gamma_i| > C_4\sqrt{\frac{\log{n}}{d-n+i}}\right] \leq \frac{1}{3n^{s+1}}\,. $$
Using the fact that $d \gg n\log{n}$ and a union bound, we conclude the result.
\end{enumerate}
Now taking $C_s = \max(C_1C_2C_3, C_2^2, C_4)$ completes the proof of the lemma.
\end{proof}

\begin{proof}[Proof of Lemma~\ref{lem:main_reduction_rgg}]
From Equation \ref{eq:identity_for_conditional_probab}, we have that
$$Q_0 = \mathbb{P}\left[\Gamma_2 \ge t^{\prime}_{p, d} \biggr|\mathcal{F}\right] \quad \text{where} \quad t_{p,d}^{\prime} = \frac{t_{p,d} - \sum_{j=3}^n a_{2j} T_j}{a_{22} \cdot \prod_{j=3}^{n} \sqrt{1-\Gamma_j^2}}$$
Since $\Gamma_2$ is independent of $\mathcal{F}$ and $t_{p,d}^{\prime}$ is $\mathcal{F}$-measurable, we conclude by Fubini's theorem that
$$Q_0 = \Psi_{d-n+2}\left( t_{p, d}' \right)$$
Note that by definition, $p = \Psi_d(t_{p,d})$. By the triangle inequality, we have that 
\begin{equation} \label{eq:probability_deviation_triangle_inequality}
    |Q_0 - p| \leq \left|\Psi_{d-n+2}\left(t_{p, d}'\right) - \Psi_{d-n+2}(t_{p,d})\right| + \left|\Psi_{d-n+2}(t_{p,d}) -\Psi_d(t_{p,d})\right|
\end{equation}
We first will apply Lemma~\ref{lem:distributional_approx_on_the_sphere} to bound $\left|\Psi_{d-n+2}(t_{p,d}) -\Psi_d(t_{p,d})\right|$. By monotonicity, we have $\bar{\Phi}(t\sqrt{d}) \leq \bar{\Phi}(t\sqrt{d-n+2})$. Now observe that
\begin{align}
   \bar{\Phi}\left(t\sqrt{d-n+2}\right) &= \bar{\Phi}\left(t\sqrt{d}\right) + \frac{1}{\sqrt{2\pi}} \int_{t\sqrt{d-n+2}}^{t\sqrt{d}}e^{-\frac{x^2}{2}} dx \nonumber \\
   &\leq \bar{\Phi}\left(t\sqrt{d}\right) + \frac{1}{\sqrt{2\pi}} \cdot t\left(\sqrt{d}-\sqrt{d-n+2}\right) \cdot e^{-\frac{(d-n+2)t^2}{2}} \nonumber \\
   &\leq \bar{\Phi}\left(t\sqrt{d}\right) + \frac{C_1nt}{\sqrt{d}} \cdot e^{-\frac{t^2(d-n)}{2}} \label{eq:gaussian_anti_conc}
\end{align}
Here, we have used the fact that $d \gg n\log^3{n}$. Applying the standard estimate for the Gaussian CDF when $x \geq 1$ given by
$\bar{\Phi}(x) \geq \frac{1}{\sqrt{2\pi}}\left(\frac{1}{x} - \frac{1}{x^3}\right)e^{-\frac{x^2}{2}}$, we now have that
\begin{equation*}
    \bar{\Phi}\left(t\sqrt{d}\right) \geq \begin{cases} 
    \bar{\Phi}(2) &\quad \text{if } t\sqrt{d} \leq 2
    \\ \frac{1}{2t\sqrt{2\pi d}} \cdot e^{-\frac{dt^2}{2}} &\quad \text{otherwise}
    \end{cases}
\end{equation*}
Combining these inequalities with Equation \ref{eq:gaussian_anti_conc} and the fact that $d\gg n$ yields
\begin{equation}
    1 \le \frac{\bar{\Phi}\left(t\sqrt{d-n+2}\right)}{\bar{\Phi}\left(t\sqrt{d}\right)} \leq \begin{cases}
    1 + \frac{Cn}{d} &\quad \text{if } t\sqrt{d} \leq 2 \\
    1 + Cnt^2 \cdot e^{\frac{nt^2}{2}} &\quad \text{otherwise}
    \end{cases}
\end{equation}
for an absolute constant $C > 0$. Let $C_{\mathsf{est}}$ be the positive constant given in Lemma \ref{lem:distributional_approx_on_the_sphere}. Since $p \gg n^{-3}$ and $d \gg n \log n$, we have that $t_{p,d} < C_{\mathsf{est}}$ for sufficiently large $n$ by item 2 of Lemma~\ref{lem:psi_properties}. Using the distributional approximation in Lemma~\ref{lem:distributional_approx_on_the_sphere}, we can bound $\Psi_d$ and $\Psi_{d-n+2}$ as follows in terms of $\bar{\Phi}$. Since $p = \Psi_d(t_{p,d})$, we have
\begin{align}
    \bigr|\Psi_{d-n+2}(t_{p,d}) -\Psi_d(t_{p,d})\bigr| &= p \cdot \left|\frac{\Psi_{d-n+2}(t_{p,d})}{\Psi_d(t_{p,d})}-1\right| \nonumber\\
    &= p \cdot \left|\left(1+ O_n(d^{-1}) \right) \cdot e^{O_n(dt^4_{p,d})} \cdot \frac{\bar{\Phi}\left(t_{p,d}\sqrt{d-n+2}\right)}{\bar{\Phi}\left(t_{p,d}\sqrt{d}\right)} -1\right| \nonumber\\
    &= p \cdot \left|\left(1+ O_n(d^{-1}) \right) \cdot e^{O_n(dt^4_{p,d})} \cdot \left(1 +  O_n\left(nt_{p,d}^2\right) \right)-1\right| \nonumber\\
    &= O_n\left(pdt_{p,d}^4+pnt_{p,d}^2 + \frac{p}{d}\right) \label{eq:dimension_shift_density_comparison}
\end{align}
We now will bound the term $|\Psi_{d-n+2}(t_{p, d}') - \Psi_{d-n+2}(t_{p,d})|$ by approximating the density $\psi_{d-n+2}$ in the neighborhood of $t_{p,d}$. First observe that combining the items in Lemma~\ref{lem:rgg_remainder_concentration} with $d \gg n \log n$ implies that
\begin{equation} \label{eqn:denombound}
a_{22} \cdot \prod_{j=3}^{n} \sqrt{1-\Gamma_j^2} = 1 - O_n\left( \frac{n \log n}{d} \right)
\end{equation}
on the event $E_{\mathsf{rem}}$. Combining this bound with the expression for $t_{p, d}'$, the fact that $p \gg n^{-3}$ and the bounds in item 2 of Lemma~\ref{lem:psi_properties} now yields that
\begin{equation}
|t_{p,d}-t_{p,d}^{\prime}| \cdot \mathbbm{1}(E_{\mathsf{rem}}) = O_n\left(\frac{\sqrt{n}\log^{3/2}{n}}{d} \right)
\label{eq:thresold_difference_bound}
\end{equation}
Observe that this difference is $O_n\left( \sqrt{\tfrac{\log n}{d}} \right)$ on the event $E_{\mathsf{rem}}$ since $d \gg n \log^2 n$. Let
$$u = \underset{x \in \left[t_{p, d}, t_{p, d}'\right]}{\text{argmin}} \, |x|$$
Note that $u = O_n\left( \sqrt{\tfrac{\log n}{d}} \right)$ conditioned on $E_{\mathsf{rem}}$. Thus given $E_{\mathsf{rem}}$ holds,
\allowdisplaybreaks
\begin{align*}
\bigr|\Psi_{d-n+2}\left(t^{\prime}_{p,d}\right) - \Psi_{d-n+2}(t_{p,d})\bigr| &= \left| \int_{t_{p,d}}^{t'_{p,d}} \psi_{d-n+2}(x)dx \right| \leq \psi_{d-n+2}\left(u \right) \cdot \left|t^{\prime}_{p,d}-t_{p,d}\right| \\
    &= \frac{\psi_{d-n+2}\left(u \right)}{\psi_d(u)} \cdot \frac{\psi_d(u)}{\psi_d(t_{p,d})} \cdot \psi_d(t_{p,d}) \cdot \left|t^{\prime}_{p,d}-t_{p,d}\right|
\end{align*}
where the inequality follows from the fact that $\psi_{d - n + 2}(t)$ is monotonically decreasing in $|t|$. Furthermore, we have that
\allowdisplaybreaks
\begin{align*}
    &\bigr|\Psi_{d-n+2}\left(t^{\prime}_{p,d}\right) - \Psi_{d-n+2}(t_{p,d})\bigr| \\
    &\quad \quad \lesssim \left( \sqrt{\frac{d - n + 2}{d}} \cdot \left( 1 - u^2 \right)^{-n/2} \right) e^{2dt_{p,d} | u - t_{p, d} |} \cdot \psi_d(t_{p,d}) \cdot \left|t^{\prime}_{p,d}-t_{p,d}\right| \\
    &\quad \quad \lesssim \sqrt{\frac{d - n}{d}} \cdot \left( 1 + O_n\left( \frac{n \log n}{d} \right) \right) e^{2dt_{p,d} | t'_{p,d} - t_{p, d} |} \cdot \psi_d(t_{p,d}) \cdot \left|t^{\prime}_{p,d}-t_{p,d}\right| \\
    &\quad \quad = \left( 1 + O_n\left( \frac{n \log n}{d} \right) \right) \exp\left( O_n\left( d \cdot \sqrt{ \frac{\log p^{-1}}{d}} \cdot \frac{\sqrt{n}\log^{3/2}{n}}{d} \right) \right) \cdot \psi_d(t_{p,d}) \cdot \left|t^{\prime}_{p,d}-t_{p,d}\right| \\
    &\quad \quad = \left( 1 + O_n\left( \frac{n \log n}{d} + \sqrt{\frac{n \log^4 n}{d}} \right) \right) \cdot \psi_d(t_{p,d}) \cdot \left|t^{\prime}_{p,d}-t_{p,d}\right| \\
    &\quad \quad = \left(1+o_n(1)\right) \cdot \psi_d(t_{p,d}) \cdot \left|t^{\prime}_{p,d}-t_{p,d}\right|
\end{align*}
The second inequality follows from items 1 and 3 of Lemma~\ref{lem:psi_properties} and using $\Gamma\left(\frac{d}{2}\right)/\Gamma\left(\frac{d-1}{2}\right)\sqrt{\pi} = \Theta(\sqrt{d})$. The third inequality follows from the fact that Bernoulli's inequality implies that $(1 - u^2)^{-n/2} \le 1 + nu^2$ if $nu^2 \le 1$. The third last equality follows from item 2 of Lemma~\ref{lem:psi_properties}, the fact that $p \gg n^{-3}$ and Equation \ref{eq:thresold_difference_bound}. The final estimate follows from the fact that $d \gg n\log^4 n$. Let $C > 0$ be the constant in the $\lesssim$ above. Substituting this bound into Equation~\ref{eq:probability_deviation_triangle_inequality}, we have 
$$|Q_0-p| \cdot \mathbbm{1}(E_{\mathsf{rem}})\leq O_n\left(pdt_{p,d}^4+pnt_{p,d}^2 + \frac{p}{d}\right) + C(1+o_n(1)) \cdot \psi_{d}\left(t_{p,d}\right) \cdot \bigr|t^{\prime}_{p,d}-t_{p,d}\bigr|$$
Now note that
$$pdt_{p,d}^4+pnt_{p,d}^2 + \frac{p}{d} = O_n\left(\frac{pn}{d}\log{ p^{-1}}\right)$$
Therefore we have that for sufficiently large $n$,
$$|Q_0-p| \cdot \mathbbm{1}(E_{\mathsf{rem}})\leq O_n\left(\frac{pn}{d}\log{ p^{-1}}\right) +2 C\psi_{d}\left(t_{p,d}\right) \cdot \bigr|t^{\prime}_{p,d}-t_{p,d}\bigr| $$
This proves the first claim in the lemma. Using the fact that $\psi_d(t_{p,d})\leq Cp\sqrt{d\log p^{-1}}$ and Equation~\ref{eq:thresold_difference_bound}, we conclude that
$$|Q_0-p| \cdot \mathbbm{1}(E_{\mathsf{rem}})\leq O_n\left(\frac{pn}{d}\log{ p^{-1}}\right) + O_n\left( p\sqrt{\frac{n\log{ p^{-1}}}{d}} \cdot\log^{3/2}{n} \right)$$
which proves the second claim in the lemma.
\end{proof}

\begin{proof}[Proof of Lemma~\ref{lem:expected_deviation_bound}]
Given the event $E_{\mathsf{rem}}$, Equation \ref{eqn:denombound} and the expression for $t_{p,d}'$ imply that
\begin{align*}
    \bigr|t^{\prime}_{p,d}-t_{p,d}\bigr| \cdot \mathbbm{1}(E_{\mathsf{rem}}) &\leq O_n\left(t_{p,d} \cdot \frac{n\log{n}}{d}\right) + (1+o_n(1)) \cdot \left|\sum_{j=3}^n T_j a_{2j}\right| \\
    &\leq O_n\left(n\left(\frac{\log{n}}{d}\right)^{\frac{3}{2}}\right) + (1+o_n(1)) \cdot \left|\sum_{j=3}^n T_j a_{2j}\right| 
\end{align*}
using the upper bound on $t_{p,d}$ in item 2 of Lemma~\ref{lem:psi_properties}. The inequality $(x+y)^2 \leq 2x^2 + 2y^2$ yields
\begin{align}
    \mathbb{E}\left[ \bigr|t^{\prime}_{p,d}-t_{p,d}\bigr|^2 \cdot \mathbbm{1}(E_{\mathsf{rem}}) \right] \lesssim n^2\left(\frac{\log{n}}{d}\right)^{3}+ \mathbb{E}\left[ \left|\sum_{j=3}^n T_j a_{2j}\right|^2 \right]
\label{eq:main_eq_around_here}
\end{align}
Now recall that $Y_j$ is a unit norm random vector in the $\sigma$-algebra $\sigma(X_j, \dots, X_n)$. Therefore, for $j \geq 3$, $Y_j$ is independent of $X_2$. Also note that $a_{2j} = \langle X_2,Y_j\rangle$. Furthermore, the $T_j$ are independent of $X_2,\dots,X_n$ and, since the random variable $T_j T_k$ is symmetric about zero, we have that $\mathbb{E}[T_jT_k] = 0$ for $k\neq j$. Thus $\mathbb{E}[T_jT_k a_{2j}a_{2k}] = \mathbb{E}[T_jT_k] \cdot \bE[a_{2j}a_{2k}] = 0$ if $j\neq k$, and hence
$$\mathbb{E}\left[ \left|\sum_{j=3}^n T_j a_{2j}\right|^2 \right] = \sum_{j=3}^{n} \mathbb{E}\left[T_j^2 a_{2j}^2\right] = \sum_{j=3}^{n} \mathbb{E}\left[T_j^2\right] \cdot \mathbb{E}\left[a_{2j}^2\right] = \frac{n - 2}{d^2} \le \frac{n}{d^2}$$
Where the equality holds because $\mathbb{E}\left[a_{2j}^2\right] = \mathbb{E}\left[T_j^2\right] = \frac{1}{d}$ for $3 \le j \le n$, since $a_{2j}, T_j \sim \psi_d$ by item 2 in Proposition~\ref{thm:rgg_coupling_results}. Substituting this into Equation \ref{eq:main_eq_around_here} completes the proof of the lemma.
\end{proof}
\end{appendices}
\end{document}